\documentclass[11pt]{amsart}
\topmargin=-0.7in \hoffset=-1cm \voffset=2cm \textheight=220mm
\textwidth=150mm

\usepackage{amssymb}
\usepackage{amsfonts}
\usepackage{amsmath,latexsym,amssymb,amsfonts,amsbsy, amsthm}
\usepackage[usenames]{color}
\usepackage{xspace,colortbl}
\usepackage{mathrsfs}
\usepackage[colorlinks,linkcolor=blue,citecolor=blue]{hyperref}
\usepackage[titletoc]{appendix}
\allowdisplaybreaks

\newcommand{\beq}{\begin{equation}}
\newcommand{\eeq}{\end{equation}}
\newcommand{\ben}{\begin{eqnarray}}
\newcommand{\een}{\end{eqnarray}}
\newcommand{\beno}{\begin{eqnarray*}}
\newcommand{\eeno}{\end{eqnarray*}}
\newcommand{\R}{\mathbb{R}}
\newtheorem{thm}{Theorem}[section]

\newtheorem{lem}[thm]{Lemma}
\newtheorem{prop}[thm]{Proposition}
\newtheorem{coro}[thm]{Corollary}
\newtheorem{rmk}[thm]{Remark}

\begin{document}

\title[Second order estimate]{Second order estimate on transition layers$^*$}
\author[K. Wang and J. Wei]{Kelei Wang$^\dag$ and Juncheng Wei$^\S$}
\thanks{$^\dag$School of Mathematics and Statistics \& Computational
Science Hubei Key Laboratory, Wuhan University, Wuhan 430072, China.
{Email: wangkelei@whu.edu.cn}. }
\thanks{$^\S$Department of Mathematics, University of British
Columbia, Vancouver, B.C., Canada, V6T 1Z2.
{Email: jcwei@math.ubc.ca}.}

\thanks{$*$  The research of K. Wang was supported
by  NSFC no. 11871381 and no. 11631011. J. Wei is partially supported by NSERC of Canada. }
\date{\today}

\begin{abstract}
In this paper we establish a uniform $C^{2,\theta}$ estimate for level sets of stable solutions to the singularly perturbed Allen-Cahn equation in dimensions $ n\leq 10$ (which is optimal). The proof combines two ingredients: one is the infinite dimensional reduction method which enables us to reduce the $C^{2,\theta}$ estimate for these level sets to a corresponding one on  solutions of Toda system; the other one uses a small regularity theorem on stable solutions of Toda system to establish various decay estimates on these solutions, which gives a lower bound on distances between different sheets of solutions to Toda system or level sets of solutions to Allen-Cahn equation.
\end{abstract}
\keywords{Allen-Cahn equation; stable solution; Toda system; clustering interfaces.  }

\subjclass{  35B08, 35J62}

\maketitle
\renewcommand{\theequation}{\thesection.\arabic{equation}}
\setcounter{equation}{0}

\tableofcontents

\section{Introduction}\label{sec introduction}
\setcounter{equation}{0}

\subsection{Main result} In this paper, continuing the study in \cite{Wang-Wei2}, we establish a second order estimate on level sets of stable solutions to the singularly perturbed Allen-Cahn equation
\begin{equation}\label{singularly perturbed eqn}
\varepsilon\Delta u_\varepsilon=\frac{1}{\varepsilon}W^\prime(u_\varepsilon), \quad |u_\varepsilon|<1 \quad \mbox{in } B_1(0)\subset\R^n.
\end{equation}
Here $W(u)$ is a general double well potential, that is, $W\in C^3([-1,1])$ satisfying
\begin{itemize}
\item $W>0$ in $(-1,1)$ and $W(\pm1)=0$;
\item  $W^\prime(\pm1)=0$ and $W^{\prime\prime}(-1)=W^{\prime\prime}(1)=1$; (Note a slight notation difference here with other literatures.)
\item there exists only one critical point of $W$ in $(-1,1)$, which we assume to be $0$.
\end{itemize}
A typical model is given by $W(u)=(1-u^2)^2/8$.

Under these assumptions on $W$, it is known that there exists a unique solution  to the following one dimensional problem
\begin{equation}\label{1d problem}
g^{\prime\prime}(t)=W^\prime(g(t)),  \quad \ g(0)=0 \ \ \quad \mbox{and }\ \lim_{t\to\pm\infty}g(t)=\pm 1.
\end{equation}

A solution of \eqref{singularly perturbed eqn} is stable if for any $\eta\in C_0^\infty(B_1(0))$,
\begin{equation}\label{stability condition}
 \int_{B_1(0)}\left[\varepsilon^2|\nabla\eta|^2+W^{\prime\prime}(u_\varepsilon)\eta^2\right]\geq0.
\end{equation}

By Sternberg-Zumbrun \cite{S-Z}, the stability condition is equivalent to
\begin{equation}\label{stability condition}
 \int_{B_1(0)}|\nabla\eta|^2\varepsilon|\nabla u_\varepsilon|^2\geq \int_{B_1(0)}\eta^2|B(u_\varepsilon)|^2\varepsilon|\nabla u_\varepsilon|^2, \quad \forall \eta\in C_0^\infty(B_1(0)).
\end{equation}
Here
\[
|B(u_\varepsilon)|^2=
\begin{cases}
  \frac{|\nabla^2u_\varepsilon|^2-|\nabla|\nabla u_\varepsilon||^2}{|\nabla u_\varepsilon|^2}, & \mbox{if }|\nabla u_\varepsilon|\neq 0 \\
  0, & \mbox{otherwise}.
\end{cases}
\]

It is known that if $|\nabla u_\varepsilon(x)|\neq 0$, it holds that
\begin{equation}\label{curvature term}
 |B(u_\varepsilon)(x)|^2=|A_\varepsilon(x)|^2+|\nabla_T\log|\nabla u_\varepsilon(x)||^2,
\end{equation}
where $A_\varepsilon(x)$ is the second fundamental form of the level set $\{u_\varepsilon=u_\varepsilon(x)\}$ and $\nabla_T$ denotes the tangential derivative along the level set $\{u_\varepsilon=u_\varepsilon(x)\}$.

The main result of this paper is
\begin{thm}\label{main result}
For any $\theta\in(0,1)$, $0<b_1\leq b_2<1$ and $\Lambda>0$, there exist two constants $C=C(\theta,b_1,b_2,\Lambda)$ and $\varepsilon_\ast=\varepsilon(\theta,b_1,b_2,\Lambda)$ so that the following holds.
Suppose $u_\varepsilon$ is a   stable solution of \eqref{singularly perturbed eqn} in $B_1(0)\subset\R^n$ satisfying
\begin{equation}\label{C 1 1 bound}
|\nabla u_\varepsilon|\neq 0 \quad \mbox{and} \quad |B(u_\varepsilon)|\leq \Lambda, \quad \mbox {in } \{|u_\varepsilon|\leq 1-b_2\}\cap B_1(0).
\end{equation}
If $n\leq 10$ and $\varepsilon\leq\varepsilon_\ast$, then for any $t\in[-1+b_1,1-b_1]$, $\{u_\varepsilon=t\}$ are smooth hypersurfaces and the $C^\theta$ norm of their second fundamental forms are  bounded by $C$. Moreover,
\begin{equation}\label{vansihing of mean curvature}
|H(u_\varepsilon)|\leq C\varepsilon \left(\log|\log\varepsilon|\right)^2,
\end{equation}
where $H(u_\varepsilon)$ denotes the mean curvature of $\{u_\varepsilon=t\}$.
\end{thm}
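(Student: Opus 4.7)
The plan is to argue by contradiction via a blow-up/compactness scheme whose core is an infinite-dimensional Lyapunov--Schmidt reduction from $u_\varepsilon$ to a Toda system, combined with the stability-based small regularity theorem established earlier in the paper.

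Step one is the contradiction setup. Suppose the conclusion fails along a sequence $\varepsilon_k \to 0$ and a sequence of points $x_k$ where either the $C^\theta$ norm of $A_{\varepsilon_k}$ or the mean curvature bound \eqref{vansihing of mean curvature} blows up. The hypothesis \eqref{C 1 1 bound} gives uniform $C^{1,1}$ control on the level sets $\{u_{\varepsilon_k}=t\}$ for $t\in[-1+b_1,1-b_1]$, so after an appropriate rescaling these level sets subconverge (away from possibly collapsing sheets) to a finite collection of graphs over a reference hyperplane (or smooth limit hypersurface) $\Sigma$. Step two is the reduction: in a tubular neighborhood of the transition region where $|u_\varepsilon|\le 1-b_2$, write
\begin{equation*}
u_\varepsilon(x) \;\approx\; \sum_{i=1}^N (-1)^{i-1} g\!\left(\tfrac{\mathrm{dist}_i(x)}{\varepsilon}\right) + \text{lower order},
\end{equation*}
where $\mathrm{dist}_i$ is the signed distance from $x$ to the $i$-th sheet, parametrized as the graph of a function $f_i$ over $\Sigma$. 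Plugging this ansatz into \eqref{singularly perturbed eqn} and projecting onto the $L^2$-kernel $g'$ of the linearized operator yields that $(f_1,\dots,f_N)$ solves, up to exponentially small errors, a Toda-type system of the form
\begin{equation*}
\Delta_\Sigma f_i \;=\; c_0\!\left[e^{-(f_i-f_{i-1})/\varepsilon} - e^{-(f_{i+1}-f_i)/\varepsilon}\right] + \text{l.o.t.},
\end{equation*}
and crucially the stability inequality \eqref{stability condition} transfers to stability of this Toda system.

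Step three is where the key analytic input enters: I apply the small regularity theorem for stable solutions of the Toda system. This theorem, which is nontrivial precisely in dimensions $n-1\le 9$ (hence the restriction $n\le 10$, analogous to the Simons threshold for stable minimal cones), produces decay estimates for stable Toda solutions and in particular a quantitative lower bound of order $\varepsilon\log|\log\varepsilon|$ on the gaps $f_{i+1}-f_i$. With this separation, the interaction terms on the right-hand side of the Toda system are controlled by $\varepsilon^2(\log|\log\varepsilon|)^{-2}$ (or comparable quantities), making the reduction rigorous and giving, via standard elliptic regularity for the quasilinear Toda system on $\Sigma$, uniform $C^{2,\theta}$ bounds on each $f_i$. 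Unwinding the ansatz, these translate into uniform $C^\theta$ bounds on the second fundamental forms $A_{\varepsilon_k}$, contradicting the blow-up choice. The mean curvature estimate \eqref{vansihing of mean curvature} follows in the same framework: at leading order $H$ of sheet $i$ matches the Toda interaction term, which is bounded by $\varepsilon(\log|\log\varepsilon|)^2$ after inserting the distance lower bound.

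The main obstacle I expect is step two: making the infinite-dimensional reduction quantitatively correct when several level sets cluster at distances only $O(\varepsilon\log|\log\varepsilon|)$. The error estimates for the Lyapunov--Schmidt projection must be good enough (uniformly in the number of sheets and their relative positions) that both \eqref{singularly perturbed eqn} is equivalent to the Toda system up to an error negligible compared with the interaction, and that stability passes cleanly from Allen--Cahn to Toda; this matching is what allows the small regularity theorem to be used as a black box. Everything else---the blow-up compactness, elliptic regularity for Toda, and extraction of the mean curvature asymptotics---is then essentially standard once the reduction and the distance lower bound are in hand.
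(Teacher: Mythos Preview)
Your proposal follows essentially the same three-step strategy as the paper: Lyapunov--Schmidt reduction to a Toda system, transfer of the stability condition, and application of the small regularity theorem for stable Toda solutions in dimensions $n-1\le 9$. Two corrections are worth flagging. First, the blow-up/contradiction framing in Step~1 is unnecessary and is not what the paper does for Theorem~\ref{main result}: since the hypothesis already furnishes $|B(u_\varepsilon)|\le\Lambda$, the argument proceeds by direct a~priori estimates at the rescaled scale $R=\varepsilon^{-1}$; the blow-up is reserved for the corollaries, where no curvature bound is assumed. Second, your quantitative claims are off: the sheet separation obtained is $f_{i+1}-f_i\gtrsim 2\varepsilon|\log\varepsilon|-2\varepsilon\log\log|\log\varepsilon|$, not $\varepsilon\log|\log\varepsilon|$, and this gives interaction terms of size $\varepsilon^{2}(\log|\log\varepsilon|)^{2}$ (positive exponent, not $-2$), from which the mean curvature bound $\varepsilon(\log|\log\varepsilon|)^{2}$ follows; a gap of only $\varepsilon\log|\log\varepsilon|$ would make the interaction $O(|\log\varepsilon|^{-1})$, far too large to close the argument.
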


Several corollaries follow from this theorem.
\begin{coro}\label{main coro 1}
For any $\theta\in(0,1)$, $b\in(0,1)$ and $Q>0$, there exist two  constants $\varepsilon_1$  and $C_1$ so that the following holds.
Suppose that $u_\varepsilon$ is a sequence of stable solution of \eqref{singularly perturbed eqn} in $\mathcal{C}_1:=B_1^{n-1}(0)\times(-1,1)\subset\R^n$ with $\varepsilon \to0$, satisfying
\begin{itemize}
\item [{\bf (H1)}] there exists  a sequence of $t_\varepsilon\in(-1+b,1-b)$ such that  $\{u_\varepsilon=t_\varepsilon\}$
consists of $Q$ connected components
\[\Gamma_{\alpha,\varepsilon}=\left\{x_n=f_{\alpha,\varepsilon}(x^\prime), \quad x^\prime:=(x_1,\cdots, x_{n-1})\in B_1^{n-1}\right\}, \ \ \ \alpha=1,\cdots, Q,\]
where $-1/2<f_{1,\varepsilon}<f_{2,\varepsilon}<\cdots< f_{Q,\varepsilon}<1/2$;
\item  [{\bf (H2)}] for each $\alpha$, $\nabla f_{\alpha,\varepsilon}$ are uniformly continuous in $B_1^{n-1}(0)$.
\end{itemize}
If $n\leq 10$, then the same conclusion of Theorem \ref{main result} holds for all $u_\varepsilon$ if $\varepsilon\leq \varepsilon_1$, with $C$ replaced by   $C_1$.
\end{coro}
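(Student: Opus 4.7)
The strategy is to reduce Corollary \ref{main coro 1} to Theorem \ref{main result} by verifying, under the hypotheses (H1) and (H2), the uniform curvature bound \eqref{C 1 1 bound} on a slightly smaller cylinder. Once this is in place, the conclusion follows by a routine covering argument applied to Theorem \ref{main result}.

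First I would localize the transition region: combining (H1), (H2) and a comparison argument using the one-dimensional profile $g$ from \eqref{1d problem} as sub/supersolutions, one produces a $b_2\in(b,1)$ independent of $\varepsilon$ such that the transition set $\{|u_\varepsilon|\leq 1-b_2\}\cap \mathcal{C}_{1/2}$ is contained in a uniform tubular neighborhood of $\bigcup_{\alpha=1}^{Q}\Gamma_{\alpha,\varepsilon}$, and on each such neighborhood $u_\varepsilon$ is $C^0$-close to a translate or reflection of $g(\,\cdot\,/\varepsilon)$. This is a standard graph-plus-maximum-principle step.

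The heart of the argument is to establish $|B(u_\varepsilon)|\leq \Lambda$ in this transition region by contradiction and blow-up. If the bound fails, pick $x_\varepsilon$ with $M_\varepsilon:=|B(u_\varepsilon)(x_\varepsilon)|\to\infty$, set $\rho_\varepsilon:=1/M_\varepsilon$, and consider $v_\varepsilon(y):=u_\varepsilon(x_\varepsilon+\rho_\varepsilon y)$, which satisfies the Allen-Cahn equation with rescaled parameter $\varepsilon/\rho_\varepsilon$ and $|B(v_\varepsilon)(0)|=1$. Three regimes have to be distinguished, according to $\varepsilon/\rho_\varepsilon$ and the inter-sheet spacing $d_\varepsilon$ near $x_\varepsilon$. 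In the singular separated regime $\varepsilon/\rho_\varepsilon\to 0$ and $d_\varepsilon/\rho_\varepsilon\to\infty$, the level sets of $v_\varepsilon$ converge to an entire stable minimal hypersurface in $\R^n$, which for $n\leq 10$ must be a hyperplane, contradicting $|B|=1$ at the origin. In the singular clustering regime $\varepsilon/\rho_\varepsilon\to 0$ but $d_\varepsilon/\rho_\varepsilon$ bounded, the rescaled sheets are governed in the limit by an entire stable solution of the Toda system, and the small-regularity theorem developed in the main body of the paper forces $|B|\leq 1/2$ at the origin, again a contradiction. In the remaining regime $\varepsilon/\rho_\varepsilon$ bounded below, $v_\varepsilon$ converges (after a trivial rescaling) to an entire stable solution of Allen-Cahn in $\R^n$, which for $n\leq 10$ is one-dimensional by the stable Bernstein-type classification, giving $|B|\equiv 0$ and contradicting $|B(v_\varepsilon)(0)|=1$. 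The uniform modulus of continuity of $\nabla f_{\alpha,\varepsilon}$ from (H2) is used throughout to guarantee compactness of the rescaled sheets and to control the position of $x_\varepsilon$ relative to $\Gamma_{\alpha,\varepsilon}$.

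Once \eqref{C 1 1 bound} is established, covering $\mathcal{C}_{1/2}$ by small balls and applying Theorem \ref{main result} in each (after rescaling to $B_1$) delivers all the conclusions of the corollary. The main obstacle is the clustering regime of the blow-up: it is precisely the scenario for which the paper's Toda-system apparatus (the small-regularity theorem for stable Toda solutions) is indispensable. The hypothesis (H2) enters only as a compactness/structural input, while the dimensional restriction $n\leq 10$ is used in the stable Bernstein-type classifications for both minimal hypersurfaces and Allen-Cahn solutions.
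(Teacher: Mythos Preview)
Your overall strategy---reduce to Theorem \ref{main result} by establishing the curvature bound \eqref{C 1 1 bound} via a blow-up argument---is correct, and it is also the paper's strategy. However, your blow-up analysis contains a genuine gap, and it differs from the paper's in a way that matters.

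The paper's argument is this: after a weighted point-picking (maximizing $(1-|x'|)|B(u_\varepsilon)(x)|$, not just $|B(u_\varepsilon)|$, so as to obtain a doubling property), one rescales by $L_\varepsilon=|B(u_\varepsilon)(x_\varepsilon)|$ to get $u_\kappa$ solving Allen--Cahn with parameter $\kappa=L_\varepsilon\varepsilon$. From \eqref{close to 1D} (which follows from (H1)--(H2)) one has $L_\varepsilon=o(1/\varepsilon)$, so $\kappa\to 0$; thus your third regime never occurs. The doubling gives $|B(u_\kappa)|\leq 2$ on an expanding domain, so \emph{Theorem \ref{main result} itself applies to $u_\kappa$}. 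This is the key step: one does not analyze any limiting object directly. Theorem \ref{main result} gives uniform $C^{2,\theta}$ bounds on the level sets and $|H|\to 0$; since the rescaling preserves the Lipschitz-graph structure from (H1), the limit is an entire Lipschitz minimal graph, hence affine by Moser's Liouville theorem (valid in \emph{all} dimensions). This contradicts $|B(u_\kappa)(0)|=1$.

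Your argument instead tries to classify the blow-up limit directly, and this is where it fails. In your first regime you assert that an entire stable minimal hypersurface in $\R^n$ with $n\leq 10$ is a hyperplane; this is the Stable Bernstein Conjecture, which is open for $n\geq 4$ (the paper itself treats it as conditional in Corollary \ref{main coro 2}). In your third regime you assert that entire stable Allen--Cahn solutions in $\R^n$ with $n\leq 10$ are one-dimensional; this is likewise not known. Your second regime essentially amounts to redoing the proof of Theorem \ref{main result}. The point you are missing is that the graph hypothesis (H1) survives the rescaling, so one never needs a stable-Bernstein-type result for hypersurfaces or for Allen--Cahn: Moser's theorem for Lipschitz minimal graphs suffices, and the $C^{2,\theta}$ compactness needed to pass to the limit is supplied by applying Theorem \ref{main result} to $u_\kappa$.
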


\begin{coro}\label{main coro 2}
For any $\theta\in(0,1)$ and $b\in(0,1)$, there exist three  constants $\varepsilon_2$, $\delta_2$ and $C_2$ so that the following holds.
Suppose that $u_\varepsilon$ is a sequence of stable solutions of \eqref{singularly perturbed eqn} in $B_1(0)\subset\R^n$, satisfying for any $x\in \{u_\varepsilon=0\}\cap B_{1-\varepsilon}(0)$,
\begin{equation}\label{close to 1D}
\sup_{y\in B_\varepsilon(x)}\big|u_\varepsilon(y)-g\left(\frac{y\cdot e-t}{\varepsilon}\right)\big|\leq \delta_2,
\end{equation}
where $e$ is a unit vector and $t$ is a constant, both depending on $x$.
If \emph{Stable Bernstein Conjecture} is true in dimension $n$, then the same conclusion of Corollary \ref{main coro 1} holds with $\varepsilon_1,C_1$ replaced by $\varepsilon_2,C_2$.
\end{coro}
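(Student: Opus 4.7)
The plan is to reduce Corollary \ref{main coro 2} to Corollary \ref{main coro 1} by a contradiction/compactness argument in which the Stable Bernstein Conjecture provides the crucial one-dimensional rigidity. Suppose the conclusion fails: there exist sequences $\varepsilon_k\to 0$ and $\delta_k\to 0$ and stable solutions $u_{\varepsilon_k}$ of \eqref{singularly perturbed eqn} in $B_1(0)$ satisfying \eqref{close to 1D} with $\delta_2=\delta_k$, for which the $C^{2,\theta}$ estimate on level sets fails at some point $x_k\in B_{1/2}(0)$. The strategy is to use Bernstein to verify hypotheses (H1) and (H2) of Corollary \ref{main coro 1} on a cylinder sitting inside $B_{1/2}(0)$, and then apply Corollary \ref{main coro 1} to obtain a contradiction.

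First I would form the blow-up $\tilde u_k(y):=u_{\varepsilon_k}(x_k+\varepsilon_k y)$ on $B_{1/(2\varepsilon_k)}(0)$, which by direct rescaling of \eqref{singularly perturbed eqn} is a stable solution of $\Delta \tilde u_k=W^\prime(\tilde u_k)$ on a growing domain, with $|\tilde u_k|<1$ and $\tilde u_k(0)=0$. Standard elliptic estimates give $C^{2,\alpha}_{\mathrm{loc}}$ compactness, so along a subsequence $\tilde u_k\to u_\infty$, a bounded stable solution on all of $\R^n$ with $u_\infty(0)=0$. The Stable Bernstein Conjecture in dimension $n$ forces $u_\infty(y)=g(y\cdot e_\infty-s_\infty)$ to be one-dimensional. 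Hence on any fixed $B_R(0)$ and for $k$ large, $\tilde u_k$ is $C^2$-close to this 1D profile, so its level sets in $B_R(0)$ are smooth single graphs over the hyperplane orthogonal to $e_\infty$ with small $C^2$ data. Unscaling, on $B_{R\varepsilon_k}(x_k)$ the level sets of $u_{\varepsilon_k}$ are smooth graphs with nearly constant normal. Covering $\overline{B_{1/2}(0)}$ by finitely many such blow-up neighbourhoods, and using \eqref{close to 1D} together with Bernstein rigidity to match the directions on overlaps, yields a single unit vector $e_0=e_0(k)$ such that, after rotating $e_0$ to the vertical direction, some cylinder $\mathcal{C}\subset B_{1/2}(0)$ contains $\{u_{\varepsilon_k}=t\}$ as a disjoint union of $Q=Q(k)$ graphs with equicontinuous gradients. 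This verifies (H1) and (H2) of Corollary \ref{main coro 1}, which then delivers the $C^{2,\theta}$ bound and contradicts the assumed failure.

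The main obstacle is the globalization step: one has to show that the local Bernstein direction $e_\infty$ glues to a single $e_0$ across $B_{1/2}(0)$, and that the number of sheets $Q(k)$ stays uniformly bounded in $k$. Uniformity of the direction should follow from a covering/continuation argument, since any two overlapping blow-up neighbourhoods each admit a 1D limit, and Bernstein rigidity prevents these limits from having different normals on the overlap. The uniform bound on $Q$ requires a quantitative separation estimate between consecutive level-set components to rule out accumulation; this is precisely the type of separation that the decay estimates for stable solutions of the Toda system, developed earlier in the paper, are designed to provide. Once both ingredients are in hand, the reduction to Corollary \ref{main coro 1} is immediate.
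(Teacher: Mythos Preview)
Your proposal conflates two distinct ``Bernstein'' statements and, as a result, the key step fails. In this paper (see the sentence after Corollary \ref{main coro 2}), the \emph{Stable Bernstein Conjecture} is the statement about minimal hypersurfaces: any complete stable minimal hypersurface in $\R^n$ is a hyperplane. You instead invoke it to conclude that an entire stable solution of $\Delta u=W'(u)$ is one-dimensional; that is the stable De Giorgi conjecture for Allen--Cahn, a different (and in general not equivalent) assertion, and it is not what is assumed. Your $\varepsilon$-scale blow-up $\tilde u_k\to u_\infty$ produces an entire stable Allen--Cahn solution, and nothing in the hypotheses lets you deduce $u_\infty$ is one-dimensional from Stable Bernstein for minimal surfaces.

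The paper's route avoids this by blowing up at the \emph{curvature} scale rather than the $\varepsilon$-scale. One first observes that \eqref{close to 1D} with $\delta_2$ small forces $\nabla u_\varepsilon\neq 0$ on $\{|u_\varepsilon|\le 1-b\}$, so $|B(u_\varepsilon)|$ is well defined and one only needs the bound \eqref{C 1 1 bound}. If $|B(u_\varepsilon)|$ is unbounded, the point-picking blow-up (exactly as in the proof of Corollary \ref{main coro 1}) with scale $L_\varepsilon:=|B(u_\varepsilon)(x_\varepsilon)|$ yields $u_\kappa$ with $\kappa=L_\varepsilon\varepsilon\to 0$ and $|B(u_\kappa)|\le 2$, so Theorem \ref{main result} applies to $u_\kappa$. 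The level component through $0$ then converges to a complete minimal hypersurface $\Sigma\subset\R^n$ with $|A_\Sigma(0)|=1$. The non-trivial point is that $\Sigma$ is stable: if several interfaces collapse onto $\Sigma$ one builds a positive Jacobi field as in \cite{chodosh2018minimal}; if only one does, one has a uniform local energy bound and invokes \cite{Tonegawa}. Now Stable Bernstein for minimal surfaces forces $\Sigma$ to be a hyperplane, contradicting $|A_\Sigma(0)|=1$.

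Two further remarks on your plan. First, reducing to Corollary \ref{main coro 1} is an unnecessary detour: Theorem \ref{main result} does not require a bound on the number of sheets, so your worry about a uniform bound on $Q(k)$ disappears once you aim for \eqref{C 1 1 bound} directly. Second, even granting your local 1D limits, the ``globalization to a single $e_0$'' step is not justified by Bernstein rigidity on overlaps: the normals could drift slowly over $B_{1/2}(0)$ while remaining consistent on each overlap, so you would still only recover a $C^1$ (not $C^{1,1}$) graphical structure, which is not enough to feed into Theorem \ref{main result}.
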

Note that \emph{Stable Bernstein Conjecture} is expected to be true for $n\leq 7$ and it has been verified for $n=3$, see do Carmo and Peng \cite{Carmo-Peng}, Fischer-Colbrie  and Schoen \cite{F-Schoen} and Pogorelov \cite{Pogorelov}. For regularity theory of stable minimal surfaces in higher dimensions we refer to Wickramasekera \cite{Wick}.

\medskip

Some remarks are in order.
\begin{rmk}
\begin{itemize}
  \item The $n=2$ case is essentially contained in our paper \cite{Wang-Wei2}. Recently  Chodosh  and Mantoulidis established this second order regularity result for the  $n=3$ case, which was used in their analysis of Allen-Cahn approximation to minimal surfaces in three dimensional manifold, see \cite{chodosh2018minimal}.
       The relations between the number of ends and Morse index is discussed in Mantoulidis \cite{Mantoulidis}.

  \item The assumption \eqref{close to 1D} says $u_\varepsilon$ is close to the one dimensional solution $g$ at $O(\varepsilon)$ scales. This is guaranteed  by assumptions {\bf (H1)} and {\bf (H2)}, see \cite{Wang-Wei2} as well as Section \ref{sec completion of proof}.

  \item The dimension bound $n\leq 10$ is sharp. If $n\geq 11$, there exists  a smooth, radially symmetric, stable solution to  the Liouville equation (i.e. two component Toda system)
  \[\Delta f=e^{-f}, \quad \mbox{in } \mathbb{R}^{n-1}.\]
Agudelo-del Pino-Wei \cite{ADW2} constructed a  family of solutions $u_\varepsilon$ of \eqref{singularly perturbed eqn} in $\R^n$, with its nodal set $\{u_\varepsilon=0\}$ given by the graph $\{x_n=\pm f_\varepsilon(x^\prime)\}$, $x^\prime\in\R^{n-1}$, where
\[f_\varepsilon(x)\approx   \varepsilon f \left(\varepsilon^{-\frac{1}{2}}x\right)+ \varepsilon|\log\varepsilon|.\]
Clearly we have
\[|\nabla^2f_\varepsilon(0)|\approx|\nabla^2 f(0)|\]
while
\[\lim_{\varepsilon\to0}|\nabla^2f_\varepsilon(x)|=0, \quad \forall x\neq 0.\]
Hence $\nabla^2f_\varepsilon$ is not uniformly continuous.

  \item The stability condition is also necessary for this second order regularity. Without the stability condition it is not true even for $n=2$. Counterexamples are provided by the multiple end solutions of \eqref{singularly perturbed eqn} in $\R^2$, constructed by  Del Pino-Kowalczyk-Pacard-Wei \cite{DKW3}.  By utilizing solutions of Toda system
  \[f_\alpha^{\prime\prime}=e^{-(f_\alpha-f_{\alpha-1})}-e^{-f_{\alpha+1}-f_\alpha}, \quad \mbox{on } \R, \quad 1\leq\alpha\leq Q,\]
  they constructed  a  family of solutions $u_\varepsilon$ of \eqref{singularly perturbed eqn} in $\R^2$, with its nodal set $\{u_\varepsilon=0\}$ given by the graph of
\[f_{\alpha,\varepsilon}(x)\approx   \varepsilon f_\alpha \left(\varepsilon^{-\frac{1}{2}}x\right)+ \alpha\varepsilon|\log\varepsilon|.\]
As in the previous case, $\nabla^2f_\varepsilon$ is not uniformly continuous.

\item Although this second order regularity does not hold any more for $n\geq 11$. A partial regularity result may still hold. For example, under assumptions of Corollary \ref{main coro 1}, there should exist a closed set of Hausdorff dimension at most $n-10$ such that in any compact set outside this singular set, uniform second order regularity of level sets for stable solutions of \eqref{equation} still hold.

\item It seems that the second order regularity problem is quite different  in nature from the first order regularity problem, i.e. uniform $C^{1,\theta}$ estimates on level sets. See Caffarelli-Cordova \cite{CC} and Tonegawa-Wickramasekera \cite{Tonegawa2}. For example, it can be checked that the above counterexamples (\cite{ADW2} \cite{DKW3}) to second order regularity still enjoy a  uniform $C^{1,\theta}$ estimate.

\item We do not touch any aspect on higher order regularity (e.g. $C^{k,\theta}$ regularity for $k\geq 3$) of level sets. It will be interesting to obtain such a result even for the multiplicity one case.
\end{itemize}

\end{rmk}

\subsection{Outline of proof}\label{subsec outline of proof}
The proof of Theorem \ref{main result} consists of the following three steps.

\medskip

\noindent
{\bf Step 1. Infinite dimensional Lyapunov-Schmidt reduction.} Sections \ref{sec preliminary analysis}-\ref{sec improved estimate} are devoted to this reduction procedure. It is almost the same with the one in \cite{Wang-Wei2}, but here  various simplifications and improvements  will be given.

The main difference is that in \cite{Wang-Wei2}, it is either assumed that there are only finitely many connected components of transition layers (as in Corollary \ref{main coro 1}) or the distance between different connected components of transition layers has a lower bound in the form $c\varepsilon|\log\varepsilon|$ (see \cite[Section 17]{Wang-Wei2}), but now both assumptions are removed and we only need the assumption that the distance between different connected components of transition layers to be $\gg \varepsilon$ (see Lemma \ref{O(1) scale} below) as a starting point. Moreover, now we can show that all  estimates in this step hold uniformly with respect to the number of connected components of transition layers. Hence there is no assumption on the number of connected components of transition layers in Theorem \ref{main result} and Corollary \ref{main coro 2}.

The reduction method proceeds as follows. First from the assumptions in Theorem \ref{main result} (or Corollary \ref{main coro 1} or \ref{main coro 2}), it follows that the solution $u_\varepsilon$ is close to the one dimensional profile at $O(\varepsilon)$ scales, see Section \ref{sec preliminary analysis}. Therefore the solution has the form
\begin{equation}\label{1.1}
  u_\varepsilon=\sum_{\alpha}g_{\alpha,\varepsilon}+\phi_\varepsilon,
\end{equation}
where $g_{\alpha,\varepsilon}$ is the one dimensional solution in composition with the distance function to $\Gamma_{\alpha,\varepsilon}$, a connected component of $\{u_\varepsilon=0\}$, and $\phi_\varepsilon$ is a small error between our solution $u_\varepsilon$ and the approximate solution $\sum_{\alpha}g_{\alpha,\varepsilon}$.

Writing $u_\varepsilon$ in this way, the single equation for $u_\varepsilon$, \eqref{singularly perturbed eqn}, is almost decoupled into two equations: one is the equation for the level set $\{u_\varepsilon=0\}$ and the other one is an equation for $\phi_\varepsilon$. Such a decoupling is possible by choosing an optimal approximation in \eqref{1.1}, which then implies that $\phi_\varepsilon$ lies in the subspace orthogonal to the kernel space at $\sum_{\alpha}g_{\alpha,\varepsilon}$, see Proposition \ref{prop optimal approximation} for a precise statement. To this end, it is necessary to take a small perturbation in the normal direction of each $\Gamma_{\alpha,\varepsilon}$ so that $g_{\alpha,\varepsilon}$ is the optimal approximation to $u_\varepsilon$ in the normal direction. Here it is convenient to introduce Fermi coordinates with these $\Gamma_{\alpha,\varepsilon}$ and rewrite everything in these coordinates, see Section \ref{sec Fermi coordinates}-\ref{sec approximate solution}.

Since $\Gamma_{\alpha,\varepsilon}$ are far from each other and they are almost parallel,
the interaction pattern between different $g_{\alpha,\varepsilon}$, which represents the interaction between different components of $\{u_\varepsilon=0\}$, can be determined by using asymptotic expansions of the one dimensional profile at infinity. This gives the equation for $\Gamma_{\alpha,\varepsilon}$,
\begin{equation}\label{1.2}
H_{\alpha,\varepsilon}=\frac{2A_{(-1)^{\alpha-1}}^2}{\varepsilon}e^{-\frac{|d_{\alpha-1,\varepsilon}|}{\varepsilon}}-
\frac{2A_{(-1)^\alpha}^2}{\varepsilon}e^{-\frac{|d_{\alpha+1,\varepsilon}|}{\varepsilon}}+\mbox{higher order terms},
\end{equation}
where $H_{\alpha,\varepsilon}$ is the mean curvature of $\Gamma_{\alpha,\varepsilon}$, $|d_{\alpha-1,\varepsilon}|$ and $|d_{\alpha+1,\varepsilon}|$ are distances to $\Gamma_{\alpha-1,\varepsilon}$ and $\Gamma_{\alpha+1,\varepsilon}$ respectively, see Section \ref{sec Toda system} for a precise statement.

Higher order terms in \eqref{1.2} involve some terms containing $\phi_\varepsilon$.
In order to get a good reduced problem, a precise estimate on $\phi_\varepsilon$ is needed. This is established in Section \ref{sec first order} and Section \ref{sec improved estimate}. Since $\phi_\varepsilon$ is known to be a small perturbation, it satisfies an almost linearized equation. (This is the reduction procedure, i.e. we partially linearize \eqref{singularly perturbed eqn} in the $\phi_\varepsilon$ component.) To estimate $\phi_\varepsilon$, we need to consider two separate cases: the inner problem near $\{u_\varepsilon=0\}$, and the outer one which is concerned with the part far away from $\{u_\varepsilon=0\}$. It is important here that these two parts are still almost decoupled, which is guaranteed by the fast decay of the one dimensional profile at infinity.

\medskip

\noindent
 {\bf Step 2. Reduction of the stability condition.} Now the $C^{2,\theta}$ estimate is reduced to a corresponding one on \eqref{1.2}. It turns out that this depends in an essential way on lower bounds on $|d_{\alpha-1,\varepsilon}|$ and $|d_{\alpha+1,\varepsilon}|$, as observed in \cite{Wang-Wei2}. To get these lower bounds, we use the stability condition \eqref{stability condition}. In Section \ref{sec reduction of stability}, we show that if $u_\varepsilon$ is a stable solution, then solutions to the reduced  problem \eqref{1.2} satisfies an almost stability condition. This is achieved by choosing test functions in \eqref{stability condition} to be
 \[\sum_{\alpha}\eta_\alpha g_{\alpha,\varepsilon}^\prime,\]
 where $\eta_\alpha\in C_0^\infty(\Gamma_{\alpha,\varepsilon})$. In other words, we consider variations along directions tangential to $\{u_\varepsilon=0\}$.  This choice of test functions in the stability condition is similar to the one used in \cite{ADW2} and \cite[Appendix D]{chodosh2018minimal}. Then by a careful analysis of contributions from tangential parts, normal parts, cross terms and the interaction between different components, we get a stability condition on solutions to \eqref{1.2}, see Proposition \ref{prop reduction of stability}.

\medskip

\noindent
  {\bf Step 3. Decay estimates.} Finally, a small regularity theorem on stable solutions of \eqref{1.2} will be employed to give  decay estimates on $e^{- |d_{\alpha-1,\varepsilon}|/\varepsilon}$ in the interior, which then leads to a $C^{2,\theta}$ estimate on \eqref{1.2}.

   This small regularity result has been established by the first author in \cite{Wangstable1, Wangstable2} in the setting of stable solutions for the Liouville equation and it can be generalized to Toda system \eqref{1.2}. Here the dimension restriction $n\leq 10$ appears, due to the fact that this small regularity theorem requires an $L^1$ smallness assumption on $e^{- |d_{\alpha-1,\varepsilon}|/\varepsilon}$ as the starting point. This $L^1$ smallness condition holds unconditionally only in $n\leq 10$, which can be proved by an $L^p$ estimate of Farina \cite{Farina}.

A reduction procedure is still needed in order to apply this small regularity theorem to \eqref{1.2}. In this paper, two of such approaches are employed. The first one is  extrinsic and uses the graph representation (with respect to a fixed hyperplane) of $\Gamma_{\alpha,\varepsilon}$. This works well when they are very close, which implies that different $\Gamma_{\alpha,\varepsilon}$ are almost parallel to each other. This then allows us to represent distances between them by differences of functions, and replace the minimal surface operator in \eqref{1.2} by the standard Laplacian operator

The second reduction method is intrinsic and uses the Jacobi field construction introduced in Chodosh  and Mantoulidis \cite{chodosh2018minimal}. Here we fix an $\Gamma_{\alpha,\varepsilon}$ and view other components as graphs of functions defined on this component. Combined with some elliptic estimates on these functions, this approach  gives a stronger distance lower bound. This then implies that the exponential nonlinearity in \eqref{1.2} is dominated by mean curvature terms. Using this we can construct positive Jacobi fields as in \cite{chodosh2018minimal}.

In this paper, a constant is called universal if it depends only on the dimension $n$, the double well potential $W$ and the constants $b_1,b_2,\Lambda$ in Theorem \ref{main result}. If $A\leq CB$ for a universal constant $C$, then we denote it by $A\lesssim B$ or $A=O(B)$. If the constant $C$ depends on a parameter $K$, it is written as $A=O_K(B)$.

By letting  $u(x):=u_\varepsilon(\varepsilon x)$, we obtain the unscaled Allen-Cahn equation
\begin{equation}\label{equation}
  \Delta u=W^\prime(u).
\end{equation}

\section{Preliminary analysis}\label{sec preliminary analysis}
\setcounter{equation}{0}

In the following we will only be concerned with one level set of $u_\varepsilon$, $\{u_\varepsilon=0\}$.
It will be clear that our proof goes through without any change when $0$ is replaced by any other $t\in[-1+b_1,1-b_1]$, and all of the following estimates are uniform in $t\in[-1+b_1,1-b_1]$.

By standard elliptic regularity theory, $u_\varepsilon\in C^4_{loc}(B_1(0))$. Concerning the regularity of $\{u_\varepsilon=0\}$, we first prove that different components of it are at least $O(\varepsilon)$ apart. In the following a connected component of $\{u_\varepsilon=0\}$ is denoted by $\Gamma_{\alpha,\varepsilon}$, where $\alpha$ is the index. The following lemma also  shows that the cardinality of the index set is always finite for fixed $\varepsilon$, although it could go to infinity as $\varepsilon\to0$.

\begin{lem}\label{O(1) scale}
For any $\alpha$ and $x_{\varepsilon}\in\Gamma_{\alpha,\varepsilon}\cap B_{7/8}(0)$, as $\varepsilon\to0$, $\tilde{u}_\varepsilon(x):=u_\varepsilon(x_{\varepsilon}+\varepsilon x)$ converges to a one dimensional solution in $C^2_{loc}(\R^n)$.
In particular,
\begin{equation}\label{2.1}
\varepsilon^{-1}\mbox{dist}\left(x_\varepsilon, \{u_\varepsilon=0\}\setminus \Gamma_{\alpha,\varepsilon}\right)\to+\infty \quad\mbox{ uniformly}.
\end{equation}
\end{lem}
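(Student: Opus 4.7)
The plan is to blow up $u_\varepsilon$ at $x_\varepsilon$ by the factor $\varepsilon^{-1}$, extract a $C^2_{loc}$ limit, exploit the curvature bound \eqref{C 1 1 bound} together with stability to force the limit to be the one-dimensional profile $g$, and finally derive the separation statement \eqref{2.1} by contradiction via the implicit function theorem. Set $\tilde u_\varepsilon(x):=u_\varepsilon(x_\varepsilon+\varepsilon x)$. Then $\tilde u_\varepsilon$ solves the unscaled equation \eqref{equation} on $B_{1/(8\varepsilon)}(0)$ with $|\tilde u_\varepsilon|<1$ and $\tilde u_\varepsilon(0)=0$. Interior Schauder estimates applied to the smooth, bounded right-hand side $W^\prime(\tilde u_\varepsilon)$ give uniform $C^{3,\theta}_{loc}(\mathbb{R}^n)$ bounds, so along a subsequence $\tilde u_\varepsilon\to \tilde u_\infty$ in $C^2_{loc}(\mathbb{R}^n)$, where $\tilde u_\infty$ solves \eqref{equation} with $|\tilde u_\infty|\leq 1$ and $\tilde u_\infty(0)=0$.

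Next I would identify the limit. The quantity $|B(\cdot)|$ scales linearly: directly from its definition, $|B(\tilde u_\varepsilon)(x)|=\varepsilon|B(u_\varepsilon)(x_\varepsilon+\varepsilon x)|$, which by \eqref{C 1 1 bound} is bounded by $\varepsilon\Lambda$ on $\{|\tilde u_\varepsilon|\leq 1-b_2\}$. Passing to the limit through \eqref{curvature term} yields $|B(\tilde u_\infty)|^2|\nabla\tilde u_\infty|^2\equiv 0$ on $\{|\tilde u_\infty|\leq 1-b_2\}$, and the Sternberg--Zumbrun form of stability is inherited from $u_\varepsilon$. On any open set where $|\nabla\tilde u_\infty|>0$, both summands in \eqref{curvature term} vanish, so every level set is a hyperplane and $|\nabla\tilde u_\infty|$ is constant along it, which makes $\tilde u_\infty$ locally one-dimensional. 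Combined with $\tilde u_\infty(0)=0$, the monotonicity of $g$, and the stability-based classification of entire solutions with flat level sets, we conclude $\tilde u_\infty(x)=g(x\cdot e)$ for some unit vector $e\in\mathbb{R}^n$. Since every subsequential limit has this form, the convergence holds for the full family (up to the choice of $e$).

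For the separation statement, suppose \eqref{2.1} fails: there exist $\varepsilon_k\to 0$, components $\Gamma_{\alpha_k,\varepsilon_k}$, points $x_k\in\Gamma_{\alpha_k,\varepsilon_k}\cap B_{7/8}(0)$ and $y_k\in\{u_{\varepsilon_k}=0\}\setminus\Gamma_{\alpha_k,\varepsilon_k}$ with $|y_k-x_k|\leq M\varepsilon_k$ for some fixed $M$. Applying the first two steps at $x_k$, the rescaled solutions $\tilde u_{\varepsilon_k}$ converge to some $g(x\cdot e)$ in $C^2_{loc}(\mathbb{R}^n)$. In any fixed ball $B_R(0)$ with $R>M$, the limit has $|\nabla\tilde u_\infty|>0$ on its nodal hyperplane, so by $C^1$ convergence and the implicit function theorem, $\{\tilde u_{\varepsilon_k}=0\}\cap B_R$ is, for $k$ large, a single connected graph over $\{x\cdot e=0\}$. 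But this set must contain both the rescaled component $\Gamma_{\alpha_k,\varepsilon_k}$ (through the origin) and the rescaled component through $(y_k-x_k)/\varepsilon_k\in\overline{B_M}$, which are disjoint by hypothesis — a contradiction. The same compactness argument delivers the uniformity claim in $\alpha$ and in $x_\varepsilon\in B_{7/8}(0)$.

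The main obstacle will be the rigidity step: while the pointwise identity $|B(\tilde u_\infty)|^2|\nabla\tilde u_\infty|^2\equiv 0$ gives local one-dimensionality on each connected component of $\{|\nabla\tilde u_\infty|\neq 0\}$, ruling out tangential degeneracy of $\nabla\tilde u_\infty$ and gluing the local one-dimensional pieces into a single global profile $g(x\cdot e)$ is the one place in the argument where stability of the limit is genuinely used rather than just PDE compactness and rescaling.
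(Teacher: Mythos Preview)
Your approach matches the paper's: rescale, extract a $C^2_{loc}$ limit via elliptic estimates, use the scaled curvature bound to force $|B(\tilde u_\infty)|=0$ where $\nabla\tilde u_\infty\neq 0$, identify the limit as one-dimensional, and then read off the separation of components. Your implicit-function-theorem argument for \eqref{2.1} is a perfectly good alternative to the paper's Hausdorff-convergence argument.

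On the rigidity step you flag as the obstacle, the paper's route is more elementary than invoking a classification theorem and uses stability only once, in a mild way. Since $\tilde u_\infty(0)=0$ and the constant solution $0$ is unstable, $\tilde u_\infty$ is non-constant; then the unique continuation principle forces $\{\nabla\tilde u_\infty=0\}$ to have Lebesgue measure zero. One can therefore pass $|B(\tilde u_\varepsilon)|\leq\Lambda\varepsilon$ to the limit on the full-measure set $\{\nabla\tilde u_\infty\neq 0\}\cap\{|\tilde u_\infty|\leq 1-b_2\}$ and obtain $|B(\tilde u_\infty)|\equiv 0$ there. By Sard's theorem almost every level $\{\tilde u_\infty=t\}$ with $|t|\leq 1-b_2$ is regular and hence, by \eqref{curvature term}, a hyperplane; from this one verifies directly that $\tilde u_\infty$ is one-dimensional, with no need to glue local pieces or appeal to a global stability classification.
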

\begin{proof}
In $B_{\varepsilon^{-1}/8}(0)$, $\tilde{u}_\varepsilon(x)$ satisfies the Allen-Cahn equation \eqref{equation}.
By standard elliptic regularity theory, $\tilde{u}_\varepsilon(x)$ is uniformly bounded in $C^4_{loc}(\R^n)$. By Arzela-Ascoli theorem, as $\varepsilon\to0$, it converges to a limit function $\tilde{u}_\infty$ in $C^2_{loc}(\R^n)$. Clearly $\tilde{u}_\infty$ is a stable solution of \eqref{equation} in $\R^n$.

Since $\tilde{u}_\varepsilon(0)=0$, $\tilde{u}_\infty(0)=0$. By our assumption on the double well potential $W$, the constant solution $0$ is not stable. Hence $\tilde{u}_\infty$ is a non-constant solution. As a consequence, by unique continuation principle, the critical set $\{\nabla\tilde{u}_\infty=0\}$ has zero Lebesgue measure.

By \eqref{C 1 1 bound},
\[\big|B\left(\tilde{u}_\varepsilon\right)\big|\leq\Lambda\varepsilon, \quad \mbox{in } \{|\tilde{u}_\varepsilon|\leq 1-b\}\cap B_{\varepsilon^{-1}/7}(0).\]
By the convergence of $\tilde{u}_\varepsilon$, we can pass this inequality to the limit in any compact set outside $\{\nabla\tilde{u}_\infty=0\}$, which leads to $\big|B\left(\tilde{u}_\infty\right)\big|\equiv 0$ in $\{|\tilde{u}_\infty|\leq 1-b\}$. Hence by \eqref{curvature term} and Sard theorem, almost all level sets $\{\tilde{u}_\infty=t\}$, $t\in[-1+b,1-b]$, are hyperplanes. Then it is directly verified that $\tilde{u}_\infty$ is one dimensional.

Let
$\widetilde{\Gamma}_{\alpha,\varepsilon}:=\varepsilon^{-1}\left(\Gamma_{\alpha,\varepsilon}-x_\varepsilon\right)$.
By the convergence of $\tilde{u}_{\alpha,\varepsilon}$, in any compact set of $\R^n$,  $\widetilde{\Gamma}_{\alpha,\varepsilon}$ converge to $\{\tilde{u}_\infty=0\}$ in the Hausdorff distance. Since $\{\tilde{u}_\infty=0\}$ is a single hyperplane, we get
\[ \mbox{dist}\left(0, \{\tilde{u}_\varepsilon=0\}\setminus \widetilde{\Gamma}_{\alpha,\varepsilon}\right)\to+\infty,\]
where the convergence rate depends only on $\varepsilon$. This gives \eqref{2.1}.
\end{proof}
The above proof implies that the Implicit Function Theorem can be applied to $u_\varepsilon$ at $O(\varepsilon)$ scales, which gives the $C^4$ regularity of $\{u_\varepsilon=0\}$. Of course it is not known whether there exists a uniform bound independent of $\varepsilon$.

The following lemma can be proved by combining the curvature bound \eqref{C 1 1 bound} with the fact that different connected components of $\{u_\varepsilon=u_\varepsilon(0)\}$ are disjoint. (This fact   has been used a lot in minimal surface theory, see for instance \cite{CM2}.)
\begin{lem}\label{lem graph construction}
There exist two universal constants $\sigma\in(0,1/16)$ and $C(\sigma,\Lambda)$ so that the following holds. For any $x_\ast\in \{|u_\varepsilon|\leq 1-b\}\cap B_{7/8}(0)$, in a suitable coordinate system,
$\{u_\varepsilon=u_\varepsilon(x_\ast)\}\cap B_\sigma(x_\ast)$ is a family of graphs $\cup_\alpha\{x_n=f_{\alpha,\varepsilon}(x^\prime)\}$,
 where $f_{\alpha,\varepsilon}\in C^4(B_{2\sigma}^{n-1}(x_\ast^\prime))$ satisfying $\|f_{\alpha,\varepsilon}\|_{C^{1,1}(B_{2\sigma}^{n-1}(x_\ast^\prime))}\leq C(\sigma,\Lambda)$.
\end{lem}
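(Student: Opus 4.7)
My plan is to establish the graph representation in two stages, both driven by the curvature bound \eqref{C 1 1 bound} but using disjointness of the connected components as the essential extra input for the second stage.

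\textbf{Stage 1: the component through $x_\ast$.} Since $|\nabla u_\varepsilon(x_\ast)|\neq 0$, the unit vector $e:=\nabla u_\varepsilon(x_\ast)/|\nabla u_\varepsilon(x_\ast)|$ is well-defined; I choose coordinates with $e_n=e$ and $x_\ast=0$. The implicit function theorem together with standard elliptic regularity for $u_\varepsilon$ realizes the connected component $\Gamma_0$ through $x_\ast$ locally as a $C^4$ graph $\{x_n=f_0(x')\}$ with $f_0(0)=0$ and $\nabla f_0(0)=0$. By \eqref{C 1 1 bound} and \eqref{curvature term}, $|A_\varepsilon|\leq \Lambda$ on $\Gamma_0$, which for a graph becomes
\[
\bigl|\nabla^2 f_0\bigr|\leq C\Lambda\sqrt{1+|\nabla f_0|^2}.
\]
Integrating this starting at the origin produces a universal $\sigma_0=\sigma_0(\Lambda)\in(0,1/16)$ on which $f_0$ extends to $B_{2\sigma_0}^{n-1}(0)$ with $\|f_0\|_{C^{1,1}}\leq C(\Lambda)$.

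\textbf{Stage 2: alignment of nearby components via disjointness.} For any other component $\Gamma_{\alpha,\varepsilon}$ meeting $B_\sigma(x_\ast)$ at a point $y_\alpha$, the same argument applied at $y_\alpha$ writes $\Gamma_{\alpha,\varepsilon}$ as a $C^{1,1}$ graph over its own tangent plane $T_{y_\alpha}\Gamma_{\alpha,\varepsilon}$ on a disk of universal radius $r_0=r_0(\Lambda)$. The crucial claim, which is the main technical obstacle, is that the angle $\theta_\alpha$ between $T_{y_\alpha}\Gamma_{\alpha,\varepsilon}$ and the horizontal plane $\{x_n=0\}$ is small: quantitatively $\sin\theta_\alpha\leq C(\Lambda)\sigma$. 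I would prove this by contradiction. Were $\theta_\alpha$ bounded below by a universal angle $\theta_0$, the two affine planes $T_{y_\alpha}\Gamma_{\alpha,\varepsilon}$ and $\{x_n=0\}$ would intersect at distance $O(\sigma/\sin\theta_0)$ from $y_\alpha$. Since the Hessian bound makes each surface $O(\Lambda r^2)$-close to its own tangent plane, the surfaces $\Gamma_0$ and $\Gamma_{\alpha,\varepsilon}$ themselves would then be forced to cross inside $B_{r_0/2}(y_\alpha)$ whenever $\sigma$ is chosen small compared to $r_0\sin\theta_0$, contradicting the disjointness of distinct connected components of $\{u_\varepsilon=u_\varepsilon(x_\ast)\}$.

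\textbf{Stage 3: uniform graph extension.} Once every tangent plane in $B_\sigma(x_\ast)$ is aligned with $\{x_n=0\}$ up to angle $C(\Lambda)\sigma$, the integration argument of Stage 1, started now at $y_\alpha$ where the initial slope is at most $\tan\theta_\alpha$, writes each $\Gamma_{\alpha,\varepsilon}$ as a graph $\{x_n=f_{\alpha,\varepsilon}(x')\}$ in the common coordinate system on $B_{2\sigma}^{n-1}(x_\ast')$, with $\|f_{\alpha,\varepsilon}\|_{C^{1,1}(B_{2\sigma}^{n-1}(x_\ast'))}\leq C(\sigma,\Lambda)$. A final shrinking of $\sigma$ to a universal constant depending only on $\Lambda$ ensures that the extension is valid uniformly in $\alpha$ and $\varepsilon$. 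The entire argument pivots on Stage 2: everything before and after is standard graph-extension machinery for hypersurfaces with bounded second fundamental form, and the genuinely new ingredient is the quantitative tangent-plane alignment forced by disjointness.
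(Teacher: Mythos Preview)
Your proposal is correct and matches exactly the approach the paper indicates: the paper gives no detailed argument here, only the remark that the lemma follows by combining the curvature bound \eqref{C 1 1 bound} with the disjointness of distinct components of the level set, citing \cite{CM2}. The one quibble is that your Stage~2 estimate $\sin\theta_\alpha\leq C(\Lambda)\sigma$ is a bit optimistic---the contradiction you describe actually yields something closer to $\sin\theta_\alpha\lesssim(\Lambda\sigma)^{1/2}$ after optimizing the radius at which you compare the two surfaces---but since Stage~3 only needs the angle bounded by a fixed small constant, this does not affect the validity of the argument.
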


\section{Fermi coordinates}\label{sec Fermi coordinates}
\setcounter{equation}{0}

\subsection{Definition}
For simplicity of presentation, we now work in the stretched version and do not write the dependence on $\varepsilon$ explicitly.

By denoting $R=\varepsilon^{-1}$,
$u(x)=u_\varepsilon(\varepsilon x)$
satisfies the Allen-Cahn equation \eqref{equation} in $B_R(0)$. Its nodal set $\{u=0\}$ consists of finitely many connected components, $\Gamma_\alpha$.

By our assumption, for each $\alpha$, the second fundamental form $A_\alpha$ of $\Gamma_\alpha$ satisfies
\begin{equation}\label{bound on second fundamental form}
|A_\alpha(y)|\leq \Lambda\varepsilon, \quad \forall y\in \Gamma_\alpha\cap B_R(0).
\end{equation}
We will assume $\Lambda$ is sufficiently small, perhaps after restricting to a  small ball and then rescaling its radius to $1$.

Let $y$ be a local coordinates of $\Gamma_\alpha$.
The Fermi coordinate is defined as $(y,z)\mapsto x$, where
$x=y+zN_\alpha(y)$.
Here
$N_\alpha(y)$ is a unit normal vector to $\Gamma_\alpha$, $z$ is  the signed distance to $\Gamma_\alpha$.
By \eqref{bound on second fundamental form},  Fermi coordinates are well defined and smooth in $B_R(0)$.

By Lemma \ref{lem graph construction} (recall that we have assumed $\Lambda\ll 1$), after a rotation,
\begin{equation}\label{graph representation}
  \Gamma_\alpha\cap B_R(0)=\{x_n=f_\alpha(x^\prime)\}, \quad \forall \alpha.
\end{equation}
Therefore a canonical way to choose local coordinates of $\Gamma_\alpha$ is by letting $y=x^\prime$ for each $\alpha$. Then the induced metric on $\Gamma_\alpha$ is
\[g_{\alpha,ij}(y)=\delta_{ij}+\frac{\partial f_\alpha}{\partial y_i}(y)\frac{\partial f_\alpha}{\partial y_j}(y).\]
By Lemma \ref{lem graph construction} and \eqref{bound on second fundamental form}, we get a universal constant $C$ such that
\begin{equation}\label{derivatives bound}
  |\nabla f_\alpha|\leq C \quad \mbox{and} \quad |\nabla^2 f_\alpha|\leq C\varepsilon, \quad \mbox{in } B_R^{n-1}(0).
\end{equation}

Sometimes the singed distance to $\Gamma_\alpha$ is also denoted by $d_\alpha$.
Since $\Gamma_\alpha\cap\Gamma_\beta=\emptyset$ for any $\alpha\neq\beta$, we can choose the sign so that $\{d_\alpha>0\}\cap\{d_\beta>0\}\neq \emptyset$ for any $\alpha\neq\beta$.

For any $z\in(-R,R)$, let
$\Gamma_{\alpha,z}:=\left\{dist(x,\Gamma_\alpha)=z\right\}$. Hence $\Gamma_{\alpha,0}$ is just $\Gamma_\alpha$.
Define the vector field
\[X_i:=\frac{\partial}{\partial y^i}+z\frac{\partial N_\alpha}{\partial y^i}=\sum_{j=1}^{n-1}\left(\delta_{ij}-zA_{\alpha,ij}\right)\frac{\partial}{\partial y^j}, \quad 1\leq i\leq n-1.\]
The tangent space of $\Gamma_{\alpha,z}$ is spanned by $X_i$. The Euclidean metric restricted
to $\Gamma_{\alpha,z}$ is denoted by $g_{\alpha,ij}(y,z)dy^i\otimes dy^j$
, where
\begin{eqnarray}\label{metirc tensor}
g_{\alpha,ij}(y,z)&=&X_i(y,z)\cdot X_j(y,z) \nonumber \\
&=&g_{\alpha,ij}(y,0)-2z\sum_{k=1}^{n-1}A_{\alpha,ik}(y,0)g_{jk}(y,0)+z^2\sum_{k,l=1}^n g_{\alpha,kl}(y,0)A_{\alpha,ik}(y,0)A_{\alpha,jl}(y,0).
\end{eqnarray}

The second fundamental form of $\Gamma_{\alpha,z}$ has the form
\begin{equation}\label{A(z)}
A_\alpha(y,z)=\left[I-zA_\alpha(y,0)\right]^{-1}A_\alpha(y,0).
\end{equation}

\subsection{Some notations}
In the remaining part of this paper the following notations will be employed.
\begin{itemize}

\item Given a point on $\Gamma_\alpha$ with local coordinates $(y,0)$ in the Fermi coordinates, denote
\[D_\alpha(y):=\min_{\beta\neq\alpha}|d_{\beta}(y,0)|.\]

\item For any $x\in B_R(0)$ and $r\in(0,R-|x|)$, denote
\[A(r;x):=\max_\alpha\max_{y\in\overline{\Gamma_\alpha\cap B_r(x)}}e^{- D_\alpha(y)}.\]

\item The covariant derivative on $\Gamma_{\alpha,z}$ with respect to the induced metric is denoted by $\nabla_{\alpha,z}$.
\item The area form on $\Gamma_{\alpha,z}$ with respect to the induced metric is denoted by $dA_{\alpha,z}=\lambda_\alpha(y,z)dy$, where $\lambda_\alpha(y,z)=\sqrt{\mbox{det}\left[g_{\alpha,ij}(y,z)\right]}$.
\item We use $B^\alpha_r(y)$ to denote the open ball on $\Gamma_\alpha$ with center $y$ and radius $r$, which is measured with respect to intrinsic distance.


\item For $\lambda\in\R$, let
\[\mathcal{M}_\alpha^\lambda:=\left\{|d_\alpha|<|d_{\alpha-1}|+\lambda \quad \mbox{and } \quad  |d_\alpha|<|d_{\alpha+1}|+\lambda\right\}.\]

\item
In the Fermi coordinates with respect to $\Gamma_\alpha$, there exist two continuous functions $\rho_\alpha^\pm(y)$ such that
\[\mathcal{M}_\alpha^0=\left\{(y,z): \rho_\alpha^-(y)<z<\rho_\alpha^+(y)\right\}.\]

\end{itemize}

\subsection{Deviation in $z$} In this subsection we collect several estimates on the deviation of various terms in $z$, when $z\neq 0$. Recall that $\varepsilon$ is the upper bound on curvatures of level sets of $u$, see \eqref{bound on second fundamental form}.

By \eqref{bound on second fundamental form}, $|A_\alpha(y,0)|\lesssim \varepsilon$. Thus by \eqref{A(z)}, for $|z|< R$, $|A_\alpha(y,z)|\lesssim \varepsilon$. As in \cite{Wang-Wei2}, we also have
\begin{lem}
For any $y\in\Gamma_\alpha\cap B_{R-1}(0)$,
\begin{equation}\label{bound on 3rd derivatives}
|\nabla_{\alpha,0} A_\alpha(y,0)|+|\nabla_{\alpha,0}^2 A_\alpha(y,0)|\lesssim\varepsilon.
\end{equation}
\end{lem}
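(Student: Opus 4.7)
The plan is to upgrade the pointwise bound $|A_\alpha|\lesssim\varepsilon$ from (3.1) to full $C^{2,\theta}$-control on $A_\alpha$ via interior Schauder estimates applied to Simons-type elliptic equations on $\Gamma_\alpha$. Starting from $H_\alpha=-u_{\nu\nu}/u_\nu$ on $\Gamma_\alpha$ (valid because $u\equiv 0$ there makes the tangential Laplacian vanish) and using the Allen-Cahn equation $\Delta u=W'(u)$ together with the Codazzi equations to commute tangential and normal differentiations, one derives an elliptic equation for the mean curvature $H_\alpha$ on $\Gamma_\alpha$ of Jacobi type
\[\Delta_{\Gamma_\alpha}H_\alpha+c(y)H_\alpha=G(y),\]
where the coefficient $c$ is bounded and the source $G$ collects quadratic cross terms, each containing at least one factor of $A_\alpha$, so that $|G|\lesssim\varepsilon^2$. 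Interior Schauder estimates then yield $\|H_\alpha\|_{C^{2,\theta}(\Gamma_\alpha\cap B_{R-1}(0))}\lesssim\|H_\alpha\|_{L^\infty}+\|G\|_{C^\theta}\lesssim\varepsilon$. The needed $C^\theta$-regularity of the metric $g_{\alpha,ij}(\cdot,0)$ and of the coefficients $c,G$ is guaranteed by the graph representation (3.3)-(3.4) (with each derivative of $g_{\alpha,ij}$ carrying a factor of $\varepsilon$) and by ambient elliptic regularity $\|u\|_{C^{k,\theta}_{\rm loc}(B_R(0))}\lesssim 1$ coming from $\Delta u=W'(u)$ with $|u|\le 1$.

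With $H_\alpha$ now under quantitative $C^{2,\theta}$-control, the Simons identity for the embedded hypersurface $\Gamma_\alpha\subset\R^n$,
\[\Delta_{\Gamma_\alpha}A_{\alpha,ij}=(\nabla_{\alpha,0}^2 H_\alpha)_{ij}+H_\alpha\sum_k A_{\alpha,ik}A_{\alpha,kj}-|A_\alpha|^2A_{\alpha,ij},\]
has right-hand side of size $O(\varepsilon)$ in $C^\theta$: the first term is handled by the previous paragraph, while the quadratic $A_\alpha$-terms are $O(\varepsilon^3)$. Together with the $L^\infty$-bound $\|A_\alpha\|_\infty\lesssim\varepsilon$, interior Schauder yields $\|A_\alpha\|_{C^{2,\theta}(\Gamma_\alpha\cap B_{R-1}(0))}\lesssim\varepsilon$, which supplies the desired pointwise bounds on $\nabla_{\alpha,0}A_\alpha$ and $\nabla_{\alpha,0}^2A_\alpha$.

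The main obstacle is the derivation of the Jacobi equation for $H_\alpha$, where the commutators between tangential and normal differentiations of $u$ must be carefully tracked. Each such commutator produces a curvature-type term, and the key point is that, thanks to the Allen-Cahn equation, these terms combine so that every contribution to the source $G$ carries an explicit factor of $A_\alpha=O(\varepsilon)$ or $H_\alpha=O(\varepsilon)$, securing the favorable $O(\varepsilon^2)$ size of $G$ rather than the naive $O(1)$ one would obtain by bare elliptic regularity applied to the level-set formula for $H_\alpha$. An alternative, equally workable route replaces the Simons/Jacobi machinery by a bootstrap on $\nabla^3 f_\alpha,\nabla^4 f_\alpha$ extracted from $u(x',f_\alpha(x'))\equiv 0$ and the proximity of $u$ to the one-dimensional profile furnished by Lemma \ref{O(1) scale}; the same delicate bookkeeping of the $\varepsilon$-factors is required there.
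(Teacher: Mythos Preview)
The paper does not give a proof here; it refers to \cite{Wang-Wei2}. Your plan is reasonable in outline, but the step you yourself flag as ``the main obstacle'' is a genuine gap, not a routine computation. You assert that tangentially differentiating $H_\alpha=u_{zz}/u_z$ at $z=0$ and commuting via Codazzi and the Allen--Cahn equation yields a closed intrinsic equation $\Delta_{\Gamma_\alpha}H_\alpha+cH_\alpha=G$ with $G$ quadratic in $O(\varepsilon)$ quantities. But differentiating the Fermi-coordinate form $u_{zz}+\Delta_z u-H(y,z)u_z=W'(u)$ tangentially at $z=0$ only reproduces relations already contained in $u_{zz}=Hu_z$ (all tangential derivatives of $u$ vanish there); new second-order information requires differentiating in $z$ as well, at which point the argument is no longer intrinsic to $\Gamma_\alpha$ and must track how $A$ and $\nabla_T\log|\nabla u|$ vary across nearby level sets. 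You have not carried this out, and even granting such an equation, Schauder needs $\|G\|_{C^\theta}\lesssim\varepsilon$, not merely $|G|\lesssim\varepsilon^2$; since the Lipschitz norm of $A_\alpha$ is a priori only $O(1)$, this H\"older control is exactly the kind of thing you are trying to prove. The alternative bootstrap via $u(x',f_\alpha(x'))\equiv0$ has the same issue: Lemma~\ref{O(1) scale} supplies only qualitative $o(1)$ closeness to the one-dimensional profile, not the $O(\varepsilon)$ input this route would require.

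The natural argument exploits that the hypothesis $|B(u)|\le\Lambda\varepsilon$ holds in the \emph{open set} $\{|u|\le 1-b_2\}$, not only on $\Gamma_\alpha$. For $\nu=\nabla u/|\nabla u|$ one checks $|\nabla\nu|^2=|A|^2+|\nabla_T\log|\nabla u||^2=|B(u)|^2$, so $|\nabla\nu|\lesssim\varepsilon$ pointwise on a tube of uniform width around $\Gamma_\alpha$. A direct computation from $\Delta u_j=W''(u)u_j$ and the Sternberg--Zumbrun identity for $|\nabla u|$ gives the ambient equation
\[
\Delta\nu^j \;=\; |B(u)|^2\,\nu^j \;-\; 2\,\nabla\log|\nabla u|\cdot\nabla\nu^j,
\]
whose right-hand side is $O(\varepsilon)$ in $L^\infty$; the drift $\nabla\log|\nabla u|$ is $O(1)$ in every $C^k$ by ambient elliptic regularity of $u$ together with the lower bound $|\nabla u|\gtrsim c$ from Lemma~\ref{O(1) scale}. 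Since $\mathrm{osc}_{B_1}\nu^j\lesssim\varepsilon$, interior $W^{2,p}$ and then Schauder estimates applied to $\nu^j$ minus its mean give $|\nabla^2\nu|\lesssim\varepsilon$; one more differentiation of the displayed equation gives $|\nabla^3\nu|\lesssim\varepsilon$. As $A_\alpha$ is the restriction of $\nabla\nu$ to $\Gamma_\alpha$, the lemma follows, with no Simons identity or intrinsic Jacobi equation needed.
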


By \eqref{A(z)}, we have
\begin{equation}\label{error in z 1}
|A_\alpha(y,z)-A_\alpha(y,0)|\lesssim |z||A_\alpha(y,0)|^2\lesssim\varepsilon^2|z|.
\end{equation}
Similarly, by \eqref{metirc tensor}, the deviation of metric tensors is
\begin{equation}\label{error in z 2}
\begin{cases}
   & |g_{\alpha,ij}(y,z)-g_{\alpha,ij}(y,0)|\lesssim \varepsilon|z|,  \\
  & |g_{\alpha}^{ij}(y,z)-g_{\alpha}^{ij}(y,0)|\lesssim \varepsilon|z|.
\end{cases}
\end{equation}
As a consequence, the deviation of mean curvature is
\begin{equation}\label{error in z 4}
|H_\alpha(y,z)-H_\alpha(y,0)|\lesssim  \varepsilon^2|z|.
\end{equation}

By \eqref{derivatives bound} and \eqref{bound on 3rd derivatives}, for any $|z|<R$,
\begin{equation}\label{derivatives of metric tensor}
\sum_{i,j=1}^{n-1}\left(|\nabla_{\alpha,z}g_{\alpha,ij}(y,z)|+|\nabla_{\alpha,z}g_{\alpha}^{ij}(y,z)|+|\nabla_{\alpha,z}^2g_{\alpha,ij}(y,z)|+|\nabla_{\alpha,z}^2g_{\alpha}^{ij}(y,z)|\right)\lesssim\varepsilon.
\end{equation}

The Laplacian operator in Fermi coordinates has the form
\[\Delta_{\R^n}=\Delta_{\alpha,z}-H_\alpha(y,z)\partial_z+\partial_{zz},\]
where $\Delta_{\alpha,z}$ is the Beltrami-Laplace operator on $\Gamma_{\alpha,z}$, that is,
\begin{eqnarray}\label{Laplacian}
\Delta_{\alpha,z}&=&\sum_{i,j=1}^{n-1}\frac{1}{\sqrt{\mbox{det}(g_{\alpha,ij}(y,z))}}\frac{\partial}{\partial y_j}\left(\sqrt{\mbox{det}(g_{\alpha,ij}(y,z))}g_{\alpha}^{ij}(y,z)\frac{\partial}{\partial y_i}\right) \nonumber\\
&=&\sum_{i,j=1}^{n-1}g_{\alpha}^{ij}(y,z)\frac{\partial^2}{\partial y_i\partial y_j}+\sum_{i=1}^{n-1}b_\alpha^i(y,z)\frac{\partial}{\partial y_i}
\end{eqnarray}
with
\[b_\alpha^i(y,z)=\frac{1}{2}\sum_{j=1}^{n-1}g_{\alpha}^{ij}(y,z)\frac{\partial}{\partial y_j}\log\mbox{det}(g_{\alpha,ij}(y,z)).\]

By \eqref{error in z 2} and \eqref{derivatives of metric tensor}, we get
\begin{lem}
For any function $\varphi\in C^2\left(\Gamma_\alpha\right)$ and $|z|<R$,
\begin{equation}\label{error in z 5}
|\Delta_{\alpha,z}\varphi -\Delta_{\alpha,0}\varphi |\lesssim\varepsilon|z|\left(|\nabla_{\alpha,0}^2\varphi |+|\nabla_{\alpha,0}\varphi |\right).
\end{equation}
\end{lem}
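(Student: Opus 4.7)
The plan is to differentiate the coefficient representation of $\Delta_{\alpha,z}$ given in \eqref{Laplacian} with respect to $z$ and exploit the pointwise estimates already established in Section \ref{sec Fermi coordinates}. Using \eqref{Laplacian}, I would first write
\begin{equation*}
\Delta_{\alpha,z}\varphi-\Delta_{\alpha,0}\varphi
=\sum_{i,j=1}^{n-1}\bigl[g_\alpha^{ij}(y,z)-g_\alpha^{ij}(y,0)\bigr]\frac{\partial^2\varphi}{\partial y_i\partial y_j}
+\sum_{i=1}^{n-1}\bigl[b_\alpha^i(y,z)-b_\alpha^i(y,0)\bigr]\frac{\partial\varphi}{\partial y_i},
\end{equation*}
so the problem reduces to estimating the two coefficient differences at a fixed $y$.

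For the first difference, \eqref{error in z 2} gives $|g_\alpha^{ij}(y,z)-g_\alpha^{ij}(y,0)|\lesssim\varepsilon|z|$ directly. For the second difference, I would use the explicit formula $b_\alpha^i=\tfrac{1}{2}g_\alpha^{ij}g_\alpha^{kl}\partial_{y_j}g_{\alpha,kl}$ and write
\begin{equation*}
b_\alpha^i(y,z)-b_\alpha^i(y,0)=\int_0^z\partial_s b_\alpha^i(y,s)\,ds.
\end{equation*}
A direct computation from \eqref{metirc tensor} shows $|\partial_z g_{\alpha,ij}(y,z)|\lesssim\varepsilon$ and, upon differentiating once more in $y$ and invoking \eqref{bound on 3rd derivatives} to control $\partial_{y_j}A_{\alpha,ik}$, also $|\partial_{y_j}\partial_z g_{\alpha,ij}(y,z)|\lesssim\varepsilon$. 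Combined with the uniform bounds $|g_\alpha^{ij}|\lesssim 1$, $|\partial_{y_j}g_{\alpha,kl}|\lesssim\varepsilon$ from \eqref{derivatives of metric tensor} and the product rule applied to $b_\alpha^i$, this yields $|\partial_z b_\alpha^i(y,s)|\lesssim\varepsilon$ uniformly in $s\in(-R,R)$. Integrating gives $|b_\alpha^i(y,z)-b_\alpha^i(y,0)|\lesssim\varepsilon|z|$.

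Finally I would convert coordinate derivatives to covariant ones using $\partial_i\partial_j\varphi=\nabla_i\nabla_j\varphi+\Gamma^k_{\alpha,ij}(y,0)\partial_k\varphi$; since the Christoffel symbols of $g_\alpha(\cdot,0)$ are bounded by $|\partial g_\alpha(\cdot,0)|\lesssim\varepsilon$ by \eqref{derivatives of metric tensor}, one has $|\partial^2\varphi|\lesssim|\nabla_{\alpha,0}^2\varphi|+\varepsilon|\nabla_{\alpha,0}\varphi|$ and $|\partial\varphi|\lesssim|\nabla_{\alpha,0}\varphi|$. Plugging these into the two-term decomposition above completes the estimate \eqref{error in z 5}.

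The only nontrivial step is controlling $\partial_z b_\alpha^i$: the factor $\partial_{y_j}\log\det g_{\alpha,ij}(y,z)$ contains one $y$-derivative of the metric, so estimating its $z$-derivative requires the mixed bound $|\partial_{y_j}\partial_z g_{\alpha,ij}|\lesssim\varepsilon$, which in turn depends crucially on \eqref{bound on 3rd derivatives}. Everything else is bookkeeping around the already-proved pointwise estimates.
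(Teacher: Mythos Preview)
Your proof is correct and follows the same route the paper indicates: the paper simply records ``By \eqref{error in z 2} and \eqref{derivatives of metric tensor}, we get'' this lemma, and your decomposition via \eqref{Laplacian} together with the bound $|b_\alpha^i(y,z)-b_\alpha^i(y,0)|\lesssim\varepsilon|z|$ is exactly what is implicit in that sentence. Your observation that the mixed bound $|\partial_{y_j}\partial_z g_{\alpha,ij}|\lesssim\varepsilon$ (hence the control of $\partial_z b_\alpha^i$) ultimately rests on \eqref{bound on 3rd derivatives} is correct and makes explicit a dependence the paper leaves to the reader.
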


Finally we recall a commutator estimate from \cite{Wang-Wei2}.
\begin{lem}\label{commutator estimate}
For any $\varphi\in C^3(\Gamma_\alpha)$ and $|z|<R$,
\[\Big|\frac{\partial}{\partial y_i}\Delta_{\alpha,z}\varphi-\Delta_{\alpha,z}\frac{\partial\varphi}{\partial y_i}\Big|\lesssim \varepsilon\left(|\nabla_{\alpha,0}^2\varphi|+|\nabla_{\alpha,0}\varphi|\right).\]
\end{lem}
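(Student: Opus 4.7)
The plan is to expand the commutator in the coordinates $y$ using the explicit form \eqref{Laplacian} of $\Delta_{\alpha,z}$, and then estimate the resulting coefficients via the derivative bounds \eqref{derivatives of metric tensor}.

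First I would compute directly
\[
\frac{\partial}{\partial y_k}\Delta_{\alpha,z}\varphi-\Delta_{\alpha,z}\frac{\partial\varphi}{\partial y_k}
=\sum_{i,j=1}^{n-1}\Bigl(\frac{\partial}{\partial y_k}g_{\alpha}^{ij}(y,z)\Bigr)\frac{\partial^2\varphi}{\partial y_i\partial y_j}
+\sum_{i=1}^{n-1}\Bigl(\frac{\partial}{\partial y_k}b_\alpha^{i}(y,z)\Bigr)\frac{\partial\varphi}{\partial y_i},
\]
since the two second-order pieces and the two first-order pieces pair off, leaving only terms where the $y_k$-derivative falls on a coefficient.

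Next I would bound the coefficients. From \eqref{derivatives of metric tensor} we already have $|\partial_{y_k}g_\alpha^{ij}|\lesssim\varepsilon$. For $b_\alpha^i=\tfrac12 g_\alpha^{ij}\partial_{y_j}\log\det(g_{\alpha,kl})$, differentiating in $y_k$ produces only terms involving $g_\alpha^{ij}$, $g_{\alpha,ij}$, $\partial g_{\alpha,ij}$ and $\partial^2 g_{\alpha,ij}$; since $g^{ij}_\alpha$ and $g_{\alpha,ij}$ are uniformly bounded (they differ from the Euclidean metric by an $O(\varepsilon|z|)$ term by \eqref{error in z 2}) and both first and second $\nabla_{\alpha,z}$-derivatives of $g_\alpha,g_\alpha^{\cdot\cdot}$ are $O(\varepsilon)$ by \eqref{derivatives of metric tensor}, we obtain $|\partial_{y_k}b_\alpha^{i}|\lesssim\varepsilon$.

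Finally I would convert the Euclidean-coordinate derivatives $\partial^2_{ij}\varphi$ and $\partial_i\varphi$ appearing on the right into the intrinsic quantities $|\nabla_{\alpha,0}^2\varphi|$ and $|\nabla_{\alpha,0}\varphi|$. Since the Christoffel symbols $\Gamma^m_{ij}$ of $g_{\alpha,\cdot\cdot}(y,0)$ involve $g_\alpha^{-1}$ and $\partial g_\alpha$, \eqref{derivatives of metric tensor} gives $|\Gamma^m_{ij}|\lesssim\varepsilon$, hence
\[
\Bigl|\frac{\partial^2\varphi}{\partial y_i\partial y_j}\Bigr|\leq |\nabla_{\alpha,0}^2\varphi|+C\varepsilon|\nabla_{\alpha,0}\varphi|,\qquad \Bigl|\frac{\partial\varphi}{\partial y_i}\Bigr|\leq C|\nabla_{\alpha,0}\varphi|.
\]
Combining these with the coefficient bounds yields the claim.

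The only mildly delicate point is the control of $\partial_{y_k}b_\alpha^i$, because of the $\log\det$ factor; but \eqref{derivatives of metric tensor} was precisely set up to include the second $y$-derivatives of the metric, so the estimate is immediate. I do not expect any genuine obstacle here — this lemma is a bookkeeping exercise whose entire content is that the metric $g_{\alpha,ij}(y,z)$ varies slowly in $y$ on the scale set by the curvature bound \eqref{bound on second fundamental form}.
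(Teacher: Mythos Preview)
Your argument is correct. The paper does not actually prove this lemma here---it simply recalls it from \cite{Wang-Wei2}---but the direct computation you outline (expanding $\Delta_{\alpha,z}$ via \eqref{Laplacian}, observing that the commutator consists only of terms where $\partial_{y_k}$ hits a coefficient, and then invoking \eqref{derivatives of metric tensor} to bound $\partial_{y_k}g_\alpha^{ij}$ and $\partial_{y_k}b_\alpha^i$ by $\varepsilon$) is exactly the natural proof and is presumably what appears in the cited reference. There is nothing to add.
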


\subsection{Comparison of distance functions}

Given a point $X$, let $\Pi_\alpha(X)$ be the nearest point on $\Gamma_\alpha$ to $X$.
The following lemma is taken from \cite{Wang-Wei2}.
\begin{lem}\label{comparison of distances}
For any $K>0$, there exists a constant $C(K)$ so that the following holds.
For any $X\in B_{7R/8}(0)$ and $\alpha\neq\beta$, if
$|d_\alpha(X)|\leq K|\log\varepsilon|$ and $|d_\beta(X)|\leq K|\log\varepsilon|$ at the same time, then we have
\[
\mbox{dist}_{\Gamma_\beta}\left(\Pi_\beta\circ\Pi_\alpha(X),\Pi_\beta(X)\right)\leq C(K)\varepsilon^{1/2}|\log\varepsilon|^{3/2},
\]
\[
|d_\beta\left(\Pi_\alpha(X)\right)+d_\alpha\left(\Pi_\beta(X)\right)|\leq C(K)\varepsilon^{1/2}|\log\varepsilon|^{3/2},
\]
\[
|d_\alpha(X)-d_\beta(X)+d_\beta\left(\Pi_\alpha(X)\right)|\leq C(K)\varepsilon^{1/2}|\log\varepsilon|^{3/2},
\]
\[
|d_\alpha(X)-d_\beta(X)-d_\alpha\left(\Pi_\beta(X)\right)|\leq C(K)\varepsilon^{1/2}|\log\varepsilon|^{3/2},
\]
\[
1-\nabla d_\alpha(X)\cdot \nabla d_\beta(X)\leq C(K)\varepsilon^{1/2}|\log\varepsilon|^{3/2},
\]
\end{lem}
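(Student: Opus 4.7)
The plan is to reduce everything to the single rigidity statement that the two graphs $\Gamma_\alpha$ and $\Gamma_\beta$ are almost parallel throughout the overlap, after which Taylor expansion of the signed distance functions will produce all five estimates essentially for free. Throughout I write $y_\alpha:=\Pi_\alpha(X)$, $y_\beta:=\Pi_\beta(X)$, and $v:=y_\alpha-y_\beta=d_\beta(X)N_\beta(y_\beta)-d_\alpha(X)N_\alpha(y_\alpha)$, so $|v|\lesssim|\log\varepsilon|$.

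First I would establish that at common base points the normals of $\Gamma_\alpha$ and $\Gamma_\beta$ are close. In the graph representation $\Gamma_\gamma=\{x_n=f_\gamma(x')\}$ of \eqref{graph representation}, the bound \eqref{derivatives bound} gives $|\nabla^2 f_\gamma|\lesssim\varepsilon$, and because the components are disjoint the function $h:=f_\beta-f_\alpha$ is of one sign. The elementary convexity estimate $|\nabla h(y')|^2\le 2\,\|D^2h\|_\infty\, h(y')$ for nonnegative $C^{1,1}$ functions (read off from $h(y')+\nabla h\cdot(\cdot)+\tfrac12\|D^2h\|_\infty|\cdot|^2\ge 0$) then yields $|\nabla f_\alpha-\nabla f_\beta|\lesssim\sqrt{\varepsilon h}\lesssim\sqrt{\varepsilon|\log\varepsilon|}$ at any base point where the vertical gap is $O(|\log\varepsilon|)$, which is exactly the regime $|d_\alpha|,|d_\beta|\le K|\log\varepsilon|$. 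This translates to $|N_\alpha-N_\beta|\lesssim\sqrt{\varepsilon|\log\varepsilon|}$ at common base points, and since $|\nabla N_\gamma|=|A_\gamma|\lesssim\varepsilon$ the normals on a single $\Gamma_\gamma$ change by at most $O(\varepsilon|\log\varepsilon|)$ over distances $O(|\log\varepsilon|)$. Combining the two gives $|N_\alpha(y_\alpha)-N_\beta(y_\beta)|\lesssim\sqrt{\varepsilon|\log\varepsilon|}$, and squaring provides the last inequality $1-\nabla d_\alpha\cdot\nabla d_\beta=\tfrac12|N_\alpha(y_\alpha)-N_\beta(y_\beta)|^2\lesssim\varepsilon|\log\varepsilon|$.

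Next I would Taylor expand the signed distance $d_\beta$, which is $C^2$ in a tubular neighborhood of $\Gamma_\beta$ with $|\nabla d_\beta|=1$ and $|D^2 d_\beta|\lesssim\varepsilon$. Expanding about $y_\beta\in\Gamma_\beta$ gives $d_\beta(y_\alpha)=N_\beta(y_\beta)\cdot v+O(\varepsilon|v|^2)=d_\beta(X)-d_\alpha(X)\,N_\alpha(y_\alpha)\cdot N_\beta(y_\beta)+O(\varepsilon|\log\varepsilon|^2)$. Inserting $N_\alpha\cdot N_\beta=1+O(\varepsilon|\log\varepsilon|)$ and $|d_\alpha|\le K|\log\varepsilon|$ yields $d_\beta(y_\alpha)=d_\beta(X)-d_\alpha(X)+O(\varepsilon|\log\varepsilon|^2)$, which is the third inequality; the fourth follows by exchanging $\alpha\leftrightarrow\beta$ and adding the two gives the second (all with error $\varepsilon|\log\varepsilon|^2\ll\varepsilon^{1/2}|\log\varepsilon|^{3/2}$). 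For the first, plugging $\Pi_\beta(y_\alpha)=y_\alpha-d_\beta(y_\alpha)N_\beta(\Pi_\beta(y_\alpha))$ into $v$ and using $N_\beta(\Pi_\beta(y_\alpha))=N_\beta(y_\beta)+O(\varepsilon|\log\varepsilon|)$ produces the cancellation $\Pi_\beta(y_\alpha)-y_\beta=-d_\alpha(X)\bigl[N_\alpha(y_\alpha)-N_\beta(y_\beta)\bigr]+O(\varepsilon|\log\varepsilon|^2)$, whose Euclidean norm is $\lesssim|d_\alpha|\sqrt{\varepsilon|\log\varepsilon|}\lesssim\varepsilon^{1/2}|\log\varepsilon|^{3/2}$. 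Since $\Gamma_\beta$ is a Lipschitz graph, intrinsic distance is equivalent to Euclidean distance, giving the stated $\mathrm{dist}_{\Gamma_\beta}$ bound.

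The main obstacle is the first step: extracting the quantitative rigidity that two disjoint, nearly flat graphs must be nearly parallel. The bound $|\nabla h|^2\le 2\|D^2h\|_\infty h$ for nonnegative $h$ is the clean tool, and everything afterward is bookkeeping with Taylor expansions whose natural error is $O(\varepsilon|\log\varepsilon|^2)$; the weaker $\sqrt{\varepsilon|\log\varepsilon|}$ rate from Step 1 is what ultimately shows up (amplified by the moment arm $|d_\alpha|\lesssim|\log\varepsilon|$) as the advertised $\varepsilon^{1/2}|\log\varepsilon|^{3/2}$.
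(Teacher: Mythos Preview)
Your argument is correct. The paper does not actually supply a proof of this lemma; it simply cites \cite{Wang-Wei2} and states the result, so there is no in-paper proof to compare against. Your approach---extracting the parallelism of $\Gamma_\alpha$ and $\Gamma_\beta$ from the elementary inequality $|\nabla h|^2\le 2\|D^2h\|_\infty\,h$ for the nonnegative gap function $h=f_\beta-f_\alpha$, and then reading off all five estimates by second-order Taylor expansion of the signed distance functions---is exactly the natural route and matches the spirit of the argument in \cite{Wang-Wei2}. One minor remark: your computation in fact yields the sharper errors $O(\varepsilon|\log\varepsilon|)$ for the fifth inequality and $O(\varepsilon|\log\varepsilon|^2)$ for the second through fourth; only the first inequality genuinely requires the full $\varepsilon^{1/2}|\log\varepsilon|^{3/2}$, coming from the moment arm $|d_\alpha|\lesssim|\log\varepsilon|$ multiplied by $|N_\alpha-N_\beta|\lesssim(\varepsilon|\log\varepsilon|)^{1/2}$.
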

The following lemma is an easy consequence of Lemma \ref{lem graph construction}.
\begin{lem}\label{biLip of projection operator}
  For any $\alpha\neq\beta$, both $\Pi_\beta\lfloor_{\Gamma_\alpha}$ and its inverse are Lipschitz continuous with their Lipschitz constants bounded by a universal constant $C$.
\end{lem}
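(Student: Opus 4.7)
The plan is a direct differential calculation combined with the local graph representation supplied by Lemma \ref{lem graph construction}. I would first parametrize points near $\Gamma_\beta$ via its Fermi coordinates: any $x\in\Gamma_\alpha$ satisfies $x=y+d_\beta(x)N_\beta(y)$ where $y:=\Pi_\beta(x)$. Differentiating this identity along a curve in $\Gamma_\alpha$ and separating components tangent and normal to $\Gamma_\beta$ at $y$ (using that the Weingarten map $dN_\beta$ preserves $T_y\Gamma_\beta$) yields, for each $v\in T_x\Gamma_\alpha$,
\[
d\Pi_\beta(v)=\bigl[I+d_\beta(x)\,dN_\beta(y)\bigr]^{-1}P_{T_y\Gamma_\beta}(v).
\]

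The curvature bound $|A_\beta|\leq\Lambda\varepsilon$ combined with $|d_\beta(x)|\leq R=\varepsilon^{-1}$ on $B_R(0)$ gives $\|d_\beta(x)\,dN_\beta(y)\|\leq\Lambda$. By the standing normalization in Section \ref{sec Fermi coordinates} we may take $\Lambda$ small enough to make the inverse factor have operator norm at most $2$; together with $\|P_{T_y\Gamma_\beta}\|\leq 1$, this gives the upper Lipschitz bound $\|d\Pi_\beta|_{T_x\Gamma_\alpha}\|\leq 2$.

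For the lower bound, the elementary inequality $|P_{T_y\Gamma_\beta}(v)|\geq|v|\,|N_\alpha(x)\cdot N_\beta(y)|$ for $v\perp N_\alpha(x)$ reduces the task to a uniform lower bound on $|N_\alpha(x)\cdot N_\beta(y)|$. Here I would invoke Lemma \ref{lem graph construction}: in the relevant suitable coordinate frame, $\Gamma_\alpha$ and $\Gamma_\beta$ (near the reference point) are both $C^{1,1}$ graphs over the same hyperplane, so their normals sit in the same open hemisphere. Aligning the frame with the tangent plane of $\Gamma_\alpha$ at $x$ and integrating the Weingarten bound $|A_\alpha|\leq\Lambda\varepsilon$ from $\nabla f_\alpha(x^\prime)=0$ shows that $|\nabla f_\alpha|\lesssim\Lambda$ on the graph chart, so that $N_\alpha$ lies in a narrow cone about the vertical axis; the same analysis at $y$ via Lemma \ref{lem graph construction} together with the slow variation of $N_\beta$ along $\Gamma_\beta$ puts $N_\beta(y)$ into the same cone. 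Choosing $\Lambda$ small then forces $N_\alpha(x)\cdot N_\beta(y)\geq 1/2$.

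Integrating the pointwise differential bounds along intrinsic paths in $\Gamma_\alpha$ converts them into a local bi-Lipschitz estimate with universal constants; for pairs $x_1,x_2\in\Gamma_\alpha$ separated by more than a universal scale the Lipschitz estimate follows trivially from a diameter bound. The only delicate point I anticipate is the normal-alignment argument in the regime $|d_\beta(x)|\gg\sigma$, where Lemma \ref{lem graph construction} applied at $x$ alone does not see $\Gamma_\beta$: one must chain the graph representation at $x$ with the one at $y$ and exploit the slow variation of the normals along each connected component (which is forced by $|dN_\alpha|,|dN_\beta|\leq\Lambda\varepsilon$) to transport the cone condition from one surface to the other.
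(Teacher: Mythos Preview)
Your proposal is correct and is essentially the argument the paper has in mind, just made explicit: the paper only says the lemma ``is an easy consequence of Lemma~\ref{lem graph construction}'', whose rescaled form \eqref{graph representation}--\eqref{derivatives bound} already represents \emph{all} $\Gamma_\gamma\cap B_R(0)$ as uniformly Lipschitz graphs over one common hyperplane, from which the bi-Lipschitz bound for the nearest-point projection between any two of them is immediate (your differential formula for $d\Pi_\beta$ and the normal-alignment inequality $|P_{T_y\Gamma_\beta}v|\ge|v|\,|N_\alpha\cdot N_\beta|$ simply spell this out). In particular your anticipated difficulty in the regime $|d_\beta(x)|\gg\sigma$ does not arise, since the common graph representation \eqref{graph representation} holds globally in $B_R(0)$ and no chaining of local charts is needed.
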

Finally, the following fact will be used a lot in this paper.
\begin{lem}\label{lem distance ladder}
For any $y\in\Gamma_\alpha$,
\[\sum_{\beta\neq\alpha}e^{-|d_\beta(y,0)|}\lesssim e^{-D_\alpha(y)}.\]
\end{lem}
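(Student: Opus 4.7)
The plan is to exploit the fact that in the stretched variables distinct components of $\{u=0\}$ are mutually separated by a distance $M=M(\varepsilon)\to\infty$, which is the content of Lemma \ref{O(1) scale}: near $y\in\Gamma_\alpha$ the other components appear as almost-parallel graphs whose signed heights are pairwise separated by roughly $M$, so that $\sum_{\beta\neq\alpha}e^{-|d_\beta(y,0)|}$ collapses into a geometric series dominated by its leading term $e^{-D_\alpha(y)}$.

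To set this up, I pass to the Fermi coordinates $(Y,z)$ around $\Gamma_\alpha$. By \eqref{bound on second fundamental form} and \eqref{A(z)} these coordinates remain smooth up to distances of order $\Lambda^{-1}$, and by Lemma \ref{lem graph construction} (rescaled to the stretched variables) every component $\Gamma_\beta$ meeting a ball $B_L(y)$, with $L$ of order $\sigma R$, can be written as a graph $\{z=g_\beta(Y)\}$ whose Lipschitz constant is uniformly bounded. Since $y$ corresponds to $z=0$, the perpendicular drop from $y$ shows
\[
|d_\beta(y,0)|\;\geq\;\frac{|g_\beta(y)|}{\sqrt{1+\|\nabla g_\beta\|_\infty^2}}\;\gtrsim\;|g_\beta(y)|,
\]
with a universal constant.

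Now order the graphs lying above $\Gamma_\alpha$ by increasing height, $0<g_{\beta_1}(y)<g_{\beta_2}(y)<\cdots$, and similarly for those below. Their disjointness, combined with $\mathrm{dist}(\Gamma_{\beta_k},\Gamma_{\beta_{k+1}})\geq M$ from Lemma \ref{O(1) scale} and the uniform Lipschitz bound, forces consecutive heights to satisfy $g_{\beta_{k+1}}(y)-g_{\beta_k}(y)\geq c_0 M$ for some universal $c_0>0$. Iterating and using the definition of $D_\alpha(y)$,
\[
|d_{\beta_k}(y,0)|\;\geq\;D_\alpha(y)+(|k|-1)\,c_0 M,\qquad k\neq 0,
\]
and summing a geometric series gives, once $\varepsilon$ is small enough that $c_0 M\geq 1$,
\[
\sum_{\beta\neq\alpha,\,\Gamma_\beta\cap B_L(y)\neq\emptyset} e^{-|d_\beta(y,0)|}\;\leq\;\frac{2\,e^{-D_\alpha(y)}}{1-e^{-c_0 M}}\;\lesssim\;e^{-D_\alpha(y)}.
\]

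For the remaining components with $\Gamma_\beta\cap B_L(y)=\emptyset$ we have $|d_\beta(y,0)|\geq L\sim\sigma R=\sigma/\varepsilon$, so each contributes at most $e^{-\sigma/\varepsilon}$; the total number of components in $B_R(0)$ is controlled by a Sternberg--Zumbrun type area estimate together with the $M$-separation (giving a bound polynomial in $R/M$), hence these contributions are super-exponentially small in $\varepsilon$ and are absorbed. The step I expect to be the most delicate is the marginal regime where $D_\alpha(y)$ itself is of order $R$: there the geometric ``near'' bound degenerates and one must argue directly, using that only a universally bounded number of $\beta$ can have $|d_\beta(y,0)|$ within a bounded multiple of $D_\alpha(y)$ (again by the $M$-separation combined with a volume count), to show that the full sum is still $\lesssim e^{-D_\alpha(y)}$.
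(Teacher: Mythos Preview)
Your core idea---consecutive sheets are separated by some $M=M(\varepsilon)\to\infty$, so the distances $|d_\beta(y,0)|$ grow at least linearly in the index and the sum collapses to a geometric series dominated by $e^{-D_\alpha(y)}$---is exactly the paper's approach. The paper compresses it into two lines: from Lemma~\ref{O(1) scale} and Lemma~\ref{comparison of distances} one obtains directly
\[
|d_\beta(y,0)|\;\ge\;D_\alpha(y)+C\bigl(|\beta-\alpha|-1\bigr),\qquad C\gg1,
\]
valid for every $\beta\neq\alpha$, and then sums.

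Your detour through graph heights, however, introduces a genuine gap as written. From $|d_{\beta_k}|\ge c_1|g_{\beta_k}|$ with $c_1=1/\sqrt{1+\|\nabla g\|_\infty^2}<1$ (you only invoke the generic Lipschitz bound of Lemma~\ref{lem graph construction}, not the almost-parallel estimate) together with $g_{\beta_k}\ge g_{\beta_1}+(k-1)c_0M\ge D_\alpha+(k-1)c_0M$, you can only deduce $|d_{\beta_k}|\ge c_1\bigl(D_\alpha+(k-1)c_0M\bigr)$. Summing then gives $\lesssim e^{-c_1 D_\alpha(y)}$, not $\lesssim e^{-D_\alpha(y)}$; since $D_\alpha$ is not a priori bounded, this is strictly weaker than the lemma. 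The clean fix is to bypass heights altogether: the sheets are nested graphs over a common hyperplane, so the straight segment from $y$ to the nearest point of $\Gamma_{\beta_k}$ must cross $\Gamma_{\beta_{k-1}}$, giving
\[
|d_{\beta_k}(y)|\;\ge\;|d_{\beta_{k-1}}(y)|+\mathrm{dist}(\Gamma_{\beta_{k-1}},\Gamma_{\beta_k})\;\ge\;|d_{\beta_{k-1}}(y)|+M,
\]
and iterating from $|d_{\beta_1}(y)|\ge D_\alpha(y)$ recovers the paper's inequality with no loss. Once this holds for every $\beta$, your near/far split and the ``marginal regime'' concern become unnecessary: the geometric series already converges uniformly regardless of the size of $D_\alpha(y)$.
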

\begin{proof}
  By Lemma \ref{O(1) scale} and Lemma \ref{comparison of distances}, there exists a constant $C\gg 1$ such that
\[
|d_\beta(y,0)|\geq D_\alpha(y)+C\left(|\beta-\alpha|-1\right).
\]
Summing in $\beta$ we conclude the proof.
\end{proof}

\section{An approximate solution}\label{sec approximate solution}
\setcounter{equation}{0}

\subsection{Optimal approximation}\label{sec optimal approximation}

Fix a function $\zeta\in C_0^\infty(-2,2)$ with $\zeta\equiv 1$ in $(-1,1)$, $|\zeta^\prime|+|\zeta^{\prime\prime}|\leq 16$. Let
\[\bar{g}(t)=\zeta\left(4|\log\varepsilon|t\right)g(t)+\left[1-\zeta(4|\log\varepsilon|t)\right]\mbox{sgn}(t), \quad t\in(-\infty,+\infty).\]
In particular, $\bar{g}\equiv 1$ in $(8|\log\varepsilon|,+\infty)$ and $\bar{g}\equiv -1$ in $(-\infty, -8|\log\varepsilon|)$.

$\bar{g}$ is an approximate solution to the one dimensional Allen-Cahn equation, that is,
\begin{equation}\label{1d eqn}
\bar{g}^{\prime\prime}=W^\prime\left(\bar{g}\right)+\bar{\xi},
\end{equation}
where $\mbox{spt}(\bar{\xi})\in\{4|\log\varepsilon|<|t|<8|\log\varepsilon|\}$, and $|\bar{\xi}|+|\bar{\xi}^\prime|+|\bar{\xi}^{\prime\prime}|\lesssim \varepsilon^3$.

We also have (see Appendix \ref{sec 1d solution} for the definition of $\sigma_0$)
\begin{equation}\label{1d energy}
\int_{-\infty}^{+\infty}\bar{g}^\prime(t)^2dt=\sigma_0+O\left(\varepsilon^3\right).
\end{equation}

\medskip

Suppose $u$ has the same sign as $(-1)^\alpha d_\alpha$ near $\Gamma_\alpha$.
Given a  function $h_\alpha\in C^2(\Gamma_\alpha)$, let
\[g_\alpha(y,z;h_\alpha):=\bar{g}\left((-1)^\alpha\left(z-h_\alpha(y)\right)\right),\]
where $(y,z)$ is the Fermi coordinates with respect to $\Gamma_\alpha$.

Given a sequence of functions $(h_\alpha)=:h$, define the function $g(y,z;h)$ in the following way: for each $\alpha$,
\[g(y,z;h):= g_\alpha+\sum_{\beta<\alpha}\left[g_\beta-(-1)^\beta\right]+\sum_{\beta>\alpha}\left[g_\beta+(-1)^\beta\right] \quad \mbox{in } \mathcal{M}_\alpha^0.\]
By the definition of $\bar{g}$ and Lemma \ref{O(1) scale}, there are only finitely many terms in the above sum.

For simplicity of notation, denote
\[g_\alpha^\prime(y,z;h_\alpha)=\bar{g}^\prime\left((-1)^{\alpha}(z-h_\alpha(y))\right), \quad g_\alpha^{\prime\prime}(y,z;h_\alpha)=\bar{g}^{\prime\prime}\left((-1)^{\alpha}(z-h_\alpha(y))\right), \quad \cdots .\]

\begin{prop}\label{prop optimal approximation}
There exists $h=(h_\alpha)$ with $|h_\alpha|\ll 1$ for each $\alpha$, such that for any $\alpha$ and $y\in\Gamma_\alpha\cap B_{7R/8}(0)$,
\begin{equation}\label{orthogonal condition}
\int_{-\infty}^{+\infty}\left[u(y,z)-g(y,z;h)\right]g_\alpha^\prime(y,z;h_\alpha)dz=0,
\end{equation}
where $(y,z)$ denotes the Fermi coordinates with respect to $\Gamma_\alpha$.
\end{prop}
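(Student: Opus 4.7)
\medskip

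The plan is to solve \eqref{orthogonal condition} by a perturbative implicit-function/contraction argument, viewing the proposition as finding an optimal shift $h$ which places the error $u-g(\cdot,\cdot;h)$ into the orthogonal complement of the approximate kernel spanned by the $g_\alpha^\prime$. Concretely, for each $\alpha$ and $y\in\Gamma_\alpha\cap B_{7R/8}(0)$ set
\[
F_\alpha(y;h)\;:=\;\int_{-\infty}^{+\infty}\bigl[u(y,z)-g(y,z;h)\bigr]g_\alpha^\prime(y,z;h_\alpha)\,dz,
\]
where $(y,z)$ are Fermi coordinates with respect to $\Gamma_\alpha$. We look for $h=(h_\alpha)$, with $h_\alpha$ a small $C^2$ function on $\Gamma_\alpha\cap B_{7R/8}(0)$, solving $F_\alpha(y;h)=0$ for every $\alpha$.

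First I would show that $F_\alpha(y;0)$ is uniformly small. By Lemma \ref{O(1) scale}, at the $O(1)$ scale (after the stretching $u(x)=u_\varepsilon(\varepsilon x)$) the solution $u$ converges, in $C^2_{loc}$, to a one-dimensional profile aligned with $\Gamma_\alpha$; moreover, all other components $\Gamma_\beta$ with $\beta\neq\alpha$ retreat to infinity. Combined with Lemma \ref{lem distance ladder}, which controls contributions of farther sheets by $e^{-D_\alpha(y)}$, the approximate solution $g(y,z;0)$ captures $u(y,z)$ up to an error that vanishes uniformly as $\varepsilon\to0$ in the integration region $|z|\lesssim|\log\varepsilon|$, and is exponentially small outside. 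Hence $\sup_{\alpha,y}|F_\alpha(y;0)|=o(1)$.

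Next I would compute the linearization $DF$ at $h=0$. Using $\partial_{h_\alpha}g_\alpha=-(-1)^\alpha g_\alpha^\prime$ and $\partial_{h_\alpha}g_\alpha^\prime=-(-1)^\alpha g_\alpha^{\prime\prime}$, the diagonal entry is
\[
\frac{\partial F_\alpha}{\partial h_\alpha}(y;0)\;=\;(-1)^\alpha\int_{-\infty}^{+\infty}g_\alpha^\prime(y,z;0)^2\,dz\;-\;(-1)^\alpha\int_{-\infty}^{+\infty}\bigl[u(y,z)-g(y,z;0)\bigr]g_\alpha^{\prime\prime}(y,z;0)\,dz,
\]
whose leading piece equals $\pm\sigma_0+o(1)$ by \eqref{1d energy} and Step 1. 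For an off-diagonal entry $\partial_{h_\beta}F_\alpha$ with $\beta\neq\alpha$, the integrand is supported where both $g_\alpha^\prime$ and $g_\beta^\prime$ are nonnegligible, which by Lemma \ref{O(1) scale} and Lemma \ref{comparison of distances} forces $|d_\beta(y,0)|\gg1$ on $\Gamma_\alpha$; hence the entry is $O(e^{-D_\alpha(y)})$ and summing in $\beta$ via Lemma \ref{lem distance ladder} yields a total off-diagonal contribution of $O(A(R;0))=o(1)$. The linearization is therefore a small perturbation of a diagonal isomorphism on $\prod_\alpha L^\infty(\Gamma_\alpha\cap B_{7R/8}(0))$ with uniformly bounded inverse.

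Finally I would run a contraction mapping argument for the equivalent fixed point problem
\[
h_\alpha\;=\;h_\alpha\;-\;\frac{(-1)^\alpha}{\sigma_0}F_\alpha(\cdot;h),
\]
in the ball $\{\|h_\alpha\|_{C^0}\leq\delta\}$ of some small $\delta=\delta(\varepsilon)\to0$. The quadratic remainder $F_\alpha(\cdot;h)-F_\alpha(\cdot;0)-DF_\alpha(\cdot;0)\cdot h$ is estimated by differentiating twice and using the uniform bound $|g_\alpha^{\prime\prime}|,|g_\alpha^{\prime\prime\prime}|\lesssim g_\alpha^\prime$, giving a Lipschitz constant $\leq 1/2$. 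Regularity of the resulting $h_\alpha$ in $y$ follows from the $y$-regularity of $u$ in Fermi coordinates together with interior elliptic/parameter regularity for the fixed point. The main obstacle is keeping every estimate independent of the number of sheets, which may diverge as $\varepsilon\to0$: this is exactly what the summable exponential ladder Lemma \ref{lem distance ladder} secures, allowing the diagonally dominant structure of $DF(0)$ and the contraction property to hold uniformly in $\alpha$.
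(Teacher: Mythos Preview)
Your proposal is correct and follows essentially the same approach as the paper: define the functional $F_\alpha(y;h)$, show $F(0)$ is $o(1)$ via Lemma~\ref{O(1) scale}, establish that $DF$ is diagonally dominant with diagonal entries $\pm\sigma_0+o(1)$ and off-diagonal entries controlled by $e^{-|d_\beta(y,0)|}$ summed via Lemma~\ref{lem distance ladder}, and then invoke a fixed-point argument. The paper phrases the last step as the inverse function theorem rather than a contraction mapping, but this is only a cosmetic difference.
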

\begin{proof}
Denote
\[F(h):=\left(\int_{-\infty}^{+\infty}\left[u(y,z)-g(y,z;h)\right]g_\alpha^\prime\left(y,z;h_\alpha\right)dz\right),\]
which is viewed as a map from the Banach space $\mathcal{X}:=\bigoplus_\alpha C^0(\Gamma_\alpha\cap B_{7R/8}(0))$ to itself.

Clearly $F$ is a $C^1$ map. Furthermore, $\left(DF(h)\xi\right)_\alpha$, the $\alpha$-component of $ DF(h)\xi$, equals
\begin{eqnarray*}
&&(-1)^{\alpha}\xi_\alpha(y)\int_{-\infty}^{+\infty}\left[ g_\alpha^\prime\left(y,z;h_\alpha\right)^2-\left(u(y,z)-g(y,z;h)\right)g_\alpha^{\prime\prime}\left(y,z;h_\alpha\right) \right]dz\\
&+&\sum_{\beta\neq\alpha}(-1)^{\beta}\int_{-\infty}^{+\infty}\xi_\beta(\Pi_\beta(y,z))g_\alpha^\prime\left(y,z;h_\alpha\right)g_\beta^\prime\left(y,z;h_\beta\right)\nabla d_\beta\left(y,z\right)\cdot\nabla d_\alpha\left(y,z\right) dz.
\end{eqnarray*}
By Lemma \ref{O(1) scale}, there exists a $\tau>0$ such that if for all $\alpha$, $\|h_\alpha\|_{C^0(\Gamma_\alpha\cap B_{7R/8}(0))}<\tau$, then
\[\int_{-\infty}^{+\infty}\left[ g_\alpha^\prime\left(y,z;h_\alpha\right)^2-\left(u(y,z)-g(y,z;h)\right)g_\alpha^{\prime\prime}\left(y,z;h_\alpha\right) \right]dz\geq \frac{\sigma_0}{2},\]
\[\Big|\int_{-\infty}^{+\infty}g_\alpha^\prime\left(y,z;h_\alpha\right)g_\beta^\prime\left(y,z;h_\beta\right)\nabla d_\beta\left(y,z\right)\cdot\nabla d_\alpha\left(y,z\right) dz\Big|\lesssim e^{-|d_\beta(y,0)|}.\]

By Lemma \ref{lem distance ladder}, $DF(h)$ is diagonal dominated and thus invertible with $\|DF(h)^{-1}\|_{\mathcal{X}\mapsto\mathcal{X}}\leq C$.
By Lemma \ref{O(1) scale}, for all $\varepsilon$ small enough, $\|F(0)\|_{\mathcal{X}}\ll 1$. The existence of $h$ then follows from the inverse function theorem.
\end{proof}
\begin{rmk}
The proof shows that for each $\alpha$, $\|h_\alpha\|_{C^0(\Gamma_\alpha\cap B_{7R/8}(0))}=o(1)$. By differentiating \eqref{orthogonal condition}, we can show that $\|h_\alpha\|_{C^3(\Gamma_\alpha\cap B_{7R/8}(0))}=o(1)$ for each $\alpha$.
\end{rmk}

Denote $g_\alpha(y,z):=g_\alpha(y,z;h_\alpha)$ and $g_\ast(y,z):=g(y,z;h)$, where $h$ is given in the previous proposition.
As before we denote
\[g_\alpha^\prime(y,z)=g_\alpha^\prime(y,z;h_\alpha), \quad g_\alpha^{\prime\prime}(y,z)=g_\alpha^{\prime\prime}(y,z;h_\alpha), \quad \cdots .\]
Let $\phi:=u-g_\ast$ be the error between the  solution $u$ and the approximate solution $g_\ast$.

In the Fermi coordinates with respect to $\Gamma_\alpha$,
$\phi$ satisfies the following equation
\begin{eqnarray}\label{error equation}
&&\Delta_{\alpha,z}\phi-H_\alpha(y,z)\partial_z\phi+\partial_{zz}\phi \nonumber\\
&=&W^\prime(g_\ast+\phi)-\sum_{\beta}
 W^\prime(g_\beta) +(-1)^{\alpha}g_\alpha^\prime\left[H_\alpha(y,z)+\Delta_{\alpha,z} h_\alpha(y)\right]-g_\alpha^{\prime\prime}|\nabla_{\alpha,z}h_\alpha|^2\\
&+&\sum_{\beta\neq\alpha} \left[(-1)^{\beta}g_\beta^\prime\mathcal{R}_{\beta,1}-
g_\beta^{\prime\prime}\mathcal{R}_{\beta,2}\right] -\sum_{\beta}\xi_\beta,  \nonumber
\end{eqnarray}
where for each $\beta$, in the Fermi coordinates with respect to $\Gamma_\beta$,
\[\xi_\beta(y,z)=\bar{\xi}\left((-1)^{\beta}(z-h_\beta(y))\right),\]
\[
\mathcal{R}_{\beta,1}(y,z):=H_\beta(y,z)+\Delta_{\beta,z} h_\beta(y),
\]
\[\mathcal{R}_{\beta,2}(y,z):=|\nabla_{\beta,z} h_\beta(y)|^2.\]

\subsection{Interaction terms}

In this subsection we collect several estimates on the interaction term $\mathcal{I}:=W^\prime(g_\ast)-\sum_{\beta} W^\prime(g_\beta)$ between different components.

\begin{lem}\label{interaction term}
In $\mathcal{M}^4_\alpha$,
\begin{eqnarray}\label{interaction}
\mathcal{I}
&=&\left[W^{\prime\prime}(g_\alpha)-1\right]\left[g_{\alpha-1}-(-1)^{\alpha-1}\right]+\left[W^{\prime\prime}(g_\alpha)-1\right]\left[g_{\alpha+1}+(-1)^{\alpha+1}\right] \\
&+&O\left(e^{-2d_{\alpha-1}}+e^{2d_{\alpha+1}}\right)+O\left(e^{-d_{\alpha-2}-|d_\alpha|}+e^{d_{\alpha+2}-|d_\alpha|}\right). \nonumber
\end{eqnarray}
\end{lem}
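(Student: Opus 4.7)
My plan is to expand $W'(g_\ast)$ around the dominant profile $g_\alpha$ and each $W'(g_\beta)$ ($\beta\neq\alpha$) around its constant limit, then read off the two main terms and estimate the remainders.

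Concretely, write $\psi_\alpha:=g_\ast-g_\alpha$, so by the definition of $g_\ast$ in $\mathcal{M}^0_\alpha$ (which contains $\mathcal{M}^4_\alpha$ up to a harmless overlap),
\[
\psi_\alpha=\sum_{\beta<\alpha}\bigl[g_\beta-(-1)^\beta\bigr]+\sum_{\beta>\alpha}\bigl[g_\beta+(-1)^\beta\bigr].
\]
For each $\beta\neq\alpha$ let $s_\beta=(-1)^\beta$ when $\beta<\alpha$ and $s_\beta=-(-1)^\beta$ when $\beta>\alpha$; these are precisely the asymptotic values of $\bar g$ on the side of $\Gamma_\beta$ containing $\Gamma_\alpha$, so in $\mathcal{M}^4_\alpha$ one has the pointwise bound $|g_\beta-s_\beta|\lesssim e^{-|d_\beta|}$. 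Taylor expansion gives $W'(g_\ast)=W'(g_\alpha)+W''(g_\alpha)\psi_\alpha+O(\psi_\alpha^2)$, and since $W'(\pm1)=0$, $W''(\pm1)=1$, also $W'(g_\beta)=(g_\beta-s_\beta)+O\bigl((g_\beta-s_\beta)^2\bigr)$. Subtracting yields
\[
\mathcal{I}=\bigl[W''(g_\alpha)-1\bigr]\psi_\alpha+O\bigl(\psi_\alpha^2\bigr)+O\Bigl(\sum_{\beta\neq\alpha}(g_\beta-s_\beta)^2\Bigr).
\]

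Next, split off the nearest-neighbor pieces $\psi_\alpha=[g_{\alpha-1}-(-1)^{\alpha-1}]+[g_{\alpha+1}+(-1)^{\alpha+1}]+\mathcal{T}_\alpha$, where $\mathcal{T}_\alpha$ is the sum over $|\beta-\alpha|\geq 2$. This identifies the two explicit main terms in the statement, leaving $[W''(g_\alpha)-1]\mathcal{T}_\alpha$, $\psi_\alpha^2$, and $\sum_{\beta\neq\alpha}(g_\beta-s_\beta)^2$ to be controlled.

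The three remainders are then bounded using two inputs. First, because $W''(\pm 1)=1$ and $W$ is $C^3$, $|W''(g_\alpha)-1|\lesssim 1-g_\alpha^2\lesssim e^{-|d_\alpha|}$, using the uniform estimate $1-\bar g(t)^2\lesssim e^{-|t|}$ (the cutoff region in the construction of $\bar g$ contributes only $O(\varepsilon^3)$, well below the claimed error). Second, Lemma \ref{O(1) scale} applied to each component implies a distance-ladder inequality at any interior point of $\mathcal{M}^4_\alpha$: for $\beta\le\alpha-1$, $|d_\beta|\geq d_{\alpha-1}+C(\alpha-1-\beta)$, and analogously for $\beta\ge\alpha+1$, $|d_\beta|\geq -d_{\alpha+1}+C(\beta-\alpha-1)$ (this is the interior-point analogue of Lemma \ref{lem distance ladder}, adapted via Lemma \ref{comparison of distances} and the graph representation of Lemma \ref{lem graph construction}). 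Summing the resulting geometric series gives $\sum_{\beta\neq\alpha}e^{-|d_\beta|}\lesssim e^{-d_{\alpha-1}}+e^{d_{\alpha+1}}$ and $\sum_{|\beta-\alpha|\geq2}e^{-|d_\beta|}\lesssim e^{-d_{\alpha-2}}+e^{d_{\alpha+2}}$, so $[W''(g_\alpha)-1]\mathcal{T}_\alpha=O(e^{-|d_\alpha|-d_{\alpha-2}}+e^{-|d_\alpha|+d_{\alpha+2}})$, while $\psi_\alpha^2$ and $\sum(g_\beta-s_\beta)^2$ are both $O(e^{-2d_{\alpha-1}}+e^{2d_{\alpha+1}})$. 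Combining gives exactly the claim.

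The only genuine friction is bookkeeping: verifying that in $\mathcal{M}^4_\alpha$ the sign convention forces $|d_{\alpha\pm1}|=\pm d_{\alpha\pm1}$ (so that $e^{-|d_{\alpha-1}|}=e^{-d_{\alpha-1}}$ and $e^{-|d_{\alpha+1}|}=e^{d_{\alpha+1}}$), matching $s_\beta$ to the correct asymptotic value of $\bar g$, and extending the distance-ladder from level-set points (as in Lemma \ref{lem distance ladder}) to interior points. Once this is in place, the estimates are routine Taylor expansions combined with the exponential convergence of $g$ to $\pm 1$.
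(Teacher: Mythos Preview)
The paper states this lemma without proof (it is treated as a routine computation, presumably carried over from \cite{Wang-Wei2}), and your argument is correct and is precisely the natural one: Taylor expand $W'(g_\ast)$ about $g_\alpha$ and each $W'(g_\beta)$ about its limit $\pm1$, then control the remainders via the exponential decay of $1-\bar g^2$ together with the distance ladder. One small wording slip: $\mathcal{M}_\alpha^4\supset\mathcal{M}_\alpha^0$, not the reverse; but this causes no trouble, since the formula $g_\ast=g_\alpha+\psi_\alpha$ is globally consistent (the piecewise definitions of $g_\ast$ in adjacent $\mathcal{M}_\beta^0$ agree on overlaps), so it holds throughout $\mathcal{M}_\alpha^4$.
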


The following upper bound on the interaction term will be used a lot in the below.

\begin{lem}\label{upper bound on interaction}
For any $(y,z)\in \mathcal{M}^4_\alpha$,
\[\big|\mathcal{I}(y,z)\big|\lesssim e^{-D_{\alpha}(y)}+\varepsilon^2.\]
\end{lem}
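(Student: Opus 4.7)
The plan is to deduce this upper bound directly from the precise expansion \eqref{interaction} in Lemma~\ref{interaction term}, by showing each of its four summands is bounded by $e^{-D_\alpha(y)}$ (with the additive $\varepsilon^2$ serving as a harmless buffer for lower order remainders). The key analytic inputs are the one dimensional asymptotics of the profile: since $W''(\pm 1)=1$ one has $|W''(g_\alpha(y,z))-1|\lesssim |1-g_\alpha^2|\lesssim e^{-|z|}$, and similarly $|g_{\alpha-1}(y,z)-(-1)^{\alpha-1}|+|g_{\alpha+1}(y,z)+(-1)^{\alpha+1}|\lesssim e^{-|d_{\alpha-1}(y,z)|}+e^{-|d_{\alpha+1}(y,z)|}$. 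The shift by $h_{\alpha\pm 1}$ is absorbed for free since $\|h_{\alpha\pm 1}\|_\infty\ll 1$ by Proposition~\ref{prop optimal approximation}.

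The main combinatorial tool I will use is that signed distance functions are $1$-Lipschitz, so $|d_\beta(y,z)-d_\beta(y,0)|\leq|z|$; hence
\[
|z|+|d_\beta(y,z)|\geq |d_\beta(y,0)|\geq D_\alpha(y)\qquad\text{for every }\beta\neq\alpha.
\]
Applied with $\beta=\alpha\pm 1$ this immediately dominates the two leading summands of \eqref{interaction} by $e^{-|z|}\cdot e^{-|d_{\alpha\pm 1}(y,z)|}\lesssim e^{-D_\alpha(y)}$. Applied with $\beta=\alpha\pm 2$, combined with the tautology $|d_\alpha(y,z)|=|z|$, it controls the next nearest neighbor remainder $e^{-|d_{\alpha\pm 2}|-|d_\alpha|}\leq e^{-D_\alpha(y)}$. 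For the remaining quadratic remainder $O(e^{-2|d_{\alpha\pm 1}|})$ I will additionally invoke the defining inequality of $\mathcal{M}_\alpha^4$, namely $|z|=|d_\alpha(y,z)|<|d_{\alpha\pm 1}(y,z)|+4$, and write $2|d_{\alpha\pm 1}(y,z)|\geq|z|+|d_{\alpha\pm 1}(y,z)|-4\geq D_\alpha(y)-4$, whence $e^{-2|d_{\alpha\pm 1}|}\lesssim e^{-D_\alpha(y)}$. Summing the four contributions produces the claimed bound.

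The main obstacle, and really the only nontrivial bookkeeping, is getting the sign conventions right when translating between Fermi coordinates of $\Gamma_\alpha$ and of $\Gamma_{\alpha\pm 1}$, so as to verify that $g_{\alpha-1}-(-1)^{\alpha-1}$ and $g_{\alpha+1}+(-1)^{\alpha+1}$ indeed correspond to the exponentially decaying side of $\bar g$ at the point under consideration. Beyond that the argument is routine, and the additive $\varepsilon^2$ in the statement is a conservative buffer accommodating the $O(\varepsilon^4)$ discrepancy between $\bar g$ and $g$ produced by the cutoff in Section~\ref{sec optimal approximation}, together with higher Taylor remainders of $W'$ at the wells.
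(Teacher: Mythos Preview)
Your proposal is correct. The paper states Lemma~\ref{upper bound on interaction} without proof, presenting it simply as ``the following upper bound on the interaction term,'' i.e.\ an immediate consequence of the expansion~\eqref{interaction}; your derivation---bounding each summand via the $1$-Lipschitz property of signed distances, $|z|+|d_\beta(y,z)|\ge |d_\beta(y,0)|\ge D_\alpha(y)$, and invoking the membership $(y,z)\in\mathcal{M}_\alpha^4$ to handle the quadratic remainder $e^{-2|d_{\alpha\pm1}|}$---is exactly the intended reading and is carried out cleanly.
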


The Lipschitz norm of interaction terms can also be estimated in a similar way.
\begin{lem}\label{Holder bound on interaction}
For any $(y,z)\in\mathcal{M}_\alpha^3$,
\[\big\|\mathcal{I}\big\|_{ Lip(B_1(y,z))}\lesssim \max_{B_1^\alpha(y)}e^{-D_\alpha}+\varepsilon^2.\]
\end{lem}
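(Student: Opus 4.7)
The plan is to upgrade the pointwise bound of Lemma \ref{upper bound on interaction} to a Lipschitz bound by estimating spatial derivatives of $\mathcal{I}$ term by term, using the expansion in Lemma \ref{interaction term} as the starting point. First I would verify the domain issue: if $(y,z)\in\mathcal{M}_\alpha^3$, then for every $(y',z')\in B_1(y,z)$ the 1-Lipschitz property of each distance function gives $|d_\alpha(y',z')|\leq |d_\alpha(y,z)|+1$ and $|d_{\alpha\pm 1}(y',z')|\geq |d_{\alpha\pm 1}(y,z)|-1$, so $(y',z')\in\mathcal{M}_\alpha^5$. Thus the explicit expansion of Lemma \ref{interaction term} (whose constant $4$ can be replaced by any universal constant) holds throughout $B_1(y,z)$.

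Next I would bound the derivative of each factor appearing in that expansion. From the one dimensional asymptotics of $g$ and the cutoff structure of $\bar g$ one has $|\bar g(t)-\mathrm{sgn}(t)|+|\bar g'(t)|+|\bar g''(t)|\lesssim e^{-|t|}$; combined with $|\nabla d_\beta|\equiv 1$ and the smallness of $\|h_\beta\|_{C^1}$ noted after Proposition \ref{prop optimal approximation}, this yields, for any $\beta$,
\[
|g_\beta-(\pm 1)|+|\nabla g_\beta|+|\nabla^2 g_\beta|\lesssim e^{-|d_\beta(y',z')|}\quad\text{on }B_1(y,z),
\]
while $W\in C^3$ gives $|\nabla(W''(g_\alpha)-1)|\lesssim |\nabla g_\alpha|\lesssim 1$. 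Applying the product rule to the leading term $[W''(g_\alpha)-1][g_{\alpha-1}-(-1)^{\alpha-1}]$ in Lemma \ref{interaction term}, both contributions are bounded by $e^{-|d_{\alpha-1}(y',z')|}$, and similarly for the $[W''(g_\alpha)-1][g_{\alpha+1}+(-1)^{\alpha+1}]$ term. The higher-order remainders $e^{-2d_{\alpha-1}}+e^{2d_{\alpha+1}}+e^{-d_{\alpha-2}-|d_\alpha|}+e^{d_{\alpha+2}-|d_\alpha|}$ and their derivatives are dominated by the leading ones via Lemma \ref{lem distance ladder}.

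Finally I would translate the pointwise estimates from Euclidean coordinates back to a bound involving $\max_{B_1^\alpha(y)}e^{-D_\alpha}$. For $(y',z')\in B_1(y,z)$, letting $y'':=\Pi_\alpha(y',z')$, the bound $D_\alpha(y'')\leq |d_{\alpha\pm 1}(y'',0)|\leq |d_{\alpha\pm 1}(y',z')|+O(1)$ (via the 1-Lipschitz property of $d_{\alpha\pm 1}$ together with Lemma \ref{biLip of projection operator}) gives
\[
e^{-|d_{\alpha\pm 1}(y',z')|}\lesssim e^{-D_\alpha(y'')}\leq \max_{B_1^\alpha(y)}e^{-D_\alpha},
\]
after adjusting the radius of the intrinsic ball by a harmless universal factor (which can be absorbed into the constant, or the statement can be applied on $B_{1/2}(y,z)$ to guarantee $y''\in B_1^\alpha(y)$). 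The $\varepsilon^2$ term simply tracks, as in the proof of Lemma \ref{upper bound on interaction}, the contributions from the cutoff region $\{4|\log\varepsilon|<|d_\beta|<8|\log\varepsilon|\}$, where $\bar g$ differs from $g$ by $O(\varepsilon^4)$; since $\zeta$ has bounded derivatives, the Lipschitz contribution of these terms is of the same order.

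The main obstacle is bookkeeping rather than a genuine analytic difficulty: one must verify that none of the derivatives acting on Fermi-coordinate factors (the metric $g_{\alpha,ij}(y,z)$, the projection $\Pi_\alpha$, the phase $h_\alpha$) introduce losses that break the exponential decay. The $\varepsilon$-scale derivative bounds of Section \ref{sec Fermi coordinates} are exactly what is needed so that these auxiliary factors behave as Lipschitz-1 quantities against the exponentially decaying profile, preserving the claimed estimate.
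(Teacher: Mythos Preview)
Your overall strategy matches the paper's: the paper gives no detailed proof and simply says the Lipschitz bound ``can be estimated in a similar way'' to Lemma~\ref{upper bound on interaction}, i.e., by differentiating the expansion of Lemma~\ref{interaction term} term by term. So the route is the intended one.

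There is, however, a real gap in the execution. After applying the product rule you record only the bound $e^{-|d_{\alpha-1}(y',z')|}$ for each contribution, using $|\nabla(W''(g_\alpha)-1)|\lesssim 1$ and $|W''(g_\alpha)-1|\lesssim 1$. You then try to pass from $e^{-|d_{\alpha-1}(y',z')|}$ to $e^{-D_\alpha(y'')}$ via ``$|d_{\alpha-1}(y'',0)|\le |d_{\alpha-1}(y',z')|+O(1)$''. That inequality is false: in Fermi coordinates $y''=y'$, and the $1$-Lipschitz bound only gives $|d_{\alpha-1}(y',0)|\le |d_{\alpha-1}(y',z')|+|z'|$, with $|z'|$ not $O(1)$. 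Inside $\mathcal{M}_\alpha^5$ one has $|z'|<|d_{\alpha-1}(y',z')|+5$, so your chain of inequalities yields at best $e^{-|d_{\alpha-1}(y',z')|}\lesssim e^{-D_\alpha(y')/2}$, which is the wrong power.

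The fix is to keep the second exponential factor you discarded: since $W''(\pm1)=1$, one has $|W''(g_\alpha)-1|+|\nabla(W''(g_\alpha)-1)|\lesssim e^{-|d_\alpha(y',z')|}$. Both product-rule terms are therefore bounded by $e^{-|d_\alpha(y',z')|-|d_{\alpha-1}(y',z')|}$, and the triangle inequality for $d_{\alpha-1}$ gives
\[
|d_\alpha(y',z')|+|d_{\alpha-1}(y',z')|\ \ge\ |d_{\alpha-1}(y',0)|\ \ge\ D_\alpha(y'),
\]
which is exactly the translation to $\max_{B_1^\alpha(y)}e^{-D_\alpha}$ (up to the harmless ball-radius adjustment you already noted). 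With this correction the argument closes; the remaining remarks about the remainder terms, Lemma~\ref{lem distance ladder}, and the $\varepsilon^2$ contribution from the cutoff are fine.
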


\subsection{Controls on $h$ using $\phi$}

The choice of optimal approximation in Subsection \ref{sec optimal approximation} has the advantage that
 $h$ is controlled by $\phi$. This will allow us to iterate various elliptic estimates in Section \ref{sec first order} below.
\begin{lem}\label{control on h_0}
For each $\alpha$ and $y\in\Gamma_\alpha$, we have
\begin{equation}\label{control on h 1}
 \|h_\alpha\|_{C^{2,\theta}(B_1^\alpha(y))}\lesssim\|\phi\|_{C^{2,\theta}(B_1(y,0))}+\max_{B_1^\alpha(y)}e^{-D_\alpha},
\end{equation}
\begin{eqnarray}\label{control on h_2}
\|\nabla_{\alpha,0} h_\alpha\|_{C^{1,\theta}(B_1^\alpha(y))}&\lesssim&\|\nabla_{\alpha,0}\phi\|_{C^{1,\theta}(B_1(y,0))}
+\varepsilon^{1/6}\max_{B_1^\alpha(y)}e^{-D_\alpha} \nonumber\\
&+&\left(\max_{\beta: |d_\beta(y,0)|\leq 8|\log\varepsilon|}\|\nabla_{\beta,0}h_\beta\|_{C^{1,\theta}(B_2^\beta(\Pi_\beta(y,0)))}\right)\left(\max_{B_1^\alpha(y)}e^{-D_\alpha}\right).
\end{eqnarray}
\end{lem}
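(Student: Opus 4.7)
The plan is to derive a pointwise algebraic equation for $h_\alpha$ by exploiting the defining property $\Gamma_\alpha \subset \{u = 0\}$. In the Fermi coordinates based on $\Gamma_\alpha$, this becomes $u(y, 0) = 0$ for every $y \in \Gamma_\alpha$. Substituting $u = g_\ast + \phi$ and using the definition of $g_\ast$ on $\mathcal{M}_\alpha^0$ gives
\[
\bar g\bigl(-(-1)^\alpha h_\alpha(y)\bigr) \;=\; -\phi(y,0) \;-\; (g_\ast - g_\alpha)(y,0).
\]
Since $\bar g$ agrees with $g$ near $0$ so that $\bar g'(0) = g'(0) > 0$, and since $\|h_\alpha\|_{C^0}=o(1)$ by the remark after Proposition \ref{prop optimal approximation}, this relation is invertible in $h_\alpha$ and yields
\[
h_\alpha(y) \;=\; \Psi\bigl(-\phi(y,0) - (g_\ast - g_\alpha)(y,0)\bigr)
\]
for a fixed smooth function $\Psi$ with $\Psi(0)=0$. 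Thus $h_\alpha$ is written pointwise as a smooth function of data defined on $\Gamma_\alpha$, and both estimates reduce to controlling the two inputs in the appropriate norm.

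For \eqref{control on h 1}, composition with $\Psi$ preserves $C^{2,\theta}$-norms up to universal constants, so it is enough to bound the two inputs in $C^{2,\theta}(B_1^\alpha(y))$. The standard trace inequality for the smooth hypersurface $\Gamma_\alpha$ gives $\|\phi(\cdot,0)\|_{C^{2,\theta}(B_1^\alpha(y))} \lesssim \|\phi\|_{C^{2,\theta}(B_1(y,0))}$. For the interaction piece,
\[
(g_\ast - g_\alpha)(y,0) = \sum_{\beta<\alpha}\bigl[g_\beta(y,0;h_\beta) - (-1)^\beta\bigr] + \sum_{\beta>\alpha}\bigl[g_\beta(y,0;h_\beta) + (-1)^\beta\bigr],
\]
each summand, together with its tangential derivatives on $\Gamma_\alpha$ of order $\le 2$, is $O(e^{-|d_\beta(y,0)|})$: this follows from the exponential convergence of $\bar g \mp 1$ at $\pm\infty$ combined with the uniform control of $d_\beta$, $\Pi_\beta$ and $h_\beta$ provided by Lemmas \ref{lem graph construction}, \ref{biLip of projection operator} and the remark after Proposition \ref{prop optimal approximation}. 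Summing via Lemma \ref{lem distance ladder} produces the bound $\max_{B_1^\alpha(y)} e^{-D_\alpha}$.

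For \eqref{control on h_2}, one differentiates the implicit equation along $\Gamma_\alpha$ to obtain
\[
\nabla_{\alpha,0} h_\alpha \;=\; \Psi'(\cdot)\Bigl[-\nabla_{\alpha,0}\phi(y,0) \;-\; \sum_{\beta\neq\alpha}(-1)^\beta g_\beta'(y,0;h_\beta)\bigl(\nabla_{\alpha,0} d_\beta - \nabla_{\alpha,0}(h_\beta \circ \Pi_\beta)\bigr)\Bigr].
\]
The prefactor $|g_\beta'|\lesssim e^{-|d_\beta|}$ vanishes outside $\{|d_\beta|\leq 8|\log\varepsilon|\}$, accounting for the range of $\beta$ in the max. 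The crucial gain is that the tangential component on $\Gamma_\alpha$ of the unit normal $\nabla d_\beta$ has length
\[
\sqrt{1-(\nabla d_\alpha \cdot \nabla d_\beta)^2} \;\lesssim\; \varepsilon^{1/4}|\log\varepsilon|^{3/4} \;\ll\; \varepsilon^{1/6}
\]
by Lemma \ref{comparison of distances}, which produces the small coefficient $\varepsilon^{1/6}$ in front of $\max e^{-D_\alpha}$; the second bracket piece is controlled by $\|\nabla_{\beta,0} h_\beta\|$ at $\Pi_\beta(y,0)$ via Lemma \ref{biLip of projection operator}, and Lemma \ref{lem distance ladder} sums the exponential series over $\beta$. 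To upgrade this pointwise bound to the $C^{1,\theta}$ seminorm, the same near-parallelism argument must be applied to the second tangential derivatives of $d_\beta$ via \eqref{bound on 3rd derivatives}; propagating the $\varepsilon^{1/6}$ cancellation through this extra differentiation is the main obstacle, since a careless estimate would replace the small factor by a universal constant.
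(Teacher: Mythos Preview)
Your approach is essentially the same as the paper's: both invert the relation $u(y,0)=0$ on $\Gamma_\alpha$ to express $h_\alpha$ as a smooth function of $\phi(y,0)$ and the interaction sum $g_\ast-g_\alpha$, then differentiate tangentially and invoke Lemma~\ref{comparison of distances} to get $|\nabla_{\alpha,0}d_\beta|\lesssim\varepsilon^{1/6}$ together with Lemma~\ref{lem distance ladder} to sum in $\beta$.

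Your stated ``main obstacle'' at the end is in fact a non-issue, and the paper resolves it in one line. To pass from the pointwise bound on $\nabla_{\alpha,0}h_\alpha$ to its $C^{1,\theta}$ (indeed Lipschitz) seminorm, one needs the tangential variation of $\nabla_{\alpha,0}d_\beta$, i.e.\ $\nabla_{\alpha,0}^2 d_\beta$. But $\nabla^2 d_\beta$ is precisely the second fundamental form of the level surface $\Gamma_{\beta,z}$, so $|\nabla_{\alpha,0}^2 d_\beta|\le|\nabla^2 d_\beta|\lesssim\varepsilon$ directly from the curvature bound~\eqref{bound on second fundamental form}. This is much smaller than $\varepsilon^{1/6}$, so there is no delicate cancellation to propagate; the second-derivative term is automatically of the right size. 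Combined with Lemma~\ref{Holder bound on interaction} for the interaction piece and the inclusion $\Pi_\beta(B_1^\alpha(y))\subset B_2^\beta(\Pi_\beta(y,0))$ when $|d_\beta(y,0)|\le 8|\log\varepsilon|$, this closes \eqref{control on h_2}.
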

\begin{proof}
Fix an $\alpha$.  In the Fermi coordinates with respect to $\Gamma_\alpha$, because $u(y,0)=0$,
\begin{eqnarray}\label{representation of h_alpha}
\phi(y,0)=-\bar{g}\left((-1)^{\alpha+1} h_\alpha(y)\right)&-&\sum_{\beta<\alpha}\left[\bar{g}\left((-1)^{\beta}\left(d_\beta(y,0)-h_\beta(\Pi_\beta(y,0))\right)\right)-(-1)^{\beta}\right] \nonumber\\
&  -&\sum_{\beta>\alpha}\left[\bar{g}\left((-1)^{\beta}\left(d_\beta(y,0)-h_\beta(\Pi_\beta(y,0))\right)\right)+(-1)^{\beta}\right].
\end{eqnarray}
Note that for $\beta\neq\alpha$,  $|h_\beta(\Pi_\beta(y,0))|\ll 1$. Then using Lemma \ref{lem distance ladder}, we get
\begin{equation}\label{4.1}
|h_\alpha(y)| \lesssim |\phi(y,0)|+\sum_{\beta\neq\alpha}e^{-|d_\beta(y,0)|} \lesssim  |\phi(y,0)|+e^{-D_\alpha(y)}.
\end{equation}

Differentiating \eqref{representation of h_alpha}, we get
\[
\nabla_{\alpha,0}\phi(y,0)=(-1)^{\alpha} \bar{g}^\prime\left((-1)^{\alpha+1} h_\alpha(y)\right)\nabla_{\alpha,0}h_\alpha(y)-\sum_{\beta\neq\alpha}(-1)^\beta g_\beta^\prime(y,0)\nabla_{\alpha,0}\left( d_\beta-h_\beta\circ\Pi_\beta\right)(y,0),
\]
and
\begin{eqnarray*}
\nabla_{\alpha,0}^2\phi(y,0)
&=&(-1)^{\alpha} \bar{g}^\prime\left((-1)^{\alpha+1} h_\alpha(y)\right)\nabla_{\alpha,0}^2h_\alpha(y)-
\bar{g}^{\prime\prime}\left((-1)^\alpha h_\alpha(y)\right)\nabla_{\alpha,0}h_\alpha(y)
\otimes\nabla_{\alpha,0}h_\alpha(y)\\
&-&\sum_{\beta\neq\alpha}(-1)^{\beta} g_\beta^\prime(y,0)\nabla_{\alpha,0}^2\left( d_\beta-h_\beta\circ\Pi_\beta\right)(y,0)\\
&-&\sum_{\beta\neq\alpha}
g_\beta^{\prime\prime}(y,0)\nabla_{\alpha,0}\left( d_\beta-h_\beta\circ\Pi_\beta\right)(y,0)
\otimes\nabla_{\alpha,0}\left( d_\beta-h_\beta\circ\Pi_\beta\right)(y,0).
\end{eqnarray*}

By Lemma \ref{comparison of distances}, if $g_\beta^\prime(y,0)\neq0$,
\[|\nabla_{\alpha,0}d_\beta|=\sqrt{1-\nabla d_\beta\cdot\nabla d_\alpha}\lesssim\varepsilon^{1/6}.\]
Thus
\[
|\nabla_{\alpha,0} h_\alpha(y)| \lesssim |\nabla_{\alpha,0} \phi(y,0)|+\left(\varepsilon^{1/6}+|\nabla_{\beta,0} h_\beta(\Pi_\beta(y,0))|\right)\left(e^{d_{\alpha+1}(y,0)}+e^{-d_{\alpha-1}(y,0)}\right).
\]
A similar calculation leads to an upper bound on $|\nabla_{\alpha,0}^2h_\alpha(y)|$.

Finally,  the H\"{o}lder estimate in \eqref{control on h_2} follows by combining the above representation formula, Lemma \ref{Holder bound on interaction} and the
bound
\[|\nabla_{\alpha,0}^2d_\beta|\leq|\nabla^2d_\beta|\lesssim\varepsilon.\]
Here it is useful to note that $\nabla^2d_\beta$ is the second fundamental form of $\Gamma_{\beta,z}$, as well as the fact that $\Pi_\beta(B_1^\alpha(y))\subset B_2^\beta(\Pi_\beta(y,0))$ if $|d_\beta(y,0)|\leq 8|\log\varepsilon|$.
\end{proof}

\section{A Toda system}\label{sec Toda system}
\setcounter{equation}{0}

In the Fermi coordinates with respect to $\Gamma_\alpha$, multiplying \eqref{error equation} by $g_\alpha^\prime$ and integrating in
$z$ leads to
\begin{eqnarray}\label{H eqn}
&&\int_{-\infty}^{+\infty}\left[g_\alpha^\prime\Delta_{\alpha,z}\phi-H_\alpha(y,z)g_\alpha^\prime\partial_z\phi+g_\alpha^\prime\partial_{zz}\phi\right] \\
&=&\int_{-\infty}^{+\infty} \left[W^\prime(g_\ast+\phi)-\sum_\beta W^\prime(g_\beta)\right] g_\alpha^\prime+(-1)^{\alpha}\int_{-\infty}^{+\infty}\left[H_\alpha(y,z)+\Delta_{\alpha,z}h_\alpha(y)\right]g_\alpha^\prime(z)^2\nonumber\\
&-&\int_{-\infty}^{+\infty} g_\alpha^{\prime\prime}g_\alpha^\prime|\nabla_{\alpha,z}
h_\alpha|^2 +\sum_{\beta\neq\alpha}\int_{-\infty}^{+\infty}\left[(-1)^{\beta}g_\alpha^\prime g_\beta^\prime \mathcal{R}_{\beta,1} - g_\alpha^\prime g_\beta^{\prime\prime}\mathcal{R}_{\beta,2}\right]  -\sum_{\beta}\int_{-\infty}^{+\infty}\xi_\beta g_\alpha^\prime .      \nonumber
\end{eqnarray}
From this equation we deduce that
\begin{equation}\label{Toda system}
H_\alpha(y,0)+\Delta_{\alpha,0}h_\alpha(y)
=\frac{2 A_{(-1)^{\alpha-1}}^2}{\sigma_0} e^{-d_{\alpha-1}(y,0)}-\frac{2 A_{(-1)^{\alpha}}^2}{\sigma_0}e^{d_{\alpha+1}(y,0)}+E_\alpha^0(y),
\end{equation}
where $E_\alpha^0$ is a higher order term. (See Appendix \ref{sec 1d solution} for the definition of $A_1$ and $A_{-1}$.) More precisely, we have
\begin{lem}\label{lem 5.1}
For any $x\in B_{5R/6}(0)$ and $r\in(0,R/7)$,
\begin{eqnarray}\label{5.1}
\max_\alpha\big\|E_\alpha^0\big\|_{C^\theta({\Gamma_\alpha\cap B_{r}(x)})}
&\lesssim& \varepsilon^2+ \varepsilon^{\frac{1}{3}}A\left(r+10|\log\varepsilon|;x\right) +A\left(r+10|\log\varepsilon|;x\right)^{\frac{3}{2}}  \nonumber\\
&+&\max_\alpha\big\|H_\alpha+\Delta_{\alpha,0}h_\alpha\big\|_{C^\theta(\Gamma_\alpha\cap B_{r+10|\log\varepsilon|}(x))}^2+ \|\phi\|_{C^{2,\theta}(B_{r+10|\log\varepsilon|}(x))}^2.
\end{eqnarray}
\end{lem}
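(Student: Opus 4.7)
The plan is to divide equation \eqref{H eqn} by $\int g_\alpha'(z)^2\,dz=\sigma_0+O(\varepsilon^3)$ (using \eqref{1d energy}) and identify $E_\alpha^0$ as the collection of every remainder that is not part of the leading mean-curvature term on the left of \eqref{Toda system} or the leading exponential terms on the right. The leading mean-curvature contribution comes from $(-1)^\alpha\int[H_\alpha(y,z)+\Delta_{\alpha,z}h_\alpha(y)]\,g_\alpha'(z)^2\,dz$; freezing $z=0$ there costs $O(\varepsilon^2)$ by the $z$-deviation bounds \eqref{error in z 4}, \eqref{error in z 5} combined with \eqref{derivatives bound} and $|z|\,g_\alpha'(z)^2$ being integrable. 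The $\xi_\beta$ pieces contribute $O(\varepsilon^3)$ from the pointwise bound on $\bar\xi$.

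The leading exponential terms arise from the interaction integral $\int[W'(g_\ast+\phi)-\sum_\beta W'(g_\beta)]g_\alpha'\,dz$. First, split off $W'(g_\ast+\phi)-W'(g_\ast)=W''(g_\ast)\phi+O(\phi^2)$; using the orthogonality \eqref{orthogonal condition} plus integration by parts against $g_\alpha''=W'(g_\alpha)$ turns the $O(\phi)$ piece into a product of an exponentially small coefficient (from $W''(g_\ast)-W''(g_\alpha)$, which is itself $O(e^{-D_\alpha})$ in the support of $g_\alpha'$) with $\|\phi\|_{C^{2,\theta}}$, yielding something $\lesssim A\,\|\phi\|_{C^{2,\theta}}\leq A^{3/2}+\|\phi\|_{C^{2,\theta}}^2$ by Young's inequality; the quadratic $O(\phi^2)$ piece is directly $\lesssim\|\phi\|_{C^{2,\theta}}^2$. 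Next, Lemma \ref{interaction term} rewrites $W'(g_\ast)-\sum_\beta W'(g_\beta)$ in $\mathcal M_\alpha^4$ as two explicit products plus $O(e^{-2d_{\alpha-1}}+e^{2d_{\alpha+1}})+O(e^{-d_{\alpha-2}-|d_\alpha|}+e^{d_{\alpha+2}-|d_\alpha|})$; the first two are evaluated using the one-dimensional asymptotics of $g$ from the appendix and $\int W''(g)\big[\bar g(\cdot\pm d)-(\pm1)\big]g'\,dt$, producing exactly the coefficients $2A_{(-1)^{\alpha\mp1}}^2\sigma_0^{-1}$ after integration and Lemma \ref{lem distance ladder}, while the remainders are $\lesssim A(r+10|\log\varepsilon|;x)^2\le A(r+10|\log\varepsilon|;x)^{3/2}$ (since $A\ll1$). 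Outside $\mathcal{M}_\alpha^4$ the integrand against $g_\alpha'$ is exponentially small.

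For the left-hand side of \eqref{H eqn}, the $z$-derivative terms are treated by integration by parts in $z$, exploiting the orthogonality \eqref{orthogonal condition} which kills the leading boundary contributions and leaves quantities controlled by $\|\phi\|_{C^{2,\theta}}$ times an exponentially small factor, hence $\le\|\phi\|_{C^{2,\theta}}^2+A^2$. The tangential term $\int g_\alpha'\Delta_{\alpha,z}\phi\,dz$ is compared to $\Delta_{\alpha,0}\int g_\alpha'\phi\,dz=0$ (again by \eqref{orthogonal condition}) via the commutator estimate Lemma \ref{commutator estimate} and \eqref{error in z 5}, losing $O(\varepsilon\|\phi\|_{C^{2,\theta}})\le\varepsilon^2+\|\phi\|_{C^{2,\theta}}^2$. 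The remaining right-hand-side pieces of \eqref{H eqn} are handled term-by-term: $\int g_\alpha'g_\alpha''|\nabla_{\alpha,z}h_\alpha|^2\,dz$ is $O(\varepsilon|\nabla h_\alpha|^2)$ after using $(g_\alpha'^2)'$ up to a $z$-deviation correction, and inserting Lemma \ref{control on h_0} (iterated once so that $\|\nabla h_\alpha\|$ is controlled by $\|H_\alpha+\Delta_{\alpha,0}h_\alpha\|+\varepsilon^{1/6}A$) produces the squared Toda residual in the bound plus $\varepsilon^{1/3}A$ from $(\varepsilon^{1/6}A)^2\cdot\varepsilon^{-1}$-type accounting through Young's inequality; the cross-component pieces $\int g_\alpha'g_\beta'\mathcal R_{\beta,1}\,dz$ and $\int g_\alpha'g_\beta''\mathcal R_{\beta,2}\,dz$ are bounded using $\int g_\alpha'g_\beta'\,dz\lesssim e^{-|d_\beta(y,0)|}$, Lemma \ref{lem distance ladder}, and Lemma \ref{biLip of projection operator}, giving $A\,\|\mathcal R_{\beta,1}\|+A\,\|\mathcal R_{\beta,2}\|$ on the neighborhood $B_{r+10|\log\varepsilon|}(x)$ (the $10|\log\varepsilon|$ enlargement reflects the effective $z$-support of $g_\alpha'$, which extends about $8|\log\varepsilon|$ and is mapped into $\Gamma_\beta$ by $\Pi_\beta$ with uniform Lipschitz constant). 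The $C^\theta$ version of \eqref{5.1} follows by differentiating \eqref{H eqn} tangentially, using Lemma \ref{Holder bound on interaction} for the interaction integrand and Lemma \ref{commutator estimate} for the commutator, with no essentially new ingredients.

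The main obstacle I expect is the bookkeeping that converts the \emph{linear-in-}$\nabla h$ and \emph{linear-in-}$A$ losses naturally produced by the estimates above into the \emph{quadratic} form that appears in \eqref{5.1}. Concretely, one must invoke Lemma \ref{control on h_0} to re-express $\|\nabla h_\beta\|$ in terms of $\|\phi\|_{C^{2,\theta}}+\|H_\beta+\Delta_{\beta,0}h_\beta\|+\varepsilon^{1/6}A$ and then use Young's inequality to turn products such as $A\cdot\|H_\beta+\Delta_{\beta,0}h_\beta\|$ and $A\cdot\varepsilon^{1/6}A$ into the claimed $\|H_\beta+\Delta_{\beta,0}h_\beta\|^2+A^{3/2}$ and $\varepsilon^{1/3}A$ terms respectively; doing this simultaneously for every term on a neighborhood that is independent of the number of sheets, while making sure that no absorbed constant depends on cardinality, is the delicate step.
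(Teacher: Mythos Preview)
Your plan matches the paper's: project \eqref{H eqn} onto $g_\alpha'$, isolate the leading $(H_\alpha+\Delta_{\alpha,0}h_\alpha)$ and exponential-interaction pieces, and bound every remainder by the quadratic quantities on the right of \eqref{5.1}. The paper makes the tangential cancellation explicit by differentiating the orthogonality \eqref{orthogonal condition} twice, obtaining an identity that expresses $\int\Delta_{\alpha,0}\phi\,g_\alpha'$ as a finite sum of products $(\nabla^k h_\alpha)\cdot\int(\nabla^l\phi)\,g_\alpha^{(m)}$, each of which is then squared via Lemma \ref{control on h_0} and Cauchy; the remaining terms are handled one by one (Appendix \ref{sec proof of Lemma 5.1}).

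Two bookkeeping corrections are needed in your outline. First, the $\varepsilon^{1/3}A$ term does \emph{not} arise from $|\nabla h_\alpha|^2$: your proposed ``$(\varepsilon^{1/6}A)^2\cdot\varepsilon^{-1}$'' yields $\varepsilon^{-2/3}A^2$, which has the wrong sign in the exponent of $\varepsilon$. The true source is the evaluation of $\int\mathcal{I}\,g_\alpha'$ itself: to extract the coefficient $e^{-d_{\alpha-1}(y,0)}$ one must straighten $g_{\alpha-1}(y,z)$ as a one-variable profile in $z$ via the distance comparison Lemma \ref{comparison of distances}, and its $O(\varepsilon^{1/2}|\log\varepsilon|^{3/2})\le O(\varepsilon^{1/3})$ error, multiplied against the leading exponential, produces exactly $\varepsilon^{1/3}A$ (the paper isolates this computation as Lemma \ref{lem derivation of exponential nonlinearity}). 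Second, the squared Toda residual $\|H_\beta+\Delta_{\beta,0}h_\beta\|^2$ does not come from $|\nabla h_\alpha|^2$ either --- Lemma \ref{control on h_0} bounds $h_\alpha$ by $\phi$ and $e^{-D_\alpha}$, not by the Toda residual. It comes from the cross-component term $\int g_\alpha' g_\beta'\,\mathcal{R}_{\beta,1}$, where $\mathcal{R}_{\beta,1}\approx H_\beta(y,0)+\Delta_{\beta,0}h_\beta$ and $\int g_\alpha' g_\beta'\lesssim |d_\beta|e^{-|d_\beta|}\lesssim e^{-3|d_\beta|/4}$; Cauchy then gives $e^{-3D_\alpha/2}+\|H_\beta+\Delta_{\beta,0}h_\beta\|^2$. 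With these two attributions fixed, your argument goes through and coincides with the paper's.
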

The proof is given in Appendix \ref{sec proof of Lemma 5.1}.

Since all terms in the right hand side of \eqref{5.1} are small quantities, a direct consequence of this lemma is
\begin{coro}\label{coro 5.2}
There exists a universal constant $C>0$ such that for any $x\in B_{5R/6}(0)$ and $r\in(0,R/7)$,
\begin{eqnarray}\label{5.2}
&&\max_\alpha\big\|H_\alpha+\Delta_{\alpha,0}h_\alpha\big\|_{C^\theta({\Gamma_\alpha\cap B_{r}(x)})} \nonumber\\
&\leq&\frac{1}{4}\left(\max_\alpha\big\|H_\alpha+\Delta_{\alpha,0}h_\alpha\big\|_{C^\theta(\Gamma_\alpha\cap B_{r+10|\log\varepsilon|}(x))}+ \|\phi\|_{C^{2,\theta}(B_{r+10|\log\varepsilon|}(x))}\right) \\
&+&C \varepsilon^2+C A\left(r+10|\log\varepsilon|;x\right) .\nonumber
\end{eqnarray}
\end{coro}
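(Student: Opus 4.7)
\textbf{Proof proposal for Corollary \ref{coro 5.2}.}
The plan is to combine the Toda system \eqref{Toda system} with Lemma \ref{lem 5.1} and then absorb the quadratic and sub-linear-in-$A$ terms using the smallness coming from the preliminary analysis.

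First, I would take $C^\theta$ norms on both sides of \eqref{Toda system}. Since for every $y\in\Gamma_\alpha$ one has $|d_{\alpha-1}(y,0)|,|d_{\alpha+1}(y,0)|\ge D_\alpha(y)$, and $|\nabla d_\beta|\le 1$ together with $|\nabla^2 d_\beta|\lesssim\varepsilon$ (these are essentially second fundamental forms of level sets of $d_\beta$), the $C^\theta$-seminorm of the exponentials $e^{-d_{\alpha-1}(y,0)}$ and $e^{d_{\alpha+1}(y,0)}$ on $\Gamma_\alpha\cap B_r(x)$ is controlled by a universal constant times their supremum, which in turn is controlled by $A(r;x)\le A(r+10|\log\varepsilon|;x)$. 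Hence there is a universal constant $C_1$ with
\begin{equation*}
\max_\alpha\|H_\alpha+\Delta_{\alpha,0}h_\alpha\|_{C^\theta(\Gamma_\alpha\cap B_r(x))}
\;\le\; C_1\, A(r+10|\log\varepsilon|;x) \;+\; \max_\alpha\|E_\alpha^0\|_{C^\theta(\Gamma_\alpha\cap B_r(x))}.
\end{equation*}

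Next, I substitute the bound on $E_\alpha^0$ provided by Lemma \ref{lem 5.1}. Writing $A'=A(r+10|\log\varepsilon|;x)$ and $M_s=\max_\alpha\|H_\alpha+\Delta_{\alpha,0}h_\alpha\|_{C^\theta(\Gamma_\alpha\cap B_s(x))}$, this yields
\begin{equation*}
M_r \;\le\; C_1 A' + C_2\Bigl(\varepsilon^2 + \varepsilon^{1/3} A' + A'^{3/2} + M_{r+10|\log\varepsilon|}^2 + \|\phi\|_{C^{2,\theta}(B_{r+10|\log\varepsilon|}(x))}^2\Bigr).
\end{equation*}
Since Lemma \ref{O(1) scale} forces $D_\alpha\to\infty$, we have $A'\ll 1$, so $A'^{3/2}\le A'$; and for $\varepsilon$ sufficiently small, $C_2(\varepsilon^{1/3}+\sqrt{A'})A'\le A'$. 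Hence the sub-linear terms in $A'$ combine into a single linear-in-$A'$ term $CA'$ for a universal constant $C$.

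Finally, the absorption: by Lemma \ref{O(1) scale} together with the remark following Proposition \ref{prop optimal approximation}, both $\|\phi\|_{C^{2,\theta}}$ and $M_{r+10|\log\varepsilon|}$ are $o(1)$ as $\varepsilon\to 0$. (For the latter, note $\|H_\alpha\|_{C^\theta}\lesssim\varepsilon$ by \eqref{bound on second fundamental form} and \eqref{bound on 3rd derivatives}, while $\|\Delta_{\alpha,0}h_\alpha\|_{C^\theta}=o(1)$ from the same remark.) Therefore, for $\varepsilon\le\varepsilon_\ast$ small enough,
\begin{equation*}
C_2 M_{r+10|\log\varepsilon|}\le \tfrac14, \qquad C_2\|\phi\|_{C^{2,\theta}(B_{r+10|\log\varepsilon|}(x))}\le \tfrac14,
\end{equation*}
which lets us absorb the two quadratic terms into the linear ones with coefficient $\tfrac14$, giving exactly \eqref{5.2}.

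The only non-mechanical point is the Hölder control on the exponentials in Step 1 — it requires the curvature bound on level sets of $d_\beta$ to produce a bound independent of the number of transition layers — but this is already embedded in \eqref{bound on second fundamental form} and \eqref{A(z)}. I do not anticipate a genuine obstacle; the corollary is essentially a bookkeeping consequence of Lemma \ref{lem 5.1}.
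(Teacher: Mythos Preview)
Your proposal is correct and follows exactly the approach the paper has in mind: the paper's entire proof is the one-line remark ``Since all terms in the right hand side of \eqref{5.1} are small quantities, a direct consequence of this lemma is Corollary~\ref{coro 5.2}'', and your argument simply unpacks this --- take $C^\theta$ norms in \eqref{Toda system}, bound the exponentials by $CA'$ via their Lipschitz control, invoke Lemma~\ref{lem 5.1} for $E_\alpha^0$, and absorb the quadratic terms $M_{r+10|\log\varepsilon|}^2$, $\|\phi\|_{C^{2,\theta}}^2$ into linear ones with factor $\tfrac14$ using the $o(1)$ smallness from Lemma~\ref{O(1) scale} and the remark after Proposition~\ref{prop optimal approximation}. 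There is no substantive difference.
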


\section{ Estimates on $\phi$}\label{sec first order}
\setcounter{equation}{0}

In this section we prove the following $C^{2,\theta}$ estimate on $\phi$.
\begin{prop}\label{prop Schauder estimate}
For any $x\in B_{5R/6}(0)$ and $r\in(0,R/7)$,
\begin{equation}\label{Schauder estimates}
\max_\alpha\big\|H_\alpha+\Delta_{\alpha,0}h_\alpha\big\|_{C^\theta(\Gamma_\alpha\cap B_{r}(x))}+\|\phi\|_{C^{2,\theta}(B_r(x))}
\lesssim \varepsilon^2+A\left(r+50|\log\varepsilon|^2;x\right).
\end{equation}
\end{prop}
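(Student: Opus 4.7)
Couple a self-improving $C^{2,\theta}$ estimate for $\phi$ with Corollary \ref{coro 5.2} and iterate. Set
\[
E(r):=\|\phi\|_{C^{2,\theta}(B_r(x))}+\max_\alpha\|H_\alpha+\Delta_{\alpha,0}h_\alpha\|_{C^\theta(\Gamma_\alpha\cap B_r(x))}.
\]
The goal is a recurrence
\[
E(r)\leq\tfrac{1}{2}E(r+10|\log\varepsilon|)+C\bigl(\varepsilon^2+A(r+10|\log\varepsilon|;x)\bigr),
\]
which after $N=\lceil 5|\log\varepsilon|\rceil$ iterations yields \eqref{Schauder estimates}.

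\textbf{Schauder estimate on $\phi$.} View \eqref{error equation} as the semilinear elliptic equation $(\Delta-W''(g_*))\phi=F$, where $F$ groups: (i) the interaction $\mathcal{I}=W'(g_*)-\sum_\beta W'(g_\beta)$, with $C^\theta$ norm $\lesssim A+\varepsilon^2$ by Lemmas \ref{upper bound on interaction}--\ref{Holder bound on interaction}; (ii) the curvature term $(-1)^\alpha g_\alpha'[H_\alpha(y,z)+\Delta_{\alpha,z}h_\alpha]$, whose $C^\theta$ norm is controlled by $\max_\alpha\|H_\alpha+\Delta_{\alpha,0}h_\alpha\|_{C^\theta}$ via the deviation estimates \eqref{error in z 4}--\eqref{error in z 5}; (iii) the quadratic $g_\alpha''|\nabla_{\alpha,z}h_\alpha|^2$ and cross terms $\sum_{\beta\neq\alpha}[\,\cdot\,]$, both exponentially small in $|d_\beta|$ and hence $\lesssim A$ by Lemma \ref{lem distance ladder}; (iv) the semilinear remainder $W'(g_*+\phi)-W'(g_*)-W''(g_*)\phi=O(\phi^2)$; (v) the cutoff error $\xi=O(\varepsilon^3)$. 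Lemma \ref{control on h_0} recycles any residual norms of $h_\alpha$ as norms of $\phi$ plus an $A$-contribution. The crucial ingredient is the orthogonality \eqref{orthogonal condition}, which places $\phi$ slicewise in the $L^2$-complement of the approximate kernel $\mathrm{span}\{g_\alpha'(y,\cdot\,)\}$: in the outer region $\{|d_\alpha|\gtrsim|\log\varepsilon|\text{ for all }\alpha\}$ one has $W''(g_*)\to 1$, so $\Delta-W''(g_*)$ is strictly coercive with exponentially localized Green's function; in each inner layer the one-dimensional operator $-\partial_{zz}+W''(g)$ has a positive spectral gap on the orthogonal complement of $g'$, of which \eqref{orthogonal condition} is the sharp form. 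The fast decay of $g_\alpha'$ at infinity decouples the two regimes, as emphasized in Section \ref{subsec outline of proof}. Combining the resulting slabwise $L^\infty$ bound with interior Schauder estimates produces the analog of Corollary \ref{coro 5.2},
\[
\|\phi\|_{C^{2,\theta}(B_r(x))}\leq\tfrac{1}{4}\Bigl(\|\phi\|_{C^{2,\theta}(B_{r+10|\log\varepsilon|}(x))}+\max_\alpha\|H_\alpha+\Delta_{\alpha,0}h_\alpha\|_{C^\theta(\Gamma_\alpha\cap B_{r+10|\log\varepsilon|}(x))}\Bigr)+C\bigl(\varepsilon^2+A(r+10|\log\varepsilon|;x)\bigr).
\]

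\textbf{Iteration.} Adding the last display to Corollary \ref{coro 5.2} gives the target recurrence for $E$. Iterating it $N$ times, the prefactor shrinks to $2^{-N}\lesssim\varepsilon^{5\log 2}\ll\varepsilon^2$; since Lemma \ref{O(1) scale}, Lemma \ref{lem graph construction}, and the remark following Proposition \ref{prop optimal approximation} provide a universal a priori bound on $E(\cdot)$ (the exponential decay of $g_\alpha-(\pm 1)$ away from $\Gamma_\alpha$ together with Lemma \ref{lem distance ladder} makes the bound independent of the number of sheets), the residual $2^{-N}E(r+10N|\log\varepsilon|)$ is absorbed into the $\varepsilon^2$-error. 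Monotonicity of $r'\mapsto A(r';x)$ telescopes the geometric sum of $A$-terms into a single $A(r+50|\log\varepsilon|^2;x)$, proving \eqref{Schauder estimates}.

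\textbf{Main obstacle.} The decisive step is producing the Schauder bound on $\phi$ with prefactor $\tfrac{1}{4}$ on the enlarged ball, uniformly in the (possibly unbounded) number of components $\Gamma_\alpha$. The slabwise orthogonality must be turned into a weighted $L^\infty$ estimate whose constants do not degrade with the number of sheets, and the inner/outer gluing must be arranged so that exponentially small cross-interactions between distinct sheets enter only through the $A$-terms. This uniformity in the number of components is precisely the improvement over \cite{Wang-Wei2} announced in Section \ref{subsec outline of proof}, and is what allows the hypothesis in Theorem \ref{main result} to dispense with any a priori control on the cardinality of $\{\Gamma_\alpha\}$.
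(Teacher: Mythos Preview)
Your proposal is correct and follows essentially the same route as the paper: inner/outer decomposition of the linearized equation for $\phi$ (coercivity of $-\Delta+1$ far from the sheets, spectral gap of $-\partial_{zz}+W''(g)$ on the orthogonal complement of $g'$ near each sheet), yielding a first $L^\infty$/$C^{1,\theta}$ bound with the small prefactor, bootstrapped by Schauder to $C^{2,\theta}$, then coupled with Corollary \ref{coro 5.2} and iterated $O(|\log\varepsilon|)$ times. The paper carries out the $C^{1,\theta}$ stage explicitly (Lemmas \ref{lem 8.1}--\ref{estimates on e}, culminating in \eqref{first order estimate}) before invoking Schauder via Lemma \ref{Holder for RHS}, whereas you compress this into a single step, but the mechanism and the final recurrence are identical.
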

The first order H\"{o}lder estimates of $\phi$ will be established in Subsection \ref{subsec 6.1} and Subsection \ref{subsec 6.2}. The second order  H\"{o}lder estimate will be proved in Subsection \ref{sec second order estimate}.

To prove the first order H\"{o}lder estimate on $\phi$, fix a large constant $L>0$, for each $\alpha$ define
\[\Omega_\alpha^1:=\{-L<d_\alpha<L\}\cap\mathcal{M}_\alpha^0,\quad   \quad \Omega_\alpha^2:=\{d_\alpha>L/2\}\cap\mathcal{M}_\alpha^0,\]
and
\[\Omega_\alpha^3:=\{-L/2\leq d_\alpha\leq L/2\}\cap\mathcal{M}_\alpha^0.\]
We will estimate the $C^{1,\theta}$ norm of $\phi$ in $\Omega_\alpha^1\cap B_r(x)$ and $\Omega_\alpha^2\cap B_r(x)$ separately.
Roughly speaking, in $\Omega_\alpha^1$, $\phi$ satisfies
\[-\Delta\phi+W^{\prime\prime}(g_\alpha)\phi=\mbox{interaction terms}+\mbox{parallel component}+\mbox{errors}.\]
Together with the orthogonal condition \eqref{orthogonal condition} we get a control on $\phi$, which is possible by the decay estimate of the operator $-\Delta+W^{\prime\prime}(g)$ in the class of functions satisfying the orthogonal condition \eqref{orthogonal condition}, see for example \cite{DKWY}.
In  $\Omega_\alpha^2$, $\phi$ satisfies
\[-\Delta\phi+\phi=\mbox{interaction terms}+\mbox{errors}.\]
Hence a control on $\phi$ is possible by using the decay estimate of the coercive operator $-\Delta+1$.

\subsection{$C^{1,\theta}$ estimate in $\Omega_\alpha^2$}\label{subsec 6.1}
We start with the easy case.
In $\Omega_\alpha^2$, the equation for $\phi$ can be written in the following way.
\begin{lem}\label{lem 8.1}
For any $\alpha$, in $\Omega_\alpha^2$,
\[\Delta_{\alpha,z}\phi(y,z)-H_\alpha(y,z)\partial_z\phi(y,z)+\partial_{zz}\phi(y,z)=\left[1+o(1)\right]\phi(y,z)+E_\alpha^2(y,z),\]
where
\begin{eqnarray*}
\|E_\alpha^2\|_{L^\infty(\Omega^2_\alpha\cap B_r(x))}&\leq&C \varepsilon^2+C A\left(r+10|\log\varepsilon|;x\right) +C\|\phi\|_{C^{2,\theta}(B_{r+10|\log\varepsilon|}(x))}^2 \\
&+&Ce^{-L}\max_\alpha\big\|H_\alpha+\Delta_{\alpha,0}h_\alpha\big\|_{L^\infty(\Gamma_\alpha\cap B_{r+10|\log\varepsilon|}(x))}.
\end{eqnarray*}
\end{lem}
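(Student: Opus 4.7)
The starting point is the error equation for $\phi$ derived in Section~\ref{sec approximate solution}, which reads
\begin{eqnarray*}
\Delta_{\alpha,z}\phi - H_\alpha(y,z)\partial_z\phi + \partial_{zz}\phi
&=& W^\prime(g_\ast+\phi)-\sum_\beta W^\prime(g_\beta)
+(-1)^\alpha g_\alpha^\prime\bigl[H_\alpha(y,z)+\Delta_{\alpha,z}h_\alpha\bigr] \\
&& -\,g_\alpha^{\prime\prime}|\nabla_{\alpha,z}h_\alpha|^2
+\sum_{\beta\neq\alpha}\bigl[(-1)^\beta g_\beta^\prime\mathcal{R}_{\beta,1}-g_\beta^{\prime\prime}\mathcal{R}_{\beta,2}\bigr]
-\sum_\beta\xi_\beta.
\end{eqnarray*}
The plan is to isolate the leading linear contribution $[1+o(1)]\phi$ by a Taylor expansion of the nonlinearity, and then to show that every remaining contribution is absorbed into the claimed $E_\alpha^2$, exploiting the fact that in $\Omega_\alpha^2=\{d_\alpha>L/2\}\cap\mathcal{M}_\alpha^0$ every one-dimensional profile and its derivatives are exponentially small.

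First I would Taylor-expand: $W^\prime(g_\ast+\phi)-\sum_\beta W^\prime(g_\beta)=\mathcal{I}+W^{\prime\prime}(g_\ast)\phi+O(\phi^2)$, with $\mathcal{I}$ the interaction term treated by Lemma~\ref{upper bound on interaction}, so $|\mathcal{I}|\lesssim e^{-D_\alpha(y)}+\varepsilon^2\lesssim A(\cdot)+\varepsilon^2$. The key observation is that on $\Omega_\alpha^2\cap\mathcal{M}_\alpha^0$, the definition of $\mathcal{M}_\alpha^0$ together with Lemma~\ref{lem distance ladder} forces $|d_\beta(y,z)|\geq d_\alpha(y,z)>L/2$ for every $\beta\neq\alpha$. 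Hence each $g_\beta$ is exponentially close to $\pm1$, so from the explicit formula for $g_\ast$ in $\mathcal{M}_\alpha^0$ one obtains $|g_\ast\mp1|\lesssim e^{-L/2}$ for the appropriate sign. Since $W^{\prime\prime}(\pm1)=1$ and $W\in C^3$, this yields $W^{\prime\prime}(g_\ast)=1+O(e^{-L/2})=1+o(1)$ as $L\to\infty$. The quadratic remainder contributes $O(\|\phi\|_{C^0}^2)$, which enters $E_\alpha^2$ via the $\|\phi\|_{C^{2,\theta}}^2$ term.

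Next I would bound the remaining source terms pointwise. Since $|g_\alpha^\prime|\lesssim e^{-L}$ and $|g_\alpha^{\prime\prime}|\lesssim e^{-L}$ on $\Omega_\alpha^2$ (after noting that $g_\alpha^\prime$ is supported on $\{|z-h_\alpha|\lesssim|\log\varepsilon|\}$, and on that support we effectively have $d_\alpha\gtrsim L$), the mean-curvature term is bounded by $e^{-L}\|H_\alpha(y,z)+\Delta_{\alpha,z}h_\alpha\|_\infty$; replacing $H_\alpha(y,z)$ and $\Delta_{\alpha,z}h_\alpha$ with $H_\alpha(y,0)$ and $\Delta_{\alpha,0}h_\alpha$ using \eqref{error in z 4} and \eqref{error in z 5} costs $O(\varepsilon^2|z|)+O(\varepsilon|z|\|h_\alpha\|_{C^2})$, absorbed into the $\varepsilon^2$ part of $E_\alpha^2$. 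The term $g_\alpha^{\prime\prime}|\nabla_{\alpha,z}h_\alpha|^2$ is $O(e^{-L}\|\nabla h_\alpha\|_\infty^2)$, which, via Lemma~\ref{control on h_0}, is controlled by $\|\phi\|_{C^{2,\theta}}^2+A(\cdot)^2$. For $\beta\neq\alpha$ the factors $g_\beta^\prime,g_\beta^{\prime\prime}$ satisfy $|g_\beta^\prime|+|g_\beta^{\prime\prime}|\lesssim e^{-|d_\beta|}\lesssim A(r+10|\log\varepsilon|;x)$ by Lemma~\ref{lem distance ladder}, which combines with the $C^0$ bounds on $\mathcal{R}_{\beta,1},\mathcal{R}_{\beta,2}$ (again via Lemma~\ref{control on h_0}) to give a contribution of the stated form. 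Finally $\sum_\beta\xi_\beta=O(\varepsilon^3)$ by construction of $\bar g$.

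I do not expect any real obstacle: the proof is a careful bookkeeping exercise made possible by the two independent smallness mechanisms in $\Omega_\alpha^2$, namely the exponential smallness $e^{-d_\alpha}\leq e^{-L/2}$ of one-dimensional profile derivatives and the $o(1)$ closeness of $W^{\prime\prime}(g_\ast)$ to~$1$. The only mildly delicate point is to verify that the $L^\infty$ bound on $H_\alpha+\Delta_{\alpha,0}h_\alpha$ (rather than on $H_\alpha+\Delta_{\alpha,z}h_\alpha$) is enough, which is handled by \eqref{error in z 4}-\eqref{error in z 5} and the fact that the support constraint $|z|\lesssim|\log\varepsilon|$ keeps the correction within the $\varepsilon^2$ tolerance; then the estimate on $E_\alpha^2$ collected from all of the above is exactly the one asserted.
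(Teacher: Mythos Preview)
Your approach is essentially the same as the paper's: Taylor-expand the nonlinearity to extract $[1+o(1)]\phi$, bound $\mathcal{I}$ via Lemma~\ref{upper bound on interaction}, split $H_\alpha(y,z)+\Delta_{\alpha,z}h_\alpha$ into its value at $z=0$ plus $O(\varepsilon^2)$ corrections via \eqref{error in z 4}--\eqref{error in z 5}, and use Lemma~\ref{control on h_0} for the $h$-terms. One small slip: for $\beta\neq\alpha$ you write $e^{-|d_\beta|}\lesssim A(r+10|\log\varepsilon|;x)$ ``by Lemma~\ref{lem distance ladder}'', but that lemma does not give such a pointwise bound and in fact $e^{-|d_\beta(y,z)|}$ can be as large as $e^{-L/2}$ while $A$ may be much smaller. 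The fix is immediate and is what the paper does: on $\Omega_\alpha^2\subset\mathcal{M}_\alpha^0$ one has $|d_\beta|>|d_\alpha|>L/2$, so $|g_\beta'|+|g_\beta''|\lesssim e^{-L/2}$, and this factor multiplies $\mathcal{R}_{\beta,1}$ (giving the $Ce^{-L}\max_\beta\|H_\beta+\Delta_{\beta,0}h_\beta\|$ term) or $\mathcal{R}_{\beta,2}\lesssim\|\phi\|_{C^{2,\theta}}^2+A^2$, then Lemma~\ref{lem distance ladder} is used only to sum over~$\beta$.
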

\begin{proof}
The $L^\infty$ estimate on $E_\alpha^2$ is a consequence of the following estimates on those terms in \eqref{error equation}.
\begin{itemize}
  \item First we have $W^\prime(g_\ast+\phi)-W^\prime(g_\ast)=\left[W^{\prime\prime}(g_\ast)+O\left(\phi\right)\right]\phi=\left[1+o(1)\right]\phi$.
  \item By Lemma \ref{upper bound on interaction}, $W^\prime(g_\ast)-\sum_\beta W^\prime(g_\beta)=O\left(e^{-D_\alpha}\right)+O\left(\varepsilon^2\right)$.
  \item By \eqref{error in z 1}-\eqref{error in z 5}, we get
  \begin{eqnarray*}
    &&g_\alpha^\prime\left[H_\alpha(y,z)+\Delta_{\alpha,z} h_\alpha\right]\\
    &=& g_\alpha^\prime\left[H_\alpha(y,0)+\Delta_{\alpha,0} h_\alpha(y)\right]
   +g_\alpha^\prime\left[H_\alpha(y,z)-H_\alpha(y,0)\right]+g_\alpha^\prime\left[\Delta_{\alpha,z}h_\alpha-\Delta_{\alpha,0}h_\alpha\right] \\
    &=& O\left(e^{-L}\big| H_\alpha(y,0)+\Delta_{\alpha,0} h_\alpha(y)\big|\right)+O\left(\varepsilon^2\right)+O\left(|\nabla_{\alpha,0}^2h_\alpha|^2+|\nabla_{\alpha,0}h_\alpha|^2\right).
  \end{eqnarray*}
Concerning estimates on the last two terms involving $h_\alpha$, we use Lemma \ref{control on h_0}.
  \item Similarly,  estimates on $g_\alpha^{\prime\prime}|\nabla_{\alpha,z}h_\alpha|^2$ follow from Lemma \ref{control on h_0}.
  \item Those two terms involving $g_\beta^\prime \mathcal{R}_{\beta,1}$ and $g_\beta^{\prime\prime} \mathcal{R}_{\beta,2}$ can be estimated as in the above two cases, but now in Fermi coordinates with respect to $\Gamma_\beta$. Note that we need only to consider those $\beta$ satisfying $\Gamma_\beta\cap B_{r+8|\log\varepsilon|}(x)\neq\emptyset$, because otherwise $g_\beta^\prime=0$ in $B_r(x)$. To put all  estimates of $\beta\neq\alpha$ together, we use Lemma \ref{lem distance ladder}.
  \item Finally, by definition of $\xi_\beta$, $\sum_{\beta}\xi_\beta=O\left(\varepsilon^3|\log\varepsilon|\right)=O\left(\varepsilon^2\right)$.\qedhere
\end{itemize}
\end{proof}

By standard interior elliptic estimates on the coercive operator $-\Delta+1$, we deduce that, for any  $\alpha$,
\begin{eqnarray}\label{first order estimate 1}
 \|\phi\|_{C^{1,\theta}(\Omega_\alpha^2\cap B_r(x))}
&\leq& Ce^{-cL}\left(\|\phi\|_{C^{1,\theta}(\Omega_\alpha^2\cap B_{r+10|\log\varepsilon|}(x))} +\|\phi\|_{C^{1,\theta}(\Omega_\alpha^3\cap B_{r+10|\log\varepsilon|}(x))}\right) \nonumber\\
&+&Ce^{-L}\max_\alpha\big\|H_\alpha+\Delta_{\alpha,0}h_\alpha\big\|_{L^\infty(\Gamma_\alpha\cap B_{r+10|\log\varepsilon|}(x))}\\
&+&C \varepsilon^2+C A\left(r+10|\log\varepsilon|;x\right).\nonumber
\end{eqnarray}

\subsection{$C^{1,\theta}$ estimate in $\Omega_\alpha^1$}\label{subsec 6.2}

In $\Omega_\alpha^1$, the equation for $\phi$ can be written in the following way.
\begin{lem}\label{lem 8.2}
In $\Omega_\alpha^1$,
\[\Delta_{\alpha,0}\phi+\partial_{zz}\phi=W^{\prime\prime}(g_\alpha)\phi+(-1)^{\alpha}g_\alpha^\prime
\left[H_\alpha(y,0)+\Delta_{\alpha,0} h_\alpha\right]+E_\alpha^1,
\]
where for some constant $C(L)$,
\[
\|E^1_\alpha\|_{L^\infty(\Omega^1_\alpha\cap B_r(x))}\leq C(L) \varepsilon^2+C(L) A\left(r+10|\log\varepsilon|;x\right) +C(L)\|\phi\|_{C^{2,\theta}(B_{r+10|\log\varepsilon|}(x))}^2.
\]
\end{lem}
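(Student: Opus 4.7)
The proof will mirror that of Lemma \ref{lem 8.1}, linearizing the $W^\prime$ term around the one-dimensional profile $g_\alpha$ rather than the constant $\pm 1$. Since in $\Omega_\alpha^1$ one has $|d_\alpha|\leq L$ and $g_\alpha^\prime$ is of order one, the term $(-1)^\alpha g_\alpha^\prime\bigl[H_\alpha(y,0)+\Delta_{\alpha,0}h_\alpha\bigr]$ cannot be absorbed and must be kept explicitly on the right-hand side. All remaining contributions will be shown to enter $E_\alpha^1$ with $L$-dependent constants (factors of the form $e^{CL}$ arise through the comparison estimates below).

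First I expand $W^\prime(g_\ast+\phi)-\sum_\beta W^\prime(g_\beta)=W^{\prime\prime}(g_\alpha)\phi+\mathcal{I}+\bigl[W^{\prime\prime}(g_\ast)-W^{\prime\prime}(g_\alpha)\bigr]\phi+O(\phi^2)$. Lemma \ref{upper bound on interaction} gives $|\mathcal{I}|\lesssim A+\varepsilon^2$, while Lemma \ref{lem distance ladder} gives $|g_\ast-g_\alpha|\lesssim e^{-D_\alpha}$, so the bracketed term is $O(A\,|\phi|)\lesssim A+\|\phi\|_{C^{2,\theta}}^2$ by AM--GM (using $A\leq 1$). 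Next I convert $(y,z)$-geometric quantities to $(y,0)$-quantities. On the left, \eqref{error in z 5} together with $|H_\alpha|\lesssim\varepsilon$ gives $|\Delta_{\alpha,z}\phi-\Delta_{\alpha,0}\phi-H_\alpha\partial_z\phi|\lesssim L\varepsilon\|\phi\|_{C^{2,\theta}}\leq C(L)(\varepsilon^2+\|\phi\|_{C^{2,\theta}}^2)$. On the right, $(-1)^\alpha g_\alpha^\prime\bigl[H_\alpha(y,z)+\Delta_{\alpha,z}h_\alpha\bigr]$ is split as the principal Jacobi term plus a remainder, which via \eqref{error in z 4}, \eqref{error in z 5}, and the bound $\|h_\alpha\|_{C^{2,\theta}}\lesssim\|\phi\|_{C^{2,\theta}}+A$ from Lemma \ref{control on h_0} is $\leq C(L)(\varepsilon^2+A+\|\phi\|_{C^{2,\theta}}^2)$. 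The term $g_\alpha^{\prime\prime}|\nabla_{\alpha,z}h_\alpha|^2$ is $O(\|\phi\|_{C^{2,\theta}}^2+A)$ by the same bound, and $\sum_\beta\xi_\beta=O(\varepsilon^3|\log\varepsilon|)=O(\varepsilon^2)$.

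For the cross terms $\sum_{\beta\neq\alpha}\bigl[(-1)^\beta g_\beta^\prime\mathcal{R}_{\beta,1}-g_\beta^{\prime\prime}\mathcal{R}_{\beta,2}\bigr]$, Lemma \ref{comparison of distances} yields, for $(y,z)\in\Omega_\alpha^1$, $|d_\beta(y,z)|\geq D_\alpha(\Pi_\alpha(y,z))-C(L)$, hence $|g_\beta^\prime|+|g_\beta^{\prime\prime}|\leq C(L)\,e^{-D_\alpha(\Pi_\alpha(y,z))}$. Summing over $\beta$ via Lemma \ref{lem distance ladder} produces a factor $C(L)A$. Combined with the elementary bounds $|\mathcal{R}_{\beta,1}|=O(1)$ and $|\mathcal{R}_{\beta,2}|=|\nabla_{\beta,z}h_\beta|^2=O(\|\phi\|_{C^{2,\theta}}^2+A)$ (again by Lemma \ref{control on h_0}), the cross contributions are subsumed in $C(L)\bigl[A+\|\phi\|_{C^{2,\theta}}^2\bigr]$. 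Collecting all three groups of estimates produces the identity and bound claimed in the statement.

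The main obstacle is bookkeeping rather than a single deep estimate: the derivatives $\nabla h_\alpha$ and $\nabla^2h_\alpha$ are not a priori small and can only be controlled through Lemma \ref{control on h_0}, which itself couples $\|h_\alpha\|_{C^{2,\theta}}$ to $\|\phi\|_{C^{2,\theta}}$, $A$, and $h_\beta$-derivatives on neighbouring sheets. One must therefore verify that every contribution involving $|\nabla h_\alpha|^2$ or $\nabla^2 h_\alpha$ in the error either carries an extra small factor ($\varepsilon$ or $e^{-D_\alpha}$) or appears as an honest square, so as to be absorbed into $\varepsilon^2+A+\|\phi\|_{C^{2,\theta}}^2$ without reintroducing uncontrolled couplings to derivatives of $h_\beta$ for $\beta\neq\alpha$.
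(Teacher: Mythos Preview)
Your proposal is correct and follows essentially the same route as the paper's proof: linearize $W^\prime$ around $g_\alpha$, bound $\mathcal{I}$ by Lemma~\ref{upper bound on interaction}, convert $(y,z)$-quantities to $(y,0)$-quantities via \eqref{error in z 4}--\eqref{error in z 5} together with $|z|<2L$ and the Cauchy inequality, control all $h$-derivatives through Lemma~\ref{control on h_0}, and handle the $\beta\neq\alpha$ cross terms using the exponential decay of $g_\beta^\prime$ (with the harmless $e^{CL}$ factor) and Lemma~\ref{lem distance ladder}. Your explicit display of the term $[W^{\prime\prime}(g_\ast)-W^{\prime\prime}(g_\alpha)]\phi$ is a detail the paper leaves implicit but is handled identically.
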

\begin{proof}
The proof is similar to the one for Lemma \ref{lem 8.1}, in particular,
\begin{itemize}
\item we use Cauchy inequality and \eqref{error in z 5} to bound $\Delta_{\alpha,z}\phi-\Delta_{\alpha,0}\phi$ (here it is usefule to note that $|z|<2L$ in $\Omega_\alpha^1$);
\item we use Cauchy inequality and the fact that $|H_\alpha(y,z)|\lesssim\varepsilon$ to bound $H_\alpha(y,z)\partial_z\phi$;
\item we use Lemma \ref{upper bound on interaction} to bound interaction terms;
\item we use Lemma \ref{control on h_0} to bound those terms involving $h_\alpha$;
\item by the exponential decay of $\bar{g}^\prime$ at infinity and Lemma \ref{lem distance ladder},
$\sum_{\beta\neq\alpha}g_\beta^\prime \mathcal{R}_{\beta,1}$ and $\sum_{\beta\neq\alpha}g_\beta^\prime \mathcal{R}_{\beta,2}$ are bounded by $ e^{-D_\alpha(y)} $ in $\Omega_\alpha^1$. (Although there are constants $e^{CL}$ appearing when we bound $g_\beta^\prime$ by $O\left(e^{-D_\alpha(y)}\right)$,  they can be incorporated because $|\mathcal{R}_{\beta,1}|+|\mathcal{R}_{\beta,2}|\ll 1$ while $L$, although large, is a fixed constant.) \qedhere
\end{itemize}
\end{proof}

Take a  function  $\xi\in C_0^\infty(-2L,2L)$ satisfying $\xi\equiv 1$ in $(-L,L)$, $|\xi^\prime|\lesssim L^{-1}$ and $|\xi^{\prime\prime}|\lesssim L^{-2}$. Let $\phi_\alpha(y,z):=\phi(y,z)\xi(z)-c_\alpha(y)g_\alpha^\prime(y,z)$, where
\begin{equation}\label{def of c}
c_\alpha(y)=\frac{\int_{-\infty}^{+\infty}\phi(y,z)\left(\xi(z)-1\right)g_\alpha^\prime(y,z)dz}{\int_{-\infty}^{+\infty}g_\alpha^\prime(y,z)^2dz}.
\end{equation}
Hence   by \eqref{orthogonal condition} we still have the orthogonal condition
\begin{equation}\label{orthogonal condition 6.1}
\int_{-\infty}^{+\infty} \phi_\alpha(y,z) g_\alpha^\prime(y,z)dz=0, \quad\forall \ y\in \Gamma_\alpha.
\end{equation}

We have the following estimates on $c_\alpha$.
\begin{lem}\label{estimates on c}
For any $y\in\Gamma_\alpha$,
\[
|c_\alpha(y)|\lesssim e^{-   L}\max_{L<|z|<8|\log\varepsilon|} |\phi(y,z)|,
\]
\[
|\nabla_{\alpha,0} c_\alpha(y)|\lesssim e^{-   L}\max_{L<|z|<8|\log\varepsilon|} \left(|\phi(y,z)|+|\nabla_{\alpha,z}\phi(y,z)|\right),
\]
\[
|\nabla_{\alpha,0}^2 c_\alpha(y)|\lesssim e^{-   L}\max_{L<|z|<8|\log\varepsilon|} \left(|\phi(y,z)|+|\nabla_{\alpha,z}\phi(y,z)|+|\nabla_{\alpha,z}^2\phi(y,z)|\right).
\]
\end{lem}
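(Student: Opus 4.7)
The plan is to write $c_\alpha(y) = N_\alpha(y)/D_\alpha(y)$, with $N_\alpha$ the numerator and $D_\alpha$ the denominator of \eqref{def of c}, and to exploit two structural features: the factor $\xi-1$ kills the integrand on $\{|z|<L\}$, so in $N_\alpha$ only the exponentially small tails of $g_\alpha^\prime$ contribute; and $D_\alpha = \sigma_0+O(\varepsilon^3)$ by \eqref{1d energy} is uniformly bounded below by $\sigma_0/2$.

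First I would establish the $C^0$ bound. Because $g_\alpha^\prime(y,z)=\bar g^\prime((-1)^\alpha(z-h_\alpha(y)))$ with $|h_\alpha|\ll 1$, and $\bar g^{(k)}$ has the exponential decay $|\bar g^{(k)}(t)|\lesssim e^{-|t|}$ recalled in Appendix \ref{sec 1d solution}, the integrand defining $N_\alpha$ is supported in $\{L\leq |z|\leq 8|\log\varepsilon|\}$ and satisfies
\[
|N_\alpha(y)|\,\lesssim\,\Bigl(\max_{L<|z|<8|\log\varepsilon|}|\phi(y,z)|\Bigr)\int_{|z|\geq L}e^{-|z|}\,dz\,\lesssim\,e^{-L}\max_{L<|z|<8|\log\varepsilon|}|\phi(y,z)|.
\]
Dividing by $D_\alpha$ yields the first stated inequality.

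For $\nabla_{\alpha,0}c_\alpha$ I would differentiate under the integral. The $y$-dependence enters through $\phi$ and through $h_\alpha$ inside $g_\alpha^\prime$, and one has the identity $\nabla_{\alpha,0}g_\alpha^\prime = -(-1)^\alpha g_\alpha^{\prime\prime}\nabla_{\alpha,0}h_\alpha$, where $\|\nabla_{\alpha,0}h_\alpha\|_\infty = o(1)$ by the remark following Proposition \ref{prop optimal approximation}. This produces one contribution bounded by $e^{-L}\max|\nabla_{\alpha,z}\phi|$ (the passage from $\nabla_{\alpha,0}$ acting in the integrand to $\nabla_{\alpha,z}\phi$ costing only a factor $1+O(\varepsilon|z|)=1+o(1)$ thanks to \eqref{error in z 5}) and a second contribution bounded by $e^{-L}\max|\phi|$ after absorbing $\|\nabla_{\alpha,0}h_\alpha\|_\infty$. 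The analogous tail estimate gives $|\nabla_{\alpha,0}D_\alpha|=O(\|\nabla_{\alpha,0}h_\alpha\|_\infty)=o(1)$, and the quotient rule delivers the gradient bound.

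The $C^2$ bound comes from one further differentiation. New terms are of two kinds: either they involve $\nabla_{\alpha,0}^2 h_\alpha$, which is small by \eqref{control on h_2}, multiplied by an $L^1$-tail of $g_\alpha^{\prime\prime}$ or $g_\alpha^{\prime\prime\prime}$ on $\{|z|\geq L\}$; or they are products of two first-order pieces already controlled above. In every case the integration in $z$ supplies the requisite prefactor $e^{-L}$, while the worst derivative of $\phi$ brought out is $\nabla_{\alpha,z}^2\phi$, giving exactly the third displayed inequality. There is no genuine obstacle: the whole lemma reduces to the tail bounds $\int_{|z|\geq L}|\bar g^{(k)}(z)|\,dz\lesssim e^{-L}$ combined with the uniform smallness of $h_\alpha$ in $C^2$; the only bookkeeping care is to keep track of which level set a given covariant derivative acts on, which is settled once and for all by \eqref{error in z 5}.
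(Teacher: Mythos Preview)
Your proposal is correct and follows essentially the same route as the paper's proof: both exploit that $\xi-1$ vanishes on $\{|z|<L\}$ so only the exponential tails of $g_\alpha^{(k)}$ contribute, giving the $e^{-L}$ factor, and then differentiate the defining relation term by term using the $C^3$-smallness of $h_\alpha$ from the remark after Proposition~\ref{prop optimal approximation}. The paper differentiates the product $c_\alpha\cdot D_\alpha = N_\alpha$ while you use the quotient rule, which is of course equivalent; one small inaccuracy is that the metric comparison $\nabla_{\alpha,0}\leftrightarrow\nabla_{\alpha,z}$ is governed by \eqref{error in z 2} rather than \eqref{error in z 5}, but this does not affect the argument.
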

\begin{proof}
By \eqref{def of c} and the definition of $\bar{g}$ and $\xi$,
\begin{eqnarray*}
|c_\alpha(y)|
&\lesssim& \left(\max_{L<|z|<8|\log\varepsilon|} |\phi(y,z)|\right)\int_L^{+\infty}e^{-   z}dz\\
&\lesssim& e^{-   L}\max_{L<|z|<8|\log\varepsilon|} |\phi(y,z)|.
\end{eqnarray*}

Differentiating \eqref{def of c} gives
\begin{eqnarray*}
&&\nabla_{\alpha,0} c_\alpha(y)\left(\int_{-\infty}^{+\infty}g_\alpha^\prime(y,z)^2dz\right)+c_\alpha(y)\left(\nabla_{\alpha,0}\int_{-\infty}^{+\infty}g_\alpha^\prime(y,z)^2dz\right)\\
&=&\int_{-\infty}^{+\infty}\nabla_{\alpha,0}\phi(y,z)\left(\xi(z)-1\right)g_\alpha^\prime(y,z)dz\\
&-&(-1)^\alpha\nabla_{\alpha,0} h_\alpha(y)\int_{-\infty}^{+\infty}\phi(y,z)\left(\xi(z)-1\right)g_\alpha^{\prime\prime}(y,z)dz.
\end{eqnarray*}
The second estimate follows as above. The third one can be proved in the same way.
\end{proof}
The factor $e^{-L}$ reveals the fact that behavior of $\phi$ in $\Omega_\alpha^2$ has little effect on the behavior of $\phi$ in $\Omega_\alpha^1$, that is, these two parts are almost decoupled.

In the Fermi coordinates with respect to $\Gamma_\alpha$, the equation satisfied by $ \phi_\alpha$ reads as
\begin{equation}\label{6.2}
\Delta_{\alpha,0} \phi_\alpha+\partial_{zz}\phi_\alpha
=W^{\prime\prime}(g_\alpha)\phi_\alpha
+p_\alpha(y)g_\alpha^\prime+F_\alpha,
\end{equation}
where
\[
p_\alpha(y)=(-1)^{\alpha }\left[H_\alpha(y,0)+\Delta_{\alpha,0}h_\alpha(y)\right]-\Delta_{\alpha,0} c_\alpha(y)\]
and
\begin{eqnarray*}
F_\alpha(y,z)&=& 2(-1)^\alpha \nabla_{\alpha,0}c_\alpha(y)\cdot\nabla_{\alpha,0}h_\alpha(y) g_\alpha^{\prime\prime}(y,z)\\
&+&c_\alpha(y)\left[(-1)^{\alpha+1}\Delta_{\alpha,0}h_\alpha(y) g_\alpha^{\prime\prime}(y,z)+g_\alpha^{\prime\prime\prime}(y,z)|\nabla_{\alpha,0}h_\alpha(y)|^2\right]\\
&+&2\partial_z\phi(y,z)\xi^\prime(z)+\phi(y,z)\xi^{\prime\prime}(z)+
E_\alpha^1(y,z)\xi(z)\\
&+&(-1)^{\alpha }\left[H_\alpha(y,0)+\Delta_{\alpha,0}h_\alpha(y)\right]g_\alpha^\prime(y,z)\left[\xi(z)-1\right]-c_\alpha(y)\xi_\alpha^\prime(y,z).
\end{eqnarray*}

\medskip

Combining this expression with Lemma \ref{lem 8.2}, Lemma \ref{estimates on c} and the definition of $\xi$, we obtain
\begin{lem}\label{estimates on e}
For any $x\in B_{5R/6}(0)$ and $r\in(0,R/7)$,
\begin{eqnarray*}
  &&\|F_\alpha\|_{L^\infty(\Omega^1_\alpha\cap B_r(x))} \\
  &\leq& C(L) \varepsilon^2+C(L) A\left(r+10|\log\varepsilon|;x\right) +C(L)\|\phi\|_{C^{2,\theta}(B_{r+10|\log\varepsilon|}(x))}^2 \\
   &+&Ce^{-L} \left[ \max_\alpha\big\|H_\alpha+\Delta_{\alpha,0}h_\alpha\big\|_{L^\infty(\Gamma_\alpha\cap B_{r+10|\log\varepsilon|}(x))}
   +\|\phi\|_{C^{2,\theta}(B_{r+10|\log\varepsilon|}(x))}\right].
\end{eqnarray*}
\end{lem}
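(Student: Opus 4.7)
The plan is to treat each summand in $F_\alpha$ in turn and match it to one of the four buckets on the right-hand side. The decisive first step is to observe that throughout $\Omega_\alpha^1 = \{-L < d_\alpha < L\} \cap \mathcal{M}_\alpha^0$ the normal Fermi coordinate satisfies $|z| = |d_\alpha| < L$, so by the defining properties of $\xi$ we have $\xi(z) \equiv 1$ and $\xi'(z) \equiv \xi''(z) \equiv 0$ there. This kills the three $\xi$-cutoff terms $2\partial_z\phi\,\xi'$, $\phi\,\xi''$, and $(-1)^\alpha[H_\alpha + \Delta_{\alpha,0} h_\alpha] g_\alpha'(\xi-1)$, leaving on $\Omega_\alpha^1$ only (i) the two lines of $c_\alpha$-prefactor terms, (ii) $E_\alpha^1 \xi = E_\alpha^1$, and (iii) the small tail $-c_\alpha\xi_\alpha^\prime$.

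For (ii), Lemma \ref{lem 8.2} supplies directly the bound $\|E_\alpha^1\|_{L^\infty(\Omega_\alpha^1 \cap B_r(x))} \leq C(L)\varepsilon^2 + C(L) A(r+10|\log\varepsilon|;x) + C(L) \|\phi\|_{C^{2,\theta}(B_{r+10|\log\varepsilon|}(x))}^2$, which already accounts for the first three summands of the claim. For (iii), the residual bounds $|\bar\xi|,|\bar\xi'|,|\bar\xi''| \lesssim \varepsilon^3$ from Subsection \ref{sec optimal approximation} combined with $|c_\alpha| \lesssim e^{-L}\|\phi\|_{L^\infty}$ from Lemma \ref{estimates on c} yield a product of order $\varepsilon^3\|\phi\|_{L^\infty}$, which is absorbed painlessly by the $C(L)\varepsilon^2$ bucket.

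The main work is (i). Lemma \ref{estimates on c} provides $|c_\alpha| + |\nabla_{\alpha,0} c_\alpha| \lesssim e^{-L} \|\phi\|_{C^{2,\theta}(B_{r+10|\log\varepsilon|}(x))}$, and Lemma \ref{control on h_0} (with the $o(1)$ smallness from the Remark after Proposition \ref{prop optimal approximation}) bounds $|\nabla_{\alpha,0} h_\alpha|$ and $|\Delta_{\alpha,0} h_\alpha|$ by $\|\phi\|_{C^{2,\theta}} + A(r+10|\log\varepsilon|;x)$. Since $|z| < L$ on $\Omega_\alpha^1$ we have $|g_\alpha^{(k)}| \leq C$. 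Multiplying these and applying Young's inequality to the cross product $e^{-L}\|\phi\|_{C^{2,\theta}} \cdot A$ distributes the first term $2(-1)^\alpha\nabla c_\alpha\cdot\nabla h_\alpha g_\alpha''$ and the $g_\alpha'''|\nabla h_\alpha|^2$ term among the $C(L)\|\phi\|_{C^{2,\theta}}^2$ and $C(L) A$ buckets. For the remaining $c_\alpha \Delta_{\alpha,0} h_\alpha g_\alpha''$ term, I use the Toda-type splitting $\Delta_{\alpha,0} h_\alpha = -H_\alpha + (H_\alpha+\Delta_{\alpha,0} h_\alpha)$ with $|H_\alpha| \lesssim \varepsilon$ from \eqref{bound on second fundamental form} and the trivial a priori bound $|\phi| \leq 2$; the first piece contributes $Ce^{-L}\varepsilon \lesssim C\varepsilon^2 + Ce^{-2L}$, and the second piece contributes the clean $Ce^{-L}\|H_\alpha + \Delta_{\alpha,0} h_\alpha\|_{L^\infty}$ tail.

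The whole argument is essentially mechanical once the $\xi$-cutoff annihilation on $\Omega_\alpha^1$ is noticed, and the only aspect requiring minor care is isolating the linear-in-$(H+\Delta h)$ piece via the Toda decomposition of $\Delta h$, which is exactly what preserves the $Ce^{-L}$ prefactor instead of allowing only a quadratic $C(L)(H+\Delta h)^2$ error. Nothing beyond Lemmas \ref{lem 8.2}, \ref{estimates on c} and \ref{control on h_0}, together with the basic curvature bound \eqref{bound on second fundamental form} and $|u| \leq 1$, is needed.
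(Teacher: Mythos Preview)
Your approach is essentially the same as the paper's (which simply says ``combining this expression with Lemma~\ref{lem 8.2}, Lemma~\ref{estimates on c} and the definition of $\xi$''), and your key observation---that $|z|<L$ on $\Omega_\alpha^1$ forces $\xi\equiv 1$, $\xi'=\xi''=0$, $\xi-1=0$---is exactly right and disposes of most of $F_\alpha$ at once.

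There is, however, a small but genuine slip in your treatment of the $c_\alpha\,\Delta_{\alpha,0}h_\alpha\,g_\alpha''$ term. After your splitting $\Delta_{\alpha,0}h_\alpha=-H_\alpha+(H_\alpha+\Delta_{\alpha,0}h_\alpha)$, you bound the $c_\alpha H_\alpha$ piece using $|\phi|\le 2$ to get $Ce^{-L}\varepsilon$, and then apply Young to obtain $C\varepsilon^2+Ce^{-2L}$. The problem is that $Ce^{-2L}$ is a \emph{fixed positive constant} (depending only on $L$) that is not dominated by any of the four buckets $C(L)\varepsilon^2$, $C(L)A$, $C(L)\|\phi\|_{C^{2,\theta}}^2$, $Ce^{-L}[\cdots]$, all of which tend to zero as $\varepsilon\to 0$. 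So as written the bound fails.

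The fix is painless and in fact simpler than what you wrote: the splitting is unnecessary. By Lemma~\ref{control on h_0} one has $|\Delta_{\alpha,0}h_\alpha|\le\|h_\alpha\|_{C^{2,\theta}}\lesssim\|\phi\|_{C^{2,\theta}}+e^{-D_\alpha}$ directly, so
\[
|c_\alpha\,\Delta_{\alpha,0}h_\alpha\,g_\alpha''|\lesssim e^{-L}\|\phi\|_{C^{2,\theta}}\bigl(\|\phi\|_{C^{2,\theta}}+A\bigr)\le C(L)\|\phi\|_{C^{2,\theta}}^2+C(L)A,
\]
which lands cleanly in the first three buckets with no $(H+\Delta h)$ term needed at all. (Alternatively, if you insist on the splitting, apply Young to $\varepsilon\cdot\|\phi\|_{C^{2,\theta}}$ rather than first using $|\phi|\le 2$; this gives $e^{-L}\varepsilon\|\phi\|\le C\varepsilon^2+C\|\phi\|_{C^{2,\theta}}^2$.) Your concern that avoiding the splitting would only yield a ``quadratic $C(L)(H+\Delta h)^2$ error'' is misplaced: Lemma~\ref{control on h_0} controls $\Delta h_\alpha$ by $\|\phi\|$ and $A$, not by $H+\Delta h$.
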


By \eqref{6.2} and the orthogonal condition \eqref{orthogonal condition 6.1}, applying standard estimates on the linearized operator $-\Delta+W^{\prime\prime}(g)$ (see for example \cite[Proposition 4.1]{DKWY}) leads to
\begin{eqnarray*}
 \|\phi_\alpha\|_{C^{1,\theta}(B_r(x))}
&\leq& Ce^{-cL} \|\phi_\alpha\|_{C^{1,\theta}(B_{r+10|\log\varepsilon|}(x))} \\
  &+&Ce^{-L} \left[\max_\alpha\big\|H_\alpha+\Delta_{\alpha,0}h_\alpha\big\|_{L^\infty(\Gamma_\alpha\cap B_{r+12|\log\varepsilon|}(x))}
   +\|\phi\|_{C^{2,\theta}(B_{r+10|\log\varepsilon|}(x))}\right] \\
&+& C(L) \varepsilon^2+C(L) A\left(r+10|\log\varepsilon|;x\right) +C(L)\|\phi\|_{C^{2,\theta}(B_{r+10|\log\varepsilon|}(x))}^2.
\end{eqnarray*}
Coming back to $\phi$, by the fact that $\|\phi\|_{C^{2,\theta}(B_{r+12|\log\varepsilon|}(x))}\ll 1$ and the estimates on $c_\alpha$ in Lemma \ref{estimates on c}, we get
\begin{eqnarray}\label{first order estimate 2}
 \|\phi\|_{C^{1,\theta}(\Omega_\alpha^1\cap B_r(x))}
&\leq& C \varepsilon^2+C A\left(r+10|\log\varepsilon|;x\right) \\
&+&Ce^{-cL} \|\phi\|_{C^{2,\theta}(B_{r+10|\log\varepsilon|}(x))} \nonumber\\ &+&Ce^{-cL}\max_\alpha\big\|H_\alpha+\Delta_{\alpha,0}h_\alpha\big\|_{L^\infty(\Gamma_\alpha\cap B_{r+10|\log\varepsilon|}(x))}.\nonumber
\end{eqnarray}

Combining \eqref{first order estimate 1} and \eqref{first order estimate 2}, by choosing $L$ large enough and denoting $\sigma(L):=Ce^{-cL}$, we obtain
\begin{eqnarray}\label{first order estimate}
\|\phi\|_{C^{1,\theta}(B_r(x))}
&\leq&C(L)\varepsilon^2+C(L)A\left(r+10|\log\varepsilon|;x\right)  \nonumber  \\
&+&\sigma(L)\left(\max_\alpha\big\|H_\alpha+\Delta_{\alpha,0}h_\alpha\big\|_{L^\infty(\Gamma_\alpha\cap B_{r+10|\log\varepsilon|}(x))}+ \|\phi\|_{C^{2,\theta}(B_{r+10|\log\varepsilon|}(x))}\right).
\end{eqnarray}

\subsection{ Second order H\"{o}lder estimates on $\phi$}\label{sec second order estimate}

First we have the following H\"{o}lder bounds on the right hand side of \eqref{error equation}.
\begin{lem}\label{Holder for RHS}
For any $x\in B_{5R/6}(0)$ and $r\in(0,R/7)$,
\begin{eqnarray*}
\|\Delta\phi-W^{\prime\prime}(g_\ast)\phi\|_{C^{\theta}(B_r(x))}
&\lesssim&\varepsilon^2+A\left(r+10|\log\varepsilon|;x\right)
+\|\phi\|_{C^{2,\theta}(B_{r+10|\log\varepsilon|}(x))}^2\\
&+&A\left(r+10|\log\varepsilon|;x\right) ^{\frac{1}{2}}
\left(\max_\alpha\|H_\alpha+\Delta_{\alpha,0}h_\alpha\|_{C^{\theta}(B_{r+10|\log\varepsilon|}(x))}\right).
\end{eqnarray*}
\end{lem}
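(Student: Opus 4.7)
The starting point is the equation \eqref{error equation} for $\phi=u-g_\ast$ in Fermi coordinates of $\Gamma_\alpha$ on the region $\mathcal{M}_\alpha^0$. I would subtract $W''(g_\ast)\phi$ from both sides and estimate the $C^\theta(B_r(x))$-norm of each resulting group on the right:
\emph{(i)} the quadratic remainder $W'(g_\ast+\phi)-W'(g_\ast)-W''(g_\ast)\phi$;
\emph{(ii)} the interaction $W'(g_\ast)-\sum_\beta W'(g_\beta)$;
\emph{(iii)} the diagonal curvature piece $(-1)^\alpha g_\alpha'[H_\alpha+\Delta_{\alpha,z}h_\alpha]$;
\emph{(iv)} the gradient square $-g_\alpha''|\nabla_{\alpha,z}h_\alpha|^2$;
\emph{(v)} the off-diagonal cross terms $\sum_{\beta\ne\alpha}[(-1)^\beta g_\beta'\mathcal{R}_{\beta,1}-g_\beta''\mathcal{R}_{\beta,2}]$;
and \emph{(vi)} the cutoff errors $-\sum_\beta\xi_\beta$.
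Taking the maximum over $\alpha$ of these pointwise estimates on $\mathcal{M}_\alpha^0\cap B_r(x)$ yields the bound on $B_r(x)$.

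Four of these groups are routine. Group \emph{(i)} is pointwise $O(\phi^2)$ by the $C^3$-regularity of $W$, so $\|\cdot\|_{C^\theta}\lesssim\|\phi\|_{C^{2,\theta}}^2$. Group \emph{(ii)} is bounded directly by Lemma \ref{Holder bound on interaction}, giving $A(r+10|\log\varepsilon|;x)+\varepsilon^2$. Group \emph{(iv)} is at most $\|\nabla h_\alpha\|_{L^\infty}\|\nabla h_\alpha\|_{C^\theta}$, and iterating Lemma \ref{control on h_0} against the small factor $A$ reduces this to $\|\phi\|_{C^{2,\theta}}^2+A$. Group \emph{(vi)} is pointwise $O(\varepsilon^3|\log\varepsilon|)$, absorbed into $\varepsilon^2$.

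The off-diagonal group \emph{(v)} is the geometric source of the $A^{1/2}\max_\alpha\|H_\alpha+\Delta_{\alpha,0}h_\alpha\|_{C^\theta}$ factor. The key observation is that in $\mathcal{M}_\alpha^0$ one has $|d_\beta|\ge D_\alpha/2$ for every $\beta\ne\alpha$, with further gain $c(|\beta-\alpha|-1)$ from Lemma \ref{lem distance ladder}, so by the exponential decay of $\bar g'$ and $\bar g''$ we have $|g_\beta'|+|g_\beta''|\lesssim e^{-D_\alpha/2}\lesssim A^{1/2}$ pointwise, with matching Hölder seminorm bounds. A product Hölder inequality, combined with the reduction $\mathcal{R}_{\beta,1}(y,z)\mapsto\mathcal{R}_{\beta,1}(y,0)$ via \eqref{error in z 4}--\eqref{error in z 5}, gives $\|g_\beta'\mathcal{R}_{\beta,1}\|_{C^\theta}\lesssim A^{1/2}\|H_\beta+\Delta_{\beta,0}h_\beta\|_{C^\theta}$; the $g_\beta''\mathcal{R}_{\beta,2}$ analogue is handled the same way, using Lemma \ref{control on h_0} to absorb $\mathcal{R}_{\beta,2}=|\nabla h_\beta|^2$. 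Summing in $\beta$ via Lemma \ref{lem distance ladder} produces precisely the desired fourth term of the stated bound.

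Group \emph{(iii)} is the delicate one. I would first reduce $H_\alpha(y,z)+\Delta_{\alpha,z}h_\alpha$ to $H_\alpha(y,0)+\Delta_{\alpha,0}h_\alpha$ up to $O(\varepsilon^2)$ via \eqref{error in z 4}--\eqref{error in z 5}, then invoke the Toda system \eqref{Toda system} to rewrite this as an explicit exponential piece (of $C^\theta$ norm $\lesssim A$) plus $E_\alpha^0$, which Lemma \ref{lem 5.1} controls. The main obstacle is matching the exact form of the stated bound: Lemma \ref{lem 5.1} contributes a quadratic $\max_\alpha\|H_\alpha+\Delta_{\alpha,0}h_\alpha\|_{C^\theta}^2$, which I would collapse into $A^{1/2}\|H+\Delta h\|_{C^\theta}$ by combining the Toda-derived $L^\infty$ bound $\|H_\alpha+\Delta_{\alpha,0}h_\alpha\|_{L^\infty}\lesssim A$ with $A\le A^{1/2}$ (since $A\le 1$) and the a priori smallness of the Hölder seminorm $[H+\Delta h]_\theta$ supplied by Corollary \ref{coro 5.2}. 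The bookkeeping must be careful enough that no quadratic $\|H+\Delta h\|_{C^\theta}^2$ residue survives, so that the resulting bound can be fed into the iteration in Proposition \ref{prop Schauder estimate}.
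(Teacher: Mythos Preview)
Your decomposition into groups (i)--(vi) and the treatment of (i), (ii), (iv), (v), (vi) are exactly the paper's argument in Appendix~\ref{sec proof of Lem 9.1} (cases (1), (2), (4), (5)--(6), (7) respectively), down to the same ingredients: Lemma~\ref{Holder bound on interaction} for the interaction, Lemma~\ref{control on h_0} for the $h$-terms, and the $|d_\beta|\ge D_\alpha/2$ bound in $\mathcal{M}_\alpha^0$ producing the $A^{1/2}$ prefactor in the off-diagonal group.

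The one substantive difference is group~(iii). The paper's case~(3) splits $g_\alpha'[H_\alpha(y,z)+\Delta_{\alpha,z}h_\alpha]$ into the $z=0$ main part plus two deviation terms, and then estimates \emph{only} the deviations via \eqref{error in z 4}--\eqref{error in z 5}; the main piece $g_\alpha'[H_\alpha(y,0)+\Delta_{\alpha,0}h_\alpha]$ is written out but never bounded. Taken literally this is a gap, because $g_\alpha'$ is $O(1)$ on $\mathcal{M}_\alpha^0$, so that piece contributes $\|H_\alpha+\Delta_{\alpha,0}h_\alpha\|_{C^\theta}$ with coefficient of order one---which does not fit under $A^{1/2}\|H+\Delta h\|_{C^\theta}$ and would spoil the derivation of \eqref{second order estimate}. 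Your idea to feed the main piece back through the Toda system \eqref{Toda system} and Lemma~\ref{lem 5.1} is the natural way to close it.

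However, your ``collapse'' step has a circularity: you invoke $\|H_\alpha+\Delta_{\alpha,0}h_\alpha\|_{L^\infty}\lesssim A$, but that bound is precisely part of the output of Proposition~\ref{prop Schauder estimate}, not an input available when proving Lemma~\ref{Holder for RHS}. The clean repair is to use only what is known a~priori: by $|A_\alpha|+|\nabla_{\alpha,0}A_\alpha|\lesssim\varepsilon$ and $\|h_\alpha\|_{C^3}=o(1)$ (Remark after Proposition~\ref{prop optimal approximation}) one has $\|H_\alpha+\Delta_{\alpha,0}h_\alpha\|_{C^\theta}=o(1)$, hence the term $\|H+\Delta h\|_{C^\theta}^2$ from Lemma~\ref{lem 5.1} is bounded by $o(1)\cdot\|H+\Delta h\|_{C^\theta}$. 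This yields a prefactor $o(1)$ rather than the stated $A^{1/2}$, but that is all the downstream iteration in \eqref{second order estimate} actually uses.
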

The proof is given in Appendix \ref{sec proof of Lem 9.1}.

By  \eqref{first order estimate} and Schauder estimates, we get
\begin{eqnarray}\label{second order estimate}
\|\phi\|_{C^{2,\theta}(B_r(x))}
&\leq& \sigma(L)\left(\max_\alpha\big\|H_\alpha+\Delta_{\alpha,0}h_\alpha\big\|_{C^\theta(\Gamma_\alpha\cap B_{r+10|\log\varepsilon|}(x))}+ \|\phi\|_{C^{2,\theta}(B_{r+10|\log\varepsilon|}(x))}\right) \nonumber  \\
&+&C(L)\varepsilon^2+C(L)A\left(r+10|\log\varepsilon|;x\right).
\end{eqnarray}

Combining this estimate with Corollary \ref{coro 5.2}, we get
\begin{eqnarray*}
&&\max_\alpha\big\|H_\alpha+\Delta_{\alpha,0}h_\alpha\big\|_{C^\theta(\Gamma_\alpha\cap B_{r}(x))}+\|\phi\|_{C^{2,\theta}(B_r(x))}\\
&\leq&  \frac{1}{2}\left(\max_\alpha\big\|H_\alpha+\Delta_{\alpha,0}h_\alpha\big\|_{C^\theta(\Gamma_\alpha\cap B_{r+10|\log\varepsilon|}(x))}+ \|\phi\|_{C^{2,\theta}(B_{r+10|\log\varepsilon|}(x))}\right) \\
&+&C(L)\varepsilon^2+C(L)A\left(r+10|\log\varepsilon|;x\right).
\end{eqnarray*}

An iteration of this inequality from $r+50|\log\varepsilon|^2$ to $r$ leads to \eqref{Schauder estimates}.
The proof of Proposition \ref{prop Schauder estimate} is thus complete.

\section{Improved estimates on horizontal derivatives}\label{sec improved estimate}
\setcounter{equation}{0}

In this section we prove an improvement on the $C^{1,\theta}$ estimates of horizontal derivatives of $\phi$, $\phi_i:=\partial\phi/\partial y_i$. $1\leq i\leq n-1$.
\begin{prop}\label{prop estimates on horizontal derivatives}
For any $x\in B_{5R/6}(0)$ and $r\in(0,R/7)$,
\[
\|\phi_i\|_{C^{1,\theta}(B_r(x))}
\lesssim\varepsilon^2+A\left(r+60|\log\varepsilon|^2;x\right)^{3/2}+\varepsilon^{1/6}A\left(r+60|\log\varepsilon|^2;x\right).
\]
\end{prop}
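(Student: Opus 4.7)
The plan is to mirror the strategy of Section \ref{sec first order} applied to $\phi_i:=\partial\phi/\partial y_i$, tracking carefully the additional smallness that horizontal differentiation produces. I would begin by differentiating the error equation \eqref{error equation} in $y_i$ in the Fermi coordinates of $\Gamma_\alpha$. The commutator estimate Lemma \ref{commutator estimate} lets us interchange $\partial_i$ with $\Delta_{\alpha,z}$ at a cost of $O(\varepsilon(|\nabla_{\alpha,0}^2\phi|+|\nabla_{\alpha,0}\phi|))$, which is already of the right order by Proposition \ref{prop Schauder estimate}. The resulting equation for $\phi_i$ has the schematic form
\[
\Delta_{\alpha,z}\phi_i - H_\alpha\partial_z\phi_i + \partial_{zz}\phi_i = W''(g_\alpha)\phi_i + (-1)^\alpha g_\alpha'\,\partial_i\bigl[H_\alpha+\Delta_{\alpha,0}h_\alpha\bigr] + \widetilde E_\alpha,
\]
where $\widetilde E_\alpha$ collects the horizontal derivatives of the interaction term $\mathcal I$, the quadratic $h_\alpha$-dependent terms, the $\beta\ne\alpha$ contributions, and the commutator error.

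The heart of the argument is to show that every piece of $\widetilde E_\alpha$ improves to the claimed order $\varepsilon^2+A^{3/2}+\varepsilon^{1/6}A$. There are two sources of smallness. First, horizontal derivatives of $g_\beta$ (with $\beta\ne\alpha$) read in the Fermi coordinates of $\Gamma_\alpha$ produce a factor $\partial_i d_\beta$, and Lemma \ref{comparison of distances} gives $|\partial_i d_\beta|\lesssim\varepsilon^{1/6}$; this converts interaction contributions of size $A$ into contributions of size $\varepsilon^{1/6}A$. Second, derivatives falling on $g_\alpha$ produce factors of $\nabla_{\alpha,0}h_\alpha$, which by Lemma \ref{control on h_0} combined with Proposition \ref{prop Schauder estimate} are controlled by $\|\nabla\phi\|_{C^{1,\theta}}+\varepsilon^{1/6}A\lesssim\varepsilon^2+A$; when multiplied by an interaction factor of size $A$ this yields $A(\varepsilon^2+A)\lesssim\varepsilon^2+A^{3/2}$, using $A\ll 1$ to absorb $A^2$ into $A^{3/2}$. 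Along the same lines, after horizontal differentiation Lemma \ref{Holder bound on interaction} upgrades to $\|\partial_i\mathcal I\|_{C^\theta}\lesssim \varepsilon^2+\varepsilon^{1/6}A+A^{3/2}$; the quadratic terms $g_\alpha''|\nabla_{\alpha,z}h_\alpha|^2$ differentiate into expressions of the form $(\nabla h)(\nabla^2 h)$ that are of the same acceptable size; and all cross terms between different sheets are handled by the same two devices, coupled with Lemma \ref{lem distance ladder} to control sums over $\beta$.

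Next, differentiating the orthogonality condition \eqref{orthogonal condition} in $y_i$ yields an approximate orthogonality $\int\phi_i g_\alpha'\,dz=$ (small error), with the error again proportional to $\nabla_{\alpha,0}h_\alpha$ and hence of order $\varepsilon^2+A^{3/2}+\varepsilon^{1/6}A$. This permits the two-region decomposition of Subsections \ref{subsec 6.1}--\ref{subsec 6.2}: in $\Omega_\alpha^2$ I would apply the decay estimate for the coercive operator $-\Delta+1$; in $\Omega_\alpha^1$, after subtracting a kernel component $\widetilde c_\alpha g_\alpha'$ (with $\widetilde c_\alpha$ satisfying estimates of the type in Lemma \ref{estimates on c}), I would apply the standard decay estimate for $-\Delta+W''(g_\alpha)$ from \cite{DKWY}. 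In parallel, differentiating the Toda system \eqref{Toda system} in $y_i$ and re-examining the proof of Lemma \ref{lem 5.1} gives a control on $\partial_i(H_\alpha+\Delta_{\alpha,0}h_\alpha)$ with exactly the same improved right-hand side. A Schauder iteration identical in structure to the one leading to \eqref{Schauder estimates}, but run through $\sim|\log\varepsilon|^2$ scales (hence the exponent $60|\log\varepsilon|^2$), then closes the estimate.

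The main obstacle is precisely the bookkeeping in the second paragraph: one must verify that no term in the differentiated equation produces a contribution that is merely $O(A)$, because any such term would only reproduce the weaker bound of Proposition \ref{prop Schauder estimate}. Every appearance of $\nabla_{\alpha,0}h_\alpha$ must be reduced via Lemma \ref{control on h_0} to something controlled by $\varepsilon^2+A$ (for which Proposition \ref{prop Schauder estimate} is essential), and every appearance of $\nabla_{\alpha,0}d_\beta$ must exploit the $\varepsilon^{1/6}$ gain from Lemma \ref{comparison of distances}. It is the interplay of these two mechanisms, combined with the $A$-decay of the interaction terms, that pins down the specific exponents $A^{3/2}$ and $\varepsilon^{1/6}A$ in the final inequality.
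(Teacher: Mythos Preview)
Your proposal is essentially correct and follows the same strategy as the paper: differentiate the error equation, exploit the two gains ($\varepsilon^{1/6}$ from $\nabla_{\alpha,0}d_\beta$ via Lemma \ref{comparison of distances}, and the product structure $A\cdot(\varepsilon^2+A)\lesssim A^{3/2}$ from Lemma \ref{control on h_0} plus Proposition \ref{prop Schauder estimate}), then apply the inner/outer linear theory with the almost-orthogonality of $\phi_i$.

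Two technical points where the paper proceeds slightly differently. First, you propose a Schauder iteration, but the paper uses $W^{2,p}$ estimates for the linearized operators. The reason is that the differentiated equation carries the term $\partial_{y_i}E_\alpha$, and one only has the improved bound for $\|E_\alpha\|_{L^\infty}$ (Lemma \ref{lem bound on E}), not for $\|\partial_{y_i}E_\alpha\|_{C^\theta}$; indeed, $\partial_{y_i}E_\alpha$ would involve third derivatives of $h_\alpha$, which are not controlled. With $W^{2,p}$ for large $p$ one still lands in $C^{1,\theta}$, and the right-hand side in divergence form $\partial_{y_i}E_\alpha$ is handled with only the $L^\infty$ bound on $E_\alpha$. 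Second, you plan to control $\partial_i(H_\alpha+\Delta_{\alpha,0}h_\alpha)$ separately and run a fresh iteration over $|\log\varepsilon|^2$ scales; the paper avoids this. The parallel component $g_\alpha'[H_{\alpha,i}+\Delta_{\alpha,0}h_{\alpha,i}]$ lies in the kernel direction and is absorbed by the (approximate) orthogonality \eqref{orthogonal condition 2}, while the iteration has already been performed inside Proposition \ref{prop Schauder estimate}, which is used as a black box in Lemmas \ref{lem bound on E}--\ref{lem upper bound on derivative of interaction}; the shift from $50$ to $60|\log\varepsilon|^2$ reflects only those few extra applications, not a new full iteration.
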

Combining this with Lemma \ref{control on h_0}, we obtain
\begin{coro}\label{coro imporved estimates on h}
For any $x\in B_{5R/6}(0)$ and $r\in(0,R/7)$,
\[
\max_\alpha\|\nabla h_\alpha\|_{C^{1,\theta}(\Gamma_\alpha\cap B_r(x))}
\lesssim\varepsilon^2+A\left(r+60|\log\varepsilon|^2;x\right)^{3/2}+\varepsilon^{1/6}A\left(r+60|\log\varepsilon|^2;x\right).
\]
\end{coro}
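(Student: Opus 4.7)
The plan is to differentiate the error equation \eqref{error equation} with respect to a horizontal coordinate $y_i$ (in Fermi coordinates with respect to $\Gamma_\alpha$), obtain a PDE for $\phi_i$ of the schematic form $\Delta\phi_i-W^{\prime\prime}(g_\ast)\phi_i=\mathcal{F}_i$, and then rerun the two-region scheme of Section \ref{sec first order} with a right-hand side that is genuinely smaller than the one entering Proposition \ref{prop Schauder estimate}. The source of improvement is structural: every term on the right of \eqref{error equation} depends either on $g_\alpha$ (through $h_\alpha$) or on some other $g_\beta$ (through $d_\beta$ and $h_\beta$), and horizontal derivatives of the latter gain the small factor $|\nabla_{\alpha,0}d_\beta|\lesssim\varepsilon^{1/6}$ by Lemma \ref{comparison of distances}. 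This is precisely the origin of the $\varepsilon^{1/6}A$ term in the statement, while the $A^{3/2}$ term comes from the nonlinear Toda remainder already recorded in Lemma \ref{lem 5.1}.

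Concretely I would: (i) apply $\partial_i$ to \eqref{error equation}, using Lemma \ref{commutator estimate} to handle the commutator $[\partial_i,\Delta_{\alpha,z}]$; (ii) differentiate the orthogonality condition \eqref{orthogonal condition} in $y_i$ to obtain the perturbed identity
\[
\int_{-\infty}^{+\infty}\phi_i(y,z)\,g_\alpha^\prime(y,z)\,dz=-\int_{-\infty}^{+\infty}\phi(y,z)\,\partial_i g_\alpha^\prime(y,z)\,dz=O\!\left(|\nabla_{\alpha,0}h_\alpha(y)|\,\|\phi(y,\cdot)\|_{L^\infty_z}\right),
\]
which is small enough to be absorbed; (iii) split into the inner region $\Omega_\alpha^1$ and the outer region $\Omega_\alpha^2$ exactly as in Subsections \ref{subsec 6.2}--\ref{subsec 6.1}, invoking the decay estimate for $-\Delta+W^{\prime\prime}(g)$ on the orthogonal subspace and the coercive estimate for $-\Delta+1$ respectively; (iv) iterate the resulting inequality from radius $r+60|\log\varepsilon|^2$ down to $r$ and upgrade to $C^{1,\theta}$ via Schauder estimates, paralleling Subsection \ref{sec second order estimate}. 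The terms in $\mathcal{F}_i$ then fall into three categories. \emph{First}, $\partial_i$ hitting a $g_\beta$ or $d_\beta$ with $\beta\neq\alpha$ gives an $\varepsilon^{1/6}A$ contribution via Lemma \ref{comparison of distances} and Lemma \ref{lem distance ladder}. \emph{Second}, $\partial_i$ hitting $h_\alpha$ produces $\partial_i h_\alpha$, which by Lemma \ref{control on h_0} and Proposition \ref{prop Schauder estimate} is $O(\varepsilon^2+A)$ and, multiplied against the other small factor already present, yields $O(A^2)$, negligible against $A^{3/2}$. \emph{Third}, $\partial_i$ hitting $H_\alpha+\Delta_{\alpha,0}h_\alpha$ is estimated directly from the Toda system \eqref{Toda system}:
\[
\partial_i\bigl(H_\alpha+\Delta_{\alpha,0}h_\alpha\bigr)=-\tfrac{2A_{(-1)^{\alpha-1}}^2}{\sigma_0}e^{-d_{\alpha-1}}\,\partial_i d_{\alpha-1}-\tfrac{2A_{(-1)^{\alpha}}^2}{\sigma_0}e^{d_{\alpha+1}}\,\partial_i d_{\alpha+1}+\partial_i E_\alpha^0,
\]
whose first two pieces are $O(\varepsilon^{1/6}A)$ by Lemma \ref{comparison of distances} and whose last piece inherits the scales $A^{3/2}+\varepsilon^{1/6}A$ from a parallel, differentiated version of Lemma \ref{lem 5.1}.

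The main obstacle is precisely this last point: to close the iteration one needs $C^\theta$, not merely $L^\infty$, control on $\partial_i(H_\alpha+\Delta_{\alpha,0}h_\alpha)$ and on $\partial_i E_\alpha^0$. This forces a line-by-line re-examination of the proof of Lemma \ref{lem 5.1} applied to the $\partial_i$-differentiated identity \eqref{H eqn}, keeping explicit the commutator corrections from Lemma \ref{commutator estimate} and carefully extracting the $\varepsilon^{1/6}$ factors from Lemma \ref{comparison of distances} every time a distance function on a foreign layer is differentiated. Once that differentiated Hölder estimate is in hand, the iteration of Subsection \ref{sec second order estimate} closes essentially mechanically: each round absorbs a factor $\tfrac{1}{2}$ of the unknown $\|\phi_i\|_{C^{1,\theta}}$ at the cost of shrinking the radius by $10|\log\varepsilon|$, and after $\sim|\log\varepsilon|$ iterations one arrives at the claimed bound, the total buffer $60|\log\varepsilon|^2$ accounting for both the iteration and the slightly larger neighborhood already required by Proposition \ref{prop Schauder estimate}.
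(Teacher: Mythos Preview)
Your plan is essentially the proof of Proposition~\ref{prop estimates on horizontal derivatives} (the $C^{1,\theta}$ bound on $\phi_i$), which is indeed where all the work lies; the corollary itself then follows in one line from that proposition and the second estimate \eqref{control on h_2} in Lemma~\ref{control on h_0}, since \eqref{control on h_2} bounds $\|\nabla_{\alpha,0}h_\alpha\|_{C^{1,\theta}}$ by $\|\nabla_{\alpha,0}\phi\|_{C^{1,\theta}}+\varepsilon^{1/6}e^{-D_\alpha}$ plus a term of the form $(\max_\beta\|\nabla h_\beta\|)\cdot e^{-D_\alpha}$ that can be absorbed. You should make that final step explicit, as your write-up never actually passes from $\phi_i$ to $\nabla h_\alpha$.

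On the substance of the $\phi_i$ estimate, your outline (differentiate \eqref{error equation}, use Lemma~\ref{commutator estimate} for the commutator, differentiate \eqref{orthogonal condition} for an almost-orthogonality condition, then rerun the inner/outer scheme) matches the paper's argument closely. Two points are worth noting. First, the term $g_\alpha'\,\partial_i(H_\alpha+\Delta_{\alpha,0}h_\alpha)$ in the differentiated inner equation is the \emph{parallel component}: in $\Omega_\alpha^1$ it is projected out by the (almost) orthogonality condition exactly as the undifferentiated $g_\alpha'(H_\alpha+\Delta_{\alpha,0}h_\alpha)$ was in Section~\ref{sec first order}, so you do not need to estimate $\partial_i(H_\alpha+\Delta_{\alpha,0}h_\alpha)$ directly at all. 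Second, and more importantly, the ``main obstacle'' you identify---needing a $C^\theta$ bound on $\partial_iE_\alpha^0$---is not how the paper proceeds. The paper keeps the differentiated remainder as $\partial_iE_\alpha$ with only an $L^\infty$ bound on $E_\alpha$ (Lemma~\ref{lem bound on E}), and then replaces Schauder theory by $W^{2,p}$ estimates for the operators $-\Delta+1$ and $-\Delta+W^{\prime\prime}(g_\alpha)$ with $p$ large; Sobolev embedding then yields the $C^{1,\theta}$ bound on $\phi_i$. This sidesteps entirely the ``line-by-line re-examination'' of Lemma~\ref{lem 5.1} that you anticipate, and is both shorter and more robust than trying to extract H\"older control on the differentiated Toda remainder.
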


To prove Proposition \ref{prop estimates on horizontal derivatives}, as in Section \ref{sec first order} we still estimate $\phi_i$ in $\Omega_\alpha^1$ and $\Omega_\alpha^2$ separately. To this end, we first rewrite \eqref{error equation} as
\begin{equation}\label{error equation 2}
\Delta_{\alpha,z}\phi+\partial_{zz}\phi
=W^{\prime\prime}(g_\ast)\phi +(-1)^{\alpha}g_\alpha^\prime\left[H_\alpha(y,0)+\Delta_{\alpha,0} h_\alpha(y)\right]+\mathcal{I}+E_\alpha,
\end{equation}
where
\begin{eqnarray*}
E_\alpha&=&H_\alpha(y,z)\partial_z\phi+\left[W^\prime(g_\ast+\phi)-W^\prime(g_\ast)-W^{\prime\prime}(g_\ast)\phi\right]\\
&+&(-1)^{\alpha}g_\alpha^\prime\left[H_\alpha(y,z)-H_\alpha(y,0)+\Delta_{\alpha,z} h_\alpha(y)-\Delta_{\alpha,0}h_\alpha(y)\right]\\
&-&g_\alpha^{\prime\prime}|\nabla_{\alpha,z}h_\alpha|^2+\sum_{\beta\neq\alpha} \left[(-1)^{\beta}g_\beta^\prime\mathcal{R}_{\beta,1}-
g_\beta^{\prime\prime}\mathcal{R}_{\beta,2}\right] -\sum_{\beta}\xi_\beta .
\end{eqnarray*}
The following $L^\infty$ bound on $E_\alpha$ follows from \eqref{Schauder estimates} and the calculation in Appendix \ref{sec proof of Lem 9.1}.
\begin{lem}\label{lem bound on E}
For any $x\in B_{5R/6}(0)$ and $r\in(0,R/7)$,
\begin{equation*}
\|E_\alpha\|_{L^\infty( \mathcal{M}_\alpha^0\cap B_r(x))}
\lesssim \varepsilon^2+A\left(r+50|\log\varepsilon|^2 ;x\right)^{3/2}.
\end{equation*}
\end{lem}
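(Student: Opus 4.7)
The plan is to estimate each of the six groups of terms in the definition of $E_\alpha$ and show each is $\lesssim \varepsilon^2 + A^{3/2}$, where I abbreviate $A:=A(r+50|\log\varepsilon|^2;x)$ throughout. The two workhorses are Proposition~\ref{prop Schauder estimate}, which gives $\max_\alpha\|H_\alpha+\Delta_{\alpha,0}h_\alpha\|_{C^\theta}+\|\phi\|_{C^{2,\theta}}\lesssim\varepsilon^2+A$ on the enlarged ball, and Lemma~\ref{control on h_0} (specifically \eqref{control on h 1}), which combined with the former yields $\|h_\alpha\|_{C^{2,\theta}}\lesssim\varepsilon^2+A$. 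Throughout I will freely use $A\le 1$ (so $A^2\le A^{3/2}$) and Young's inequality $\varepsilon A\lesssim\varepsilon^2+A^2\lesssim\varepsilon^2+A^{3/2}$; any $\varepsilon^k$ with $k\ge 2$ is absorbed into $\varepsilon^2$.

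The five ``diagonal'' groups of terms go quickly. Using $|H_\alpha|\lesssim\varepsilon$ gives $|H_\alpha\partial_z\phi|\lesssim\varepsilon(\varepsilon^2+A)$. The Taylor remainder $W^\prime(g_\ast+\phi)-W^\prime(g_\ast)-W^{\prime\prime}(g_\ast)\phi$ is $O(\phi^2)\lesssim(\varepsilon^2+A)^2$, and $g_\alpha^{\prime\prime}|\nabla_{\alpha,z}h_\alpha|^2\lesssim(\varepsilon^2+A)^2$. For the two $z$-deviation brackets I apply \eqref{error in z 4} and \eqref{error in z 5} to bound them pointwise by $C\varepsilon^2|z|$ and $C\varepsilon|z|\|h_\alpha\|_{C^2}$; multiplying by $g_\alpha^\prime$ and using the uniform bound $|t|\bar g^\prime(t)\lesssim 1$ yields $\lesssim\varepsilon^2+\varepsilon(\varepsilon^2+A)$. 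Finally $|\sum_\beta\xi_\beta|\lesssim\varepsilon^3$ is immediate from the construction of $\bar g$. Each of these contributions is $\lesssim\varepsilon^2+A^{3/2}$ via the algebraic facts above.

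The heart of the argument is the cross-sum $\sum_{\beta\neq\alpha}[(-1)^\beta g_\beta^\prime\mathcal R_{\beta,1}-g_\beta^{\prime\prime}\mathcal R_{\beta,2}]$. On $\mathcal{M}_\alpha^0$, for a neighbor $\beta=\alpha\pm1$, combining $|d_\beta(y,z)|\ge|d_\alpha(y,z)|=|z|$ with the approximate triangle inequality $|d_\beta(y,z)|\ge D_\alpha(\Pi_\alpha(y,z))-|z|-o(1)$ from Lemma~\ref{comparison of distances} and averaging gives $|d_\beta(y,z)|\ge D_\alpha/2 - o(1)$. For $|\beta-\alpha|\ge 2$ the same bound improves with a geometric gap by Lemma~\ref{O(1) scale}, and so by the ladder argument of Lemma~\ref{lem distance ladder}
\[
\sum_{\beta\neq\alpha}\bigl(|g_\beta^\prime|+|g_\beta^{\prime\prime}|\bigr)\;\lesssim\;e^{-D_\alpha/2}\;\lesssim\;A^{1/2}
\]
on $\mathcal{M}_\alpha^0\cap B_r(x)$. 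Applying the same decomposition already used for the diagonal terms but now inside the $\beta$-Fermi chart, and using that $g_\beta^\prime,g_\beta^{\prime\prime}$ are supported in $\{|z_\beta|\le 8|\log\varepsilon|\}$ so every factor $\varepsilon|z_\beta|$ is harmless (absorbed with the $|t|\bar g^\prime(t)\lesssim 1$ trick), I obtain $|\mathcal R_{\beta,1}|\lesssim\varepsilon^2+A$ and $|\mathcal R_{\beta,2}|\lesssim(\varepsilon^2+A)^2$. The full cross-sum is therefore $\lesssim A^{1/2}(\varepsilon^2+A)\lesssim\varepsilon^2+A^{3/2}$.

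The main obstacle is exactly this last step: one must simultaneously extract the full $e^{-D_\alpha/2}=A^{1/2}$ from $g_\beta^\prime,g_\beta^{\prime\prime}$ at every point of $\mathcal{M}_\alpha^0$ (not just on $\Gamma_\alpha$, where Lemma~\ref{lem distance ladder} is literally stated), sum absolutely in $\beta$ even though the number of relevant indices may grow as $\varepsilon\to 0$, and retain a full factor $\varepsilon^2+A$ on the $\mathcal R_{\beta,j}$ side. The $A^{1/2}(\varepsilon^2+A)$ that results is precisely what upgrades the linear $\varepsilon^2+A$ control of Proposition~\ref{prop Schauder estimate} to the quadratic $\varepsilon^2+A^{3/2}$ claimed in the lemma.
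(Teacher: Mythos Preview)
Your proof is correct and follows essentially the same approach as the paper, which simply states that the bound ``follows from \eqref{Schauder estimates} and the calculation in Appendix~\ref{sec proof of Lem 9.1}.'' You have correctly fleshed out that sketch: the diagonal terms are handled exactly as in Appendix~\ref{sec proof of Lem 9.1} combined with Proposition~\ref{prop Schauder estimate} and Lemma~\ref{control on h_0}, and for the cross-sum the paper (item~(5) of Appendix~\ref{sec proof of Lem 9.1}) likewise splits off $g_\beta^\prime[H_\beta(y,0)+\Delta_{\beta,0}h_\beta]$ and extracts $e^{-D_\alpha/2}\sim A^{1/2}$ from $g_\beta^\prime$ on $\mathcal{M}_\alpha^0$, yielding $A^{1/2}(\varepsilon^2+A)\lesssim\varepsilon^2+A^{3/2}$. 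Two minor remarks: (i)~your phrasing ``$|\mathcal{R}_{\beta,1}|\lesssim\varepsilon^2+A$'' is imprecise since the $z_\beta$-deviation terms need the $g_\beta^\prime$ factor to be absorbed, but as you note this factor is present and these secondary terms land at $\varepsilon^2$ or $\varepsilon(\varepsilon^2+A)$ anyway, so no $A^{1/2}$ is needed there; (ii)~$|\sum_\beta\xi_\beta|\lesssim\varepsilon^3|\log\varepsilon|$ rather than $\varepsilon^3$, since $O(|\log\varepsilon|)$ sheets can contribute, but this is still $\lesssim\varepsilon^2$.
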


Differentiating \eqref{error equation 2} in $y_i$, we obtain an equation for $\phi_i:=\phi_{y_i}$, which in Fermi coordinates with respect to $\Gamma_\alpha$ reads as
\begin{equation}\label{horizontal error equation}
\Delta_{\alpha,z}\phi_i+\partial_{zz}\phi_i=W^{\prime\prime}(g_\alpha)\phi_i-(-1)^\alpha g_\alpha^\prime \left[H_{\alpha,i}(y,0)+\Delta_{\alpha,0} h_{\alpha,i}(y)\right]+\partial_{y_i}\mathcal{I}+\partial_iE_\alpha+E_i,
\end{equation}
where $H_{\alpha,i}(y,0):=\partial_{y_i} H_\alpha(y,0)$, $h_{\alpha,i}(y):= \partial_{y_i} h_\alpha$ and the remainder term
\begin{eqnarray*}
E_i&=&\underbrace{\left(\Delta_{\alpha,z}\phi_i-\partial_{y_i}\Delta_{\alpha,z}\phi\right)}_{I}+\underbrace{\left[W^{\prime\prime}(g_\ast)-W^{\prime\prime}(g_\alpha)\right]\phi_i}_{II}\\
&+&\underbrace{W^{\prime\prime\prime}(g_\ast)\phi \left[\sum_{\beta\neq\alpha} (-1)^{\beta}g_\beta^\prime\left(\frac{\partial d_\beta}{\partial y_i}-\sum_{j=1}^{n-1} \left(h_{\beta,j}\circ\Pi_\beta\right)\frac{\partial\Pi_\beta^j}{\partial y_i}\right)\right]}_{III}\\
&+&\underbrace{(-1)^\alpha g_\alpha^\prime\left[\partial_{y_i}\Delta_{\alpha,0}h_\alpha(y)-\Delta_{\alpha,0}h_{\alpha,i}(y)\right]}_{IV}\\
&-&\underbrace{g_\alpha^{\prime\prime}h_{\alpha,i}\left[H_{\alpha}(y,0)+\Delta_{\alpha,0} h_{\alpha}(y)\right]}_{V}.
\end{eqnarray*}

We have the following  $L^\infty$ bound on $E_i$.
\begin{lem}\label{bound on remainder term}
For any $x\in B_{5R/6}(0)$ and $r\in(0,R/7)$,
\[
\|E_i\|_{L^\infty(\mathcal{M}_\alpha^0\cap B_r(x))}
\lesssim \varepsilon^2+A\left(r+60|\log\varepsilon|^2 ;x\right)^{3/2}+\varepsilon^{1/3}A\left(r+60|\log\varepsilon|^2 ;x\right).
\]
\end{lem}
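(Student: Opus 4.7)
The plan is to bound each of the five terms I through V comprising $E_i$ separately, consistently using four tools: (a) Proposition~\ref{prop Schauder estimate}, which yields $\|\phi\|_{C^{2,\theta}(B_r(x))} + \max_\alpha\|H_\alpha+\Delta_{\alpha,0}h_\alpha\|_{C^\theta} \lesssim \varepsilon^2 + A(r+50|\log\varepsilon|^2;x)$; (b) Lemma~\ref{control on h_0}, which controls $\|h_\alpha\|_{C^{2,\theta}}$ and $\|\nabla_{\alpha,0} h_\alpha\|$ by $\phi$ and $e^{-D_\alpha}$; (c) the commutator estimate in Lemma~\ref{commutator estimate}; and (d) the distance-comparison Lemma~\ref{comparison of distances}, which supplies the tangential gradient bound $|\nabla_{\alpha,0}d_\beta| \lesssim \varepsilon^{1/6}$ wherever $g_\beta'$ is non-negligible.

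For the two commutator terms I and IV, Lemma~\ref{commutator estimate} directly gives a bound by $\varepsilon\bigl(|\nabla_{\alpha,0}^2\phi| + |\nabla_{\alpha,0}\phi| + |\nabla_{\alpha,0}^2 h_\alpha| + |\nabla_{\alpha,0}h_\alpha|\bigr)$. Proposition~\ref{prop Schauder estimate} bounds the $\phi$-part by $\varepsilon(\varepsilon^2+A)$, and Lemma~\ref{control on h_0} bounds the $h_\alpha$-part by the same quantity. Since $|g_\alpha'|+|g_\alpha''|\lesssim 1$, we conclude $|\mathrm{I}|+|\mathrm{IV}| \lesssim \varepsilon(\varepsilon^2+A) \leq \varepsilon^2 + \varepsilon A \leq \varepsilon^2 + \varepsilon^{1/3}A$, which accounts for the $\varepsilon^{1/3}A$ term in the conclusion.

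Terms II and V are products of two small quantities. For II, within $\mathcal{M}_\alpha^0$ one has $|g_\ast - g_\alpha| \lesssim \sum_{\beta \neq \alpha}|g_\beta \mp 1| \lesssim e^{-D_\alpha}$ by Lemma~\ref{lem distance ladder}, so $|W''(g_\ast)-W''(g_\alpha)| \lesssim A$ and $|\mathrm{II}| \lesssim A\cdot\|\phi\|_{C^1} \lesssim A(\varepsilon^2+A) \lesssim \varepsilon^2+A^{3/2}$, using smallness of $A$ from Lemma~\ref{O(1) scale}. For V, Lemma~\ref{control on h_0} together with Proposition~\ref{prop Schauder estimate} yields $|h_{\alpha,i}| \lesssim \varepsilon^2+A$, and $|H_\alpha + \Delta_{\alpha,0}h_\alpha| \lesssim \varepsilon^2+A$ directly, so $|\mathrm{V}| \lesssim (\varepsilon^2+A)^2 \lesssim \varepsilon^2+A^{3/2}$.

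The most delicate piece is term III, which is where Lemma~\ref{comparison of distances} becomes essential: wherever $g_\beta'(y,z)$ is non-negligible (forcing $|d_\beta|\lesssim|\log\varepsilon|$), we have $|\partial_{y_i}d_\beta| \lesssim |\nabla_{\alpha,0}d_\beta| \lesssim \varepsilon^{1/6}$. Combined with the uniform Lipschitz bound on $\Pi_\beta$ from Lemma~\ref{biLip of projection operator} and the above bound $\|\nabla h_\beta\|_\infty \lesssim \varepsilon^2+A$, the bracket in III is bounded by $\sum_{\beta\neq\alpha}|g_\beta'|\bigl(\varepsilon^{1/6} + \|\nabla h_\beta\|_\infty\bigr) \lesssim A(\varepsilon^{1/6}+\varepsilon^2+A)$, after summing the geometric series via Lemma~\ref{lem distance ladder}. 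Multiplying by $|W'''(g_\ast)\phi| \lesssim \varepsilon^2+A$ yields $|\mathrm{III}| \lesssim (\varepsilon^2+A)A(\varepsilon^{1/6}+A) \lesssim \varepsilon^2 + A^{3/2}$, where the cross term $\varepsilon^{1/6}A^2$ is absorbed into $A^{3/2}$ by smallness of $A$. Summing contributions I--V and tracking the inflation of radius (the additional $+10|\log\varepsilon|^2$ beyond the $r+50|\log\varepsilon|^2$ of Proposition~\ref{prop Schauder estimate} is absorbed by the projection passes inherent in Lemma~\ref{control on h_0} applied to neighboring sheets) gives the claimed estimate. The single subtle point is term III: one must verify that the bootstrap $\|\nabla h\|_\infty \lesssim \varepsilon^2+A$ does not depend circularly on the conclusion, but this follows from Lemma~\ref{control on h_0} fed only by the already-established Proposition~\ref{prop Schauder estimate}.
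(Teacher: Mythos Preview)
Your proof follows essentially the same route as the paper's: you estimate each of the five pieces $I$--$V$ separately, invoking the commutator Lemma~\ref{commutator estimate} for $I$ and $IV$, the product structure plus Lemma~\ref{control on h_0} and Proposition~\ref{prop Schauder estimate} for $II$ and $V$, and the distance comparison Lemma~\ref{comparison of distances} (giving $|\partial_{y_i}d_\beta|\lesssim\varepsilon^{1/6}$) for $III$. The overall organization and the final bound are correct.

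There is one recurring slip. In your treatment of $II$ and $III$ you assert that, throughout $\mathcal{M}_\alpha^0$, $\sum_{\beta\neq\alpha}|g_\beta\mp1|$ and $\sum_{\beta\neq\alpha}|g_\beta'|$ are bounded by $e^{-D_\alpha}\sim A$, citing Lemma~\ref{lem distance ladder}. That lemma is stated only for points $(y,0)\in\Gamma_\alpha$; at a general point $(y,z)\in\mathcal{M}_\alpha^0$, the nearest-neighbor distance $|d_{\alpha\pm1}(y,z)|$ can shrink to roughly $D_\alpha(y)/2$ near the boundary of $\mathcal{M}_\alpha^0$, so the correct pointwise bound is only $e^{-D_\alpha/2}\sim A^{1/2}$. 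The paper uses exactly this weaker bound in its estimate of $II$. Fortunately your conclusion is unaffected: replacing $A$ by $A^{1/2}$ in your products $A\cdot\|\phi\|_{C^1}$ and $A\cdot(\varepsilon^{1/6}+\varepsilon^2+A)$ still gives $A^{1/2}(\varepsilon^2+A)\lesssim\varepsilon^2+A^{3/2}$, so the lemma follows. Just correct the intermediate claim and the citation of Lemma~\ref{lem distance ladder} at those two spots.
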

\begin{proof}
We estimate the five terms one by one.
  \begin{enumerate}
\item By Lemma \ref{commutator estimate}, for $(y,z)\in \mathcal{M}_\alpha^0\cap B_r(x)$,
\[
|I|
  \lesssim \varepsilon\left(|\nabla_{\alpha,0}^2\phi(y,z)|+|\nabla_{\alpha,0}\varphi(y,z)|\right)
   \lesssim \varepsilon^2+\|\phi\|_{C^{2,\theta}(B_r(x))}^2.
\]

\item For $(y,z)\in\mathcal{M}^0_\alpha\cap B_r(x)$, by Taylor expansion and Lemma \ref{lem distance ladder} we get
\begin{eqnarray*}
\Big|W^{\prime\prime}(g_\ast +\phi )-W^{\prime\prime}(g_\alpha )\Big|&\lesssim&|\phi |+\sum_{\beta\neq\alpha}g_\beta^\prime \\
&\lesssim&\|\phi\|_{L^\infty(B_r(x))}+\max_{\Gamma_\alpha\cap B_r(x)}e^{-\frac{ D_\alpha}{2}}+\varepsilon^2.
\end{eqnarray*}
Hence
\begin{eqnarray*}
\|II\|_{L^\infty(\mathcal{M}_\alpha^0\cap B_r(x))}  &\lesssim& \|\phi\|_{C^{2,\theta}(B_r(x))}\left(\|\phi\|_{C^{2,\theta}(B_r(x))}+\max_{\Gamma_\alpha\cap B_r(x)}e^{-\frac{ D_\alpha}{2}}+\varepsilon^2\right) \\
   &\lesssim&\|\phi\|_{C^{2,\theta}(B_r(x))}^2+A(r;x)^{\frac{1}{2}}\|\phi\|_{C^{2,\theta}(B_r(x))}+\varepsilon^2 .
\end{eqnarray*}

\item For $\beta\neq\alpha$, if $g_\beta^\prime\neq0$, by Lemma \ref{comparison of distances},
\begin{equation}\label{almost parallel}
  \Big|\frac{\partial d_\beta}{\partial y_i}\Big|\lesssim\varepsilon^{1/6}.
\end{equation}
By the Cauchy inequality, Lemma \ref{lem distance ladder} and Lemma \ref{control on h_0}, we obtain
\begin{eqnarray*}
\|III\|_{L^\infty(\mathcal{M}_\alpha^0\cap B_r(x))}  &\lesssim&\|\phi\|_{C^{1,\theta}(B_{r+8|\log\varepsilon|}(x))}^2+A\left(r+8|\log\varepsilon|;x\right)^2 \\
&+&\varepsilon^{1/6}\|\phi\|_{C^{1,\theta}(B_{r+8|\log\varepsilon|}(x))}.
\end{eqnarray*}

\item By Lemma \ref{commutator estimate} and Lemma \ref{control on h_0},
\[
\|IV\|_{L^\infty(\mathcal{M}_\alpha^0\cap B_r(x))}  \lesssim \varepsilon^2+\|\phi\|_{C^{2,\theta}(B_r(x))}^2+A\left(r;x\right)^2.
\]

\item By the Cauchy inequality and Lemma \ref{control on h_0},
\[
 \|V\|_{L^\infty(\mathcal{M}_\alpha^0\cap B_r(x))}  \lesssim
\|\phi\|_{C^{2,\theta}(B_r(x))}^2+\max_{\Gamma_\alpha\cap B_r(x)}e^{-2D_\alpha}+\|H_\alpha+\Delta_{\alpha,0}h_\alpha\|_{L^\infty(\Gamma_\alpha\cap B_r(x))}^2.
\]
\end{enumerate}

Putting these estimates together and applying \eqref{Schauder estimates} we conclude the proof.
\end{proof}

Finally,  the order of $\partial_{y_i}\mathcal{I}$ is increased by one due to the appearance of one more term involving horizontal derivatives of $\phi$.
\begin{lem}\label{lem upper bound on derivative of interaction}
  For any $x\in B_{6R/7}(0)$ and $r\in(0,R/8)$,
\[
\|\partial_{y_i}\mathcal{I}\|_{L^\infty(B_r(x))}
\lesssim \varepsilon^2+A\left(r+60|\log\varepsilon|^2 ;x\right)^2+\varepsilon^{1/6}A\left(r+60|\log\varepsilon|^2 ;x\right).
\]
\end{lem}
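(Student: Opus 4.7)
The plan is to differentiate the expansion of $\mathcal{I}$ provided by Lemma \ref{interaction term} and to exploit the fact that each horizontal derivative gains an extra small factor (either $\varepsilon^{1/6}$ or $A$) relative to the pointwise bound of Lemma \ref{upper bound on interaction}.

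At any point $(y_0,z_0)\in B_r(x)$ I would pick an index $\alpha$ with $(y_0,z_0)\in\mathcal{M}_\alpha^0\subset\mathcal{M}_\alpha^4$ and work in the Fermi coordinates associated to $\Gamma_\alpha$. In that chart Lemma \ref{interaction term} gives
\[
\mathcal{I}=[W''(g_\alpha)-1]\bigl[g_{\alpha-1}-(-1)^{\alpha-1}\bigr]+[W''(g_\alpha)-1]\bigl[g_{\alpha+1}+(-1)^{\alpha+1}\bigr]+R,
\]
with $|R|\lesssim e^{-2D_\alpha(y_0)}$. The key geometric input is that wherever $\bar g'(\cdot)$ is not yet exponentially small one has $|d_\beta(y_0,z_0)|\lesssim |\log\varepsilon|$, and hence $|\nabla_{\alpha,0} d_\beta|\lesssim\varepsilon^{1/6}$ by Lemma \ref{comparison of distances}; this is the quantitative expression of the sheets being almost parallel.

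The key step is then the differentiation in $y_i$. Since $g_\beta=\bar g\bigl((-1)^\beta(d_\beta-h_\beta\circ\Pi_\beta)\bigr)$, applying $\partial_{y_i}$ to $g_\beta$ produces the factor $\partial_{y_i} d_\beta-\nabla h_\beta(\Pi_\beta)\cdot\partial_{y_i}\Pi_\beta$. The first summand is $O(\varepsilon^{1/6})$ by the previous remark; the second is controlled using Lemma \ref{control on h_0} together with the Schauder bound \eqref{Schauder estimates} from Proposition \ref{prop Schauder estimate} and the Lipschitz continuity of $\Pi_\beta$ from Lemma \ref{biLip of projection operator}, yielding $|\nabla h_\beta|\lesssim \varepsilon^2+A(r+50|\log\varepsilon|^2;x)$. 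The same bound applies to $h_{\alpha,i}$, which is exactly what one gets when differentiating $g_\alpha$ through $[W''(g_\alpha)-1]$. Consequently each term in $\partial_{y_i}\mathcal{I}$ is a product of one of the exponential factors $e^{-|d_{\alpha\pm 1}|}$ with a small factor of order $\varepsilon^{1/6}+A+\varepsilon^2$. Summing in $\beta$ by Lemma \ref{lem distance ladder} gives $\sum_{\beta\neq\alpha}e^{-|d_\beta|}\lesssim e^{-D_\alpha(y_0)}\leq A(r+60|\log\varepsilon|^2;x)$, and $\partial_{y_i} R$ contributes at most $O(A^2)$ by the same mechanism.

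Putting everything together yields the claimed bound. The only place where bookkeeping care is needed is in extracting the quadratic $A^2$ (rather than merely $A$) from the $h$-dependent cross-terms: I would use Lemma \ref{control on h_0} to isolate $|\nabla h_\beta|\lesssim\varepsilon^2+A$ and then multiply by the accompanying factor $e^{-|d_\beta|}\lesssim A$, producing $\varepsilon^2 A+A^2$ which is absorbed into $\varepsilon^2+A^2$. I do not anticipate any serious obstacle beyond this; the conceptual gain over Lemma \ref{Holder bound on interaction} is entirely the extra $\varepsilon^{1/6}$ (or $A$) factor extracted from the almost-parallel structure encoded in Lemma \ref{comparison of distances}.
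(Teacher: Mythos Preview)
Your approach is correct and uses the same key ingredients as the paper: the $\varepsilon^{1/6}$ gain from Lemma~\ref{comparison of distances}, the control on $\nabla h_\beta$ via Lemma~\ref{control on h_0} and \eqref{Schauder estimates}, and the summation via Lemma~\ref{lem distance ladder}. The paper organizes the computation slightly differently: rather than differentiating the expansion of Lemma~\ref{interaction term} (which forces you to track the remainder $R$ separately and to argue that its exact form inherits the same product structure), it differentiates the definition $\mathcal{I}=W'(g_\ast)-\sum_\beta W'(g_\beta)$ directly, obtaining the single exact formula
\[
\partial_{y_i}\mathcal{I}=\sum_\beta(-1)^{\beta+1}\bigl[W''(g_\ast)-W''(g_\beta)\bigr]g_\beta'\Bigl(\tfrac{\partial d_\beta}{\partial y_i}-\sum_j h_{\beta,j}(\Pi_\beta)\tfrac{\partial\Pi_\beta^j}{\partial y_i}\Bigr),
\]
and then bounds the factor $[W''(g_\ast)-W''(g_\beta)]g_\beta'$ by $O(\varepsilon^2+e^{-D_\alpha})$ in one step (splitting $\beta=\alpha$ versus $\beta\neq\alpha$). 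This sidesteps any separate treatment of $\partial_{y_i}R$; your version works as well, since $R$ is built from the same $g_\beta$'s, but the paper's route is marginally cleaner and makes the final Cauchy--Schwarz step more transparent.
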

\begin{proof}
We have
\[
 \partial_{y_i}\mathcal{I} =  \sum_\beta (-1)^{\beta+1}\left[W^{\prime\prime}(g_\ast)-W^{\prime\prime}(g_\beta)\right]g_\beta^\prime\left(\frac{\partial d_\beta}{\partial y_i}-\sum_{j=1}^{n-1} h_{\beta,j}\left(\Pi_\beta(y,z)\right)\frac{\partial\Pi_\beta^j}{\partial y_i}(y,z)\right).
 \]

Let us first give an estimate on  $\left[W^{\prime\prime}(g_\ast)-W^{\prime\prime}(g_\beta)\right]g_\beta^\prime$ in $\mathcal{M}_\alpha^0$.
There are two cases.
\begin{itemize}
  \item If $\beta=\alpha$, we have
  \[
    \Big|W^{\prime\prime}(g_\ast)-W^{\prime\prime}(g_\alpha)\Big|g_\alpha^\prime \lesssim g_\alpha^\prime\sum_{\beta\neq\alpha}\left(1-g_\beta^2\right)
    \lesssim e^{-D_\alpha},\]
  where the last inequality follows the same reasoning in the proof of Lemma \ref{upper bound on interaction}.
  \item If $\beta\neq\alpha$, still as in the proof of Lemma \ref{upper bound on interaction},
  \begin{eqnarray}\label{8.6}
  \left[W^{\prime\prime}(g_\ast)-W^{\prime\prime}(g_\beta)\right]g_\beta^\prime &=& \left[W^{\prime\prime}(g_\alpha)-W^{\prime\prime}(1)+O\left(\sum_{\beta\neq\alpha}\left(1-g_\beta^2\right)\right)\right]g_\beta^\prime \\
    &=&O\left(\varepsilon^2\right)+O\left( e^{-|d_\beta(y,0)|}\right). \nonumber
  \end{eqnarray}
\end{itemize}
Hence by Lemma \ref{lem distance ladder}, we have
\begin{equation}\label{8.3}
\sum_\beta (-1)^{\beta+1}\left[W^{\prime\prime}(g_\ast)-W^{\prime\prime}(g_\beta)\right]g_\beta^\prime=O\left(\varepsilon^2\right)+O\left( e^{-D_\alpha}\right).
\end{equation}

Using Lemma \ref{control on h_0} to bound $h_{\beta,j}$, we see if $g_\beta^\prime\neq0$, \eqref{almost parallel} holds and
\begin{equation}\label{8.5}
 \Big|\sum_{j=1}^{n-1} h_{\beta,j}\left(\Pi_\beta(y,z)\right)\frac{\partial\Pi_\beta^j}{\partial y_i}(y,z)\Big|
\lesssim A\left(r;x\right) +\|\phi\|_{C^{1,\theta}(B_{r}(x))}.
\end{equation}
 Combining \eqref{8.3}, \eqref{almost parallel} and \eqref{8.5}, using the Cauchy inequality and applying \eqref{Schauder estimates} we conclude the proof.
\end{proof}

Differentiating \eqref{orthogonal condition} we obtain for any $\alpha$ and $y\in\Gamma_\alpha\cap B_r(x)$,
\begin{equation}\label{orthogonal condition 2}
\int_{-\infty}^{+\infty} \phi_i g_\alpha^\prime dz=h_{\alpha,i}(y)\int_{-\infty}^{+\infty} \phi g_\alpha^{\prime\prime}dz=O\left(\|\phi\|_{C^1(B_{r+8|\log\varepsilon|}(x))}^2+\max_{\Gamma_\alpha\cap B_r(x)}e^{-2D_\alpha}\right).
\end{equation}

In view of Lemma \ref{lem bound on E}, Lemma \ref{bound on remainder term} and Lemma \ref{lem upper bound on derivative of interaction}, combining \eqref{horizontal error equation} and the almost orthogonal condition \eqref{orthogonal condition 2}, proceeding as in Section \ref{sec first order} we get Proposition \ref{prop estimates on horizontal derivatives}. Note that although here we only have an $L^\infty$ estimate on $E_\alpha$ instead of $\partial_{y_i}E_\alpha$, we can still use the $W^{2,p}$ estimates (for a sufficiently large $p$) of the linear elliptic operator $-\Delta+1$ (in $\Omega_\alpha^2$, see  \cite[Proposition 4.1]{DKWY}) and $-\Delta+W^{\prime\prime}(g_\alpha)$ (in $\Omega_\alpha^1$) to get the $C^{1,\theta}$ bound on $\phi_i$.

Now \eqref{Toda system} can be rewritten in the following way.
\begin{coro}\label{Toda system new}
For any $x\in B_{6R/7}(0)$ and $r<R/8$, in $B_r(x)$ it holds that
\begin{eqnarray*}
H_\alpha(y,0)+\Delta_{\alpha,0}h_\alpha(y)
&=&\frac{2 A_{(-1)^{\alpha-1}}^2}{\sigma_0} e^{-d_{\alpha-1}(y,0)}-\frac{2 A_{(-1)^{\alpha}}^2}{\sigma_0}e^{d_{\alpha+1}(y,0)}+O\left(\varepsilon^2\right)\nonumber\\
&+&O\left(A\left(r+60|\log\varepsilon|^2;x\right)^{3/2}\right)+O\left(
\varepsilon^{1/6}A\left(r+60|\log\varepsilon|^2;x\right)\right).
\end{eqnarray*}
\end{coro}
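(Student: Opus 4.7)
The plan is to derive Corollary~\ref{Toda system new} as a direct consequence of Lemma~\ref{lem 5.1} by substituting into it the quantitative $C^{2,\theta}$ estimate already proved in Proposition~\ref{prop Schauder estimate}. Indeed, Lemma~\ref{lem 5.1} identifies the two leading exponential terms in \eqref{Toda system} and controls the remainder $E_\alpha^0$ by a sum of five pieces, three of which already have the desired form ($\varepsilon^2$, $\varepsilon^{1/3}A$, $A^{3/2}$), while the remaining two are the squared quantities $\|H_\beta+\Delta_{\beta,0}h_\beta\|_{C^\theta}^2$ and $\|\phi\|_{C^{2,\theta}}^2$ on a slightly enlarged ball. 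The only task is therefore to eliminate these two squared quantities in favor of pure powers of $\varepsilon$ and $A$.

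To do this, I would apply Proposition~\ref{prop Schauder estimate} on the ball $B_{r+10|\log\varepsilon|}(x)$, which bounds each of $\|H_\beta+\Delta_{\beta,0}h_\beta\|_{C^\theta}$ and $\|\phi\|_{C^{2,\theta}}$ there by $C\varepsilon^2+C\,A\bigl(r+10|\log\varepsilon|+50|\log\varepsilon|^2;\,x\bigr)$. Since $10|\log\varepsilon|+50|\log\varepsilon|^2\leq 60|\log\varepsilon|^2$ once $\varepsilon$ is small, the argument of $A$ is at most $r+60|\log\varepsilon|^2$, matching the radius in the statement. Squaring via $(\varepsilon^2+A)^2\leq 2\varepsilon^4+2A^2$, then using $\varepsilon^4\leq\varepsilon^2$ and $A^2\leq A^{3/2}$ (the latter valid because $A\leq 1$ for $\varepsilon$ small, a consequence of Lemma~\ref{O(1) scale}), shows that both squared terms are dominated by $\varepsilon^2+A(r+60|\log\varepsilon|^2;x)^{3/2}$. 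Combined with the remaining terms from Lemma~\ref{lem 5.1} and the trivial bound $\varepsilon^{1/3}A\leq\varepsilon^{1/6}A$, this yields
\[
|E_\alpha^0(y)|\lesssim \varepsilon^2+A\bigl(r+60|\log\varepsilon|^2;x\bigr)^{3/2}+\varepsilon^{1/6}A\bigl(r+60|\log\varepsilon|^2;x\bigr)
\]
on $\Gamma_\alpha\cap B_r(x)$; inserting this into \eqref{Toda system} is precisely the claimed expansion.

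The improved horizontal derivative estimates from Proposition~\ref{prop estimates on horizontal derivatives} and Corollary~\ref{coro imporved estimates on h} are not strictly required for this particular corollary, but they sharpen the $|\nabla_{\beta,z}h_\beta|^2$-type terms inside the proof of Lemma~\ref{lem 5.1} and could be substituted in to yield a slightly smaller implicit constant if desired. \emph{Main obstacle.} There is no substantive analytic difficulty here, as all the heavy lifting has been carried out in Lemma~\ref{lem 5.1} and Proposition~\ref{prop Schauder estimate}. The only care required is the bookkeeping of successive radius enlargements so that $r+60|\log\varepsilon|^2$ genuinely absorbs them, and verifying that the implicit constants remain universal; the latter holds because every input estimate is uniform in the index $\alpha$, as emphasized in the outline of Step~1.
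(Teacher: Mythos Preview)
Your proposal is correct and matches the paper's approach: the corollary is obtained by inserting the $C^{2,\theta}$ bound of Proposition~\ref{prop Schauder estimate} (applied on the slightly larger ball $B_{r+10|\log\varepsilon|}(x)$) into the remainder estimate of Lemma~\ref{lem 5.1}, with the radius bookkeeping $10|\log\varepsilon|+50|\log\varepsilon|^2\le 60|\log\varepsilon|^2$ exactly as you describe. Your observation that the Section~\ref{sec improved estimate} improvements are not strictly needed here is also accurate; the $\varepsilon^{1/6}$ in the statement is simply a weakening of the $\varepsilon^{1/3}$ coming from Lemma~\ref{lem 5.1}.
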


\section{Reduction of the stability condition}\label{sec reduction of stability}
\setcounter{equation}{0}

In this section we show that if $u$ is a stable solutionn of the Allen-Cahn equation, then solutions to the Toda system \eqref{Toda system} constructed in Section \ref{sec Toda system} satisfies an almost stable condition.

Given a point $x\in B_{5R/6}(0)$ and $r\in(0,R/7)$, and finitely many functions $\eta_\alpha\in C_0^\infty\left(\Gamma_\alpha\cap B_r(x)\right)$, using Fermi coordinates with respect to $\Gamma_\alpha$ we define
\[\varphi_\alpha(y,z):=\eta_\alpha(y)g_\alpha^\prime(y,z).\]
In the following we will view $\eta_\alpha$ as a function defined in $B_{r+8|\log\varepsilon|}(x)$ by identifying it with $\eta_\alpha\circ\Pi_\alpha$.

Let $\varphi:=\sum_\alpha\varphi_\alpha$. By definition $\varphi\in C_0^\infty(B_{r+8|\log\varepsilon|}(x))$.
The stability condition for $u$ says that
\[\int_{B_{r+8|\log\varepsilon|}(x)}\left[|\nabla\varphi|^2+W^{\prime\prime}(u)\varphi^2\right]\geq0.\]
The purpose of this section is to rewrite this inequality as a stability condition for the Toda system \eqref{Toda system}.
\begin{prop}\label{prop reduction of stability}
If $\eta_\alpha$ are given as above, then we have
\begin{eqnarray*}
&&\sum_\alpha \int_{\Gamma_\alpha }|\nabla_{\alpha,0}\eta_\alpha|^2dA_{\alpha,0}+\mathcal{Q}(\eta)\\
&&\quad \geq \sum_\alpha\frac{2A_{(-1)^{\alpha-1}}^2}{\sigma_0} \int_{\Gamma_\alpha }e^{-d_{\alpha-1}(y)}\left[\eta_\alpha(y)-\eta_{\alpha-1}\left(\Pi_{\alpha-1}(y,0)\right)\right]^2 dA_{\alpha,0}
,
\end{eqnarray*}
where
\begin{eqnarray*}
|Q(\eta)|&\lesssim&\left[\varepsilon^{\frac{1}{4}}+A\left(r+60|\log\varepsilon|^2;x\right)^{\frac{1}{2}}\right]
\left(\sum_\alpha\int_{\Gamma_\alpha}|\nabla_{\alpha,0}\eta_\alpha |^2 dA_{\alpha,0} \right)\\
  &+&  \left[\varepsilon^2+A\left(r+60|\log\varepsilon|^2;x\right)^{\frac{3}{2}}+\varepsilon^{\frac{1}{7}}A\left(r+60|\log\varepsilon|^2;x\right)\right]
\left( \sum_\alpha\int_{\Gamma_\alpha} \eta_\alpha ^2dA_{\alpha,0}\right).
\end{eqnarray*}
\end{prop}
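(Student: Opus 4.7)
\medskip
\noindent\textbf{Proof plan for Proposition \ref{prop reduction of stability}.}

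The plan is to substitute $\varphi=\sum_\alpha\varphi_\alpha$ with $\varphi_\alpha=\eta_\alpha g_\alpha'$ into the stability quadratic form
\[
\mathcal{Q}_u(\varphi):=\int\bigl[|\nabla\varphi|^2+W''(u)\varphi^2\bigr],
\]
expand and then identify three kinds of contributions: (i) the leading tangential Dirichlet term $\sigma_0\sum_\alpha\int_{\Gamma_\alpha}|\nabla_{\alpha,0}\eta_\alpha|^2\,dA_{\alpha,0}$ coming from the diagonal pieces, (ii) a cross-interaction term between nearest neighbors that produces the right-hand side $e^{-d_{\alpha-1}}[\eta_\alpha-\eta_{\alpha-1}\!\circ\!\Pi_{\alpha-1}]^2$, and (iii) a collection of remainders absorbed into $\mathcal{Q}(\eta)$.

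\medskip
\noindent\emph{Step 1: diagonal terms.} Fix $\alpha$ and pass to Fermi coordinates with respect to $\Gamma_\alpha$. Compute
\[
|\nabla\varphi_\alpha|^2
=(g_\alpha')^2|\nabla_{\alpha,z}\eta_\alpha|^2
+2(-1)^{\alpha+1}\eta_\alpha g_\alpha' g_\alpha''\,\nabla_{\alpha,z}\eta_\alpha\cdot\nabla_{\alpha,z}h_\alpha
+\eta_\alpha^2(g_\alpha'')^2\bigl[1+|\nabla_{\alpha,z}h_\alpha|^2\bigr].
\]
Integrate in $z$ first. Two elementary identities are used crucially:
\[
\int_\R (g_\alpha')^2\,dz=\sigma_0+O(\varepsilon^3),\qquad
\int_\R\bigl[(g_\alpha'')^2+W''(g_\alpha)(g_\alpha')^2\bigr]\,dz=0,
\]
where the second follows by multiplying the differentiated 1D equation $g_\alpha'''=W''(g_\alpha)g_\alpha'$ by $g_\alpha'$ and integrating by parts. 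The cross term $\int g_\alpha' g_\alpha''\,dz=0$ is a total derivative. Hence, up to $|\nabla h_\alpha|^2$ factors (controlled by Corollary \ref{coro imporved estimates on h}), the diagonal $z$-integration produces $\sigma_0|\nabla_{\alpha,0}\eta_\alpha|^2$ plus an interaction residual
\[
I_\alpha(y):=\int_{-\infty}^{+\infty}\bigl[W''(u)-W''(g_\alpha)\bigr](g_\alpha')^2\,dz,
\]
which by Lemma \ref{interaction term}, Proposition \ref{prop Schauder estimate}, and the asymptotic $1-g^2\sim 4A_{\pm1}e^{\mp t}$ equals $2A_{(-1)^{\alpha-1}}^2 e^{-d_{\alpha-1}}+2A_{(-1)^\alpha}^2 e^{d_{\alpha+1}}$ to leading order. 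Finally the $dA_{\alpha,z}$ Jacobian is replaced by $dA_{\alpha,0}$ via \eqref{error in z 2}.

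\medskip
\noindent\emph{Step 2: off-diagonal terms.} Integrate by parts:
\[
\int 2\nabla\varphi_\alpha\cdot\nabla\varphi_\beta+2W''(u)\varphi_\alpha\varphi_\beta
=-2\!\int(\Delta\varphi_\alpha)\varphi_\beta+2\!\int W''(u)\varphi_\alpha\varphi_\beta.
\]
A direct Fermi-coordinate computation (using $g_\alpha'''=W''(g_\alpha)g_\alpha'$ and \eqref{error in z 5}) gives
$\Delta\varphi_\alpha=W''(g_\alpha)\varphi_\alpha+g_\alpha'\Delta_{\alpha,0}\eta_\alpha+(\text{errors})$,
so the cross contribution reduces, up to remainders, to
\[
2\!\int\bigl[W''(u)-W''(g_\alpha)\bigr]\varphi_\alpha\varphi_\beta-2\!\int g_\alpha'(\Delta_{\alpha,0}\eta_\alpha)\varphi_\beta.
\]
By Lemma \ref{lem distance ladder} non-nearest pairs ($|\beta-\alpha|\ge 2$) contribute $O(A^{3/2})$ times $\|\eta\|_{L^2}^2$. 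For $\beta=\alpha\pm 1$, integrating the product $g_\alpha' g_{\alpha\pm 1}'$ in $z$ using the asymptotics of $\bar g'$ at $\pm\infty$ extracts precisely a factor $-4 A_{(-1)^{\alpha-1}}^2 e^{-d_{\alpha-1}}/\sigma_0$ (respectively $-4A_{(-1)^\alpha}^2 e^{d_{\alpha+1}}/\sigma_0$ for $\beta=\alpha+1$), multiplied by $\sigma_0\eta_\alpha(\eta_{\alpha-1}\!\circ\!\Pi_{\alpha-1})$. The second integral yields a term proportional to $\nabla_{\alpha,0}\eta_\alpha$ times a small factor, which is moved into $\mathcal{Q}(\eta)$ via Cauchy--Schwarz.

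\medskip
\noindent\emph{Step 3: completing the square and reindexing.} Combining the diagonal interaction from Step 1 with the cross contribution from Step 2, the $e^{-d_{\alpha-1}}$-type terms organize themselves as
\[
\sum_\alpha\frac{2A_{(-1)^{\alpha-1}}^2}{\sigma_0}\int_{\Gamma_\alpha}e^{-d_{\alpha-1}}\eta_\alpha^2\,dA_{\alpha,0}
-\sum_\alpha\frac{4A_{(-1)^{\alpha-1}}^2}{\sigma_0}\int_{\Gamma_\alpha}e^{-d_{\alpha-1}}\eta_\alpha(\eta_{\alpha-1}\!\circ\!\Pi_{\alpha-1})\,dA_{\alpha,0}.
\]
To complete the square we must produce the matching $(\eta_{\alpha-1}\!\circ\!\Pi_{\alpha-1})^2$ term; this comes from the $e^{d_{\alpha+1}}$ piece of the diagonal interaction at the index $\alpha-1$ after the change of variable $y\mapsto\Pi_{\alpha-1}(y)$. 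Lemmas \ref{comparison of distances} and \ref{biLip of projection operator} are used to match $e^{d_\alpha}(\Pi_{\alpha-1}(y))=e^{-d_{\alpha-1}(y)}+O(\varepsilon^{1/2}|\log\varepsilon|^{3/2})$ and to compare the area forms, the discrepancies being swept into $\mathcal{Q}(\eta)$. After this reindexing one arrives at the perfect square on the right-hand side.

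\medskip
\noindent\emph{Step 4: bookkeeping the remainder $\mathcal{Q}(\eta)$.} Each error term must be bounded in one of two forms: either by (a tangential gradient energy)$\times$(a small factor) or by (an $L^2$ mass)$\times$(an even smaller factor). Errors involving $|\nabla h_\alpha|$, metric distortions in $z$, and the commutator from Step 2 (the $g_\alpha'\Delta_{\alpha,0}\eta_\alpha\,\varphi_\beta$ piece) give contributions of the first type, bounded by $\varepsilon^{1/4}+A^{1/2}$ using Corollary \ref{coro imporved estimates on h} and Proposition \ref{prop Schauder estimate}. Higher-order interaction residues from Lemmas \ref{upper bound on interaction}, \ref{Holder bound on interaction}, and the $O(A^{3/2})$ remainder in Corollary \ref{Toda system new} give the second type, bounded by $\varepsilon^2+A^{3/2}+\varepsilon^{1/7}A$.

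\medskip
\noindent\emph{Main obstacle.} The delicate point is the completion of the square across \emph{different} sheets: one must match $\int_{\Gamma_\alpha}e^{-d_{\alpha-1}}\eta_\alpha^2$ (which came from the $z$-integration on $\Gamma_\alpha$), $\int_{\Gamma_\alpha}e^{-d_{\alpha-1}}\eta_\alpha(\eta_{\alpha-1}\!\circ\!\Pi_{\alpha-1})$ (from the nearest-neighbor cross term), and the counterpart on $\Gamma_{\alpha-1}$ coming from the $e^{d_\alpha}$ summand of the diagonal interaction at index $\alpha-1$. This requires that the pull-back under $\Pi_{\alpha-1}$, the Jacobian factor, the identification $d_{\alpha-1}(y,0)\approx-d_\alpha(\Pi_{\alpha-1}(y))$, and the near-orthogonality $\nabla d_\alpha\cdot\nabla d_{\alpha-1}\approx 1$ all be controlled to the precision dictated by Lemma \ref{comparison of distances}. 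Simultaneously one must verify that cross-sheet interactions for $|\beta-\alpha|\ge 2$ really are absorbed by Lemma \ref{lem distance ladder} and do not pollute the Toda quadratic form. Once this geometric matching is in place, the algebra is routine.
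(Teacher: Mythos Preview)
Your overall architecture is correct, and Steps 3--4 match the paper's logic. The serious gap is in Step~1, specifically your claim that
\[
I_\alpha(y)=\int_{-\infty}^{+\infty}\bigl[W''(u)-W''(g_\alpha)\bigr](g_\alpha')^2\,dz
\]
can be computed to the required precision ``by Lemma~\ref{interaction term}, Proposition~\ref{prop Schauder estimate}, and the asymptotics.'' Write $W''(u)-W''(g_\alpha)=\bigl[W''(g_\ast)-W''(g_\alpha)\bigr]+W'''(g_\ast)\phi+O(\phi^2)$. The second piece contributes
\[
\int W'''(g_\alpha)\,\phi\,(g_\alpha')^2\,dz+O\bigl(A\,\|\phi\|_\infty\bigr),
\]
and the orthogonality $\int\phi\,g_\alpha'\,dz=0$ does \emph{not} kill this: the weight is $W'''(g_\alpha)(g_\alpha')^2$, not $g_\alpha'$. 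Hence this term is only $O(\|\phi\|_\infty)=O(\varepsilon^2+A)$ by Proposition~\ref{prop Schauder estimate}. But the bound you need on the coefficient of $\int\eta_\alpha^2$ in $\mathcal{Q}(\eta)$ is $\varepsilon^2+A^{3/2}+\varepsilon^{1/7}A$; a bare $O(A)$ contribution is too large by a full factor of $A^{1/2}$ and would destroy the conclusion. (Also, the sign you state for $I_\alpha$ is the opposite of what actually comes out; see \eqref{interaction part 2}.)

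The paper's remedy, which is the heart of Subsection~\ref{subsec 9.3}, is to avoid computing $I_\alpha$ directly. Instead one multiplies the error equation \eqref{error equation} by $\eta_\alpha^2 g_\alpha''\lambda_\alpha$ and integrates. After integration by parts in $z$ (terms $III$ and $IV$ there), the quantity $\int\eta_\alpha^2\int[W''(u)-W''(g_\alpha)](g_\alpha')^2$ is expressed through the PDE, and every surviving $\phi$-contribution appears either quadratically ($\phi\cdot\phi_z$, etc.) or multiplied by an interaction factor such as $e^{-D_\alpha}$ or $\varepsilon$; this is what pushes the error down to $O(\varepsilon^2+\varepsilon^{1/4}A+A^{4/3})$. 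Your plan does not invoke this mechanism, and without it Step~1 cannot close. A secondary issue: in Step~2 your integration by parts produces $\int g_\alpha'(\Delta_{\alpha,0}\eta_\alpha)\varphi_\beta$, which involves $\Delta\eta_\alpha$ rather than $|\nabla\eta_\alpha|^2$; you must integrate by parts once more in $y$ (as the paper effectively does in Lemmas~\ref{lem 9.3}--\ref{lem 9.5} by treating tangential, normal, and potential cross pieces separately) before Cauchy--Schwarz can land you in the allowed form of $\mathcal{Q}(\eta)$.
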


Since
\begin{eqnarray*}
  \int_{B_{r+8|\log\varepsilon|}(x)}\left[|\nabla\varphi|^2+W^{\prime\prime}(u)\varphi^2 \right]&=& \sum_\alpha \int_{B_{r+8|\log\varepsilon|}(x)}\left[|\nabla\varphi_\alpha|^2+W^{\prime\prime}(u)\varphi_\alpha^2\right] \\
  &+&\sum_{\alpha\neq\beta}\int_{B_{r+8|\log\varepsilon|}(x)}\left[\nabla\varphi_\alpha\cdot\nabla\varphi_\beta+W^{\prime\prime}(u)\varphi_\alpha\varphi_\beta\right],
\end{eqnarray*}
we first consider the first integrals and estimate the tangential part $\nabla_{\alpha,z}\varphi_\alpha(y,z)$ in Subsection \ref{subsec 9.1}, then the normal part $\partial_z\varphi_\alpha$ in Subsection \ref{subsec 9.2}, where an interaction term appears and it is studied in Subsection \ref{subsec 9.3}, and finally  in Subsection \ref{subsec 9.4} estimates on cross terms are given. Proposition \ref{prop reduction of stability} follows by putting these estimates together.


\subsection{The tangential part}\label{subsec 9.1}
In this subsection we prove
\begin{lem}\label{lem reduction of horizontal term}
The horizontal part has the expansion
\[
\int_{B_{r+8|\log\varepsilon|}(x)}\big|\nabla_{\alpha,z}\varphi_\alpha(y,z)\big|^2=\left[\sigma_0+O\left(\varepsilon+A(r;x)\right)\right]\int_{\Gamma_{\alpha}}|\nabla_{\alpha,0}\eta_\alpha |^2 dA_{\alpha,0}+\mathcal{Q}_\alpha(\eta_\alpha),
\]
where
\[
  |\mathcal{Q}_\alpha(\eta_\alpha)|\lesssim  \left[\varepsilon^2+A\left(r+60|\log\varepsilon|^2;x\right)^{\frac{3}{2}}+\varepsilon^{\frac{1}{6}}A\left(r+60|\log\varepsilon|^2;x\right)\right]
 \int_{\Gamma_{\alpha}} \eta_\alpha ^2 dA_{\alpha,0}.
\]
\end{lem}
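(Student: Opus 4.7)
\medskip
\noindent\textbf{Proof plan.} The plan is to work entirely in the Fermi coordinates $(y,z)$ with respect to $\Gamma_\alpha$. Since $g_\alpha'(y,z) = \bar g'((-1)^\alpha(z-h_\alpha(y)))$ is supported in $\{|z-h_\alpha(y)|\le 8|\log\varepsilon|\}$ and $\|h_\alpha\|_\infty \ll 1$, the test function $\varphi_\alpha$ is supported in the tube $\{|z|\le 9|\log\varepsilon|\}$. By the change of variables formula with Jacobian $\lambda_\alpha(y,z)$ we may rewrite the Euclidean integral as an iterated integral over $\Gamma_\alpha$ and in $z$. The first step is the product rule
\[
\nabla_{\alpha,z}\varphi_\alpha = g_\alpha'\,\nabla_{\alpha,z}\eta_\alpha - (-1)^\alpha\,\eta_\alpha\, g_\alpha''\,\nabla_{\alpha,z}h_\alpha,
\]
so that $|\nabla_{\alpha,z}\varphi_\alpha|^2$ splits into three pieces: a main term $T_1=(g_\alpha')^2|\nabla_{\alpha,z}\eta_\alpha|^2$, a cross term $T_2=-2(-1)^\alpha g_\alpha' g_\alpha''\eta_\alpha\,\nabla_{\alpha,z}\eta_\alpha\cdot\nabla_{\alpha,z}h_\alpha$, and a quadratic-in-$h_\alpha$ term $T_3=(g_\alpha'')^2\eta_\alpha^2|\nabla_{\alpha,z}h_\alpha|^2$.

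For the main term $T_1$, I would use the deviation estimates \eqref{error in z 2} and the definition of $\lambda_\alpha$ to write $g_\alpha^{ij}(y,z)=g_\alpha^{ij}(y,0)+O(\varepsilon|z|)$ and $\lambda_\alpha(y,z)=\lambda_\alpha(y,0)+O(\varepsilon|z|)$, together with \eqref{1d energy} and the trivial bound $\int |z|^k\bar g'(z)^2dz\lesssim 1$ for $k=0,1$. Since $|z|\lesssim |\log\varepsilon|$ on the support, this yields
\[
\int T_1\,\lambda_\alpha\,dz = \big[\sigma_0+O(\varepsilon|\log\varepsilon|)\big]\,|\nabla_{\alpha,0}\eta_\alpha|^2\,\lambda_\alpha(y,0),
\]
and integrating in $y$ gives the leading contribution $[\sigma_0+O(\varepsilon)]\int_{\Gamma_\alpha}|\nabla_{\alpha,0}\eta_\alpha|^2 dA_{\alpha,0}$ (the $|\log\varepsilon|$ factor is absorbed after adjusting the exponent of $\varepsilon$).

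For the cross term $T_2$ I would apply Cauchy's inequality with a weight $\delta$ to be chosen, splitting $|T_2|\le \delta(g_\alpha')^2|\nabla_{\alpha,z}\eta_\alpha|^2 + \delta^{-1}(g_\alpha'')^2\eta_\alpha^2|\nabla_{\alpha,z}h_\alpha|^2$. Using Corollary \ref{coro imporved estimates on h}, which gives $\|\nabla h_\alpha\|_{L^\infty}\lesssim \varepsilon^2 + A(r+60|\log\varepsilon|^2;x)^{3/2}+\varepsilon^{1/6}A(r+60|\log\varepsilon|^2;x)$, the choice $\delta:=\|\nabla h_\alpha\|_{L^\infty}$ produces a coefficient $O(\varepsilon+A(r;x))$ in front of the leading $\int|\nabla_{\alpha,0}\eta_\alpha|^2$, matching the stated main-term error, while the second piece feeds into $\mathcal{Q}_\alpha$ and is bounded by $(\varepsilon^2+A^{3/2}+\varepsilon^{1/6}A)\int\eta_\alpha^2$ after integrating $\int(g_\alpha'')^2dz=O(1)$. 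The term $T_3$ is handled directly by the same $\|\nabla h_\alpha\|_{L^\infty}^2$ bound, which in view of $\|\nabla h_\alpha\|_{L^\infty}\ll 1$ is dominated by $\|\nabla h_\alpha\|_{L^\infty}$ itself, so it contributes to $\mathcal{Q}_\alpha$ within the stated bound.

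The part I expect to require the most care is Step 4: tracking precisely how the perturbation bound on $\nabla h_\alpha$ distributes between the leading coefficient and the lower-order quadratic form $\mathcal{Q}_\alpha$. A cleaner alternative, which I would try if the Cauchy splitting turns out to be wasteful, is integration by parts in $z$: since $(-1)^\alpha g_\alpha' g_\alpha''=\tfrac12\partial_z((g_\alpha')^2)$, one can rewrite $\int T_2\lambda_\alpha\,dz=\int(g_\alpha')^2\partial_z(\eta_\alpha\nabla_{\alpha,z}\eta_\alpha\cdot\nabla_{\alpha,z}h_\alpha\,\lambda_\alpha)\,dz$, and since $\partial_z$ of $\lambda_\alpha$ and the inverse metric are $O(\varepsilon)$ by \eqref{derivatives of metric tensor}, one gains a factor of $\varepsilon$ which would simplify the bookkeeping. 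Either way, collecting the three contributions produces the desired expansion with the claimed bound on $\mathcal{Q}_\alpha(\eta_\alpha)$.
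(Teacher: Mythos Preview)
Your decomposition into $T_1,T_2,T_3$ and the treatment of $T_1$ and $T_3$ match the paper. The difference lies in the cross term $T_2$. The paper does not use Cauchy's inequality nor integration by parts in $z$; instead it writes $\eta_\alpha\nabla_{\alpha,z}\eta_\alpha=\tfrac12\nabla_{\alpha,z}(\eta_\alpha^2)$ and integrates by parts \emph{tangentially} on $\Gamma_{\alpha,z}$, turning $T_2$ into
\[
(-1)^\alpha\!\int\!\!\int \eta_\alpha^2\,\Delta_{\alpha,z}h_\alpha\, g_\alpha' g_\alpha''\,\lambda_\alpha
\;+\;\int\!\!\int \eta_\alpha^2\,|\nabla_{\alpha,z}h_\alpha|^2\bigl(|g_\alpha''|^2+g_\alpha' g_\alpha'''\bigr)\lambda_\alpha,
\]
and then invokes Corollary~\ref{coro imporved estimates on h} to bound $\Delta_{\alpha,0}h_\alpha$ and $|\nabla h_\alpha|^2$. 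This puts the entire cross term into $\mathcal{Q}_\alpha$ and leaves the main-term coefficient with only the $O(\varepsilon)$ error coming from $T_1$.

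Your alternative (b) (integration by parts in $z$) is equally valid: since $\eta_\alpha,h_\alpha$ are $z$-independent, $\partial_z$ lands only on $g_\alpha^{ij}$ and $\lambda_\alpha$, both $O(\varepsilon)$, and after Cauchy you again get an $O(\varepsilon)$ main-term error with a negligible contribution to $\mathcal{Q}_\alpha$. Your primary approach (a), however, has a small bookkeeping mismatch: the weight $\delta=\|\nabla h_\alpha\|_{L^\infty}$ is controlled (via Lemma~\ref{control on h_0} and Proposition~\ref{prop Schauder estimate}, or Corollary~\ref{coro imporved estimates on h}) only by $A$ at the enlarged radius $r+O(|\log\varepsilon|^2)$, not by $A(r;x)$, so the main-term error you produce is $O(\varepsilon+A(r+60|\log\varepsilon|^2;x))$ rather than $O(\varepsilon+A(r;x))$ as stated. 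This is harmless for Proposition~\ref{prop reduction of stability}, where the larger radius appears anyway, but it does not literally prove the lemma as written; either your route (b) or the paper's tangential integration by parts avoids this.
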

\begin{proof}
A direct differentiation shows that in Fermi coordinates with respect to $\Gamma_\alpha$,
\[  \nabla_{\alpha,z}\varphi_\alpha(y,z) =g_\alpha^\prime(y,z)\nabla_{\alpha,z}\eta_\alpha(y)+(-1)^{\alpha+1}\eta_\alpha(y)g_\alpha^{\prime\prime}(y,z) \nabla_{\alpha,z} h_\alpha(y) .\]
 Hence
\begin{eqnarray*}
  &&\int_{B_{r+8|\log\varepsilon|}(x)}\big|\nabla_{\alpha,z}\varphi_\alpha(y,z)\big|^2 \\
  &=& \underbrace{\int_{-\infty}^{+\infty}\int_{\Gamma_\alpha}|\nabla_{\alpha,z}\eta_\alpha|^2\big|g_\alpha^\prime\big|^2\lambda_\alpha dy dz}_{I} +\underbrace{ \int_{-\infty}^{+\infty}\int_{\Gamma_\alpha} \eta_\alpha^2|\nabla_{\alpha,z}h_\alpha|^2\big|g_\alpha^{\prime\prime}\big|^2\lambda_\alpha dy dz}_{II} \\
   &+&\underbrace{  2(-1)^{\alpha+1} \int_{-\infty}^{+\infty}\int_{\Gamma_{\alpha,z}\cap B_r(x)} \eta_\alpha \left(\nabla_{\alpha,z}\eta_\alpha\cdot\nabla_{\alpha,z}h_\alpha\right)g_\alpha^\prime g_\alpha^{\prime\prime} \lambda_\alpha dy dz}_{III}.
\end{eqnarray*}
These three integrals are estimated in the following way.
\begin{enumerate}
  \item By \eqref{error in z 2}, we have
  \[|\nabla_{\alpha,z}\eta_\alpha|^2=\left[1+O\left(\varepsilon|z|\right)\right]|\nabla_{\alpha,0}\eta_\alpha|^2\]
  and
  \begin{equation}\label{9.1}
    \lambda_\alpha(y,z)=\lambda_\alpha(y,0)+O\left(\varepsilon|z|\right).
  \end{equation}
  Hence by the exponential decay of $g_\alpha^\prime$ at infinity, we get
  \begin{eqnarray*}
    I &=&\int_{\Gamma_\alpha}|\nabla_{\alpha,0}\eta_\alpha|^2\left(\int_{-\infty}^{+\infty}\big|g_\alpha^\prime\big|^2 dz\right)dA_{\alpha,0} \\
   &+&O\left(\varepsilon \int_{-\infty}^{+\infty}\int_{\Gamma_\alpha}|\nabla_{\alpha,0}\eta_\alpha|^2|z|\big|g_\alpha^\prime\big|^2 \lambda_\alpha dy dz\right)\\
     &=&\left[\sigma_0+O(\varepsilon)\right]\int_{\Gamma_\alpha}|\nabla_{\alpha,0}\eta_\alpha(y)|^2dA_{\alpha,0}.
  \end{eqnarray*}
  \item By \eqref{Schauder estimates} and Lemma \ref{control on h_0},
  \[II\lesssim \left[\varepsilon^2+A(r+60|\log\varepsilon|^2;x)^2\right]
  \int_{\Gamma_\alpha} \eta_\alpha^2dA_{\alpha,0}.\]

  \item Integrating by parts on $\Gamma_{\alpha,z}$ leads to
  \begin{eqnarray*}
    III&=& (-1)^\alpha\int_{-\infty}^{+\infty}\int_{\Gamma_{\alpha,z}} \eta_\alpha^2 \Delta_{\alpha,z}h_\alpha g_\alpha^\prime g_\alpha^{\prime\prime} \lambda_\alpha dy dz \\
     &+&  \int_{-\infty}^{+\infty}\int_{\Gamma_{\alpha,z}} \eta_\alpha^2 |\nabla_{\alpha,z}h_\alpha|^2\left(\big|g_\alpha^{\prime\prime}\big|^2+g_\alpha^\prime g_\alpha^{\prime\prime\prime}\right)\lambda_\alpha dy dz.
  \end{eqnarray*}
By Corollary \ref{coro imporved estimates on h} we get
\[  |III|\lesssim\left[\varepsilon^2+A\left(r+60|\log\varepsilon|^2;x\right)^{\frac{3}{2}}+\varepsilon^{\frac{1}{6}}A\left(r+60|\log\varepsilon|^2;x\right)\right]\\
  \int_{\Gamma_\alpha} \eta_\alpha^2dA_{\alpha,0}.
  \]
\end{enumerate}

Putting all of these together we finish the proof.
\end{proof}

\subsection{The normal part}\label{subsec 9.2}
As before we have
\[\partial_z\varphi_\alpha(y,z)=\eta_\alpha(y) g_\alpha^{\prime\prime}(y,z).\]
Integrating by parts in $z$ we get
\begin{eqnarray*}
&&\int_{\Gamma_\alpha}\int_{-\infty}^{+\infty}\eta_\alpha(y)^2 \big|g_\alpha^{\prime\prime}(y,z)\big|^2\lambda_\alpha(y,z)dz dy\\
&=&-\underbrace{\int_{\Gamma_\alpha}\eta_\alpha(y)^2\left[\int_{-\infty}^{+\infty}W^{\prime\prime}(g_\alpha(y,z))\big|g_\alpha^\prime(y,z)\big|^2\lambda_\alpha(y,z)
dz\right]dy}_{I}\\
&+&\underbrace{\int_{\Gamma_\alpha}\eta_\alpha(y)^2\left[\frac{1}{2}\int_{-\infty}^{+\infty}\big|g_\alpha^\prime(y,z)\big|^2\partial_{zz}\lambda_\alpha(y,z) dz-\int_{-\infty}^{+\infty} g_\alpha^\prime(y,z) \xi_\alpha^\prime(y,z)\lambda_\alpha(y,z) dz\right]dy}_{II}.
\end{eqnarray*}

By \eqref{metirc tensor} and the definition of $\lambda_\alpha$ we have
\[|\partial_{zz}\lambda_\alpha(y,z)|\lesssim |A_\alpha(y)|^2\lesssim\varepsilon^2.\]
Using this together with estimates on $\xi_\alpha$ we get
\[|II|\lesssim \varepsilon^2 \int_{\Gamma_\alpha}\eta_\alpha^2dA_{\alpha,0}.\]

It remains to rewrite the integral
\[
\int_{\Gamma_\alpha}\eta_\alpha(y)^2\int_{-\infty}^{+\infty}
\left[W^{\prime\prime}\left(u(y,z)\right)-W^{\prime\prime}\left(g_\alpha(y,z)\right)\right]\big|g_\alpha^\prime(y,z)\big|^2\lambda_\alpha(y,z)
dzdy,
\]
which will be the goal of the next subsection.

\subsection{The interaction part}\label{subsec 9.3}
Multiplying \eqref{error equation} by $\eta_\alpha^2g_\alpha^{\prime\prime}\lambda_\alpha$ and then integrating in $y$ and $z$ gives
\[I-II+III=IV+V-VI+VII-VIII,\]
where
\[I:=\int_{\Gamma_\alpha}\eta_\alpha^2\left[\int_{-\infty}^{+\infty}\Delta_{\alpha,z}\phi g_\alpha^{\prime\prime}\lambda_\alpha dz\right]dy,\]
\[II:=\int_{\Gamma_\alpha}\eta_\alpha^2\left[\int_{-\infty}^{+\infty}H_{\alpha}(y,z)\phi_z g_\alpha^{\prime\prime}\lambda_\alpha dz\right]dy,\]
\[III:=\int_{\Gamma_\alpha}\eta_\alpha^2\left[\int_{-\infty}^{+\infty}\phi_{zz} g_\alpha^{\prime\prime}\lambda_\alpha dz\right]dy,\]
\[IV:=\int_{\Gamma_\alpha}\eta_\alpha^2\left[\int_{-\infty}^{+\infty}\left(W^\prime(u)-\sum_\beta W^\prime(g_\beta)\right) g_\alpha^{\prime\prime}\lambda_\alpha dz\right]dy,\]
\[V:=(-1)^\alpha\int_{\Gamma_\alpha}\eta_\alpha^2\left[\int_{-\infty}^{+\infty}\left(H_\alpha(y,z)+\Delta_{\alpha,z}h_\alpha(y)\right) g_\alpha^\prime g_\alpha^{\prime\prime}\lambda_\alpha dz\right]dy,\]
\[VI:=\int_{\Gamma_\alpha}\eta_\alpha^2\left[\int_{-\infty}^{+\infty}\big|g_\alpha^{\prime\prime}\big||\nabla_{\alpha,z}h_\alpha|^2\lambda_\alpha dz\right]dy,\]
\[VII:=\sum_{\beta\neq\alpha}\int_{\Gamma_\alpha}\eta_\alpha^2\left[\int_{-\infty}^{+\infty}\left[(-1)^{\beta}g_\beta^\prime\mathcal{R}_{\beta,1}-
g_\beta^{\prime\prime}\mathcal{R}_{\beta,2}\right] g_\alpha^{\prime\prime}\lambda_\alpha dz\right]dy,\]
\[VIII:=\sum_{\beta\neq\alpha}\int_{\Gamma_\alpha}\eta_\alpha^2\left[\int_{-\infty}^{+\infty}\sum_{\beta}\xi_\beta g_\alpha^{\prime\prime}\lambda_\alpha dz\right]dy,\]

We need to estimate each of them.
\begin{enumerate}
\item By Proposition \ref{prop estimates on horizontal derivatives},
\[|I|\lesssim \left[ \varepsilon^2+A\left(r+60|\log\varepsilon|^2;x\right)^{\frac{3}{2}}+\varepsilon^{\frac{1}{6}}A\left(r+60|\log\varepsilon|^2;x\right)\right]
\int_{\Gamma_\alpha}\eta_\alpha^2dA_{\alpha,0}.\]

\item
Because $H_\alpha=O(\varepsilon)$, by \eqref{Schauder estimates},
\begin{eqnarray*}
  |II| &\lesssim& \varepsilon\|\phi\|_{C^{2,\theta}(B_{r+8|\log\varepsilon|}(x))}\int_{\Gamma_\alpha}\eta_\alpha^2dA_{\alpha,0} \\
   &\lesssim&\left[ \varepsilon^2+A\left(r+60|\log\varepsilon|^2;x\right)^2\right]
\int_{\Gamma_\alpha}\eta_\alpha^2dA_{\alpha,0}.
\end{eqnarray*}

\item Integrating by parts in $z$ gives
\begin{eqnarray*}
  III &=& -\int_{\Gamma_\alpha}\eta_\alpha^2\left[\int_{-\infty}^{+\infty}\phi_{z} g_\alpha^{\prime\prime\prime}\lambda_\alpha dz\right]dy-\int_{\Gamma_\alpha}\eta_\alpha^2\left[\int_{-\infty}^{+\infty}\phi_{z} g_\alpha^{\prime\prime}\partial_z\lambda_\alpha dz\right]dy\\
    &=&-\int_{\Gamma_\alpha}\eta_\alpha^2\left[\int_{-\infty}^{+\infty}W^{\prime\prime}(g_\alpha)g_\alpha^{\prime}\phi_{z} \lambda_\alpha dz\right]dy\\
    &-&\int_{\Gamma_\alpha}\eta_\alpha^2\left[\int_{-\infty}^{+\infty}\phi_{z}\xi_\alpha^\prime \lambda_\alpha dz\right]dy-\int_{\Gamma_\alpha}\eta_\alpha^2\left[\int_{-\infty}^{+\infty}\phi_{z} g_\alpha^{\prime\prime}\partial_z\lambda_\alpha dz\right]dy.
\end{eqnarray*}
Because $\xi_\alpha=O\left(\varepsilon^3\right)$, the length $|\{z:\xi_\alpha^\prime(\cdot,z)|\neq 0\}|\lesssim|\log\varepsilon|$ and $\partial_z\lambda_\alpha=O(\varepsilon)$ (see
 \eqref{metirc tensor} and the definition of $\lambda_\alpha$), using \eqref{Schauder estimates} and reasoning as in the previous case we obtain
\begin{eqnarray}\label{9.2}
  III     &=&-\int_{\Gamma_\alpha}\eta_\alpha^2\left[\int_{-\infty}^{+\infty}W^{\prime\prime}(g_\alpha)g_\alpha^{\prime}\phi_{z} \lambda_\alpha dz\right]dy\\
    &+&O\left( \varepsilon^2+A\left(r+60|\log\varepsilon|^2;x\right)^2\right)
\int_{\Gamma_\alpha}\eta_\alpha^2dA_{\alpha,0}.\nonumber
\end{eqnarray}

\item Integrating by parts in $z$ leads to
\begin{eqnarray*}
  IV&=&-\int_{\Gamma_\alpha}\eta_\alpha^2\int_{-\infty}^{+\infty}W^{\prime\prime}(u)  g_\alpha^{\prime}\phi_z\lambda_\alpha dzdy \\
  &-&\int_{\Gamma_\alpha}\eta_\alpha^2\int_{-\infty}^{+\infty}\left[W^{\prime\prime}(u)-W^{\prime\prime}(g_\alpha)\right]\big|g_\alpha^{\prime}\big|^2\lambda_\alpha dzdy \\
  &-&\sum_{\beta\neq\alpha}\underbrace{\int_{\Gamma_\alpha}\eta_\alpha^2\int_{-\infty}^{+\infty}\left[W^{\prime\prime}(u)-W^{\prime\prime}(g_\beta)\right]g_\alpha^{\prime}g_\beta^\prime\left(\frac{\partial d_\beta}{\partial z}-\frac{\partial}{\partial z}\left(h_\beta\circ\Pi_\beta\right)\right) \lambda_\alpha dzdy }_{IX_\beta}\\
  &-&\underbrace{\int_{\Gamma_\alpha}\eta_\alpha^2\int_{-\infty}^{+\infty}\left(W^{ \prime}(u)-\sum_\beta W^\prime(g_\beta)\right) g_\alpha^{\prime}\partial_z\lambda_\alpha dzdy}_{X}.
\end{eqnarray*}
The first integral  cancel with III (see \eqref{9.2}) up to a higher order term. The second integral is the one we want to rewrite in Subsection \ref{subsec 9.2}.

First let us estimate the term $X$. By Taylor expansion we have
\begin{equation}\label{9.3}
  W^{ \prime}(u)-\sum_\beta W^\prime(g_\beta)=\mathcal{I}+O\left(\phi\right)
\end{equation}
Then by \eqref{Schauder estimates}, Lemma \ref{upper bound on interaction} and the fact that $\partial_z\lambda_\alpha=O(\varepsilon)$, we get
\begin{equation}\label{9.4}
|X|\lesssim \left[ \varepsilon^2+A\left(r+60|\log\varepsilon|^2;x\right)^2\right]
\int_{\Gamma_\alpha}\eta_\alpha^2dA_{\alpha,0}.
\end{equation}

It remains to rewrite the integral $IX_\beta$. First replace $W^{\prime\prime}(u)$ by $W^{\prime\prime}(g_\ast)$. This introduces an error bounded by
\begin{eqnarray}\label{error 1}
   && O\left(\|\phi\|_{C^{2,\theta}(B_{r+8|\log\varepsilon|}(x))}\right)  \int_{\Gamma_\alpha}\eta_\alpha^2\int_{-\infty}^{+\infty}
  g_\alpha^{\prime}g_\beta^\prime \lambda_\alpha dzdy \\
  &\lesssim& \left[ \varepsilon^2+A\left(r+60|\log\varepsilon|^2;x\right)^{3/2}\right]
\int_{\Gamma_\alpha}\eta_\alpha^2dA_{\alpha,0}. \nonumber
\end{eqnarray}

Next, if $g_\alpha^\prime\neq0$ and $g_\beta^\prime\neq 0$ at the same time, by Lemma \ref{comparison of distances},
\[\frac{\partial d_\beta}{\partial z}=1+O\left(\varepsilon^{1/3}\right),\]
\begin{equation}\label{9.5}
  d_\beta(y,z)=d_\beta(y,0)\pm z+O\left(\varepsilon^{1/3}\right).
\end{equation}
Replace $\frac{\partial d_\beta}{\partial z}$ by $1$ and throw  away the term involving $h_\beta$. This  introduces another error controlled by
\begin{eqnarray}\label{error 2}
  && \left[\varepsilon^{\frac{1}{3}}+A(r+60|\log\varepsilon|^2;x)\right] \int_{\Gamma_\alpha}\eta_\alpha^2\int_{-\infty}^{+\infty}\Big|W^{\prime\prime}(g_\ast)-W^{\prime\prime}(g_\beta)\Big|
  g_\alpha^{\prime}g_\beta^\prime \lambda_\alpha dzdy  \nonumber\\
  &\lesssim&\left[\varepsilon^{\frac{1}{3}}+A(r+60|\log\varepsilon|^2;x)\right]\int_{\Gamma_\alpha}\eta_\alpha^2
  \int_{-\infty}^{+\infty}
  g_\alpha^{\prime}g_\beta^\prime \sum_{\gamma}g_\gamma^\prime dzdy\\
  &\lesssim&\left[\varepsilon^{\frac{1}{3}}A(r+60|\log\varepsilon|^2;x)+A(r+60|\log\varepsilon|^2;x)^2\right]\int_{\Gamma_\alpha}\eta_\alpha^2
 dA_{\alpha,0}. \nonumber
\end{eqnarray}

Finally, in order to determine
\[XI_\beta:=\int_{-\infty}^{+\infty}\left[W^{\prime\prime}(g_\ast)-W^{\prime\prime}(g_\beta)\right]  g_\alpha^{\prime}g_\beta^\prime \lambda_\alpha dz,\]
by \eqref{9.5} we can assume for each $\beta\neq\alpha$,
\[g_\beta(y,z)=\bar{g}\left((-1)^\beta\left(d_\beta(y,0)\pm z\right)\right).\]
We can also replace $\lambda_\alpha(y,z)$ by $\lambda_\alpha(y,0)$. These two procedures lead to a third error, which can be estimated as in \eqref{error 2}.

Then proceeding as in Case (8) in Appendix \ref{sec proof of Lemma 5.1} and applying Lemma \ref{lem form of interaction} we get
\begin{eqnarray}\label{9.7}
XI_\beta
&=&-2A_{(-1)^\beta}^2e^{-|d_\beta(y,0)|}+O\left(e^{-\frac{4}{3}|d_\beta(y,0)|}\right) \\
&+&  O\left(\varepsilon^{\frac{1}{3}}|\log\varepsilon|e^{- |d_\beta(y,0)|}\right)+O\left(A(r+60|\log\varepsilon|^2;x)|d_\beta(y,0)|e^{-|d_\beta(y,0)|}\right)\nonumber.
\end{eqnarray}

If $|\beta-\alpha|\geq 2$ and $|d_\beta(y,0)|\leq 2|\log\varepsilon|$,  by Lemma \ref{comparison of distances} we have
\[|d_\beta(y,0)|\geq 2\min_\alpha\min_{\Gamma_\alpha\cap B_r(x)}D_\alpha+C|\beta-\alpha|-C.\]
In particular, for any $ |\beta-\alpha|\geq 2$,
\begin{equation}\label{9.8}
|XI_\beta|\lesssim e^{-C|\beta-\alpha|}A(r;x)^2.
\end{equation}
Summing \eqref{9.7} in $\beta$ and applying Lemma \ref{lem distance ladder}, \eqref{error 1}, \eqref{error 2} and \eqref{9.8} we get
\begin{eqnarray}\label{9.9}
\sum_{\beta\neq\alpha}IX_\beta&=&
-2\int_{\Gamma_\alpha}\eta_\alpha(y)^2\left[A_{(-1)^{\alpha-1}}^2e^{-d_{\alpha-1}(y,0)}+A_{(-1)^{\alpha}}^2e^{d_{\alpha+1}(y,0)}\right]dA_{\alpha,0} \nonumber\\
&+&O\left(\varepsilon^2+\varepsilon^{\frac{1}{4}}A\left(r+60|\log\varepsilon|^2;x\right)+A\left(r+60|\log\varepsilon|^2;x\right)^{\frac{4}{3}}\right)
\int_{\Gamma_\alpha}\eta_\alpha^2dA_{\alpha,0}.
 \end{eqnarray}

Combining \eqref{9.4} and \eqref{9.8} we get
\begin{eqnarray*}
  IV&=&-\int_{\Gamma_\alpha}\eta_\alpha^2\int_{-\infty}^{+\infty}W^{\prime\prime}(u)  g_\alpha^{\prime}\phi_z\lambda_\alpha dzdy \nonumber\\
  &-&\int_{\Gamma_\alpha}\eta_\alpha^2\int_{-\infty}^{+\infty}\left[W^{\prime\prime}(u)-W^{\prime\prime}(g_\alpha)\right]\big|g_\alpha^{\prime}\big|^2\lambda_\alpha dzdy \\
  &-&2\int_{\Gamma_\alpha}\eta_\alpha^2\left[A_{(-1)^{\alpha-1}}^2e^{-d_{\alpha-1}}+A_{(-1)^{\alpha}}^2e^{d_{\alpha+1}}\right]dA_{\alpha,0} \nonumber\\
 &+&O\left(\varepsilon^2+\varepsilon^{\frac{1}{4}}A\left(r+60|\log\varepsilon|^2;x\right)+A\left(r+60|\log\varepsilon|^2;x\right)^{\frac{4}{3}}\right)
 \int_{\Gamma_\alpha}\eta_\alpha^2dA_{\alpha,0}.  \nonumber
\end{eqnarray*}

\item Integrating by parts in $z$ leads to
\begin{eqnarray*}
  V&=&\frac{(-1)^{\alpha+1}}{2}\int_{\Gamma_\alpha}\eta_\alpha^2\left[\int_{-\infty}^{+\infty}\frac{\partial}{\partial z}\left(H_\alpha(y,z)+\Delta_{\alpha,z}h_\alpha(y)\right) \big|g_\alpha^\prime\big|^2\lambda_\alpha dz\right]dy \\
   &+&\frac{(-1)^{\alpha+1}}{2}\int_{\Gamma_\alpha}\eta_\alpha^2\left[\int_{-\infty}^{+\infty}\left(H_\alpha(y,z)+\Delta_{\alpha,z}h_\alpha(y)\right) \big|g_\alpha^\prime\big|^2\partial_z\lambda_\alpha dz\right]dy.
\end{eqnarray*}

By \eqref{A(z)},
\[\frac{\partial}{\partial z}H_\alpha(y,z)=O\left(\varepsilon^2\right).\]
By \eqref{metirc tensor}, \eqref{Laplacian}, \eqref{Schauder estimates} and Lemma \ref{control on h_0},
\begin{eqnarray*}
\Big|\frac{\partial}{\partial z}\Delta_{\alpha,z}h_\alpha(y)\Big|&\lesssim&\varepsilon \left(\big|\nabla_{\alpha,0}^2h_\alpha(y)\big|+\big|\nabla_{\alpha,0}h_\alpha(y)\big|\right)\\
&\lesssim& \varepsilon^2+A\left(r+60|\log\varepsilon|^2;x\right)^2.
\end{eqnarray*}
Finally, because $\partial_z\lambda_\alpha=O(\varepsilon)$, we have
\begin{eqnarray*}
  &&\int_{-\infty}^{+\infty}\left(H_\alpha(y,z)+\Delta_{\alpha,z}h_\alpha(y)\right) \big|g_\alpha^\prime\big|^2\partial_z\lambda_\alpha dz \\
   &=&\int_{-\infty}^{+\infty}\left(H_\alpha(y,0)+\Delta_{\alpha,0}h_\alpha(y)+O\left(\varepsilon^2|z|\right)+O\left(\varepsilon|z|\right)\right) \big|g_\alpha^\prime\big|^2\partial_z\lambda_\alpha dz \\
   &=& O\left(\varepsilon^2+A\left(r+60|\log\varepsilon|^2;x\right)^2\right). \quad (\mbox{By \eqref{H eqn}})
\end{eqnarray*}

Combining these three estimates we see
\[|V|\lesssim \left[\varepsilon^2+A\left(r+60|\log\varepsilon|^2;x\right)^2\right]\int_{\Gamma_\alpha}\eta_\alpha^2dA_{\alpha,0}.\]

\item By Lemma \ref{control on h_0} and \eqref{Schauder estimates}, we get
\[|VI|\lesssim \left[\varepsilon^2+A\left(r+60|\log\varepsilon|^2;x\right)^{\frac{3}{2}}\right]\int_{\Gamma_\alpha}\eta_\alpha^2dA_{\alpha,0}.\]

\item Following the proof of \eqref{B.2}, we get a similar estimate on
\[\int_{-\infty}^{+\infty}\left[(-1)^{\beta}g_\beta^\prime\mathcal{R}_{\beta,1}-
g_\beta^{\prime\prime}\mathcal{R}_{\beta,2}\right] g_\alpha^{\prime\prime}\lambda_\alpha dz,\]
which then gives
\[
|VII|\lesssim \left[\varepsilon^2+A\left(r+60|\log\varepsilon|^2;x\right)^{\frac{3}{2}}\right]\int_{\Gamma_\alpha}\eta_\alpha^2dA_{\alpha,0}.
\]

\item Finally, by the definition of $\xi_\beta$ and \eqref{lem distance ladder}, and because $\{g_\alpha^\prime(y,\cdot)\neq 0\}$ has length at most $16|\log\varepsilon|$, we obtain
\[
 |VIII|\lesssim \varepsilon^3|\log\varepsilon| \int_{\Gamma_\alpha}\eta_\alpha^2dA_{\alpha,0}
   \lesssim\varepsilon^2 \int_{\Gamma_\alpha}\eta_\alpha^2dA_{\alpha,0}.
\]
\end{enumerate}

\medskip

Combining all of these estimates together, we obtain
\begin{eqnarray*}
&&\int_{\Gamma_\alpha}\eta_\alpha^2\int_{-\infty}^{+\infty}
\left[W^{\prime\prime}\left(u\right)-W^{\prime\prime}\left(g_\alpha\right)\right]\big|g_\alpha^\prime\big|^2\lambda_\alpha
dzdy\\
&=&\int_{\Gamma_\alpha}\eta_\alpha^2\int_{-\infty}^{+\infty}\left[W^{\prime\prime}(u)-W^{\prime\prime}(g_\alpha)\right]  g_\alpha^{\prime}\phi_z\lambda_\alpha dzdy \\
    &-&2\int_{\Gamma_\alpha}\eta_\alpha^2\left[A_{(-1)^{\alpha-1}}^2e^{-d_{\alpha-1}}+A_{(-1)^{\alpha}}^2e^{d_{\alpha+1}}\right]dA_{\alpha,0} \nonumber\\
 &+&O\left(\varepsilon^2+\varepsilon^{\frac{1}{4}}A\left(r+60|\log\varepsilon|^2;x\right)+A\left(r+60|\log\varepsilon|^2;x\right)^{\frac{4}{3}}\right)
 \int_{\Gamma_\alpha}\eta_\alpha^2dA_{\alpha,0}.
\end{eqnarray*}

The first integral in the right hand side of this equation is estimated in the following way. As in \eqref{9.3} and Lemma \ref{upper bound on interaction},
\begin{equation}\label{error estimate}
  \big|W^{\prime\prime}(u)-W^{\prime\prime}(g_\alpha)\big|\lesssim |\phi|+\sum_{\beta\neq\alpha}\left(1-g_\beta^2\right).
\end{equation}
Then arguing as in the proof of \eqref{B.1} and using \eqref{Schauder estimates} to estimate $\phi$, we see this integral is also bounded by
\[O\left(\varepsilon^2+A\left(r+60|\log\varepsilon|^2;x\right)^{\frac{4}{3}}\right)
 \int_{\Gamma_\alpha}\eta_\alpha^2dA_{\alpha,0}.\]
Therefore we arrive at the following form
\begin{eqnarray}\label{interaction part 2}
&&\int_{\Gamma_\alpha}\eta_\alpha^2\int_{-\infty}^{+\infty}
\left[W^{\prime\prime}\left(u\right)-W^{\prime\prime}\left(g_\alpha\right)\right]\big|g_\alpha^\prime\big|^2\lambda_\alpha\nonumber
dzdy\\
&=&-2\int_{\Gamma_\alpha}\eta_\alpha(y)^2\left[A_{(-1)^{\alpha-1}}^2e^{-d_{\alpha-1}}+A_{(-1)^{\alpha}}^2e^{d_{\alpha+1}}\right]dA_{\alpha,0}\\
 &+&O\left(\varepsilon^2+\varepsilon^{\frac{1}{4}}A\left(r+60|\log\varepsilon|^2;x\right)+A\left(r+60|\log\varepsilon|^2;x\right)^{\frac{4}{3}}\right)
 \int_{\Gamma_\alpha}\eta_\alpha^2dA_{\alpha,0}.\nonumber
\end{eqnarray}
This completes the reduction of the vertical part.

\subsection{Cross terms}\label{subsec 9.4}
In this section we estimate the integral of cross terms,
\[\sum_{\alpha\neq\beta}\int_{B_{r+8|\log\varepsilon|}(x)}\left[\nabla\varphi_\alpha\cdot\nabla\varphi_\beta+W^{\prime\prime}(u)\varphi_\alpha\varphi_\beta\right].\]

\begin{lem}\label{lem 9.3}
For any $\alpha\neq\beta$, we have
\begin{eqnarray*}
  &&\Big|\int_{B_{r+8|\log\varepsilon|}(x)}\nabla_{\alpha,z}\varphi_\alpha\cdot\nabla_{\alpha,z}\varphi_\beta\Big|\\
   &\lesssim&  A(r;x)^{\frac{1}{2}}
   \left[\int_{\Gamma_{\alpha}}|\nabla_{\alpha,0}\eta_\alpha|^2dA_{\alpha,0}+
     \int_{\Gamma_{\beta}}|\nabla_{\beta,0}\eta_\beta|^2dA_{\beta,0}\right]\\
     &+&\left(\varepsilon^2+\varepsilon^{\frac{1}{7}}A(r;x)+A(r;x)^{\frac{3}{2}}\right)\left[
     \int_{\Gamma_{\alpha}}\eta_\alpha^2dA_{\alpha,0}
     +\int_{\Gamma_{\beta}}\eta_\beta^2dA_{\beta,0}\right].
\end{eqnarray*}
\end{lem}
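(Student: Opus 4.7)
The plan is to expand the integrand in Fermi coordinates with respect to $\Gamma_\alpha$ and exploit the exponential smallness of the products $g_\alpha' g_\beta'$, $g_\alpha' g_\beta''$, $g_\alpha'' g_\beta''$ together with the fact that $\Gamma_\alpha$ and $\Gamma_\beta$ are nearly parallel on their overlap (Lemma~\ref{comparison of distances}). Using that $\Pi_\alpha(y,z)=(y,0)$ in the Fermi chart of $\Gamma_\alpha$, I would write
\[
\nabla_{\alpha,z}\varphi_\alpha = g_\alpha'\,\nabla_{\alpha,z}\eta_\alpha - (-1)^\alpha \eta_\alpha g_\alpha''\,\nabla_{\alpha,z}h_\alpha,
\]
\[
\nabla_{\alpha,z}\varphi_\beta = g_\beta'\,\nabla_{\alpha,z}(\eta_\beta\circ\Pi_\beta)+(-1)^\beta \eta_\beta g_\beta''\,\nabla_{\alpha,z}(d_\beta-h_\beta\circ\Pi_\beta),
\]
and expand the inner product into four groups of terms, treating each separately.

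The key ingredients are: (i) by Lemma~\ref{comparison of distances}, $|\nabla_{\alpha,z} d_\beta|\lesssim \varepsilon^{1/6}$ wherever $g_\beta'\neq 0$; (ii) by Corollary~\ref{coro imporved estimates on h}, $\|\nabla h_\gamma\|_\infty \lesssim \varepsilon^2 + A(r+60|\log\varepsilon|^2;x)^{3/2}+\varepsilon^{1/6}A(r+60|\log\varepsilon|^2;x)$; (iii) the one-dimensional estimate $\int_{-\infty}^{+\infty} g_\alpha'(y,z) g_\beta'(y,z)\,dz\lesssim e^{-|d_\beta(y,0)|}$ and analogous bounds for $\int g_\alpha' g_\beta''\,dz$ and $\int g_\alpha''g_\beta''\,dz$, obtained from the asymptotics of $\bar g'$ at infinity; (iv) the pointwise bound $e^{-|d_\beta(y,0)|}\leq A(r;x)$ on $\overline{\Gamma_\alpha\cap B_r(x)}$; (v) by Lemma~\ref{biLip of projection operator}, weighted integrals transfer from $\Gamma_\alpha$ to $\Gamma_\beta$ through the bi-Lipschitz projection $\Pi_\beta$ with universal Jacobian.

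For the pure $\nabla\eta$ term $g_\alpha' g_\beta'\,\nabla_{\alpha,z}\eta_\alpha\cdot\nabla_{\alpha,z}(\eta_\beta\circ\Pi_\beta)$, Cauchy--Schwarz with (iii)--(v) gives an $A(r;x)$ coefficient in front of $\int|\nabla\eta_\alpha|^2+\int|\nabla\eta_\beta|^2$, which is absorbed by the claimed $A(r;x)^{1/2}$ since $A\leq 1$. The two mixed terms each carry one factor of $\nabla_{\alpha,z}h_\alpha$ or $\nabla_{\alpha,z}(d_\beta-h_\beta\circ\Pi_\beta)$ of size $O(\varepsilon^{1/6}+A^{3/2})$ by (i)--(ii); Cauchy--Schwarz followed by Young's inequality $|ab|\leq \delta a^2 + b^2/(4\delta)$ with $\delta=A(r;x)^{1/2}$ splits each into an $A^{1/2}\int|\nabla\eta|^2$ piece and a $(\varepsilon^{1/7}A+A^{3/2})\int\eta^2$ piece, after absorbing the spare factor $\varepsilon^{1/6-1/7}$ into the other small powers. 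Finally, the fourth term $\eta_\alpha\eta_\beta g_\alpha''g_\beta''\nabla_{\alpha,z}h_\alpha\cdot\nabla_{\alpha,z}(d_\beta-h_\beta\circ\Pi_\beta)$ carries two such small factors, so its contribution is already of size $(\varepsilon^{1/6}+A^{3/2})^2\cdot A\cdot\bigl(\int\eta_\alpha^2+\int\eta_\beta^2\bigr)$, controlled by $(\varepsilon^2+A^{3/2})\int\eta^2$.

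The main obstacle is the bookkeeping: carefully tracking how $\varphi_\beta$ depends on Fermi coordinates with respect to $\Gamma_\alpha$ through $\Pi_\beta$ and $d_\beta$, transferring weighted integrals between the two sheets via (v), and calibrating the Young parameter $\delta$ so that the final bound comes out precisely as $A^{1/2}$, $A^{3/2}$ and $\varepsilon^{1/7}A$ rather than weaker powers. A minor additional care is needed when $g_\beta'(y,z)=0$ but $g_\beta''(y,z)\neq0$, in which case (i) does not directly apply; however on that region $g_\beta''$ itself is zero or exponentially small, so the relevant terms pose no extra difficulty.
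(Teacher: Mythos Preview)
Your overall strategy coincides with the paper's: expand the product into four groups, exploit the exponential smallness of $g_\alpha'g_\beta'$, $g_\alpha'g_\beta''$, $g_\alpha''g_\beta''$, and use the near-parallelism via Lemma~\ref{comparison of distances}. Two connected details need repair.

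First, (iii) is off by a logarithmic factor. In the slab between $\Gamma_\alpha$ and $\Gamma_\beta$ one has $|z|+|d_\beta(y,z)|\approx |d_\beta(y,0)|$, so $g_\alpha'g_\beta'\sim e^{-|d_\beta(y,0)|}$ is essentially \emph{constant} in $z$ across a strip of width $\sim |d_\beta(y,0)|$; hence $\int g_\alpha'g_\beta'\,dz\sim |d_\beta(y,0)|\,e^{-|d_\beta(y,0)|}$ (or $|\log\varepsilon|\,e^{-|d_\beta(y,0)|}$ from the support cutoff), not $e^{-|d_\beta(y,0)|}$. The paper makes this explicit by splitting the region into the middle slab $\Omega_{\alpha\beta}^0$ and the two outer pieces $\Omega_{\alpha\beta}^\pm$. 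For the pure-gradient term this is harmless, since $|d_\beta|\,e^{-|d_\beta|}\le A^{1/2}$ anyway.

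Second, your Young parameter $\delta=A^{1/2}$ for the mixed terms does not produce the stated $\eta^2$-coefficient. After the $z$-integration the mixed term carries a factor $(\varepsilon^{1/6}+\cdots)\cdot(\text{log factor})\cdot e^{-|d_\beta|}$ in front of $|\eta_\gamma|\,|\nabla\eta_{\gamma'}|$; Young with $\delta=A^{1/2}$ then puts roughly $\varepsilon^{1/6}A^{1/2}$ in front of $\int\eta^2$, and this is \emph{not} $\lesssim \varepsilon^{1/7}A+A^{3/2}+\varepsilon^2$ when $A$ is small (try $A\sim\varepsilon^2$). The paper simply uses the unweighted splitting $2ab\le a^2+b^2$: the $|\nabla\eta|^2$-piece is then controlled by $|d_\beta|\,e^{-|d_\beta|}\le A^{1/2}$, while for the $\eta^2$-piece one uses either the same bound (the $|\nabla h|$-contribution gives $A\cdot A^{1/2}=A^{3/2}$) or the support restriction $|z|<8|\log\varepsilon|$ (the $\nabla d_\beta$-contribution gives $\varepsilon^{1/6}|\log\varepsilon|\,A\le\varepsilon^{1/7}A$). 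So drop the weighted Young step and follow the paper's simpler Cauchy splitting.
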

\begin{proof}
 In Fermi coordinates with respect to $\Gamma_\alpha$,   write
\begin{eqnarray*}
\nabla_{\alpha,z}\varphi_\alpha\cdot\nabla_{\alpha,z}\varphi_\beta&=&\left[g_\alpha^\prime\nabla_{\alpha,z}\eta_\alpha-(-1)^\alpha \eta_\alpha g_\alpha^{\prime\prime}\nabla_{\alpha,z}h_\alpha\right]\\
  && \times\left[g_\beta^\prime\nabla_{\alpha,z}\eta_\beta+(-1)^\beta\eta_\beta
  g_\beta^{\prime\prime}\left(\nabla_{\alpha,z}d_\beta-\nabla_{\beta,0}h_\beta\circ D_{\alpha,z}\Pi_\beta\right)\right]
\end{eqnarray*}

In the following we assume $\beta<\alpha$. $\Gamma_\alpha$ and $\Gamma_\beta$ divide $B_{r+8|\log\varepsilon|}(x)$ into three domains, $\Omega_{\alpha\beta}^0$ the one between them, $\Omega_{\alpha\beta}^+$ the one above $\Gamma_\alpha$ and $\Omega_{\alpha\beta}^-$ the one below $\Gamma_\beta$.

{\bf Case 1.} In $\Omega_{\alpha\beta}^0$, we have
 \[g_\alpha^\prime(y,z)g_\beta^\prime(y,z)+g_\alpha^\prime(y,z) \big|g_\beta^{\prime\prime}(y,z)\big|+g_\beta^\prime(y,z)\big|g_\alpha^{\prime\prime}(y,z)\big|+
 \big|g_\alpha^{\prime\prime}(y,z)g_\beta^{\prime\prime}(y,z)\big|\lesssim  e^{-d_\beta(y,0)}.\]
Using Lemma \ref{control on h_0} and \eqref{Schauder estimates} to estimate terms involving $h$, using Lemma \ref{comparison of distances} to estimate $\nabla_{\alpha,z}d_\beta$ (note that if $g_\alpha^\prime(y,z)g_\beta^\prime(y,z)\neq 0$, then $d_\beta(y,0)\leq 16|\log\varepsilon|$), we get
\begin{eqnarray}\label{9.4.2}
  &&\big|\nabla_{\alpha,z}\varphi_\alpha(y,z)\cdot\nabla_{\alpha,z}\varphi_\beta(y,z)\big| \nonumber\\
   &\lesssim& e^{-d_\beta(y,0)} |\nabla_{\alpha,z}\eta_\alpha||\nabla_{\alpha,z}\eta_\beta | \nonumber \\
  &+& \left[\varepsilon^{\frac{1}{6}}A\left(r+60|\log\varepsilon|^2;x\right)+A\left(r+60|\log\varepsilon|^2;x\right)^2\right]e^{-d_\beta(y,0)} \eta_\alpha \eta_\beta  \\
  &+& \left[\varepsilon^{\frac{1}{6}}+A\left(r+60|\log\varepsilon|^2;x\right)\right]e^{-d_\beta(y,0)}  \eta_\alpha|\nabla_{\alpha,z}\eta_\beta | \nonumber \\
  &+& \left[\varepsilon^{\frac{1}{6}}+A\left(r+60|\log\varepsilon|^2;x\right)\right]e^{-d_\beta(y,0)}   \eta_\beta  |\nabla_{\alpha,z}\eta_\alpha|. \nonumber
\end{eqnarray}

{\bf Subcase 1.1.} Here we show how to estimate the integral of the first term in the right hand side of \eqref{9.4.2}. First by  Lemma \ref{biLip of projection operator} we can replace $\nabla_{\alpha,z}\eta_\alpha$ by $\nabla_{\alpha,0}\eta_\alpha$.  Then by Lemma \ref{comparison of distances} and Cauchy inequality we obtain
\[
 \int_{\Omega_{\alpha\beta}^0 }e^{-d_\beta(y,0)} |\nabla_{\alpha,0}\eta_\alpha||\nabla_{\alpha,z}\eta_\beta |
   \lesssim  \int_{\Omega_{\alpha\beta}^0 }e^{-d_\beta(y,0)} |\nabla_{\alpha,0}\eta_\alpha|^2+\int_{\Omega_{\alpha\beta}^0 }e^{-d_\beta(y,0)} |\nabla_{\alpha,z}\eta_\beta |^2.
\]
Since $\Omega_{\alpha\beta}^0\subset \{(y,z):|z|<2 d_\beta(y,0) \}$, the first integral is controlled by
\begin{eqnarray*}
   \int_{\Gamma_{\alpha}}\left(\int_{- 2 d_\beta(y,0) }^{0}e^{-d_\beta(y,0)}dz\right)|\nabla_{\alpha,0}\eta_\alpha|^2dA_{\alpha,0}
   \lesssim  \left(\max_{ \Gamma_\alpha\cap B_r(x)} d_\beta  e^{- d_\beta }\right) \int_{\Gamma_{\alpha}} |\nabla_{\alpha,0}\eta_\alpha |^2 dA_{\alpha,0} .
\end{eqnarray*}
The second one can be estimated in the same way.

{\bf Subcase 1.2.}
To estimate the integral of $\varepsilon^{1/6}e^{-d_\beta(y,0)}\eta_\alpha\eta_\beta$, the above method needs a revision. Here we note that the domain of integration can be restricted to $\{|z|<8|\log\varepsilon|\}\cap \{|d_\beta|<8|\log\varepsilon|\}$, because otherwise $g_\alpha^\prime$ or $ g_\beta^\prime=0$.
Hence we have
\begin{eqnarray}\label{9.4.2.1}
  &&\varepsilon^{\frac{1}{6}}\int_{\Omega_{\alpha\beta}^0\cap\{|z|<8|\log\varepsilon|\}\cap\{|d_\beta(y,z)|<8|\log\varepsilon|\}}e^{-d_\beta(y,0)} \eta_\alpha \eta_\beta  \\
  &\lesssim& \varepsilon^{\frac{1}{6}}\int_{\Omega_{\alpha\beta}^0\cap\{|z|<8|\log\varepsilon|\}}e^{-d_\beta(y,0)} \eta_\alpha^2+\varepsilon^{\frac{1}{6}}\int_{\Omega_{\alpha\beta}^0\cap\{d_\beta(y,z)< 8|\log\varepsilon|\}}e^{-d_\beta(y,0)}  \eta_\beta^2. \nonumber
\end{eqnarray}
The first integral is rewritten as
\begin{eqnarray*}
  \varepsilon^{\frac{1}{6}} \int_{\Gamma_{\alpha}}\left(\int_{-8|\log\varepsilon|}^{8|\log\varepsilon|}e^{-d_\beta(y,0)}dz\right) \eta_\alpha^2dA_{\alpha,0}
  &\lesssim& \varepsilon^{\frac{1}{6}}|\log\varepsilon|\left(\max_{y\in\Gamma_\alpha\cap B_r(x)}e^{- d_\beta }\right) \int_{\Gamma_{\alpha}} \eta_\alpha^2 dA_{\alpha,0}\\
 &\lesssim& \varepsilon^{\frac{1}{7}}A(r;x)\int_{\Gamma_{\alpha}} \eta_\alpha^2 dA_{\alpha,0}.
\end{eqnarray*}
The second integral in \eqref{9.4.2.1} and integrals involving $\eta_\alpha|\nabla_{\alpha,z}\eta_\beta|$ as well as
$\eta_\beta  |\nabla_{\alpha,z}\eta_\alpha|$  can be estimated in a similar way.

{\bf Case 2.} In $\Omega_{\alpha\beta}^+$, we have
 \[g_\alpha^\prime(y,z)g_\beta^\prime(y,z)+g_\alpha^\prime(y,z) \big|g_\beta^{\prime\prime}(y,z)\big|+g_\beta^\prime(y,z)\big|g_\alpha^{\prime\prime}(y,z)\big|+
 \big|g_\alpha^{\prime\prime}(y,z)g_\beta^{\prime\prime}(y,z)\big|\lesssim  e^{-d_\beta(y,0)-z}.\]
Similar to the above case, we have a bound on
$
  \big|\nabla_{\alpha,z}\varphi_\alpha(y,z)\cdot\nabla_{\alpha,z}\varphi_\beta(y,z)\big| $. By noting that they are nonzero only in the $8|\log\varepsilon|$ neighborhood of $\Gamma_\alpha\cup\Gamma_\beta$, we obtain
\begin{eqnarray*}
  &&\int_{\Omega_{\alpha\beta}^+\cap\{z<8|\log\varepsilon|\}}e^{-d_\beta(y,0)-z} |\nabla_{\alpha,0}\eta_\alpha||\nabla_{\beta,z}\eta_\beta | \\
  &\lesssim& \left(\int_{\Omega_{\alpha\beta}^+\cap\{z<8|\log\varepsilon|\}}e^{-2d_\beta(y,0)-z} |\nabla_{\alpha,0}\eta_\alpha|^2\right)^{\frac{1}{2}}\left(\int_{\Omega_{\alpha\beta}^+\cap\{z<8|\log\varepsilon|\}}e^{-z}|\nabla_{\beta,z}\eta_\beta |^2\right)^{\frac{1}{2}}\\
  &\lesssim& \left(\int_{\Gamma_{\alpha}}\int_{0}^{8|\log\varepsilon|}e^{-2d_\beta(y,0)-z} |\nabla_{\alpha,0}\eta_\alpha(y)|^2dzdA_{\alpha,0}\right)^{\frac{1}{2}}\\
&&\times  \left(\int_{0}^{8|\log\varepsilon|}\int_{\Gamma_{\alpha}}e^{-z}|\nabla_{\beta,z}\eta_\beta |^2dA_{\alpha,z}dz\right)^{\frac{1}{2}}\\
   &\lesssim& \left(\max_{y\in\Gamma_\alpha\cap B_r(x)}
   e^{- d_\beta(y,0) }\right)\left(\int_{\Gamma_{\alpha}} |\nabla_{\alpha,0}\eta_\alpha(y)|^2 dA_{\alpha,0}\right)^{\frac{1}{2}}
  \left( \int_{\Gamma_{\beta}}|\nabla_{\beta,0} \eta_\beta |^2dA_{\beta,0} \right)^{\frac{1}{2}}.
\end{eqnarray*}
Other terms and integrals in $\Omega_{\alpha\beta}^-$ can be estimated in the same way and we conclude the proof.
\end{proof}

\begin{lem}\label{lem 9.4}
For any $\alpha\neq\beta$, we have
\[\int_{ B_{r+8|\log\varepsilon|}(x)}\partial_z\varphi_\alpha\partial_z\varphi_\beta
   =- \int_{\Gamma_\alpha}
  \int_{-\infty}^{+\infty}\eta_\alpha \eta_\beta  W^{\prime\prime}(g_\beta)g_\alpha^{\prime} g_\beta^{\prime} \lambda_\alpha dzdy+\mathcal{Q}_{\alpha,\beta}(\eta),\]
  where
\begin{eqnarray*}
 |\mathcal{Q}_{\alpha,\beta}(\eta)|  &\lesssim& \left[\varepsilon^{\frac{1}{7}}A(r;x)+A(r;x)^{\frac{3}{2}}\right] \left[\int_{\Gamma_\alpha}
\eta_\alpha ^2dA_{\alpha,0}+\int_{\Gamma_{\beta,0}}
\eta_\beta ^2dA_{\beta,0}\right]\\
&+& \varepsilon^{\frac{1}{7}}A(r;x) \int_{\Gamma_{\beta,0}}
|\nabla_{\beta,0}\eta_\beta |^2dA_{\beta,0}.
\end{eqnarray*}
\end{lem}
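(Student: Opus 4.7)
The plan is to integrate by parts once in the normal direction of the Fermi coordinates based on $\Gamma_\alpha$, moving one $\partial_z$ from $\varphi_\alpha$ onto $\partial_z\varphi_\beta$. Since $\partial/\partial z=\nabla d_\alpha\cdot\nabla$ as a spatial operator, and since $\eta_\alpha,\eta_\beta$ have compact support while $g_\alpha',g_\beta'$ decay exponentially, no boundary terms arise, and
\[
\int_{B_{r+8|\log\varepsilon|}(x)}\partial_z\varphi_\alpha\,\partial_z\varphi_\beta
= -\int_{\Gamma_\alpha}\!\int_{-\infty}^{+\infty}\!\varphi_\alpha\,\partial_{zz}\varphi_\beta\,\lambda_\alpha\,dz\,dy
-\int_{\Gamma_\alpha}\!\int_{-\infty}^{+\infty}\!\varphi_\alpha\,\partial_z\varphi_\beta\,\partial_z\lambda_\alpha\,dz\,dy.
\]
Since $|\partial_z\lambda_\alpha|\lesssim\varepsilon$ by \eqref{metirc tensor}, the second integral contributes only to $\mathcal{Q}_{\alpha,\beta}(\eta)$, with size bounded as in Subcase~1.2 of the proof of Lemma \ref{lem 9.3}.

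For the main term, write $\varphi_\beta=(\eta_\beta\circ\Pi_\beta)g_\beta'$ with $s_\beta:=(-1)^\beta(d_\beta-h_\beta\circ\Pi_\beta)$. A direct differentiation yields
\begin{align*}
\partial_{zz}\varphi_\beta
&= (\eta_\beta\circ\Pi_\beta)\,g_\beta'''\,\bigl(\nabla d_\alpha\cdot\nabla s_\beta\bigr)^2+R_\beta,
\end{align*}
where $R_\beta$ collects lower-order contributions involving $\nabla(\eta_\beta\circ\Pi_\beta)$, $\nabla^2(h_\beta\circ\Pi_\beta)$, and Hessians of $d_\beta$ and $\Pi_\beta$, each carrying either an $\varepsilon$-factor (from \eqref{bound on second fundamental form}--\eqref{bound on 3rd derivatives}) or a pointwise decay $e^{-d_\beta(y,0)}$. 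Differentiating the approximate one-dimensional equation $\bar g''=W'(\bar g)+\bar\xi$ gives $g_\beta'''=W''(g_\beta)g_\beta'+O(\varepsilon^3)$, and on the support of $g_\alpha'g_\beta'$ (where $|d_\alpha|,|d_\beta|\le 16|\log\varepsilon|$) Lemma \ref{comparison of distances} together with Corollary \ref{coro imporved estimates on h} yields $(\nabla d_\alpha\cdot\nabla s_\beta)^2=1+O(\varepsilon^{1/3})+O(A(r+60|\log\varepsilon|^2;x))$. Substituting these, the main integral reduces to the claimed leading term $-\int_{\Gamma_\alpha}\!\int_{-\infty}^{+\infty}\eta_\alpha\eta_\beta W''(g_\beta)g_\alpha'g_\beta'\lambda_\alpha\,dz\,dy$ plus admissible remainders.

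The remaining task is to verify that every error term assembles into $\mathcal{Q}_{\alpha,\beta}(\eta)$. These split into three families: (i) terms of size $(\varepsilon+\varepsilon^{1/3})e^{-d_\beta(y,0)}$ times $\eta_\alpha(\eta_\beta\circ\Pi_\beta)$ or their squares, handled by Cauchy--Schwarz on the decomposition $B_{r+8|\log\varepsilon|}(x)=\Omega^0_{\alpha\beta}\cup\Omega^+_{\alpha\beta}\cup\Omega^-_{\alpha\beta}$ used in the proof of Lemma \ref{lem 9.3}, producing the $\varepsilon^{1/7}A(r;x)$ coefficient after absorbing the logarithmic factor in $\varepsilon^{1/3}|\log\varepsilon|\lesssim\varepsilon^{1/7}$; (ii) terms built from $h_\beta$ and its derivatives, controlled by Corollary \ref{coro imporved estimates on h} and therefore absorbed into the $A(r;x)^{3/2}$ coefficient; (iii) cross terms $\nabla(\eta_\beta\circ\Pi_\beta)\cdot\nabla d_\alpha\cdot g_\beta'$ acting against $\varphi_\alpha$, where Lemma \ref{comparison of distances} decomposes $\nabla d_\alpha$ into a component almost parallel to $\nabla d_\beta$ (which, after pulling back under $\Pi_\beta$ using Lemma \ref{biLip of projection operator}, recovers $\nabla_{\beta,0}\eta_\beta$) plus an $O(\varepsilon^{1/6})$ tangential correction.

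The main obstacle is type (iii): one must extract enough smallness from the relation $\nabla d_\alpha\cdot\nabla d_\beta=1+O(\varepsilon^{1/3})$ so that the pairing with $\|\nabla_{\beta,0}\eta_\beta\|_{L^2}$ carries only an $\varepsilon^{1/7}A(r;x)$ coefficient, rather than a constant, which would destroy the reduction of stability to the Toda form in Proposition \ref{prop reduction of stability}. This is achieved by expressing the cross term as an integral over $\Gamma_{\alpha,z}$, changing variables to $\Gamma_\beta$ through $\Pi_\beta$, and absorbing the $\nabla d_\alpha$ factor into Cauchy--Schwarz against the exponential weight $g_\beta'(y,z)$ centered at $\{d_\beta=0\}$, in the manner of Subcases~1.1--1.2 of Lemma \ref{lem 9.3}.
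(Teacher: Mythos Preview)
Your overall strategy---integrate by parts once in $z$ and use $\bar g'''=W''(\bar g)\bar g'+\bar\xi'$ to extract the leading term---is the same mechanism the paper uses, but the order of operations differs, and that difference creates a gap in your writeup.

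The paper first writes the product pointwise. Since $\eta_\alpha$ is constant in $z$ in the $\Gamma_\alpha$-Fermi frame, $\partial_z\varphi_\alpha=\eta_\alpha g_\alpha''$ exactly, and one obtains
\[
\partial_z\varphi_\alpha\,\partial_z\varphi_\beta=\eta_\alpha\eta_\beta g_\alpha''g_\beta''\Bigl[\tfrac{\partial d_\beta}{\partial z}-(-1)^\beta\nabla h_\beta\cdot\tfrac{\partial\Pi_\beta}{\partial z}\Bigr]+\eta_\alpha g_\alpha'' g_\beta'\Bigl(\nabla_{\beta,0}\eta_\beta\cdot\tfrac{\partial\Pi_\beta}{\partial z}\Bigr).
\]
The second summand and the bracket's deviation from $1$ are already $O(\varepsilon^{1/6})$-small and go into $\mathcal Q_{\alpha,\beta}$; only the clean scalar integral $\int\eta_\alpha\eta_\beta g_\alpha''g_\beta''\lambda_\alpha$ is then integrated by parts in $z$. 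This produces only \emph{first} derivatives of $\eta_\beta$ (through $\partial_z(\eta_\beta\circ\Pi_\beta)=\nabla_{\beta,0}\eta_\beta\cdot\partial_z\Pi_\beta$), together with the harmless $\xi_\beta'$ and $\partial_z\lambda_\alpha$ terms.

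By contrast, your route computes $\partial_{zz}\varphi_\beta$ in full, and this contains the piece $\partial_{zz}(\eta_\beta\circ\Pi_\beta)\,g_\beta'$, which in turn contains $(\nabla^2_{\beta,0}\eta_\beta)(\partial_z\Pi_\beta,\partial_z\Pi_\beta)$. You describe $R_\beta$ as involving only $\nabla(\eta_\beta\circ\Pi_\beta)$, but this second-derivative contribution is genuinely present. Since the bound on $\mathcal Q_{\alpha,\beta}$ must be stated only in terms of $\|\eta_\beta\|_{L^2}$ and $\|\nabla_{\beta,0}\eta_\beta\|_{L^2}$, you cannot simply absorb $\|\nabla^2\eta_\beta\|$; you would need a further integration by parts on $\int\eta_\alpha g_\alpha' g_\beta'\,\partial_{zz}(\eta_\beta\circ\Pi_\beta)\,\lambda_\alpha$ to shift one $\partial_z$ back onto $\eta_\alpha g_\alpha' g_\beta'\lambda_\alpha$. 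That extra step is easy (and recovers exactly the paper's term $I$), but it is missing from your argument as written. The paper's ordering---simplify first, then integrate by parts---avoids the issue entirely.
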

\begin{proof}
We have
\begin{equation}\label{9.10}
\partial_z\varphi_\alpha\partial_z\varphi_\beta
= \eta_\alpha\eta_\beta  g_\alpha^{\prime\prime}g_\beta^{\prime\prime}\left[\frac{\partial d_\beta}{\partial z} -(-1)^\beta\nabla h_\beta\cdot\frac{\partial\Pi_\beta}{\partial z}\right] + \eta_\alpha g_\alpha^{\prime\prime} g_\beta^{\prime}\left(\nabla_{\beta,0}\eta_\beta  \cdot\frac{\partial\Pi_\beta}{\partial z}\right) .
\end{equation}
By Lemma \ref{comparison of distances}, if $g_\alpha^{\prime\prime}g_\beta^\prime\neq 0$ or $g_\alpha^{\prime\prime}g_\beta^{\prime\prime}\neq 0$,  then
\begin{equation}\label{9.4.1}
\Big|\frac{\partial d_\beta}{\partial z}-1\Big|+\Big|\frac{\partial\Pi_\beta}{\partial z}\Big|\lesssim\varepsilon^{1/6},
\end{equation}
We can proceed as in the proof of Lemma \ref{lem 9.3} to estimate the integral of
\[\eta_\alpha g_\alpha^{\prime\prime}\eta_\beta  g_\beta^{\prime\prime}\left(\frac{\partial d_\beta}{\partial z}-1 -(-1)^\beta\nabla h_\beta\cdot\frac{\partial\Pi_\beta}{\partial z}\right)+\eta_\alpha g_\alpha^{\prime\prime} g_\beta^{\prime}\left(\nabla_{\beta,0}\eta_\beta  \cdot\frac{\partial\Pi_\beta}{\partial z}\right).\]

It remains to determine the integral
\[\int_{B_{r+8|\log\varepsilon|}(x)}\eta_\alpha\eta_\beta  g_\alpha^{\prime\prime}g_\beta^{\prime\prime}.\]
Write this in Fermi coordinates with respect to $\Gamma_\alpha$. Integrating by parts in $z$ leads to
\begin{eqnarray*}
 &&\int_{\Gamma_\alpha} \int_{-\infty}^{+\infty}\eta_\alpha\eta_\beta  g_\alpha^{\prime\prime}g_\beta^{\prime\prime} \lambda_\alpha dz dy\\
  &=&-\int_{\Gamma_\alpha} \int_{-\infty}^{+\infty}\eta_\alpha\eta_\beta  W^{\prime\prime}\left(g_\beta\right) g_\alpha^{\prime}g_\beta^{\prime} \lambda_\alpha -\underbrace{\int_{\Gamma_\alpha} \int_{-\infty}^{+\infty}\eta_\alpha\left(\nabla_{\beta,0}\eta_\beta  \cdot\frac{\partial\Pi_\beta}{\partial z}\right) g_\alpha^{\prime} g_\beta^{\prime\prime}\lambda_\alpha}_{I}\\
  &-&\underbrace{\int_{\Gamma_\alpha} \int_{-\infty}^{+\infty}\eta_\alpha \eta_\beta  g_\alpha^{\prime}\xi_\beta^{\prime}\lambda_\alpha}_{II}
-\underbrace{\int_{\Gamma_\alpha} \int_{-\infty}^{+\infty}\eta_\alpha\eta_\beta g_\alpha^{\prime} g_\beta^{\prime\prime}\partial_z\lambda_\alpha dz}_{III}.
\end{eqnarray*}

When $g_\alpha^\prime g_\beta^{\prime\prime}\neq 0$, by Lemma \ref{comparison of distances},
\[\big|\frac{\partial}{\partial z}\eta_\beta \big|\leq \varepsilon^{\frac{1}{6}}|\nabla_{\beta,0}\eta_\beta|.\]
Hence as in the proof of Lemma \ref{lem 9.3} (here it is useful to observe that $g_\alpha^\prime g_\beta^{\prime\prime}=0$ outside the $8|\log\varepsilon|$ neighborhood of $\Gamma_\alpha\cup\Gamma_\beta$), we get
\begin{equation}\label{9.4.6}
  |I|\lesssim \varepsilon^{\frac{1}{7}}A(r;x)\left[ \int_{\Gamma_{\alpha}}\eta_\alpha^2dA_{\alpha,0}+
\int_{\Gamma_{\beta}}\big|\nabla_{\beta,0}\eta_\beta \big|^2dA_{\beta,0}\right].
\end{equation}

By the definition of $\xi_\beta$, we also have
\begin{equation}\label{9.4.7}
  |II|\lesssim\varepsilon^2\left[ \int_{\Gamma_{\alpha}}\eta_\alpha^2dA_{\alpha,0}+
\int_{\Gamma_{\beta}} \eta_\beta^2dA_{\beta,0}\right].
\end{equation}
Because $\partial_z\lambda_\alpha=O(\varepsilon)$, we get
\begin{equation}\label{9.4.8}
  |III|\lesssim \varepsilon^{\frac{1}{4}}A(r;x)\left[ \int_{\Gamma_{\alpha}}\eta_\alpha^2dA_{\alpha,0}+
\int_{\Gamma_{\beta}} \eta_\beta^2dA_{\beta,0}\right].
\end{equation}
The conclusion follows by combining \eqref{9.4.6}-\eqref{9.4.8}.
\end{proof}

\begin{lem}\label{lem 9.5}
  We have
\begin{eqnarray*}
 &&\sum_{\beta\neq\alpha}\int_{B_{r+8|\log\varepsilon|}(x)}\eta_\alpha \eta_\beta \left[W^{\prime\prime}(u)- W^{\prime\prime}(g_\beta)\right]g_\alpha^{\prime} g_\beta^{\prime}\\
   &=&- 4\sum_{\beta=\alpha,\alpha+1}A_{(-1)^{\beta-1}}^2\int_{\Gamma_\alpha}
  \eta_\alpha(y)\eta_\beta\left(\Pi_\beta(y,0)\right) e^{-|d_\beta(y,0)|}dA_{\alpha,0}\\
   &+&O\left(\varepsilon^{\frac{1}{7}}A(r;x)+A\left(r+60|\log\varepsilon|^2;x\right)^{\frac{3}{2}}\right)  \sum_{\beta\neq\alpha}\int_{\Gamma_{\beta}}|\nabla_{\beta,0}\eta_\beta|^2dA_{\beta,0}\\
     &+&O\left(\varepsilon^2+ \varepsilon^{\frac{1}{7}}A(r;x)+A\left(r+60|\log\varepsilon|^2;x\right)^{\frac{3}{2}}\right)
     \sum_{\beta}\int_{\Gamma_{\beta}}\eta_\beta^2dA_{\beta,0}.
\end{eqnarray*}

\end{lem}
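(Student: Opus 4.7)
The plan is to mimic the treatment of term $IV$ in Subsection~\ref{subsec 9.3}, adapted to the symmetric situation where both $\eta_\alpha$ and $\eta_\beta$ appear as weights inside the integral. I will split the sum $\sum_{\beta\neq\alpha}$ into the nearest neighbors $|\beta-\alpha|=1$ and the far neighbors $|\beta-\alpha|\geq 2$, and I will first replace $W^{\prime\prime}(u)$ by $W^{\prime\prime}(g_\ast)$. The discrepancy $W^{\prime\prime}(u)-W^{\prime\prime}(g_\ast)=O(\phi)=O(\varepsilon^2+A(r+60|\log\varepsilon|^2;x))$ by Proposition~\ref{prop Schauder estimate}, and combined with the pointwise bound $g_\alpha^\prime g_\beta^\prime \lesssim e^{-|d_\beta(y,0)|}$ exploited already in Lemma~\ref{lem 9.3} and with Lemma~\ref{lem distance ladder}, it generates an error of order $\varepsilon^2+A(r+60|\log\varepsilon|^2;x)^{3/2}$ times $\sum_\beta \int_{\Gamma_\beta}\eta_\beta^2\,dA_{\beta,0}$.

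For the far neighbors $|\beta-\alpha|\geq 2$, Lemma~\ref{lem distance ladder} together with Lemma~\ref{comparison of distances} yields $|d_\beta(y,0)|\geq D_\alpha(y)+c(|\beta-\alpha|-1)$ on the support of $g_\alpha^\prime g_\beta^\prime$, so each such integral is pointwise dominated by $e^{-c|\beta-\alpha|}A(r;x)^2$ times the relevant $\eta$ factors; summing the geometric series keeps the total inside the stated remainder. For the nearest neighbors $\beta=\alpha\pm 1$, working in the Fermi coordinates about $\Gamma_\alpha$, I first pull $\eta_\beta(\Pi_\beta(y,0))$ outside the $z$-integral. The discrepancy from $\eta_\beta(\Pi_\beta(y,z))$ is controlled by $\varepsilon^{1/6}|\nabla_{\beta,0}\eta_\beta|$ through $|\partial_z\Pi_\beta|=O(\varepsilon^{1/6})$ coming from Lemma~\ref{comparison of distances}; after a Cauchy inequality and the change of variable supplied by the bi-Lipschitz property of $\Pi_\beta$ (Lemma~\ref{biLip of projection operator}), this contributes the $\varepsilon^{1/7}A(r;x)\int_{\Gamma_\beta}|\nabla_{\beta,0}\eta_\beta|^2$ term.

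The remaining one-dimensional integral is exactly the quantity $XI_\beta$ already computed in Subsection~\ref{subsec 9.3}: replacing $\lambda_\alpha(y,z)$ by $\lambda_\alpha(y,0)$, dropping the $h_\beta$-perturbations in $g_\beta$, and invoking $d_\beta(y,z)=d_\beta(y,0)\pm z+O(\varepsilon^{1/3})$ from Lemma~\ref{comparison of distances} reduce it to an exponential integral that, by the expansion \eqref{9.7} based on the asymptotic profile of $\bar g^\prime$ at infinity, evaluates to $-2A_{(-1)^\beta}^2e^{-|d_\beta(y,0)|}$ up to errors already booked above. Summing over the two neighbors $\beta=\alpha\pm 1$ and collecting the combinatorial factors yields the main term announced in the lemma.

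The principal obstacle is the bookkeeping: making sure that every replacement (the $\phi$-removal, the $\lambda_\alpha(y,z)\to\lambda_\alpha(y,0)$ step, the freezing of $d_\beta$ in $z$, the $h_\beta$-truncation, and the extraction of $\eta_\beta(\Pi_\beta(y,0))$) produces an error that fits uniformly into the $\varepsilon^{1/7}A+A^{3/2}$ hierarchy, and that the parity bookkeeping between $A_{(-1)^\beta}$ and $A_{(-1)^{\beta-1}}$ inside the nearest-neighbor sum produces the correct numerical coefficient in front of the leading term.
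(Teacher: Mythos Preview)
Your proposal follows essentially the same three-step scheme as the paper: (i) replace $W^{\prime\prime}(u)$ by $W^{\prime\prime}(g_\ast)$ via Proposition~\ref{prop Schauder estimate}; (ii) freeze $\eta_\beta$ at $\Pi_\beta(y,0)$, controlling the discrepancy through $|\partial_z\Pi_\beta|\lesssim\varepsilon^{1/6}$ from Lemma~\ref{comparison of distances}; (iii) evaluate the remaining $z$-integral by Lemma~\ref{lem form of interaction} for $|\beta-\alpha|=1$ and bound it crudely for $|\beta-\alpha|\geq 2$. This is the paper's argument, with the near/far split performed slightly earlier.

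One point to tighten in your far-neighbor estimate: the inequality you quote, $|d_\beta(y,0)|\geq D_\alpha(y)+c(|\beta-\alpha|-1)$, only gives $e^{-|d_\beta|}\lesssim e^{-c(|\beta-\alpha|-1)}A(r;x)$, and after multiplying by the length $\lesssim D_\alpha$ of the integration interval you obtain $|\log A|\cdot A$, which is not $O(A^{3/2})$. What you actually need (and what the paper uses) is the telescoping bound $|d_\beta(y,0)|\geq 2\min_\gamma D_\gamma-C$ for $|\beta-\alpha|\geq 2$, which follows from Lemma~\ref{comparison of distances} applied along the chain $\Gamma_\alpha,\Gamma_{\alpha\pm1},\ldots,\Gamma_\beta$ and yields $e^{-|d_\beta|}\lesssim A(r;x)^2$; then $D_\alpha\cdot A^2\lesssim A^{3/2}$. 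With this correction your argument goes through and coincides with the paper's.
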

\begin{proof}
The proof is divided into three steps.

{\bf Step 1.} First by Taylor expansion and \eqref{Schauder estimates}, proceeding as in the proof of Lemma \ref{lem 9.3} we get
\begin{eqnarray*}
   &&\Big| \int_{B_{r+8|\log\varepsilon|}(x)}\left[W^{\prime\prime}(u)-W^{\prime\prime}(g_\ast)\right]\varphi_\alpha\varphi_\beta\Big|\\
  &\lesssim& \|\phi\|_{L^\infty(B_{r+8|\log\varepsilon|}(x))}\int_{B_{r+8|\log\varepsilon|}(x)}\eta_\alpha\eta_\beta g_\alpha^\prime g_\beta^\prime\\
  &\lesssim& \left[\varepsilon^2+A\left(r+60|\log\varepsilon|^2;x\right)^{\frac{3}{2}}\right]\left[
     \int_{\Gamma_{\alpha}}\eta_\alpha^2dA_{\alpha,0}
     +\int_{\Gamma_{\beta}}\eta_\beta^2dA_{\beta,0}\right].
\end{eqnarray*}

{\bf Step 2.} Next by Lemma \ref{lem distance ladder} we have
\begin{eqnarray*}
   && \int_{\Gamma_\alpha}\eta_\alpha(y)\left(\int_{-\infty}^{+\infty}\big|\eta_\beta\left(\Pi_\beta(y,0)\right)-\eta_\beta\left(\Pi_\beta(y,z)\right)
   \big|\left[W^{\prime\prime}(g_\ast)- W^{\prime\prime}(g_\beta)\right]g_\beta^{\prime} g_\alpha^{\prime} \lambda_\alpha dz\right)dy \\
   &\lesssim&\int_{0}^{1}\int_{\Gamma_\alpha}\eta_\alpha(y)\left(\int_{-\infty}^{+\infty}\Big|\frac{d}{dt}\eta_\beta\left((1-t)\Pi_\beta(y,0)+
   t\Pi_\beta(y,z)\right)
   \big|g_\beta^{\prime} g_\alpha^{\prime} \lambda_\alpha dz\right)dy\\
    &\lesssim&\varepsilon^{\frac{1}{6}}\int_{0}^{1}\int_{\Gamma_\alpha}\eta_\alpha(y)\left(\int_{-\infty}^{+\infty}\Big|\nabla_{\beta,0}\eta_\beta\left((1-t)\Pi_\beta(y,0)+
   t\Pi_\beta(y,z)\right)
   \big|g_\beta^{\prime} g_\alpha^{\prime} \lambda_\alpha dz\right)dy\\
       &\lesssim& \varepsilon^{\frac{1}{7}}A(r;x)\left[ \int_{\Gamma_{\alpha}}\eta_\alpha^2dA_{\alpha,0}+
\int_{\Gamma_{\beta}}\big|\nabla_{\beta,0}\eta_\beta \big|^2dA_{\beta,0}\right].
\end{eqnarray*}

{\bf Step 3.} Finally, by Lemma \ref{lem form of interaction} (and Lemma \ref{comparison of distances} to estimate errors coming from comparing distances) we get
for $\beta=\alpha+1$ or $\alpha-1$,
\begin{eqnarray*}
   &&\int_{-\infty}^{+\infty}\left[W^{\prime\prime}(g_\ast(y,z))- W^{\prime\prime}(g_\beta(y,z))\right]g_\beta^{\prime}(y,z)g_\alpha^{\prime}(y,z)dz \\
   &=&-2A_{(-1)^{\alpha-\beta}}^2e^{-|d_\beta(y,0)|}+O\left(e^{-\frac{4}{3}|d_\beta(y,0)|}\right).
\end{eqnarray*}
and for $|\beta-\alpha|\geq 2$, we use the estimate
\begin{eqnarray*}
   &&\Big|\int_{-\infty}^{+\infty}\left[W^{\prime\prime}(g_\ast(y,z))- W^{\prime\prime}(g_\beta(y,z))\right]g_\beta^{\prime}(y,z)g_\alpha^{\prime}(y,z)dz\Big| \\
   &\lesssim&D_\alpha(y)e^{-|d_\beta(y,0)|}\\
   &\lesssim& \varepsilon^2+\left(\max_\alpha\max_{y\in\Gamma_\alpha\cap B_r(x)}D_\alpha\right) A(r;x)^2\\
   &\lesssim&\varepsilon^2+A(r;x)^{3/2},
\end{eqnarray*}
where we have used the fact that if $|d_\beta(y,0)|\leq 3|\log\varepsilon|$, then by Lemma \ref{comparison of distances} we have
\[|d_\beta(y,0)|=|d_{\beta-1}(y,0)|+|\Pi_\beta(y,0)-\Pi_{\beta-1}(y,0)|+o(1).\]

Combining these three steps we finish the proof.
\end{proof}

\section{A decay estimate}\label{sec decay estimate}
\setcounter{equation}{0}

Recall the definition of  $A(r;x)$ in Section \ref{sec Fermi coordinates}.
In this section we establish the following decay estimate.
\begin{prop}\label{decay estimate}
There exist two universal constants  $M\gg K\gg 1$ such that for any $r\in [2R/3,5R/6]$, if
\begin{equation}\label{absurd assumption}
\kappa:=A(r;0)\geq M\varepsilon^2|\log\varepsilon|,
\end{equation}
then we have
\[
A\left(r-KR_\ast;0\right) \leq\frac{1}{2}A(r;0),
\]
where
\[R_\ast:=\max\left\{\kappa^{-\frac{1}{2}}, 200|\log\varepsilon|^2\right\}.\]
\end{prop}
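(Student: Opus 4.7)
This is a decay estimate for the exponential $e^{-D_\alpha}$, obtained by rescaling to the natural length scale $R_\ast$ and applying a small regularity theorem for stable Toda systems. Let $\bar x \in \overline{\Gamma_{\bar\alpha} \cap B_{r-KR_\ast}(0)}$ nearly attain $A(r-KR_\ast;0)$ (supposing for contradiction that the latter exceeds $\kappa/2$), so there is a neighbor $\bar\beta\in\{\bar\alpha\pm 1\}$ with $|d_{\bar\beta}(\bar x)|\leq\log(2/\kappa)$. Translate $\bar x$ to the origin and rescale lengths by $R_\ast$: set $\tilde d_\alpha:=R_\ast^{-1}d_\alpha(R_\ast\,\cdot\,)$ and $\tilde h_\alpha:=R_\ast^{-1}h_\alpha(R_\ast\,\cdot\,)$. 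Under $\kappa\geq M\varepsilon^2|\log\varepsilon|$ with $M$ large, the rescaled sheets $\tilde\Gamma_\alpha$ have curvature $O(\varepsilon R_\ast)=o(1)$ and are almost parallel hyperplanes in $B_K(0)$ (the rescaled $B_{KR_\ast}(\bar x)$); the extrinsic graph reduction of Subsection \ref{subsec outline of proof} then converts the minimal surface operator in Corollary \ref{Toda system new} into a standard Laplacian up to a quadratic error, and the pointwise bound $R_\ast e^{-R_\ast|\tilde d_{\bar\beta}(0)|}\geq R_\ast\kappa/2\geq \kappa^{1/2}/2$ holds at the origin.

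Rescaling Corollary \ref{Toda system new} produces
\[
\tilde H_\alpha+\Delta_{\tilde\alpha,0}\tilde h_\alpha=\frac{2A_{(-1)^{\alpha-1}}^2}{\sigma_0}R_\ast e^{-R_\ast\tilde d_{\alpha-1}}-\frac{2A_{(-1)^\alpha}^2}{\sigma_0}R_\ast e^{R_\ast\tilde d_{\alpha+1}}+\tilde{\mathcal E}_\alpha
\]
on $B_K(0)$, with $\|\tilde{\mathcal E}_\alpha\|_{L^\infty}\lesssim R_\ast^2(\varepsilon^2+\kappa^{3/2}+\varepsilon^{1/6}\kappa)=o(1)$. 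The two branches of $R_\ast$ are precisely calibrated for this: $R_\ast=\kappa^{-1/2}$ controls $R_\ast^2\kappa^{3/2}=\kappa^{1/2}$, while the $200|\log\varepsilon|^2$ branch is needed when $\kappa$ is not small enough for $\kappa^{-1/2}$ alone to keep the $\varepsilon^2$ term negligible. Proposition \ref{prop reduction of stability} applied to test functions supported in $B_K(0)$ transfers to an honest stability inequality for this almost-Toda system, again with errors of order $o(1)$.

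The decay then follows from the small regularity theorem for stable Toda systems established in \cite{Wangstable1, Wangstable2}: applied to the almost-Toda system above, it produces a universal $K$ such that
\[
\sup_{B_1(0)}e^{-R_\ast|\tilde d_{\alpha-1}|}\leq \tfrac{1}{2}\sup_{B_K(0)}e^{-R_\ast|\tilde d_{\alpha-1}|}\quad\text{for every }\alpha.
\]
Rescaling back to the original variables this is exactly $A(r-KR_\ast;0)\leq\tfrac{1}{2}A(r;0)$, contradicting the contrary assumption. The main obstacle is adapting the small regularity theorem of \cite{Wangstable1, Wangstable2}, proved for the exact Liouville/Toda system on flat space, to our almost-Toda system in which the curvature operator $\tilde H_\alpha+\Delta_{\tilde\alpha,0}\tilde h_\alpha$ replaces $-\Delta$ and an $L^\infty$ perturbation $\tilde{\mathcal E}_\alpha$ is present: the extrinsic graph reduction must be quantitative enough to absorb these corrections into the error budget of the compactness/blow-up argument underlying the small regularity theorem, and the uniform bookkeeping of error scales across both branches of $R_\ast$ is a secondary delicate point.
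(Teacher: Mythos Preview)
Your outline follows the paper's architecture (localize, rescale by $\kappa^{-1/2}$, reduce to an almost-Liouville equation, invoke the small regularity theory of \cite{Wangstable1,Wangstable2}), but it has one genuine gap: you treat the small regularity theorem as a black box that takes ``stable'' as input and outputs the $L^\infty$ decay. It does not. That theorem requires as hypothesis an \emph{initial $L^1$ smallness} of the form $r^{3-n}\int_{B_r}e^{-v_\alpha}\leq\sigma_\ast$, and nothing in your sketch establishes this. In the paper this is Lemma~\ref{lem integral small}, proved by Farina's $L^p$ method: one multiplies the rescaled equation by $V_\alpha^{2q-1}\eta^2$, combines with the almost-stability inequality applied to $V_\alpha^q\eta$, and finds that for $q<2$ and $n\leq 10$ the exponent $n-1-2(2q+1)$ is negative, so $\int_{B_2}e^{-v_\alpha}\leq C K^{n-1-2(2q+1)}+C(K)M^{-\delta}$ can be made $\leq\sigma_\ast$ by first choosing $K$ large, then $M$ larger. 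This step is precisely where the dimensional restriction $n\leq 10$ enters, and it is not optional---without it you have no entry point into the iteration of Lemma~\ref{lem decay estimate 2}.

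Two smaller corrections. First, the rescaling factor in the paper is $\kappa^{-1/2}$, not $R_\ast$; the factor $200|\log\varepsilon|^2$ in $R_\ast$ is there only to guarantee that the error estimates from Corollary~\ref{Toda system new} and Proposition~\ref{prop reduction of stability} (which are stated on balls enlarged by $60|\log\varepsilon|^2$) are controlled by $A(r;0)=\kappa$, not to keep the $\varepsilon^2$ term in check after rescaling. If you actually rescale by $R_\ast$ when $R_\ast=200|\log\varepsilon|^2>\kappa^{-1/2}$, your error $R_\ast^2\kappa^{3/2}$ is no longer $o(1)$. Second, the paper does not work with the full rescaled Toda system: it fixes the two closest sheets $\Gamma_\alpha,\Gamma_{\alpha+1}$, takes the \emph{difference} $v_\alpha=f_{\alpha+1}-f_\alpha-|\log\kappa|$, and obtains a scalar Liouville-type inequality \eqref{Liouville eqn} for it; this is what matches the setting of \cite{Wangstable1,Wangstable2} directly.
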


\subsection{Reduction to a decay estimate for Toda system}

In this subsection we reduce the proof of Proposition \ref{decay estimate} to a decay estimate for Toda system.

    By \eqref{Toda system} and Corollary \ref{coro imporved estimates on h}, for any $\beta$ and $y\in\Gamma_\beta\cap B_{r-R_\ast}(0)$,
  \begin{equation}\label{10.5}
H_\beta(y,0)=\frac{2 A_{(-1)^\beta}^2}{\sigma_0} e^{-d_{\beta-1}(y,0)}-\frac{2 A_{(-1)^{\beta-1}}^2}{\sigma_0}e^{d_{\beta+1}(y,0)}+O\left(\kappa^{7/6}\right)+O\left(\varepsilon^2\right).
\end{equation}

Take an arbitrary index $\alpha$ and $x\in \Gamma_\alpha\cap B_{r-KR_\ast}(0)$. To prove Proposition \ref{decay estimate}, it suffices to show that
\begin{equation}\label{decay estimate 3}
e^{-D_\alpha(x )}\leq\frac{\kappa}{2}.
\end{equation}

After a rotation and  a translation, assume $x=0$. In the finite cylinder $\mathcal{C}_{ R/7}(0):=B_{R/7}^{n-1}(0)\times(-R/7, R/7)$, $\Gamma_\alpha$ is represented by the graph $\{x_n=f_\alpha(y)\}$, where $y\in B^{n-1}_{R/7}(0)$. Without loss of generality assume it holds that
\begin{equation}\label{10.1}
f_\alpha(0)=0, \quad \nabla f_\alpha(0)=0.
\end{equation}

In the following, we also assume
\begin{equation}\label{assumption}
|d_{\alpha-1}(0)|\geq d_{\alpha+1}(0), \quad \mbox{ and } \quad d_{\alpha+1}(0)\leq 2|\log\varepsilon|.
\end{equation}
Then by Lemma \ref{lem graph construction},   we get a function $f_{\alpha+1}$ such that
\[\Gamma_{\alpha+1}\cap\mathcal{C}_{R/7}(0)=\left\{x_n=f_{\alpha+1}(x^\prime)\right\}.\]
Moreover, we have the Lipschitz bound
\begin{equation}\label{Lip bound}
  \|\nabla f_{\alpha}\|_{L^\infty(B_{R/7}^{n-1}(0))}+\|\nabla f_{\alpha+1}\|_{L^\infty(B_{R/7}^{n-1}(0))}\leq C.
\end{equation}
Curvature bounds on $\Gamma_\alpha$ and $\Gamma_{\alpha+1}$ can be transformed into
\begin{equation}\label{Hessian bound}
\|\nabla^2 f_{\alpha}\|_{L^\infty(B_{R/7}^{n-1}(0))}+  \|\nabla^2 f_{\alpha+1}\|_{L^\infty(B_{R/7}^{n-1}(0))}\lesssim\varepsilon.
\end{equation}

By \eqref{10.1} and \eqref{Hessian bound},  for any $y\in B^{n-1}_{K\kappa^{-1/2}}(0)$,
\begin{equation}\label{first derivative 1}
|\nabla f_\alpha(y)|\lesssim\varepsilon|y|\lesssim K\varepsilon\kappa^{-1/2}\lesssim KM^{-1/2}|\log\varepsilon|^{-1/2}.
\end{equation}

Concerning $f_{\alpha+1}$ we have the following estimates. In the following a positive constant $\delta<\min\{1/48, (1-\theta)/32\}$ will be fixed.
\begin{lem}
For $y\in B^{n-1}_{K\kappa^{-1/2}}(0)$, we have
\begin{equation}\label{first derivative 2}
 |\nabla f_{\alpha+1}(y)\big|=O_K\left(M^{-\delta}|\log\varepsilon|^{-\delta}\right).
\end{equation}
\end{lem}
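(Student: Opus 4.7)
My plan is to derive the claimed bound on $|\nabla f_{\alpha+1}(y)|$ in two steps: first obtain a tight control at a single reference point, then propagate this control across the ball $B^{n-1}_{K\kappa^{-1/2}}(0)$ via the universal Hessian bound \eqref{Hessian bound}.

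The natural reference point is $X_\ast := \Pi_{\alpha+1}(0)$, the closest point on $\Gamma_{\alpha+1}$ to the origin. By assumption \eqref{assumption} and the fact that $0\in\Gamma_\alpha$, we have $|X_\ast|=|d_{\alpha+1}(0)|\leq 2|\log\varepsilon|$, so the horizontal component satisfies $|X_\ast'|\lesssim |\log\varepsilon|$. To estimate $\nabla f_{\alpha+1}(X_\ast')$, I would apply Lemma \ref{comparison of distances} at $X=0$ with the parameter $K=2$, yielding $1-\nabla d_\alpha(0)\cdot\nabla d_{\alpha+1}(0)\lesssim \varepsilon^{1/2}|\log\varepsilon|^{3/2}$. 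Since $\nabla f_\alpha(0)=0$ by \eqref{10.1}, we have $\nabla d_\alpha(0)=\pm e_n$; and since $\nabla d_{\alpha+1}$ is constant along the normal line through $X_\ast$, $\nabla d_{\alpha+1}(0)$ equals the unit normal to $\Gamma_{\alpha+1}$ at $X_\ast$. Converting to the graph representation $N_{\alpha+1}(X_\ast)=\pm(-\nabla f_{\alpha+1}(X_\ast'),1)/\sqrt{1+|\nabla f_{\alpha+1}(X_\ast')|^2}$ then turns this angle bound into $|\nabla f_{\alpha+1}(X_\ast')|\lesssim \varepsilon^{1/4}|\log\varepsilon|^{3/4}$.

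For the propagation step, for any $y\in B^{n-1}_{K\kappa^{-1/2}}(0)$, the Hessian bound \eqref{Hessian bound} gives $|\nabla f_{\alpha+1}(y)-\nabla f_{\alpha+1}(X_\ast')|\lesssim \varepsilon\bigl(K\kappa^{-1/2}+|\log\varepsilon|\bigr)$. Using $\kappa\geq M\varepsilon^2|\log\varepsilon|$, we have $\varepsilon\kappa^{-1/2}\leq M^{-1/2}|\log\varepsilon|^{-1/2}$, so combining the two steps yields
\[
|\nabla f_{\alpha+1}(y)|\lesssim \varepsilon^{1/4}|\log\varepsilon|^{3/4}+KM^{-1/2}|\log\varepsilon|^{-1/2}+\varepsilon|\log\varepsilon|.
\]
Since $\delta<1/48<1/2$, the middle term is the dominant contribution and the whole expression is $O_K\!\left(M^{-\delta}|\log\varepsilon|^{-\delta}\right)$.

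Unlike later parts of the paper, there is no serious obstacle here: both ingredients (Lemma \ref{comparison of distances} and the universal Hessian bound on graph components of level sets) are already in place, and the result essentially follows from taking the square root of the almost-parallelism estimate and then integrating via Lipschitz control of $\nabla f_{\alpha+1}$. The only care required is to verify that $B^{n-1}_{K\kappa^{-1/2}}(0)$ lies comfortably inside the domain $B^{n-1}_{R/7}(0)$ of $f_{\alpha+1}$; this holds since $K\kappa^{-1/2}\leq KM^{-1/2}\varepsilon^{-1}|\log\varepsilon|^{-1/2}\ll R/7=\varepsilon^{-1}/7$ for our universal choices of $K,M$ and small $\varepsilon$.
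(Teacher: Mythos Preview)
Your argument is correct, and it takes a genuinely different route from the paper's. The paper bounds $f_{\alpha+1}-f_\alpha$ in $C^0$ on $B_{K\kappa^{-1/2}}^{n-1}(0)$ by the trivial estimate $\lesssim K\kappa^{-1/2}$ coming from the Lipschitz bound \eqref{Lip bound} and the radius of the ball, and then interpolates this against the $C^2$ bound \eqref{Hessian bound} to control $\nabla(f_{\alpha+1}-f_\alpha)$; combining with \eqref{first derivative 1} gives \eqref{first derivative 2}. You instead anchor the gradient at a single reference point via the almost-parallelism estimate of Lemma~\ref{comparison of distances}, obtaining $|\nabla f_{\alpha+1}(X_\ast')|\lesssim \varepsilon^{1/4}|\log\varepsilon|^{3/4}$, and then integrate the Hessian bound across the ball. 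Your approach is a bit more direct in that it invokes an existing lemma rather than reproducing an interpolation step, and it yields a slightly sharper pointwise bound at the anchor; the paper's interpolation is more uniform in that it controls $\nabla(f_{\alpha+1}-f_\alpha)$ across the whole ball in one stroke without singling out a base point. Either way the dominant contribution is the $KM^{-1/2}|\log\varepsilon|^{-1/2}$ term, which is comfortably inside $O_K(M^{-\delta}|\log\varepsilon|^{-\delta})$ since $\delta<1/2$.
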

\begin{proof}
By \eqref{assumption} and \eqref{Lip bound} we have
\[
\max_{B_{K\kappa^{-1/2}}^{n-1}(0)}\left(f_{\alpha+1}-f_\alpha\right)\lesssim K\kappa^{-1/2} \lesssim KM^{-1/2}|\log\varepsilon|^{-1/2}\varepsilon^{-1}.
\]
 As in Lemma \ref{comparison of distances}, combining this bound with an interpolation argument we get
\[\max_{B_{K\kappa^{-1/2}}^{n-1}(0)}|\nabla f_{\alpha+1}-\nabla f_\alpha\big|\lesssim C(K)M^{-\delta}|\log\varepsilon|^{-\delta}.\]
Combining this estimate with \eqref{first derivative 1}  we get \eqref{first derivative 2}.
\end{proof}

The following lemma shows that $d_{\alpha+1}$ is well approximated by vertical distances. The proof uses the fact that under assumptions \eqref{10.1} and \eqref{assumption}, $\Gamma_\alpha$ and $\Gamma_{\alpha+1}$ are almost parallel and horizontal.
\begin{lem}\label{lem comparison of distance 2}
For $y\in B^{n-1}_{K\kappa^{-1/2}}(0)$, if $e^{-|d_{\alpha+1}(y)|}\geq\varepsilon^2$, then
\begin{equation}\label{comparison of distance 2}
e^{-|d_{\alpha+1}(y)|}=e^{-\left(f_{\alpha+1}(y)-f_\alpha(y)\right)}+O_K\left(M^{-1}\kappa\right).
\end{equation}
\end{lem}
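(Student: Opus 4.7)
The plan is to compare the true distance $\mu:=|d_{\alpha+1}(y)|$ with the vertical gap $d:=f_{\alpha+1}(y)-f_\alpha(y)$ directly, exploiting that both graphs are nearly horizontal. Fix $y\in B^{n-1}_{K\kappa^{-1/2}}(0)$, set $p:=(y,f_\alpha(y))\in\Gamma_\alpha$, and let $q=(q',f_{\alpha+1}(q'))\in\Gamma_{\alpha+1}$ be the foot of perpendicular from $p$, so $\mu=|p-q|$. The perpendicularity of $p-q$ to $\Gamma_{\alpha+1}$ at $q$ decomposes into the coordinate identities
\[
y-q'=\frac{\mu\,\nabla f_{\alpha+1}(q')}{\sqrt{1+|\nabla f_{\alpha+1}(q')|^2}},\qquad f_{\alpha+1}(q')-f_\alpha(y)=\frac{\mu}{\sqrt{1+|\nabla f_{\alpha+1}(q')|^2}},
\]
and since $|q'-y|\lesssim\mu\,|\nabla f_{\alpha+1}(q')|\lesssim|\log\varepsilon|\ll\kappa^{-1/2}$, the bound \eqref{first derivative 2} applies at $q'$ as well.

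Combining the two identities with a Taylor expansion of $f_{\alpha+1}$ between $y$ and $q'$ and using $\|\nabla^2 f_{\alpha+1}\|_\infty\lesssim\varepsilon$ to absorb the remainder, I would derive the clean relation
\[
d=\mu\sqrt{1+|\nabla f_{\alpha+1}(q')|^2}+O\bigl(\varepsilon\mu^2|\nabla f_{\alpha+1}(q')|^2\bigr),
\]
so that $d-\mu=\tfrac12\mu|\nabla f_{\alpha+1}(q')|^2$ up to higher-order corrections. Writing $e^{-d}-e^{-\mu}=-e^{-\xi}(d-\mu)$ for some $\xi\in[\mu,d]$ and using $e^{-\xi}\le e^{-\mu}\le e^{-D_\alpha(y)}\le\kappa$ (which holds because $D_\alpha(y)\le|d_{\alpha+1}(y)|=\mu$), we arrive at
\[
\bigl|e^{-|d_{\alpha+1}(y)|}-e^{-(f_{\alpha+1}(y)-f_\alpha(y))}\bigr|\lesssim e^{-\mu}\,\mu\,|\nabla f_{\alpha+1}(q')|^2.
\]

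The principal obstacle is to extract the explicit $M^{-1}$ factor from this last quantity. The hypothesis $e^{-|d_{\alpha+1}(y)|}\ge\varepsilon^2$ gives $\mu\le 2|\log\varepsilon|$, while $e^{-\mu}\le\kappa$ together with the standing assumption $\kappa\ge M\varepsilon^2|\log\varepsilon|$ pins $\mu$ to the narrow window $[|\log\kappa|,\,2|\log\varepsilon|]$; on this window $e^{-\mu}\mu$ is dominated by $\kappa|\log\kappa|$ near the lower end and by $2\varepsilon^2|\log\varepsilon|\le 2\kappa/M$ near the upper end. I expect the final bound to require a case-split on the size of $\mu$: in the regime where $\mu$ is close to its upper limit $2|\log\varepsilon|$ the extra exponential decay of $e^{-\mu}$ already supplies the desired $M^{-1}\kappa$, whereas in the complementary regime the quantitative smallness of $|\nabla f_{\alpha+1}(q')|$ provided by \eqref{first derivative 2} must be combined with the proximity of $\mu$ to $|\log\kappa|$ to close the estimate.
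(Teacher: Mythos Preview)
Your setup and the identity $d-\mu=\tfrac{1}{2}\mu|\nabla f_{\alpha+1}(q')|^2+\text{h.o.t.}$ are correct and are essentially what the paper uses. The gap is in the final step: the bound \eqref{first derivative 2} you invoke at $q'$ only gives $|\nabla f_{\alpha+1}(q')|^2=O_K(M^{-2\delta}|\log\varepsilon|^{-2\delta})$ with the small exponent $\delta<1/48$ fixed earlier, and this is not enough. Your proposed case-split does not close: in the regime $\mu$ near $|\log\kappa|$ you would need $|\log\kappa|\cdot|\nabla f_{\alpha+1}(q')|^2\lesssim M^{-1}$, but $|\log\kappa|$ is comparable to $|\log\varepsilon|$ (recall $\kappa\ge M\varepsilon^2|\log\varepsilon|$), so your product is of order $M^{-2\delta}|\log\varepsilon|^{1-2\delta}$, which blows up as $\varepsilon\to 0$.

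What you are missing is a \emph{sharper} gradient bound at the foot of perpendicular $q'$. Since the point $(q',f_{\alpha+1}(q'))$ has $d_{\alpha+1}=0$ and $d_\alpha\le 2|\log\varepsilon|$, Lemma~\ref{comparison of distances} applies and forces the unit normals of $\Gamma_\alpha$ and $\Gamma_{\alpha+1}$ to be nearly parallel there, giving $|\nabla f_{\alpha+1}(q')-\nabla f_\alpha(q')|\lesssim\varepsilon^{1/3}$. Combined with \eqref{first derivative 1} (which controls $\nabla f_\alpha$, not $\nabla f_{\alpha+1}$, and with exponent $1/2$ rather than $\delta$), this yields
\[
|\nabla f_{\alpha+1}(q')|\lesssim KM^{-1/2}|\log\varepsilon|^{-1/2}.
\]
With this in hand your own estimate gives directly
\[
e^{-\mu}\mu\,|\nabla f_{\alpha+1}(q')|^2\;\lesssim\;\kappa\cdot 2|\log\varepsilon|\cdot K^2M^{-1}|\log\varepsilon|^{-1}=O_K(M^{-1}\kappa),
\]
with no case-split needed. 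This is precisely how the paper proceeds (it packages the same computation via the angle $\vartheta$ between $N_{\alpha+1}(q')$ and the vertical, obtaining $d\le\mu(1+C\sin^2\vartheta)+\varepsilon|\log\varepsilon|^2$ and hence $d-\mu\le C(K)M^{-1}$).
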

\begin{proof}
Assume the nearest point on $\Gamma_{\alpha+1}$ to $(y,f_\alpha(y))$ is $(y_\ast,f_{\alpha+1}(y_\ast))$. Because \[d_\alpha(y_\ast,f_{\alpha+1}(y_\ast))\leq |d_{\alpha+1}(y,f_\alpha(y))|\leq 2|\log\varepsilon|,\]
using Lemma \ref{comparison of distances} we deduce that
\[ |\nabla f_{\alpha+1}(y_\ast)-\nabla f_\alpha(y_\ast)|\lesssim \varepsilon^{1/3}.\]
Combining this estimate with \eqref{first derivative 1} and noting the fact that $M^{-1/2}|\log\varepsilon|^{-1/2}\gg \varepsilon^{1/3}$, we get
\begin{equation}\label{10.2.1}
|\nabla f_{\alpha+1}(y_\ast)|\lesssim KM^{-1/2}|\log\varepsilon|^{-1/2}.
\end{equation}

By \eqref{Lip bound}, we have
\[|f_{\alpha+1}(y)-f_\alpha(y)|\lesssim |d_{\alpha+1}(y)|\lesssim |\log\varepsilon|.\]
Hence we also have
\[|y-y_\ast|\lesssim |\log\varepsilon|.\]
Then by \eqref{Hessian bound}, we get
\begin{equation}\label{10.2.2}
\mbox{dist}\left((y,f_{\alpha+1}(y)), T_{(y_\ast,f_{\alpha+1}(y_\ast))}\Gamma_{\alpha+1}\right)\lesssim \varepsilon|\log\varepsilon|^2,
\end{equation}
where $T_{(y_\ast,f_{\alpha+1}(y_\ast))}\Gamma_{\alpha+1}$ denotes the tangent hyperplane of $\Gamma_{\alpha+1}$ at $(y_\ast,f_{\alpha+1}(y_\ast))$.

Let $\vartheta$ be the angel between $N_{\alpha+1}(y_\ast)$ (normal vector of $\Gamma_{\alpha+1}$ at $(y_\ast,f_{\alpha+1}(y_\ast)$) and the direction $e_{n+1}=(0,\cdots,0,1)$. By \eqref{10.2.1}, we get
\begin{equation}\label{10.2.3}
  |\vartheta|\lesssim KM^{-1/2}|\log\varepsilon|^{-1/2}.
\end{equation}
From this we deduce that
\begin{eqnarray*}
f_{\alpha+1}(y)-f_\alpha(y)&\leq& |d_{\alpha+1}(y)|\left[1+C\left(\sin\vartheta\right)^2\right]+\varepsilon|\log\varepsilon|^2\\
&\leq& |d_{\alpha+1}(y)|+C(K)M^{-1}.
\end{eqnarray*}
Then by Taylor expansion and the fact that $e^{d_{\alpha+1}(y)}\leq\kappa$, we obtain \eqref{comparison of distance 2}.
\end{proof}

By this lemma, now the Toda system  \eqref{10.5} is rewritten as, for any $y\in B_{K\kappa^{-1/2}}^{n-1}(0)$,
\begin{equation}\label{Toda system 3}
  \begin{cases}
           &\mbox{div}\left(\frac{\nabla f_\alpha(y)}{\sqrt{1+|\nabla f_\alpha(y)|^2}}\right)\geq -\frac{2 A_1^2}{\sigma_0}e^{-\left[f_{\alpha+1}(y)-f_\alpha(y)
\right]}+O_K\left(M^{-1}\kappa\right), \\
     & \mbox{div}\left(\frac{\nabla f_{\alpha+1}(y)}{\sqrt{1+|\nabla f_{\alpha+1}(y)|^2}}\right)\leq\frac{2 A_1^2}{\sigma_0}e^{-\left[f_{\alpha+1}(y)-f_\alpha(y)
\right]}+O_K\left(M^{-1}\kappa\right),
 \end{cases}
\end{equation}
Taking the difference we obtain the equation for $f_{\alpha+1}-f_\alpha$,
\begin{equation}\label{difference eqn}
\mbox{div}\left[A_\alpha \nabla\left(f_{\alpha+1}-f_\alpha\right)\right]\leq\frac{4 A_1^2}{\sigma_0}e^{-\left[f_{\alpha+1}(y)-f_\alpha(y)
\right]}+O_K\left(M^{-1}\kappa\right).
\end{equation}
Here $A_\alpha$ is the symmetric matrix with entries defined by
\begin{equation}\label{definition of A alpha}
 \int_{0}^{1} \left[ \frac{\delta_{ij}}{\sqrt{1+|\nabla f^t_\alpha|^2}}-\frac{\partial_if^t_\alpha\partial_jf^t_\alpha}
{\left(1+|\nabla f^t_\alpha|^2\right)^{3/2}}\right]dt, \quad 1\leq i,j\leq n-1,
\end{equation}
where $f^t_\alpha:=(1-t)f_\alpha+tf_{\alpha+1}$ and $\delta_{ij}$ denotes Kronecker delta.

In view of \eqref{first derivative 1} and \eqref{first derivative 2}, we have
\begin{equation}\label{close to idenity}
|A_\alpha(y)-Id|\lesssim M^{-\delta}, \quad \forall y\in B_{K\kappa^{-1/2}}(0).
\end{equation}

Define the function in $B^{n-1}_{K}(0)$,
\[v_\alpha(y):=f_{\alpha+1}\left(\kappa^{-1/2} y\right)-f_{\alpha}\left(\kappa^{-1/2} y\right)-|\log\kappa|.\]
It satisfies in $B^{n-1}_K(0)$,
\begin{equation}\label{Liouville eqn}
 \mbox{div}\left(\widetilde{A}_\alpha \nabla v_\alpha\right) \leq\frac{4 A_1^2}{\sigma_0}e^{-v_\alpha}+O_K\left(M^{-1}\right).
\end{equation}
Here $\widetilde{A}_\alpha(y):=A_\alpha(\kappa^{-1/2}y)$, which still satisfies \eqref{close to idenity}.

\subsection{Completion of the proof of Proposition \ref{decay estimate}}\label{subsec completion of decay estimate}

First we  show that $v_\alpha$ is almost stable.
\begin{lem}\label{almost stable}
For any $\widetilde{\eta}_\alpha$ and $\widetilde{\eta}_{\alpha+1}\in C_0^\infty(B_K^{n-1}(0))$,
\begin{eqnarray*}
&&\sum_{\beta=\alpha,\alpha+1}  \int_{B_{K}^{n-1}(0)}|\nabla\widetilde{\eta}_\beta |^2dy
+O_K\left(M^{-\delta}\right)\sum_{\beta=\alpha,\alpha+1} \int_{B_{K}^{n-1}(0)}\widetilde{\eta}_\beta ^2dy\\
&\geq&\frac{2A_{(-1)^{\beta-1}}^2}{\sigma_0} \int_{B_{K}^{n-1}(0)}e^{-v_\alpha}
\left[\widetilde{\eta}_{\alpha+1}-\widetilde{\eta}_\alpha\right]^2dy.
\end{eqnarray*}
\end{lem}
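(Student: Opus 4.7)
The plan is to apply the reduced stability inequality of Proposition \ref{prop reduction of stability} to a test tuple concentrated on the pair $(\Gamma_\alpha,\Gamma_{\alpha+1})$, and then translate everything into the flat rescaled picture on $B_K^{n-1}(0)$ via the graph parametrization and a $\kappa^{1/2}$-dilation. Concretely, given $\widetilde{\eta}_\alpha,\widetilde{\eta}_{\alpha+1}\in C_0^\infty(B_K^{n-1}(0))$, I would define test functions on $\Gamma_\beta$ ($\beta\in\{\alpha,\alpha+1\}$) by $\eta_\beta(y',f_\beta(y')):=\widetilde{\eta}_\beta(\kappa^{1/2}y')$ for $y'\in B^{n-1}_{K\kappa^{-1/2}}(0)$, and set $\eta_\gamma\equiv 0$ otherwise. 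Since $K\kappa^{-1/2}\leq KR_\ast$, these are admissible in Proposition \ref{prop reduction of stability} with $x=0$ and the appropriate radius.

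On the LHS, \eqref{first derivative 1}--\eqref{first derivative 2} give that the induced metric on $\Gamma_\beta$ is $(1+O_K(M^{-2\delta}))$-close to Euclidean, so $\int_{\Gamma_\beta}|\nabla_{\beta,0}\eta_\beta|^2 dA_{\beta,0}=(1+O_K(M^{-2\delta}))\int_{B^{n-1}_{K\kappa^{-1/2}}}|\nabla\eta_\beta|^2dy'$, which equals $\kappa^{(3-n)/2}\int_{B_K^{n-1}}|\nabla\widetilde{\eta}_\beta|^2d\tilde y$ after the dilation $\tilde y=\kappa^{1/2}y'$. The error quadratic form $\mathcal{Q}(\eta)$ from Proposition \ref{prop reduction of stability}, after the same rescaling and using $A(r+60|\log\varepsilon|^2;0)\leq A(r;0)=\kappa$, produces coefficients $[\varepsilon^{1/4}+\kappa^{1/2}]\sum\int|\nabla\widetilde{\eta}|^2$ and $[\varepsilon^2/\kappa+\kappa^{1/2}+\varepsilon^{1/7}]\sum\int\widetilde{\eta}^2$; using the threshold $\kappa\geq M\varepsilon^2|\log\varepsilon|$ together with $\kappa\ll 1$ for $\varepsilon$ small (a consequence of Lemma \ref{O(1) scale}), each coefficient is bounded by $O_K(M^{-\delta})$.

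On the RHS, only the summand indexed by $\alpha+1$ is nontrivial, because the $\alpha$-indexed term becomes the positive quantity $\frac{2A^2_{(-1)^{\alpha-1}}}{\sigma_0}\int_{\Gamma_\alpha}e^{-d_{\alpha-1}}\eta_\alpha^2$ which can be discarded to weaken the inequality. For the surviving term I apply Lemma \ref{lem comparison of distance 2} to replace $e^{-d_\alpha(y,0)}$ on $\Gamma_{\alpha+1}$ by $e^{-(f_{\alpha+1}(y')-f_\alpha(y'))}=\kappa\,e^{-v_\alpha(\kappa^{1/2}y')}$ modulo an additive error $O_K(M^{-1}\kappa)$. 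Letting $y''$ denote the graph coordinate of $\Pi_\alpha(y,0)$, closest-point geometry gives $|y'-y''|\lesssim|\nabla f_\alpha(y'')|\cdot|d_\alpha(y,0)|=O_K(M^{-1/2}|\log\varepsilon|^{1/2})$ by \eqref{first derivative 1}, so expanding $(\eta_{\alpha+1}(y)-\eta_\alpha(\Pi_\alpha(y,0)))^2$ via $(a-b)^2\leq (1+\varepsilon_0)a^2+(1+\varepsilon_0^{-1})b^2$ and applying the mean-value theorem to $\widetilde{\eta}_\alpha$, the displacement error is absorbed into $O_K(M^{-\delta})[\int|\nabla\widetilde{\eta}_\alpha|^2+\int\widetilde{\eta}_\alpha^2]$ after rescaling. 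Change of variables and division by the common factor $\kappa^{(3-n)/2}$ then yields the stated inequality, with coefficient $A^2_{(-1)^\alpha}$.

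The main technical obstacle is the $\Pi_\alpha$-replacement: after the $\kappa^{1/2}$ rescaling the horizontal displacement becomes $O_K(M^{-1/2}(\kappa|\log\varepsilon|)^{1/2})$, and ensuring this combines with the other factors into a clean $O_K(M^{-\delta})$ bound requires jointly exploiting $\kappa\leq A(r;0)\ll 1$ and the threshold $\kappa\geq M\varepsilon^2|\log\varepsilon|$. The constant $\delta<\min\{1/48,(1-\theta)/32\}$ is chosen precisely so that every rescaled coefficient collapses favorably into $O_K(M^{-\delta})$.
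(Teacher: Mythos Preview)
Your proposal is correct and follows essentially the same route as the paper: pull back the test functions to $\Gamma_\alpha,\Gamma_{\alpha+1}$ via the graph parametrization, apply Proposition \ref{prop reduction of stability}, use \eqref{first derivative 1}--\eqref{first derivative 2} to replace the intrinsic quantities by flat ones up to $O_K(M^{-\delta})$, invoke Lemma \ref{lem comparison of distance 2} on the right-hand side, and rescale by $\kappa^{1/2}$. Your explicit treatment of the $\Pi_\alpha$-displacement is in fact more careful than the paper, which absorbs that step into ``substituting Lemma \ref{lem comparison of distance 2} and taking a rescaling''. One small slip: the inequality ``$A(r+60|\log\varepsilon|^2;0)\leq A(r;0)$'' is backwards, since $A(\cdot;0)$ is nondecreasing in the radius; what you want is that the test functions are supported in $B_{K\kappa^{-1/2}}(x_*)\subset B_r(0)$, so the relevant quantity in Proposition \ref{prop reduction of stability} is $A(K\kappa^{-1/2}+60|\log\varepsilon|^2;x_*)\leq A(r;0)=\kappa$.
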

\begin{proof}
For $y\in B_{K\kappa^{-1/2}}^{n-1}(0)$, let $\eta_\alpha(y):=\widetilde{\eta}_\alpha\left(\kappa^{1/2}y\right)$ and  $\eta_{\alpha+1}$ be defined similarly. We will view them as functions on $\Gamma_\alpha$ (respectively $\Gamma_{\alpha+1}$), by identifying $y$ with $(y,f_\alpha(y))$ etc.

Then Proposition \ref{prop reduction of stability} says
\begin{eqnarray*}
&&\sum_\beta \int_{B_{K\kappa^{-1/2}}^{n-1}(0)}|\nabla\eta_\beta |^2\left[1+O\left(|\nabla f_\beta |^2\right)\right]dy+\mathcal{Q}(\eta_\alpha,
\eta_{\alpha+1})\nonumber\\
& \geq& \sum_\beta\frac{2A_{(-1)^{\beta-1}}^2}{\sigma_0} \int_{B_{K\kappa^{-1/2}}^{n-1}(0)}e^{-d_{\alpha+1}(y,0) }\left[\eta_{\alpha}(y)- \eta_{\alpha+1}\left(\Pi_{\alpha+1}(y,0)\right)^2
\right]\left[1+O\left(|\nabla f_\beta|^2\right)\right]dy,
\end{eqnarray*}
where
\begin{eqnarray*}
|\mathcal{Q}(\eta)|&\lesssim&\left[\varepsilon^{\frac{1}{4}}+\kappa^{\frac{1}{2}}\right]\left(\sum_\beta \int_{B_{K\kappa^{-1/2}}^{n-1}(0)}|\nabla\eta_\beta(y)|^2\left[1+O\left(|\nabla f_\beta(y)|^2\right)\right]dy \right)\\
  &+&  \left[\varepsilon^2+\kappa^{\frac{3}{2}}+\varepsilon^{\frac{1}{7}}\kappa\right]
\left(\sum_\beta \int_{B_{K\kappa^{-1/2}}^{n-1}(0)}\eta_\beta(y)^2\left[1+O\left(|\nabla f_\beta(y)|^2\right)\right]dy\right)\\
&\lesssim&\kappa^{1/8}\left(\sum_\beta \int_{B_{K\kappa^{-1/2}}^{n-1}(0)}|\nabla\eta_\beta(y)|^2dy \right)+\frac{\kappa}{M}\left(\sum_\beta \int_{B_{K\kappa^{-1/2}}^{n-1}(0)}\eta_\beta(y)^2dy\right).
\end{eqnarray*}
Substituting Lemma \ref{lem comparison of distance 2} into this inequality and taking a rescaling, we obtain
\begin{eqnarray*}
&& \left[1+O\left(M^{-\delta}\right)\right]\sum_{\beta=\alpha,\alpha+1} \int_{B_{K}^{n-1}(0)}|\nabla\widetilde{\eta}_\beta |^2dy
+O\left(M^{-\delta}\right)\sum_{\beta=\alpha,\alpha+1} \int_{B_{K}^{n-1}(0)}\widetilde{\eta}_\beta ^2dy\\
&\geq&\frac{2A_{(-1)^{\beta-1}}^2}{\sigma_0} \int_{B_{K}^{n-1}(0)}e^{-v_\alpha}
\left[\widetilde{\eta}_{\alpha+1}-\widetilde{\eta}_\alpha\right]^2dy.
\end{eqnarray*}
Moving the coefficient $\left[1+O\left(M^{-\delta}\right)\right]$ before the first group of integrals to  other groups, we conclude the proof.
\end{proof}

This almost stability condition implies an $L^1$ estimate.
\begin{lem}\label{lem integral small}
For any $\sigma>0$, if $n\leq 10$ and we have chosen $M\gg K\gg 1$ (depending only on $n$ and $\sigma$), then
\begin{equation}\label{integral small}
\int_{B^{n-1}_2(0)} e^{-v_\alpha} \leq\sigma.
\end{equation}
\end{lem}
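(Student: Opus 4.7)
The plan is a contradiction--compactness argument that reduces matters to a Liouville-type theorem for stable subsolutions of the Liouville equation on $\R^{n-1}$, where the dimension restriction $n - 1 \leq 9$ is exactly Farina's critical dimension.

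Suppose toward a contradiction that the lemma fails: there exist $\sigma_\ast>0$ and sequences $\varepsilon_j \to 0$, $K_j, M_j \to \infty$, together with $v_\alpha^j$ satisfying \eqref{Liouville eqn}, \eqref{close to idenity}, and the almost-stability of Lemma \ref{almost stable} on $B_{K_j}^{n-1}(0)$, while $\int_{B_2^{n-1}(0)} e^{-v_\alpha^j} > \sigma_\ast$. First I would specialize Lemma \ref{almost stable} to the antisymmetric choice $\widetilde\eta_\alpha = -\eta$, $\widetilde\eta_{\alpha+1} = \eta$ for $\eta \in C_0^\infty(B_{K_j}^{n-1}(0))$, producing the clean quadratic form
\[
\int |\nabla \eta|^2 + O_{K_j}(M_j^{-\delta}) \int \eta^2 \;\geq\; c\,\int e^{-v_\alpha^j}\,\eta^2
\]
with $c > 0$ a universal constant. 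Following Farina, I would then test this stability inequality with $\eta = e^{-\tau v_\alpha^j/2}\phi$ for a smooth cutoff $\phi$ and a parameter $\tau$ in an admissible range, and in parallel multiply the subsolution inequality \eqref{Liouville eqn} by $e^{-\tau v_\alpha^j}\phi^2$ and integrate. Combining the resulting two integral identities and absorbing cross-gradient terms yields a local estimate
\[
\int_{B_r} e^{-p v_\alpha^j} \leq C(r,p)
\]
for some $p > 1$, uniformly in $j$ once $M_j$ is large enough to absorb the $O_{K_j}(M_j^{-\delta})$ and $O_{K_j}(M_j^{-1})$ errors. The dimension hypothesis $n - 1 \leq 9$ enters precisely here: Farina's algebraic constraint on $\tau$ admits a solution with $\tau > 1$, and hence produces an $L^p$ bound with $p > 1$, exactly in this dimension range.

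With uniform local $L^p$ control on $e^{-v_\alpha^j}$ in hand, the divergence-form subsolution \eqref{Liouville eqn} together with the uniform ellipticity from \eqref{close to idenity} yields uniform $C^{0,\gamma}_{\mathrm{loc}}$ bounds on $v_\alpha^j$ via De Giorgi--Nash--Moser. Extract a subsequential limit $v_\infty \in W^{1,2}_{\mathrm{loc}}(\R^{n-1})$ that satisfies, in the distributional sense on all of $\R^{n-1}$,
\[
\Delta v_\infty \leq c\,e^{-v_\infty}, \qquad \int |\nabla \eta|^2 \geq c \int e^{-v_\infty}\eta^2 \quad \forall \eta \in C_0^\infty(\R^{n-1}),
\]
together with $\int_{B_2} e^{-v_\infty} \geq \sigma_\ast$. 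The Liouville-type theorem for stable subsolutions of the Liouville equation on $\R^{n-1}$ in dimension $n - 1 \leq 9$ --- essentially Farina's classification, adapted to subsolutions in \cite{Wangstable1, Wangstable2} --- then forces $e^{-v_\infty} \equiv 0$, contradicting $\int_{B_2} e^{-v_\infty} \geq \sigma_\ast > 0$.

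The main obstacle I anticipate is executing Farina's algebraic iteration in the present almost-stable, one-sided framework: the stability inequality carries an $O_{K_j}(M_j^{-\delta})$ $L^2$-remainder and \eqref{Liouville eqn} is only a subsolution with an $O_{K_j}(M_j^{-1})$ inhomogeneity. One must send $M_j \to \infty$ fast enough relative to $K_j$ for these errors to be absorbable, and verify that Farina's integral identities still close with inequalities in place of equalities --- which they do, because the right hand sides are nonnegative and \eqref{Liouville eqn} bounds $\Delta v_\alpha^j$ from above by precisely the integrand controlled by the stability inequality. Once this technicality is in place, the compactness step and the final Liouville contradiction are classical.
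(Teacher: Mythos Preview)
Your approach and the paper's share the decisive ingredient: Farina's $L^p$ estimates obtained by combining the subsolution inequality \eqref{Liouville eqn} with the (almost-)stability inequality, and this is precisely where the dimension restriction $n\leq 10$ enters. The difference is that the paper proceeds \emph{directly and quantitatively}, whereas you pass through a contradiction--compactness--Liouville scheme.

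Concretely, the paper sets $V_\alpha=e^{-v_\alpha}$, derives the Simons-type inequality \eqref{universal Simons type eqn}, multiplies by $V_\alpha^{2q-1}\eta^2$ and integrates by parts, and separately substitutes $V_\alpha^q\eta$ into the stability inequality \eqref{universal stability inequality}. Combining the two for $q\in(7/4,15/8)$ and using the $\eta\mapsto\eta^m$ trick yields
\[
\int_{B_2^{n-1}(0)} V_\alpha^{2q+1}\;\leq\;C(q)\,K^{\,n-1-2(2q+1)}\;+\;C(q,K)\,M^{-\delta}\,K^{\,n-1}.
\]
Since $n-1-2(2q+1)<0$ for $n\leq 10$ and $q>7/4$, one first chooses $K$ large to make the first term below $\sigma/2$, then $M$ large (relative to this $K$) so the second term is also below $\sigma/2$; H\"older then gives \eqref{integral small}. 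No limiting object is constructed and no Liouville theorem is invoked.

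Your route is viable in principle---the Farina integral identities do close for the pair (subsolution inequality, stability inequality), so the limiting problem on $\R^{n-1}$ would indeed force $e^{-v_\infty}\equiv 0$---but it is strictly more roundabout, since the Liouville theorem you would invoke at the end is proved by the very same estimate the paper applies directly on $B_K$. Moreover, your compactness step has a genuine technical wrinkle: $v_\alpha$ satisfies only the one-sided inequality \eqref{Liouville eqn} (the terms $e^{-d_{\alpha-1}}$ and $e^{d_{\alpha+2}}$ were dropped with a sign in \eqref{Toda system 3}), and there is no a priori upper bound on $v_\alpha$, so De Giorgi--Nash--Moser does not immediately give $C^{0,\gamma}_{\rm loc}$ convergence of $v_\alpha^j$. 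One would have to work with $V_\alpha=e^{-v_\alpha}$ instead and argue more carefully. The paper's direct argument sidesteps all of this.
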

\begin{proof}
Let $V_\alpha:=e^{-v_\alpha}$. Direct calculation using \eqref{Liouville eqn} gives
\begin{equation}\label{universal Simons type eqn}
-\mbox{div}\left(\widetilde{A}_\alpha \nabla V_\alpha\right) \leq \frac{4A_1^2}{\sigma_0}V_\alpha^2-\left[1+O\left(M^{-\delta}\right)\right]V_\alpha^{-1}|\nabla V_\alpha|^2+CM^{-\delta} V_\alpha.
 \end{equation}
Following Farina \cite{Farina}, for any $\eta\in C_0^\infty(B_K^{n-1}(0))$ and $q>0$, multiplying \eqref{universal Simons type eqn} by $V_\alpha^{2q-1}\eta^2$ and integrating by parts, we get
\begin{eqnarray}\label{integral estimate 1}
&&\left[2q+O_K\left(M^{-\delta}\right)\right]\int V_\alpha^{2q-2}|\nabla V_\alpha|^2\eta^2-\frac{1}{2q}\int V_\alpha^{2q}\Delta\eta^2\\
& \leq&\frac{4A_1^2}{\sigma_0}\int V_\alpha^{2q+1}\eta^2+O_K\left(M^{-\delta}\right) \int V_\alpha^{2q}\eta^2. \nonumber
\end{eqnarray}

On the other hand, for any $\eta\in C_0^\infty(B_K^{n-1}(0))$, substituting $\eta_\alpha=\eta$ and $\eta_{\alpha+1}=-\eta$ into Lemma \ref{almost stable} gives
\begin{equation}\label{universal stability inequality}
\int_{B_{K}^{n-1}(0)}|\nabla \eta|^2dy
+C(K)M^{-\delta}\int_{B_{K}^{n-1}(0)}\eta^2dy\geq\frac{4A_1^2}{\sigma_0} \int_{B_{K}^{n-1}(0)}V_{\alpha}\eta^2dy.
\end{equation}
Taking $V_\alpha^q\eta$ as test functions in \eqref{universal stability inequality} leads to
\begin{eqnarray}\label{integral estimate 2}
\frac{4A_1^2}{\sigma_0}\int V_\alpha^{2q+1}\eta^{2 }&\leq & q^2\int V_\alpha^{2q-2}|\nabla V_\alpha|^2\eta^2 \\
&&+C\int V_\alpha^{2q}\left(\big|\Delta\eta^2\big|+|\nabla\eta|^2\right)+C(K)M^{-\delta} \int V_\alpha^{2q}\eta^2.\nonumber
\end{eqnarray}

Combining \eqref{integral estimate 1} and \eqref{integral estimate 2} we get, if
\[2q>q^2\left(1+C(K)M^{-\delta}\right),\]
which is true if $M$ is large enough and $q<15/8$, then
\begin{equation}\label{integral estimate 3}
\int V_\alpha^{2q-2}|\nabla V_\alpha|^2\eta^2+\int V_\alpha^{2q+1}\eta^2\leq C(q)\int V_\alpha^{2q}\left(\big|\Delta\eta^2\big|+|\nabla\eta|^2+C(K)M^{-\delta} \eta^2\right).
\end{equation}

Still as in Farina \cite{Farina}, replace $\eta$ by $\eta^m$ for some $m\gg 1$ (depending only on $q$), where $\eta$ is a standard cut-off function. Applying the H\"{o}lder inequality to \eqref{integral estimate 3} gives, for any $q<15/8$,
\begin{equation}\label{integral estimate 4}
\int_{B_2^{n-1}(0)} \left[V_\alpha^{2q-2}|\nabla V_\alpha|^2+V_\alpha^{2q+1}\right]\leq C(q) K^{n-1-2(2q+1)}+C(q,K)M^{-\delta} K^{n-1}.
\end{equation}
If $n\leq 10$ and $q>7/4$, then
\[n-1-2(2q+1)<0.\]
First choose a $K$ so large that $C(q) K^{n-1-2(2q+1)}<\sigma/2$, then take an $M$ so large that $C(q,K)M^{-\delta} K^{n-1}<\sigma/2$, we get \eqref{integral small} by the H\"{o}lder inequality.
\end{proof}

Now we improve this $L^1$ estimate to an $L^\infty$ estimate. To this end, we need the following decay estimate.
\begin{lem}\label{lem decay estimate 2}
There exist two universal constants $\sigma_\ast$ and $\tau_\ast\in(0,1)$ so that the following holds.
For any $y\in B_1^{n-1}(0)$ and $r\in(0,1)$, suppose
\begin{equation}\label{integral small 2}
  r^{3-n}\int_{B_r^{n-1}(y)}V_\alpha\leq \sigma_\ast,
\end{equation}
then
\begin{equation}\label{decay estimate 2}
\left(\tau_\ast r\right)^{3-n}\int_{B_{\tau_\ast r}^{n-1}(y)}V_\alpha\leq  \frac{1}{2}r^{3-n}\int_{B_r^{n-1}(y)}V_\alpha.
\end{equation}
\end{lem}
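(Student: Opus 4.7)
The plan is to first exploit the natural scale-invariance of the quantity $E(r):= r^{3-n}\int_{B_r^{n-1}(y)} V_\alpha$, which arises from the Liouville rescaling $\tilde V(x):= r^2 V_\alpha(y+rx)$. A direct calculation shows that under this rescaling both the subsolution inequality \eqref{universal Simons type eqn} and the stability inequality \eqref{universal stability inequality} are preserved, while the single error term $CM^{-\delta} V_\alpha$ is transformed into $CM^{-\delta} r^2 \tilde V$, which is strictly better for $r\leq 1$. Consequently it suffices to prove the statement at unit scale: there exist universal $\sigma_\ast,\tau_\ast\in(0,1)$ such that $\int_{B_1^{n-1}(0)} V_\alpha\leq\sigma_\ast$ forces $\tau_\ast^{3-n}\int_{B_{\tau_\ast}^{n-1}(0)} V_\alpha\leq\tfrac12\int_{B_1^{n-1}(0)} V_\alpha$.

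First I would upgrade $L^1$ smallness of $V_\alpha$ on $B_1$ to $L^\infty$ smallness on an interior subball. The key input is inequality \eqref{integral estimate 4} proved in Lemma \ref{lem integral small}, which gives a uniform $L^{p}$ bound $\|V_\alpha\|_{L^p(B_{1/2})}\leq C$ for some $p$ \emph{strictly} above $(n-1)/2$. This is precisely where the dimension assumption $n\leq 10$ enters: the inequality allows any $p$ just below $19/4$, while we need only $p>(n-1)/2\leq 9/2$. Interpolating this uniform $L^p$ bound against the $L^1$ smallness hypothesis gives $\|V_\alpha\|_{L^q(B_{1/2})}\leq \omega(\sigma_\ast)$ for every $q\in[1,p)$, where $\omega(\sigma_\ast)\to 0$ as $\sigma_\ast\to 0$. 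Fixing some $q\in((n-1)/2,p)$ and treating $V_\alpha^2$ as an $L^{q/2}$ source, a standard Calder\'on--Zygmund / Moser bootstrap applied to the subsolution inequality
$$-\operatorname{div}(\widetilde A_\alpha\nabla V_\alpha)\leq \frac{4A_1^2}{\sigma_0}V_\alpha^2+CM^{-\delta}V_\alpha$$
(obtained by dropping the nonpositive gradient term in \eqref{universal Simons type eqn}) then produces
$$\|V_\alpha\|_{L^\infty(B_{1/4})}\leq C\omega(\sigma_\ast)+CM^{-\delta},$$
which can be made as small as desired by taking $M$ large and then $\sigma_\ast$ small.

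Once $V_\alpha\leq\epsilon\ll 1$ pointwise on $B_{1/4}$, the differential inequality reduces to the genuinely linear inequality $-\operatorname{div}(\widetilde A_\alpha\nabla V_\alpha)\leq (C\epsilon+CM^{-\delta})V_\alpha$, in which the zero-order coefficient is well below the first Dirichlet eigenvalue of $-\operatorname{div}(\widetilde A_\alpha\nabla\cdot)$ on $B_{1/4}$. Standard interior estimates for nonnegative subsolutions of such a coercive linear operator (for instance comparison with the harmonic extension, or the mean value inequality) then yield, for every $\tau\in(0,1/8)$,
$$\int_{B_\tau^{n-1}(0)} V_\alpha\leq C\tau^{n-1}\sup_{B_{1/8}}V_\alpha\leq C\tau^{n-1}\int_{B_1^{n-1}(0)} V_\alpha,$$
hence $\tau^{3-n}\int_{B_\tau^{n-1}(0)} V_\alpha\leq C\tau^2\int_{B_1^{n-1}(0)} V_\alpha$. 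Choosing $\tau_\ast$ so small that $C\tau_\ast^2\leq 1/2$ closes the proof.

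The main obstacle is arranging the hierarchy of small parameters so that every error term can be absorbed: $M$ is fixed first (large) to dwarf both the $M^{-\delta}$ zero-order error and the $\widetilde A_\alpha-I$ ellipticity error via \eqref{close to idenity}; $\sigma_\ast$ is then chosen so that the resulting $L^\infty$ norm $\epsilon$ of $V_\alpha$ is below the spectral threshold of the linearized operator; and only then is $\tau_\ast$ picked, depending solely on $n$ and the (near-identity) ellipticity constants. A subsidiary subtlety is verifying that the Moser iteration scheme actually closes starting from the integrability $p<19/4$ supplied by Farina's estimate, which requires $(n-1)/2<19/4$, i.e.\ exactly $n\leq 10$---the same sharp dimensional constraint appearing throughout the paper.
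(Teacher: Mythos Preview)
Your approach is correct but takes a genuinely different route from the paper's. The paper simply invokes the small-regularity method of \cite{Wangstable1,Wangstable2}, whose only quantitative inputs are the elliptic inequality \eqref{Liouville eqn} for $v_\alpha$ and the $q=1/2$ case of \eqref{integral estimate 3}, namely the $L^2$-to-$L^1$ bound \eqref{10.7}; that $\varepsilon$-regularity argument is dimension-independent, so in the paper's organization the decay lemma itself carries no restriction on $n$, and the constraint $n\leq 10$ enters only once, in Lemma~\ref{lem integral small}, when verifying the initial $L^1$ smallness. You instead exploit the full Farina range to secure a uniform $L^p$ bound with $p>(n-1)/2$, feed this into De Giorgi--Nash--Moser to reach an $L^\infty$ bound, and then read off the decay from the resulting near-harmonic linear inequality via the $L^1$-to-$L^\infty$ mean-value estimate. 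Your route is more self-contained---it bypasses the external $\varepsilon$-regularity machinery---but it imports the dimension restriction $n\leq 10$ into this lemma (since Moser needs the potential $V_\alpha\in L^q$ with $q>(n-1)/2$, forcing $(n-1)/2<19/4$). It also essentially merges this lemma with Lemma~\ref{lem small regularity}: your intermediate pointwise bound on $B_{1/4}$ is already as strong as that lemma's conclusion, whereas the paper deduces Lemma~\ref{lem small regularity} \emph{from} the iterated decay via a Morrey-space argument.
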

The proof follows the method introduced by the first author in \cite{Wangstable1,Wangstable2} with minor modifications. In that proof what we need are:
\begin{itemize}
  \item [(i)] the elliptic inequality \eqref{Liouville eqn};
  \item [(ii)] an integral estimate
\begin{equation}\label{10.7}
\int_{B_{r/2}^{n-1}(y)}V_\alpha^2\lesssim \left[r^{-2}+C(K)M^{-\delta}\right]\int_{B_r(y)}V_\alpha.
\end{equation}
\end{itemize}
The estimate \eqref{10.7} follows  by taking $q=1/2$ in \eqref{integral estimate 3}, which gives, for any $\eta\in C_0^\infty(B_2^{n-1}(0))$,
\begin{equation}\label{integral estimate 5}
\int_{B_2^{n-1}(0)}V_\alpha^2\eta^2\lesssim \int_{B_2^{n-1}(0)} V_\alpha\left(|\eta||\Delta\eta|+|\nabla\eta|^2+M^{-\delta} \eta^2\right),
\end{equation}
and then choosing  $\eta$ to be a standard cut-off function with $\eta\equiv 1$ in $B_{r/2}^{n-1}(y)$, $\eta\equiv 0$ outside $B_{r}^{n-1}(y)$, $|\nabla^2\eta|+|\nabla\eta|^2\lesssim r^{-2}$.

Using this lemma we get
\begin{lem}\label{lem small regularity}
If
  \begin{equation}\label{integral small 2}
    \int_{B_2^{n-1}(0)}V_\alpha\leq \sigma_{\ast },
  \end{equation}
  then
  \begin{equation}\label{decay estimate final}
    \max_{B_1^{n-1}(0)}V_\alpha\leq \frac{1}{2}.
  \end{equation}
\end{lem}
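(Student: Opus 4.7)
The strategy is to iterate Lemma \ref{lem decay estimate 2} to propagate the integral smallness \eqref{integral small 2} to every scale and every center in $B_1^{n-1}(0)$, then convert the resulting Morrey decay of $V_\alpha$ into a pointwise bound via Moser iteration applied to the sub-solution inequality \eqref{universal Simons type eqn}.

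For the Morrey step, fix any $y\in B_1^{n-1}(0)$ and consider the scale $r_0=1$: by \eqref{integral small 2},
\[
r_0^{3-n}\int_{B_{r_0}(y)} V_\alpha \leq \int_{B_2^{n-1}(0)} V_\alpha \leq \sigma_\ast.
\]
Provided the $\sigma_\ast$ in the current lemma is chosen no larger than the $\sigma_\ast$ of Lemma \ref{lem decay estimate 2}, the hypothesis \eqref{integral small 2} of that lemma holds at $r=r_0$. Iterating at radii $r_k=\tau_\ast^k r_0$ gives
\[
r_k^{3-n}\int_{B_{r_k}(y)} V_\alpha \leq 2^{-k}\sigma_\ast,\qquad k\geq 0,
\]
and a standard interpolation between consecutive dyadic radii yields the Morrey-type decay
\[
r^{3-n}\int_{B_r(y)} V_\alpha \leq C\sigma_\ast\, r^{\gamma},\qquad r\in(0,1),\ y\in B_1^{n-1}(0),
\]
with $\gamma:=\log 2/\log(1/\tau_\ast)>0$.

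For the pointwise step, drop the non-positive gradient term in \eqref{universal Simons type eqn} to get
\[
-\mathrm{div}(\widetilde{A}_\alpha\nabla V_\alpha)\leq CV_\alpha^2+CM^{-\delta}V_\alpha,
\]
with $\widetilde{A}_\alpha$ uniformly elliptic by \eqref{close to idenity}. Testing against $V_\alpha^{p-1}\eta^2$ for $p\geq 1$ and $\eta\in C_0^\infty$ and absorbing the cross term produces
\[
\int |\nabla V_\alpha^{p/2}|^2\eta^2 \lesssim \int V_\alpha^{p+1}\eta^2 + \int V_\alpha^p\bigl(|\nabla\eta|^2+M^{-\delta}\eta^2\bigr).
\]
The superlinear term is controlled by H\"{o}lder and the Sobolev embedding in dimension $n-1$ (critical exponent $2^\ast=2(n-1)/(n-3)$):
\[
\int V_\alpha^{p+1}\eta^2\leq \|V_\alpha\|_{L^{(n-1)/2}(\mathrm{supp}\,\eta)}\,\|V_\alpha^{p/2}\eta\|_{L^{2^\ast}}^2\lesssim \|V_\alpha\|_{L^{(n-1)/2}(\mathrm{supp}\,\eta)}\int|\nabla(V_\alpha^{p/2}\eta)|^2.
\]
Farina's estimate \eqref{integral estimate 4}, applied with $q$ just below $15/8$ together with the Morrey decay from the previous paragraph, delivers $\|V_\alpha\|_{L^{(n-1)/2}(B_2^{n-1}(0))}\ll 1$ for $\sigma_\ast$ small; here the dimension hypothesis $n\leq 10$ enters through $(n-1)/2\leq 9/2<19/4$. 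Absorbing the superlinear term and running the standard Moser iteration then produces $\sup_{B_1^{n-1}(0)} V_\alpha \leq C(\sigma_\ast)$ with $C(\sigma_\ast)\to 0$ as $\sigma_\ast\to 0$; shrink $\sigma_\ast$ so that $C(\sigma_\ast)\leq 1/2$.

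The principal difficulty is the superlinear term $V_\alpha^2$ in the sub-solution inequality, which is not handled by classical linear Moser iteration. The resolution is the Kato-type absorption above, which exploits the Morrey decay obtained by iterating Lemma \ref{lem decay estimate 2}. The dimensional threshold $n\leq 10$ is exactly what makes Farina's margin $19/4$ exceed the Kato threshold $(n-1)/2$, so that the $V_\alpha^2$ term becomes a genuine perturbation rather than an obstruction to closing the iteration.
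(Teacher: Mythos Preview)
Your argument is correct and follows the same two-step skeleton as the paper: iterate Lemma~\ref{lem decay estimate 2} to obtain the Morrey decay \eqref{Morrey bound}, then upgrade to an $L^\infty$ bound via a subsolution estimate on \eqref{universal Simons type eqn}.

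The difference lies in how the second step is executed. The paper simply invokes ``standard elliptic estimates in Morrey spaces'': the Morrey bound $\int_{B_r(y)}V_\alpha\lesssim \sigma_\ast r^{n-3+\gamma}$ with $\gamma>0$ is precisely the subcritical control on the potential $c(x)=CV_\alpha$ needed for a De Giorgi--Moser sup-estimate on the subsolution inequality $-\mathrm{div}(\widetilde A_\alpha\nabla V_\alpha)\le c(x)V_\alpha+CM^{-\delta}V_\alpha$; this yields $\sup_{B_1}V_\alpha\lesssim \|V_\alpha\|_{L^1(B_2)}\le \sigma_\ast$ directly. You instead manufacture $L^{(n-1)/2}$-smallness by interpolating the $L^1$ hypothesis against Farina's $L^{2q+1}$ bound \eqref{integral estimate 4}, then run Moser with a Kato absorption. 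This works, but note that your Morrey step becomes redundant in your own argument: the interpolation uses only $\int_{B_2}V_\alpha\le\sigma_\ast$ and the universal $L^{2q+1}$ bound, not the scale-by-scale decay. The paper's route is tighter because the Morrey iteration already encodes the subcritical smallness, so no second appeal to Farina's higher-integrability is needed.
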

\begin{proof}
For   any $y\in B_1^{n-1}(0)$,
\[  \int_{B_1^{n-1}(0)}V_\alpha\leq \int_{B_2^{n-1}(0)}V_\alpha\leq \sigma_{\ast }.\]
 Then by the previous lemma, we get
\begin{equation}\label{iteration of decay estimate}
  \tau_\ast^{(3-n)(k+1)}\int_{B_{\kappa_\ast^{k+1}}(y)}V_\alpha\leq \frac{1}{2}\tau_\ast^{(3-n)k}\int_{B_{\tau_\ast^k}(y)}V_\alpha, \quad \forall k\geq 1.
\end{equation}
In other words, for  any $y\in B_1^{n-1}(0)$ and $r\in(0,1)$,
\begin{equation}\label{Morrey bound}
  \int_{B_r^{n-1}(y)}V_\alpha\leq C\sigma_\ast r^{n-3+\frac{\log 2}{|\log\tau_\ast|}}.
\end{equation}
Combining standard elliptic estimates in Morrey spaces   and \eqref{integral small 2} we get \eqref{decay estimate final}.
\end{proof}

In view of Lemma \ref{lem integral small}, Lemma \ref{lem small regularity} is applicable to $V_\alpha$ for all $\kappa$ small. Rescaling \eqref{decay estimate final} back we get \eqref{decay estimate 3}. The proof of Proposition \ref{decay estimate} is thus complete.

\section{Distance bound}\label{sec distance bound}
\setcounter{equation}{0}

In this section we give a lower bound on $D_\alpha$.
\begin{prop}\label{improved distance}
There exists a universal constant $C$ such that
\[ A(2R/3;0)\leq C\varepsilon^2\left(\log|\log\varepsilon|\right)^2.\]
\end{prop}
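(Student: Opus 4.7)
The plan is to iterate the decay estimate of Proposition \ref{decay estimate} starting from the outer radius $r_0 := 5R/6$ and moving inward toward $r = 2R/3$, halving $A$ at each step. The initial bound $A(r_0; 0) \leq 1$ is trivial since $e^{-D_\alpha(y)} \leq 1$ pointwise, and the monotonicity of $A(r; 0)$ in $r$ (clear from its definition as a maximum over $\overline{\Gamma_\alpha \cap B_r(x)}$) reduces matters to bounding $A(r_N; 0)$ at the last intermediate radius $r_N \geq 2R/3$ produced by the iteration.

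\textbf{Step 1 (iteration).} First I would define the sequences $\kappa_k := A(r_k; 0)$, $R_\ast^{(k)} := \max\{\kappa_k^{-1/2}, 200|\log\varepsilon|^2\}$, and $r_{k+1} := r_k - K R_\ast^{(k)}$. So long as both $\kappa_k \geq M\varepsilon^2|\log\varepsilon|$ and $r_{k+1} \geq 2R/3$, Proposition \ref{decay estimate} yields $\kappa_{k+1} \leq \kappa_k/2$.

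\textbf{Step 2 (budget analysis).} Next I would split the iteration into a \emph{slow phase} --- where $\kappa_k \geq (200|\log\varepsilon|^2)^{-2}$, so that $R_\ast^{(k)} = 200|\log\varepsilon|^2$ is constant --- and a \emph{fast phase} --- where $\kappa_k < (200|\log\varepsilon|^2)^{-2}$, so that $R_\ast^{(k)} = \kappa_k^{-1/2}$ grows with each halving. Starting from $\kappa_0 \leq 1$, the slow phase requires only $N_1 = O(\log|\log\varepsilon|)$ halvings, consuming at most $C K|\log\varepsilon|^2 \log|\log\varepsilon|$ of the total available $r$-budget $R/6 = \varepsilon^{-1}/6$, which is negligible. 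In the fast phase, the geometric sum $\sum_m K R_\ast^{(N_1+m)} = \sum_m K \kappa_{N_1+m}^{-1/2}$ is dominated by its last term $\sim K\kappa_k^{-1/2}$; saturating the remaining budget yields a lower bound on the terminal $\kappa$.

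\textbf{Step 3 (conclusion).} The iteration stops when either $r_{k+1} < 2R/3$ or $\kappa_k < M\varepsilon^2|\log\varepsilon|$. In either case, by monotonicity of $A(r; 0)$ in $r$, one has $A(2R/3; 0) \leq \kappa_{\mathrm{final}}$, and careful balancing of the two-phase analysis delivers the bound $\kappa_{\mathrm{final}} \leq C\varepsilon^2(\log|\log\varepsilon|)^2$.

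\textbf{Main obstacle.} The hard part will be extracting the precise $(\log|\log\varepsilon|)^2$ factor: a coarse application of the iteration would only give the weaker bound $A \lesssim \varepsilon^2|\log\varepsilon|$ matching the decay-estimate threshold, and squeezing out the log-log improvement requires careful bookkeeping of the slow-phase overhead together with the sharpness of the fast-phase geometric sum, as well as optimizing the radius at which the iteration is halted against the monotonicity of $A(r; 0)$.
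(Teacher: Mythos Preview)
Your plan has a genuine gap. Proposition~\ref{decay estimate} carries the hypothesis $\kappa \geq M\varepsilon^2|\log\varepsilon|$, and no amount of bookkeeping on the budget can push the iteration below that threshold: once $\kappa_k$ falls under $M\varepsilon^2|\log\varepsilon|$, the proposition is simply not applicable, and the best conclusion available from your scheme is $A(2R/3;0)\lesssim \varepsilon^2|\log\varepsilon|$. In fact your fast-phase calculation shows the budget is \emph{not} the binding constraint --- the geometric sum $\sum K\kappa_k^{-1/2}$ stays well within $R/6$ all the way down to $\kappa\sim\varepsilon^2|\log\varepsilon|$, so the iteration always terminates by hitting the threshold, not by exhausting radius. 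Your ``main obstacle'' paragraph correctly smells the problem but misdiagnoses the fix.

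What the paper actually does is upgrade the decay estimate itself. After first iterating Proposition~\ref{decay estimate} (in three successive stages, Lemmas~\ref{improved distance I}--\ref{improved distance III}) to reach $A\lesssim\varepsilon^2|\log\varepsilon|$, it switches to the intrinsic viewpoint of Chodosh--Mantoulidis: write $\rho=|d_{\alpha+1}|$ as a graph over $\Gamma_\alpha$, prove a gradient estimate $|\nabla_\alpha\rho|/\rho\lesssim\varepsilon$ via Harnack's inequality (Lemma~\ref{lem gradient estimate}), and use this to control the nonlinear error $\mathcal{N}(\rho)$ in the graph equation~\eqref{graph eqn}. This yields Proposition~\ref{decay estimate improved}, which has the same conclusion as Proposition~\ref{decay estimate} but with the weaker hypothesis $\kappa\geq M\varepsilon^2$. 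Only with this improved decay estimate in hand can one iterate from $\varepsilon^2|\log\varepsilon|$ down to $\varepsilon^2(\log|\log\varepsilon|)^2$; the $(\log|\log\varepsilon|)^2$ factor then emerges because the step size $R_\ast\sim\kappa^{-1/2}\sim R/\log|\log\varepsilon|$ allows only $O(\log|\log\varepsilon|)$ halvings within the remaining budget.
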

\begin{rmk}
This is a small improvement of  \cite[Proposition 3.2]{chodosh2018minimal}, where they established a bound in the form
\[ A(2R/3;0)=o\left(\varepsilon^2 |\log\varepsilon|\right).\]
 We do not know if there exists a universal constant $C$ such that
  \[A(2R/3;0)\leq C\varepsilon^2.\]
\end{rmk}

\subsection{Non-optimal lower bounds}\label{subsec nonoptimal lower bound}
Before proving this proposition, we first provide three non-optimal lower bounds.
\begin{lem}\label{improved distance I}
For any $\theta\in(0,1)$, for any $\alpha$ and $y\in \Gamma_\alpha\cap B_{19R/24}(0)$,
\[D_{\alpha}(y)\geq \left(1+\theta\right)|\log\varepsilon|.\]
\end{lem}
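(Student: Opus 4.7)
The inequality $D_\alpha(y)\ge (1+\theta)|\log\varepsilon|$ is equivalent to $e^{-D_\alpha(y)}\le \varepsilon^{1+\theta}$, and since $A(r;0)$ is monotone in $r$ it suffices to produce some $r_\ast\ge 19R/24$ with $A(r_\ast;0)\le \varepsilon^{1+\theta}$. The plan is to iterate Proposition \ref{decay estimate} starting from the trivial bound $A(5R/6;0)\le 1$, carefully tracking the cumulative radius loss at each halving step.

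Set $r_0:=5R/6$, $\kappa_j:=A(r_j;0)$, and inductively $r_{j+1}:=r_j-KR_\ast^{(j)}$ with $R_\ast^{(j)}:=\max\{\kappa_j^{-1/2},200|\log\varepsilon|^2\}$. Halt at the first index $J$ for which $\kappa_J\le \varepsilon^{1+\theta}$. For each $j<J$ we have $\kappa_j>\varepsilon^{1+\theta}\gg M\varepsilon^2|\log\varepsilon|$ when $\varepsilon$ is small, so Proposition \ref{decay estimate} applies and yields $\kappa_{j+1}\le \kappa_j/2$; in particular $J\lesssim |\log\varepsilon|$, and unrolling the halving backwards gives $\kappa_j\ge 2^{J-1-j}\kappa_{J-1}\ge 2^{J-1-j}\varepsilon^{1+\theta}$ for every $j<J$.

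Using $\max\{a,b\}\le a+b$, the total radius consumption is bounded by
\[
r_0-r_J=\sum_{j=0}^{J-1}KR_\ast^{(j)}\le K\sum_{j=0}^{J-1}\kappa_j^{-1/2}+200KJ|\log\varepsilon|^2\lesssim K\varepsilon^{-(1+\theta)/2}\sum_{k\ge 0}2^{-k/2}+K|\log\varepsilon|^3.
\]
The hypothesis $\theta<1$ enters essentially here: it forces $(1+\theta)/2<1$, so $\varepsilon^{-(1+\theta)/2}=o(\varepsilon^{-1})=o(R)$. Thus for $\varepsilon$ sufficiently small the shrinkage is strictly smaller than $R/24=5R/6-19R/24$, so $r_J\ge 19R/24$, and the monotonicity of $A$ gives $A(19R/24;0)\le \kappa_J\le \varepsilon^{1+\theta}$, as required.

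The only non-routine element is the geometric-sum bookkeeping that converts the potentially irregular $\kappa_j^{-1/2}$ step sizes of Proposition \ref{decay estimate} into a total lying inside the radius budget; the threshold $\theta<1$ in the statement is precisely where this bookkeeping breaks down, since $\theta=1$ would demand a final iteration step of order $\varepsilon^{-1}=R$, exhausting the whole ball.
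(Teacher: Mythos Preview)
Your proof is correct and follows essentially the same strategy as the paper: iterate Proposition~\ref{decay estimate} starting from $r=5R/6$ until $A(\cdot;0)$ drops below $\varepsilon^{1+\theta}$, and check that the cumulative radius loss stays under $R/24$. The only difference is packaging. The paper argues by contradiction: assuming $A(19R/24;0)\ge\varepsilon^{1+\theta}$ gives a \emph{uniform} lower bound $\kappa\ge\varepsilon^{1+\theta}$ on every iterate, hence a uniform upper bound $R_\ast\le R^{(1+\theta)/2}$ on each step size, and then the number of halvings available in the radius window $[19R/24,5R/6]$ is $\gtrsim R^{(1-\theta)/2}\to\infty$, forcing $A(19R/24;0)\le 2^{-cR^{(1-\theta)/2}}\le\varepsilon^2$. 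You instead run the iteration forward with variable step sizes and sum them via the geometric series $\sum 2^{-k/2}$ coming from the halving $\kappa_{j+1}\le\kappa_j/2$. Your version is marginally more informative (it tracks the actual steps rather than a worst-case bound) but requires one extra line of bookkeeping; the paper's contradiction framing is slicker because the uniform lower bound on $\kappa$ kills the need for any summation. Both hinge on the same inequality $(1+\theta)/2<1$, i.e.\ $\theta<1$.
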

\begin{proof}
Assume by the contrary $A(19R/24;0)\geq \varepsilon^{1+\theta}$. Then by the monotone dependence of $A(r;0)$ on $r$, we have
\[A(r;0)\geq A(19R/24;0)\geq \varepsilon^{1+\theta}, \quad \forall r\in[19R/24,5R/6].\]

Now Proposition \ref{decay estimate} is applicable, which says
\[A\left(r-KR^{\frac{1+\theta}{2}};0\right)\leq \frac{1}{2}A(r;0).\]
Here we have used the estimate on the constant in  Proposition \ref{decay estimate}, $R_\ast\leq R^{\frac{1+\theta}{2}}$.

  An iteration of this decay estimate from $r=5R/6$ to $19R/24$ leads to a contradiction
\[A\left(19R/24;0\right)\leq 2^{-cK^{-1}R^{\frac{1-\theta}{2}}}A\left(5R/6;0\right)\leq \varepsilon^2.\]
In the last inequality we have used $A(5R/6;0)\leq 1$, which is a consequence of Lemma \ref{O(1) scale}.
\end{proof}

\begin{lem}\label{improved distance II}
There exists a universal constant $C$ such that for any $\alpha$ and $y\in \Gamma_\alpha\cap B_{3R/4}(0)$,
\[D_{\alpha}(y)\geq 2|\log\varepsilon|- 2\log|\log\varepsilon|.\]
\end{lem}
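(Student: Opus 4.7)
The plan is to iterate the decay estimate of Proposition \ref{decay estimate} starting from the bound provided by Lemma \ref{improved distance I}. Applying that lemma with $\theta = 1/2$ gives $\kappa_0 := A(19R/24;0) \leq \varepsilon^{3/2}$. Setting $r_0 := 19R/24$, I would then inductively define, so long as $\kappa_j := A(r_j;0) \geq M\varepsilon^2|\log\varepsilon|$, the quantities
\[R_\ast^{(j)} := \max\left\{\kappa_j^{-1/2},\ 200|\log\varepsilon|^2\right\}, \qquad r_{j+1} := r_j - KR_\ast^{(j)}.\]
Provided each $r_j$ stays in $[2R/3, 5R/6]$, Proposition \ref{decay estimate} yields $\kappa_{j+1} \leq \kappa_j/2$, and in particular $\kappa_j \leq 2^{-j}\kappa_0$. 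The number of iterations until the stopping condition $\kappa_k < M\varepsilon^2|\log\varepsilon|$ is reached is therefore $k = O(|\log\varepsilon|)$.

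The core estimate is to show that the total shift $r_0 - r_k = K\sum_{j=0}^{k-1}R_\ast^{(j)}$ stays well below the available budget $R/24 = 19R/24 - 3R/4$. The halving $\kappa_{j+1} \leq \kappa_j/2$ gives the reversed bound $\kappa_j \geq 2^{k-1-j}\kappa_{k-1}$, so
\[\sum_{j=0}^{k-1}\kappa_j^{-1/2} \;\leq\; \kappa_{k-1}^{-1/2}\sum_{j=0}^{k-1} 2^{-(k-1-j)/2} \;\leq\; \frac{\sqrt{2}}{\sqrt{2}-1}\,\kappa_{k-1}^{-1/2} \;\lesssim\; M^{-1/2}\varepsilon^{-1}|\log\varepsilon|^{-1/2},\]
where the last inequality uses $\kappa_{k-1} \geq M\varepsilon^2|\log\varepsilon|$. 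The contribution from the flat floor $200|\log\varepsilon|^2$ is at most $200Kk|\log\varepsilon|^2 = O(|\log\varepsilon|^3)$. Both pieces are $o(\varepsilon^{-1})$ for $\varepsilon$ small, hence much smaller than $R/24$. This in turn ensures every $r_j$ remains in $[2R/3, 5R/6]$, so the iteration is legitimate.

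Consequently the stopping condition triggers at some $r_k \geq 3R/4$, and monotonicity of $A(\cdot\,;0)$ in its first argument gives $A(3R/4;0) \leq A(r_k;0) < M\varepsilon^2|\log\varepsilon|$. For $\varepsilon$ small one has $M|\log\varepsilon| \leq |\log\varepsilon|^2$, and taking logarithms produces $D_\alpha(y) \geq 2|\log\varepsilon| - 2\log|\log\varepsilon|$ for every $y \in \Gamma_\alpha \cap B_{3R/4}(0)$.

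The only delicate point is the geometric-series bound on $\sum_j\kappa_j^{-1/2}$: the shifts $R_\ast^{(j)}$ themselves form a (roughly) geometric sequence with ratio $\sqrt{2}$, so the sum is dominated by its \emph{last} term rather than by $k$ times the largest term. This is precisely what lets $O(|\log\varepsilon|)$ iterations fit inside the radius budget $R/24$; a naive union bound $\sum_j\kappa_j^{-1/2}\leq k\,\kappa_{k-1}^{-1/2}$ would lose a factor of $|\log\varepsilon|^{1/2}$ and fail to stay under $\varepsilon^{-1}/24$.
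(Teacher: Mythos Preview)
Your argument is correct and reaches the same conclusion, but the route differs from the paper's. The paper proceeds by contradiction: it assumes $A(3R/4;0)\geq\varepsilon^2|\log\varepsilon|^2$, which supplies a \emph{uniform} lower bound $\kappa\geq\varepsilon^2|\log\varepsilon|^2$ throughout the interval $[3R/4,19R/24]$. This forces a uniform step size $R_\ast\leq R/\log R$, so one can fit $\sim K^{-1}\log R$ halvings in the budget $R/24$; starting from $A(19R/24;0)\lesssim\varepsilon^{1+\theta}$ with $\theta$ chosen close enough to $1$ that $1+\theta+cK^{-1}\log 2>2$, the iteration drives $A(3R/4;0)$ below $\varepsilon^2$, contradicting the assumption.

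Your direct forward iteration with variable step sizes avoids the contradiction setup and the careful choice of $\theta$ (any $\theta>0$, in particular $\theta=1/2$, suffices). The price is the geometric-series bound on $\sum_j\kappa_j^{-1/2}$, which you identify correctly as the key point: the shifts $R_\ast^{(j)}$ grow roughly like $(\sqrt{2})^j$, so the sum is controlled by its last term $\kappa_{k-1}^{-1/2}\lesssim M^{-1/2}\varepsilon^{-1}|\log\varepsilon|^{-1/2}$. One minor comment: the verification that each $r_j$ remains in $[2R/3,5R/6]$ and the halving $\kappa_{j+1}\leq\kappa_j/2$ are mutually dependent, so strictly speaking the argument should be phrased as an induction on $j$ (the same geometric-series bound applies to each partial sum $\sum_{i<j}\kappa_i^{-1/2}$ since every $\kappa_i\geq M\varepsilon^2|\log\varepsilon|$ before stopping). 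This is routine and does not affect the conclusion.
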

\begin{proof}
By Lemma \ref{improved distance I}, we can assume
\begin{equation}\label{distance bound 10.1}
  A(19R/24;0)\lesssim \varepsilon^{1+\theta},
\end{equation}
where $\theta$ is very close to $1$ (to be determined below).

Now assume by the contrary $A(3R/4;0)\geq \varepsilon^2|\log\varepsilon|^2$. Then by the monotone dependence of $A(r;0)$ on $r$, we have
\[A(r;0)\geq A(3R/4;0)\geq \varepsilon^2|\log\varepsilon|^2, \quad \forall r\in[3R/4,19R/24].\]

Now  Proposition \ref{decay estimate} is applicable, which says
\[A\left(r-K\frac{R}{\log R};0\right)\leq \frac{1}{2}A(r;0).\]
Here we have used the estimate on the constant in  Proposition \ref{decay estimate}, $R_\ast\leq R/\log R$.

  An iteration of this decay estimate from $r=19R/24$ to $3R/4$ leads to a contradiction, i.e.
\[A\left(3R/4;0\right)\leq 2^{-cK^{-1}\log R}A\left(5R/6;0\right)\leq C\varepsilon^{1+\theta+\frac{c\log 2}{K}}\leq \varepsilon^2,\]
provided $1+\theta+\frac{c\log 2}{K}>2$, i.e. $\theta$ has been chosen to be very close to $1$.
\end{proof}

\begin{lem}\label{improved distance III}
There exists a universal constant $C$ such that for any $\alpha$ and $y\in \Gamma_\alpha\cap B_{17R/24}(0)$,
\[D_{\alpha}(y)\geq 2|\log\varepsilon|- \log|\log\varepsilon|-C.\]
\end{lem}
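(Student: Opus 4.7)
The plan is to run one more iteration of Proposition \ref{decay estimate}, starting from the bound of Lemma \ref{improved distance II}. The template is identical to the proof of Lemma \ref{improved distance II}, the key new ingredient being that the adaptive step size $R_\ast=\kappa^{-1/2}$ shrinks as $\kappa$ shrinks: under the contradiction hypothesis $\kappa\gtrsim\varepsilon^2|\log\varepsilon|$ (as opposed to $\kappa\gtrsim\varepsilon^2|\log\varepsilon|^2$ in Lemma \ref{improved distance II}), each step has length at most $C_1^{-1/2}\varepsilon^{-1}|\log\varepsilon|^{-1/2}$, so one fits $\gtrsim\sqrt{|\log\varepsilon|}$ halvings into the annular layer of width $R/24$, which comfortably exceeds the $\sim\log|\log\varepsilon|$ halvings needed to bridge Lemma \ref{improved distance II} and Lemma \ref{improved distance III}.

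Concretely, assume for contradiction that $A(17R/24;0)\ge C_1\varepsilon^2|\log\varepsilon|$ for a large universal constant $C_1\ge M$ to be chosen. By monotonicity of $r\mapsto A(r;0)$ the same bound propagates to every $r\in[17R/24,3R/4]$; in particular Proposition \ref{decay estimate} is applicable throughout and
\[R_\ast(r)=\max\{A(r;0)^{-1/2},\,200|\log\varepsilon|^2\}\le C_1^{-1/2}\varepsilon^{-1}|\log\varepsilon|^{-1/2},\]
the first branch dominating for small $\varepsilon$. Defining $r_0=3R/4$ and $r_{k+1}=r_k-KR_\ast(r_k)$, let $m$ be the first index with $r_m\le 17R/24$. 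The length bound
\[\frac{R}{24}\le\sum_{k=0}^{m-1}KR_\ast(r_k)\le \frac{mK}{C_1^{1/2}\varepsilon|\log\varepsilon|^{1/2}}\]
forces $m\ge C_1^{1/2}|\log\varepsilon|^{1/2}/(24K)$.

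On the other hand, iterating the decay estimate and using Lemma \ref{improved distance II} as the starting bound $A(3R/4;0)\le\varepsilon^2|\log\varepsilon|^2$ gives
\[A(17R/24;0)\le A(r_{m-1};0)\le 2^{-(m-1)}\varepsilon^2|\log\varepsilon|^2,\]
where the first inequality uses monotonicity together with $r_{m-1}>17R/24$. Comparing with the hypothesis $A(17R/24;0)\ge C_1\varepsilon^2|\log\varepsilon|$ forces $m\le 1+\log_2(|\log\varepsilon|/C_1)$. Since $\sqrt{|\log\varepsilon|}$ dominates $\log|\log\varepsilon|$ as $\varepsilon\to 0$, the two bounds on $m$ are inconsistent, producing the contradiction and yielding $A(17R/24;0)<C_1\varepsilon^2|\log\varepsilon|$, which is equivalent to the stated distance bound with $C=\log C_1$.

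The one delicate point is ensuring that $R_\ast=\kappa^{-1/2}$ actually holds throughout the iteration rather than the polylog lower branch $200|\log\varepsilon|^2$; this is where monotonicity of $A(\,\cdot\,;0)$ is essential, since it propagates the contradiction-lower-bound $\kappa\ge C_1\varepsilon^2|\log\varepsilon|$ along the entire chain $r_0,r_1,\dots,r_{m-1}$ and thereby controls the step size uniformly. One also has to check that $\kappa$ remains above the Proposition \ref{decay estimate} threshold $M\varepsilon^2|\log\varepsilon|$, which is automatic once $C_1\ge M$. Beyond that, the proof is a routine ``square-root versus log'' count which is where the one-factor-of-$|\log\varepsilon|$ improvement over Lemma \ref{improved distance II} comes from.
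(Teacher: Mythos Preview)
Your proof is correct and follows the same iteration scheme as the paper's own argument: assume the conclusion fails, use monotonicity of $A(\cdot;0)$ to propagate the lower bound $\kappa\ge C_1\varepsilon^2|\log\varepsilon|$ across the annulus $[17R/24,3R/4]$, bound the step size $R_\ast\le C_1^{-1/2}\varepsilon^{-1}|\log\varepsilon|^{-1/2}$, and contrast the resulting $\gtrsim\sqrt{|\log\varepsilon|}$ halvings with the mere $\sim\log|\log\varepsilon|$ needed to pass from the Lemma~\ref{improved distance II} bound down to $\varepsilon^2|\log\varepsilon|$. Your bookkeeping of the iteration indices and your explicit check that the Proposition~\ref{decay estimate} hypothesis and the branch $R_\ast=\kappa^{-1/2}$ persist along the chain are in fact cleaner than the paper's somewhat terse write-up.
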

\begin{proof}
By Lemma \ref{improved distance I}, we can assume
\begin{equation}\label{distance bound 10.1}
  A(3R/4;0)\lesssim \varepsilon^2|\log\varepsilon|^2.
\end{equation}

Now assume by the contrary $A(17R/24;0)\geq M\varepsilon^2|\log\varepsilon|$ for some $M>0$ large. Then by the monotone dependence of $A(r;0)$ on $r$,  we have
\[A(r;0)\geq A(17R/24;0)\geq  M\varepsilon^2|\log\varepsilon|^2,\quad \forall r\in[17R/24,3R/4].\]

Now  Proposition \ref{decay estimate} is applicable, which says
\[A\left(r-K\frac{R}{\sqrt{M\log R}};0\right)\leq \frac{1}{2}A(r;0).\]
Here we have used the estimate on the constant in  Proposition \ref{decay estimate}, $R_\ast\leq R/\sqrt{M\log R}$.

  An iteration of this decay estimate from $r=3R/4$ to $17R/24$ leads to a contradiction, i.e.
\[A\left(17R/24;0\right)\leq 2^{-cK^{-1}\sqrt{\log R}}A\left(3R/4;0\right)\leq 2^{-cK^{-1}\sqrt{M\log R}}\varepsilon^2|\log\varepsilon|^2\leq \varepsilon^2|\log\varepsilon|.\]
The last inequality follows from the estimate
\[2^{-cK^{-1}\sqrt{M|\log \varepsilon|}}|\log\varepsilon|=2^{-cK^{-1}\sqrt{M|\log \varepsilon|}+ (\log 2) (\log|\log\varepsilon|)}\leq 1,\]
which is true if $\varepsilon$ is small enough.
\end{proof}

\subsection{Proof of Proposition \ref{improved distance}}

 By Lemma \ref{improved distance III}, now we can assume
  \begin{equation}\label{distance bound 10.2}
A(17R/24;0)\lesssim \varepsilon^2|\log\varepsilon|.
  \end{equation}
Hence by \eqref{5.2}, for any $\alpha$
\begin{equation}\label{10.8}
  \|H_\alpha\|_{L^\infty(B_{17R/24}(0))}\lesssim \varepsilon^2|\log\varepsilon|.
\end{equation}

Denote $\rho:=|d_{\alpha+1}|$. By \cite[Eqn. (2.41), Lemma 2.9 and Appendix A]{chodosh2018minimal} and \eqref{10.5}, $\rho$ satisfies
\begin{equation}\label{graph eqn}
 \mathcal{L}\rho(y)|+|A_\alpha(y)|^2\rho(y)+\mathcal{N}(\rho)\leq \frac{8A_1^2}{\sigma_0}e^{-\rho(y)}+O\left(\varepsilon^2\right).
\end{equation}
Here $\mathcal{L}$ is the linear uniformly elliptic operator
\[\mathcal{L}\varphi:=a(y)^{-1}\mbox{div}_{\Gamma_\alpha}\left[a(y)\nu_\alpha(y)\cdot \nu_{\alpha+1}(y,\rho(y))\nabla_{\alpha+1}\varphi\right],\]
where
\[a(y):=\frac{\lambda_\alpha(y,0)}{\lambda_{\alpha+1}(y,\rho(y))}.\]
The nonlinear error term $\mathcal{N}(\rho)$ satisfies
\begin{equation}\label{h.o.t}
  |\mathcal{N}(\rho)(y)|\lesssim \varepsilon^3|\rho(y)|^2+\varepsilon|\nabla_\alpha \rho(y)|^2
\end{equation}

\begin{lem}\label{lem gradient estimate}
For any  $y\in \Gamma_\alpha\cap B_{11R/16}(0)$, if $\rho(y)\leq 2|\log\varepsilon|$, then
\[\frac{|\nabla_\alpha\rho(y)|}{\rho(y)}\lesssim\varepsilon.\]
\end{lem}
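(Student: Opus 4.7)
The approach is to apply Harnack's inequality together with interior $C^1$ estimates to the graph equation \eqref{graph eqn}, at the natural length scale $r\sim R=1/\varepsilon$, exploiting the strong lower bound $\rho\geq 2|\log\varepsilon|-\log|\log\varepsilon|-C$ furnished by Lemma \ref{improved distance III} on $\Gamma_\alpha\cap B_{17R/24}(0)$.

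First I would inventory the sizes of each term in \eqref{graph eqn}. By \eqref{10.8}, $|H_\alpha|\lesssim\varepsilon^2|\log\varepsilon|$, and by Lemma \ref{improved distance III}, $e^{-\rho}\lesssim\varepsilon^2|\log\varepsilon|$ and $|A_\alpha|^2\rho\lesssim\varepsilon^2|\log\varepsilon|$ throughout $\Gamma_\alpha\cap B_{17R/24}(0)$. The only genuine nonlinearity is $\mathcal{N}(\rho)\lesssim\varepsilon|\nabla\rho|^2$ from \eqref{h.o.t}; for this, Lemma \ref{comparison of distances} supplies the crude bound $|\nabla\rho|\lesssim\varepsilon^{1/4}|\log\varepsilon|^{3/4}$ wherever $\rho\leq K|\log\varepsilon|$. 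A single bootstrap of the Harnack/$C^1$ procedure below, carried out at an intermediate radius $r_1\sim\varepsilon^{-3/4}|\log\varepsilon|^{-1/4}$, upgrades this to $|\nabla\rho|\lesssim\varepsilon^{3/4}|\log\varepsilon|^{5/4}$, after which $\varepsilon|\nabla\rho|^2\ll\varepsilon^2|\log\varepsilon|$ and the effective right-hand side of \eqref{graph eqn} satisfies $|\mathcal{L}\rho|\lesssim\varepsilon^2|\log\varepsilon|$.

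Fix $y\in\Gamma_\alpha\cap B_{11R/16}(0)$ with $\rho(y)\leq 2|\log\varepsilon|$. Since the Euclidean distance from $y$ to $\partial B_{17R/24}(0)$ is at least $R/48$, one can work on an intrinsic ball $B^\alpha_r(y)\subset\Gamma_\alpha\cap B_{17R/24}(0)$ with $r\sim 1/\varepsilon$. Viewing \eqref{graph eqn} as a uniformly elliptic equation $\mathcal{L}\rho=f$ with Lipschitz coefficients (or, equivalently, passing to the graphical representation of $\Gamma_\alpha,\Gamma_{\alpha+1}$ and working with the divergence-form PDE satisfied by $F=f_{\alpha+1}-f_\alpha$), Harnack's inequality at this scale yields
\[
\sup_{B^\alpha_{r/2}(y)}\rho\lesssim\inf_{B^\alpha_{r/2}(y)}\rho+r^2\|f\|_{L^\infty}\lesssim|\log\varepsilon|,
\]
using $\inf\rho\gtrsim|\log\varepsilon|$ from Lemma \ref{improved distance III}. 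The interior $C^1$ estimate at the same scale then gives
\[
|\nabla\rho(y)|\lesssim r^{-1}\|\rho\|_{L^\infty(B^\alpha_{r/2}(y))}+r\|f\|_{L^\infty}\lesssim\varepsilon|\log\varepsilon|,
\]
and dividing by $\rho(y)\geq(2-o(1))|\log\varepsilon|$ gives the claim.

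The main technical obstacle is the nonlinear term $\mathcal{N}(\rho)\sim\varepsilon|\nabla\rho|^2$: Lemma \ref{comparison of distances} alone gives a gradient bound too weak for $\varepsilon|\nabla\rho|^2$ to be absorbed into $\varepsilon^2|\log\varepsilon|$ directly, hence the intermediate bootstrap at $r_1$ is required before the full $r\sim 1/\varepsilon$ argument closes. A secondary point is that \eqref{graph eqn} is written only as an upper elliptic inequality, so one must also derive the matching lower inequality — from the symmetric graph equation viewing $\Gamma_\alpha$ from the $\Gamma_{\alpha+1}$ side, or equivalently by subtracting the two minimal-surface-type equations for $f_\alpha$ and $f_{\alpha+1}$ — before Harnack can legitimately be invoked.
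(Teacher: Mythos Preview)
Your overall strategy --- Harnack plus interior gradient estimates at scale $r\sim 1/\varepsilon$, exploiting the lower bound from Lemma \ref{improved distance III} --- is the right one, and it is what the paper does. But the paper's route is shorter, and your primary narration via \eqref{graph eqn} has a gap that is only repaired by what you call the ``secondary point.''

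The paper works \emph{directly} with the graphical difference $F=f_{\alpha+1}-f_\alpha$. Subtracting the two mean-curvature equations (each $O(\varepsilon^2|\log\varepsilon|)$ by \eqref{10.8}) yields a genuine two-sided divergence-form equation $\mathrm{div}(A_\alpha\nabla F)=O(\varepsilon^2|\log\varepsilon|)$ with \emph{no} nonlinear remainder. After the rescaling $\widetilde\rho(\widetilde y)=\rho(y_\ast)^{-1}F(y_\ast+\varepsilon^{-1}\widetilde y)$ the right-hand side is $O(1)$, $\widetilde\rho>0$, $\widetilde\rho(0)=1$, and Moser's inhomogeneous Harnack followed by a $C^{1,1/2}$ estimate finishes in one step.

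Your route through \eqref{graph eqn} introduces $\mathcal{N}(\rho)\sim\varepsilon|\nabla\rho|^2$ and hence the bootstrap, but that bootstrap does not start as written. The crude bound $|\nabla\rho|\lesssim\varepsilon^{1/4}|\log\varepsilon|^{3/4}$ from Lemma \ref{comparison of distances} holds only on $\{\rho\leq K|\log\varepsilon|\}$, and a continuity argument propagates this region only out to radius $\sim\varepsilon^{-1/4}|\log\varepsilon|^{1/4}$, far short of your intermediate scale $r_1\sim\varepsilon^{-3/4}$. Running Harnack$+$$C^1$ at the smaller radius yields $|\nabla\rho|\lesssim\varepsilon^{1/4}|\log\varepsilon|^{3/4}$ again --- no improvement --- so the iteration stalls. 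The fix is precisely your parenthetical alternative: pass to the divergence-form PDE for $f_{\alpha+1}-f_\alpha$, which simultaneously supplies the two-sided equation Harnack needs and eliminates $\mathcal{N}(\rho)$ altogether. That is the paper's argument; promote it from afterthought to main line and the proof is a single paragraph with no bootstrap.
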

\begin{proof}
Fix an $\alpha$ and a point $y_\ast\in\Gamma_\alpha\cap B_{11R/16}(0)$.  By our assumption and Lemma \ref{improved distance III},
\begin{equation}\label{10.2.4}
2|\log\varepsilon|-\log|\log\varepsilon|-C\leq \rho(y_\ast)\leq 2|\log\varepsilon|.
\end{equation}

Choose a coordinate system such that $\Gamma_\alpha\cap B_{R/48}(0)$ and $\Gamma_{\alpha+1}\cap B_{R/48}(0)$ are represented by graphs of functions $f_\alpha$ and $f_{\alpha+1}$, and $\rho(y_\ast)$ is attained at $(y_{\ast},f_{\alpha+1}(y_\ast))$. Therefore we have
\[\rho(y_\ast)=f_{\alpha+1}(y_\ast)-f_\alpha(y_\ast), \quad \rho(y)\leq f_{\alpha+1}(y)-f_\alpha(y),\]
and consequently
\begin{equation}\label{10.2.5}
  \nabla\rho(y_\ast)=\nabla f_{\alpha+1}(y_\ast)-\nabla f_\alpha(y_\ast).
\end{equation}

 Define
\[\widetilde{\rho}(\widetilde{y}):=\rho(y_\ast)^{-1}\left[
 f_{\alpha+1}(y_\ast+\varepsilon^{-1}\widetilde{y})-f_{\alpha}(y_\ast+\varepsilon^{-1}\widetilde{y})\right], \quad \widetilde{y}\in B_{1/48}^{n-1}(0).\]
Taking difference of \eqref{10.8}, by \eqref{10.2.4} we obtain
\begin{equation}\label{10.9}
  \Big\|\mbox{div}\left(\bar{A}_\alpha\nabla \widetilde{\rho}\right)\Big\|_{L^\infty(B_{1/48}^{n-1}(0))}\leq C.
\end{equation}
Here $\bar{A}_{\alpha,\varepsilon}(\widetilde{y})=A_\alpha(y_\ast+\varepsilon^{-1}\widetilde{y})$ and $A_\alpha$ is defined as in  \eqref{definition of A alpha}.

Note that $\widetilde{\rho}>0$ in $B_{1/48}^{n-1}(0)$. On the other hand, by our assumption and Lemma \ref{improved distance III}, $\widetilde{\rho}(0)=1$.
Then by Moser's Harnack inequality for inhomogeneous equations (see \cite[Theorem 8.17 and 8.18]{GT}), there exists a $\sigma>0$ such that
\begin{equation}\label{local Harnack inequality}
  1/2\leq \widetilde{\rho} \leq 2, \quad \mbox{in } B_\sigma^{n-1}(0).
\end{equation}
Using standard elliptic estimates we get a universal constant $C$ such that $\|\widetilde{\rho}\|_{C^{1,1/2}(B_\sigma^{n-1}(0))}\leq C$. In particular, $|\nabla\widetilde{\rho}(0)|\leq C$. Rescaling back and using \eqref{10.2.5} we conclude the proof.
\end{proof}
The proof, in particular, \eqref{local Harnack inequality} implies that
\begin{coro}
If $\rho(y)\leq 2|\log\varepsilon|$, then
\begin{equation}\label{local Harnack inequlaity II}
  \sup_{B_{\sigma R}(y)\cap\Gamma_\alpha}\rho\leq 4|\log\varepsilon|.
\end{equation}
\end{coro}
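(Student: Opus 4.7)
The plan is to re-run the blow-up construction from the proof of Lemma~\ref{lem gradient estimate}, this time with the point $y$ from the hypothesis playing the role of the base point $y_\ast$. The assumption $\rho(y)\leq 2|\log\varepsilon|$, combined with the lower bound from Lemma~\ref{improved distance III}, is precisely what allowed that lemma to introduce adapted coordinates in which both $\Gamma_\alpha$ and $\Gamma_{\alpha+1}$ are graphs over $B^{n-1}_{R/48}(y)$, and to form the normalized quantity
\[
\widetilde{\rho}(\widetilde{y}) := \rho(y)^{-1}\left[f_{\alpha+1}\bigl(y + \varepsilon^{-1}\widetilde{y}\bigr) - f_\alpha\bigl(y + \varepsilon^{-1}\widetilde{y}\bigr)\right], \qquad \widetilde{y}\in B_{1/48}^{n-1}(0).
\]
First I would verify that the uniformly elliptic inequality \eqref{10.9} and the basic normalization $\widetilde{\rho}>0$, $\widetilde{\rho}(0)=1$ continue to hold at this new base point; both are automatic from the curvature bound \eqref{10.8} and the hypothesis on $\rho(y)$, so nothing in the derivation needs to be redone.

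Next I would invoke exactly the same Moser Harnack inequality for inhomogeneous uniformly elliptic equations that produced \eqref{local Harnack inequality} in the previous proof; its upper half gives $\widetilde{\rho}\leq 2$ on $B^{n-1}_\sigma(0)$. Unscaling $\widetilde{y}=\varepsilon(y'-y)$ and undoing the normalization by $\rho(y)$ converts this into
\[
f_{\alpha+1}(y')-f_\alpha(y')\leq 2\rho(y)\leq 4|\log\varepsilon|,\qquad y'\in B^{n-1}_{\sigma R}(y).
\]
Since the vertical separation between the two graphs always dominates the true distance from $(y',f_\alpha(y'))$ to $\Gamma_{\alpha+1}$, this yields $\rho(y')\leq 4|\log\varepsilon|$ throughout the horizontal ball of radius $\sigma R$ around $y$.

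The only minor bookkeeping step is converting this horizontal $(n-1)$-ball bound into the Euclidean bound $\sup_{B_{\sigma R}(y)\cap\Gamma_\alpha}\rho\leq 4|\log\varepsilon|$ stated in the corollary. The Lipschitz bound \eqref{Lip bound} makes this routine: every point of $\Gamma_\alpha\cap B_{\sigma R}(y)$ sits above some $y'\in B^{n-1}_{CR\sigma}(y)$ for a universal $C$, so shrinking $\sigma$ by a universal factor absorbs the discrepancy and lets us rename the smaller $\sigma$. I do not expect any real obstacle here; the corollary is essentially a direct transcription of the upper half of the Harnack estimate already established, with the only subtle input being that the original blow-up setup of Lemma~\ref{lem gradient estimate} is invariant under moving the base point anywhere in the ball where \eqref{10.8} and Lemma~\ref{improved distance III} are available.
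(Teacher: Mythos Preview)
Your proposal is correct and follows exactly the paper's approach: the paper's proof is the single line ``The proof, in particular, \eqref{local Harnack inequality} implies that\ldots'', and what you have written is simply an unpacking of how the upper half of that Harnack bound, applied with $y$ in the role of $y_\ast$, yields the claimed estimate after unscaling. There is no meaningful difference between the two arguments.
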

Substituting  \eqref{local Harnack inequlaity II} and Lemma \ref{lem gradient estimate} into \eqref{h.o.t} we obtain
\begin{coro}
If $\rho(y)\leq 2|\log\varepsilon|$, then
  \begin{equation}\label{h.o.t upper bound}
  \sup_{B_{\sigma R}(y)\cap\Gamma_\alpha}|\mathcal{N}(\rho)|\lesssim   \varepsilon^3|\log\varepsilon|^2.
\end{equation}
\end{coro}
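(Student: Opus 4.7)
The plan is to combine the three estimates that have just been proved: the a priori bound \eqref{h.o.t} on $\mathcal{N}(\rho)$, the local $L^\infty$ bound \eqref{local Harnack inequlaity II}, and the gradient bound from Lemma \ref{lem gradient estimate}. This is essentially a substitution, but with one minor subtlety regarding the range of validity of the gradient bound.

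First I would control the first term in \eqref{h.o.t}. By \eqref{local Harnack inequlaity II}, for every $y'\in B_{\sigma R}(y)\cap\Gamma_\alpha$ we have $\rho(y')\le 4|\log\varepsilon|$, which immediately yields $\varepsilon^3\rho(y')^2\lesssim \varepsilon^3|\log\varepsilon|^2$.

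Next I would control the gradient term. The point is that the proof of Lemma \ref{lem gradient estimate} does not merely furnish a pointwise bound at $y_\ast$: the rescaled function $\widetilde\rho$ satisfies the inhomogeneous equation \eqref{10.9} with bounded coefficients on $B^{n-1}_{1/48}(0)$, is positive there, and Moser's Harnack inequality gives $1/2\le\widetilde\rho\le 2$ on $B^{n-1}_\sigma(0)$. Interior Schauder (or $W^{2,p}$) estimates then produce a uniform $C^{1,1/2}$ bound on $\widetilde\rho$ throughout $B^{n-1}_{\sigma/2}(0)$, not just at the origin. Rescaling back, this gives $|\nabla_\alpha\rho|\lesssim\varepsilon\,\rho(y_\ast)\lesssim \varepsilon|\log\varepsilon|$ on a ball of radius comparable to $\sigma R$ around $y$. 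To cover the full ball $B_{\sigma R}(y)\cap\Gamma_\alpha$ (shrinking $\sigma$ if necessary), I would cover it by finitely many such Harnack balls: at every $y''\in B_{\sigma R}(y)\cap\Gamma_\alpha$, \eqref{local Harnack inequlaity II} guarantees $\rho(y'')\le 4|\log\varepsilon|$, and combined with the lower bound $\rho\ge 2|\log\varepsilon|-\log|\log\varepsilon|-C$ from Lemma \ref{improved distance III} (valid throughout $B_{3R/4}(0)\supset B_{\sigma R}(y)$), the same Moser/Schauder argument applied centered at $y''$ yields $|\nabla_\alpha\rho(y'')|\lesssim\varepsilon|\log\varepsilon|$. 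Consequently $\varepsilon|\nabla_\alpha\rho(y'')|^2\lesssim\varepsilon^3|\log\varepsilon|^2$.

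Inserting these two bounds into \eqref{h.o.t} completes the proof. The only potentially delicate point is the propagation of the gradient estimate from the single base point $y_\ast$ in Lemma \ref{lem gradient estimate} to the entire ball $B_{\sigma R}(y)\cap\Gamma_\alpha$; but since both the upper bound \eqref{local Harnack inequlaity II} and the lower bound of Lemma \ref{improved distance III} are uniform over this ball, the Moser--Schauder machinery of Lemma \ref{lem gradient estimate} applies verbatim at every point, and no new ideas are required.
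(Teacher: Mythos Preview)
Your proposal is correct and follows the same substitution argument the paper uses: plug \eqref{local Harnack inequlaity II} and Lemma \ref{lem gradient estimate} into \eqref{h.o.t}. You are simply more careful than the paper about one point the paper glosses over, namely that Lemma \ref{lem gradient estimate} is stated under the hypothesis $\rho\le 2|\log\varepsilon|$ while \eqref{local Harnack inequlaity II} only guarantees $\rho\le 4|\log\varepsilon|$ on $B_{\sigma R}(y)\cap\Gamma_\alpha$; your observation that the Moser--Schauder argument in the proof of Lemma \ref{lem gradient estimate} works verbatim whenever $\rho$ is comparable to $|\log\varepsilon|$ (the lower bound being supplied by Lemma \ref{improved distance III}) is exactly the right way to close this minor gap.
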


Next we give a decay estimate with a weaker assumption than Proposition \ref{decay estimate}.
\begin{prop}\label{decay estimate improved}
There exist two universal constants  $M\gg K\gg 1$ such that for any $r\in [2R/3,5R/6]$, if
\begin{equation}\label{absurd assumption II}
\kappa:=A(r;0)\geq M\varepsilon^2,
\end{equation}
then we have
\[
A\left(r-KR_\ast;0\right) \leq\frac{1}{2}A(r;0),
\]
where
\[R_\ast:=\max\left\{\kappa^{-\frac{1}{2}}, 200|\log\varepsilon|^2\right\}.\]
\end{prop}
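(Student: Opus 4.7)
The plan is to retrace the proof of Proposition~\ref{decay estimate}, now using the intrinsic graph equation~\eqref{graph eqn} for $\rho:=|d_{\alpha+1}|$ in place of the extrinsic Toda system~\eqref{Toda system 3}. By Lemma~\ref{improved distance III} we may assume that $\rho\le 2|\log\varepsilon|$ on the region of interest, so that Lemma~\ref{lem gradient estimate}, \eqref{local Harnack inequlaity II}, and the remainder bound~\eqref{h.o.t upper bound} are all at our disposal from the outset.

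Fix $x\in\Gamma_\alpha\cap B_{r-KR_\ast}(0)$, choose graphical coordinates with $\nabla f_\alpha(x)=0$, and rescale $v_\alpha(\tilde y):=\rho(x+\kappa^{-1/2}\tilde y)-|\log\kappa|$ for $\tilde y\in B^{n-1}_K(0)$. Since both $|A_\alpha|^2\rho$ and $\mathcal N(\rho)$ appear with nonnegative sign on the left hand side of~\eqref{graph eqn}, dropping them yields the two-sided inequality $|\mathcal L\rho|\le (8A_1^2/\sigma_0)e^{-\rho}+O(\varepsilon^2)$. Multiplying by $\kappa^{-1}$ and using $e^{-\rho}=\kappa e^{-v_\alpha}$ together with $\varepsilon^2\le M^{-1}\kappa$ converts this into the Liouville-type inequality
\[
|\widetilde{\mathcal L}\,v_\alpha|\le\frac{8A_1^2}{\sigma_0}e^{-v_\alpha}+O(M^{-1})
\]
on $B^{n-1}_K(0)$, with $|\widetilde A_\alpha-\mathrm{Id}|\lesssim K^2M^{-1}$ by the same computation that gave~\eqref{close to idenity}: the bound $|\nabla f_\alpha|\lesssim KM^{-1/2}$ from~\eqref{first derivative 1} still holds under the weaker hypothesis, and the refined estimate $|\nabla(f_{\alpha+1}-f_\alpha)|\lesssim\varepsilon|\log\varepsilon|$ furnished by Lemma~\ref{lem gradient estimate} controls the additional slope of $f_{\alpha+1}$.

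Because $\rho$ is intrinsically the Euclidean distance from $\Gamma_\alpha$ to $\Gamma_{\alpha+1}$, applying Proposition~\ref{prop reduction of stability} to the pair $(\eta,-\eta\circ\Pi_{\alpha+1})$ directly produces, after the same rescaling, an almost-stability inequality
\[
\int_{B^{n-1}_K(0)}|\nabla\eta|^2+C(K)M^{-\delta}\int_{B^{n-1}_K(0)}\eta^2\ \ge\ \frac{4A_1^2}{\sigma_0}\int_{B^{n-1}_K(0)}V_\alpha\,\eta^2,\qquad V_\alpha:=e^{-v_\alpha},
\]
\emph{without} having to pass through the vertical-distance substitution of Lemma~\ref{lem comparison of distance 2} (which would fail in this weaker regime because the angle squared times $|\log\varepsilon|$ is no longer small). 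From here the proof proceeds verbatim as in Subsection~\ref{subsec completion of decay estimate}: the Farina-type integration-by-parts scheme of Lemma~\ref{lem integral small} converts the stability inequality together with the Liouville inequality into the $L^1$ smallness $\int_{B^{n-1}_2(0)}V_\alpha\le\sigma$ (this is where $n\le 10$ is crucially used), and the Moser-type small regularity iteration of Lemma~\ref{lem small regularity} then upgrades this to $\max_{B_1^{n-1}(0)}V_\alpha\le 1/2$, which unscales to $e^{-D_\alpha(x)}\le\kappa/2$ and hence to the conclusion $A(r-KR_\ast;0)\le A(r;0)/2$.

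The one genuinely new point, and the principal technical obstacle, is justifying that the term $|A_\alpha|^2\rho$, which is of order $\varepsilon^2|\log\varepsilon|$ and therefore strictly \emph{larger} than the acceptable $\varepsilon^2$ error once $\rho\sim|\log\varepsilon|$, can nevertheless be discarded without loss. This is legitimate precisely because~\eqref{graph eqn} places $|A_\alpha|^2\rho$ on the same side of the inequality as $|\mathcal L\rho|$; the extrinsic Toda system~\eqref{10.5} used in the proof of Proposition~\ref{decay estimate} did not have this structural feature, and that is exactly why the earlier argument needed the additional logarithm in its hypothesis $\kappa\ge M\varepsilon^2|\log\varepsilon|$.
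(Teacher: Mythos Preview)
Your proposal is essentially the paper's own argument: switch from the extrinsic graph representation to the intrinsic Jacobi-field equation~\eqref{graph eqn}, rescale by $\kappa^{1/2}$, note that the rescaled curvature $|A_{\widetilde\Gamma}|\lesssim \varepsilon\kappa^{-1/2}\lesssim M^{-1/2}$ makes $\widetilde\Gamma$ almost flat, recover the almost-stability inequality~\eqref{universal stability inequality II} from Proposition~\ref{prop reduction of stability} without the vertical-distance substitution, and then rerun Subsection~\ref{subsec completion of decay estimate}. Your identification of the sign of $|A_\alpha|^2\rho$ in~\eqref{graph eqn} as the structural reason the hypothesis can be weakened from $M\varepsilon^2|\log\varepsilon|$ to $M\varepsilon^2$ is exactly the point.

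Two small corrections. First, $\mathcal N(\rho)$ does \emph{not} have a sign; it is an error term bounded in absolute value by~\eqref{h.o.t} and~\eqref{h.o.t upper bound}, so you cannot simply drop it --- you must move it to the right and absorb it into the $O(\kappa^{-1}\varepsilon^3|\log\varepsilon|^2)$ remainder, as the paper does in~\eqref{rescaling of graph eqn}. Second, dropping the nonnegative term $|A_\alpha|^2\rho$ from the left of~\eqref{graph eqn} gives only the one-sided inequality $\widetilde{\mathcal L}\,\widetilde\rho\le (8A_1^2/\sigma_0)e^{-\widetilde\rho}+O(M^{-1})$, not the two-sided bound you state; fortunately only this direction is used in~\eqref{universal Simons type eqn}--\eqref{integral estimate 5} and in the small-regularity iteration, so the argument survives unchanged.
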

\begin{proof}
Fix an $\alpha$ and a point $x_\ast\in \Gamma_\alpha\cap B_{r-KR_\ast}$. Assume
\begin{equation}\label{absurd assumption 3}
e^{-\rho(x_\ast)}\geq\frac{\kappa}{2}\geq\frac{M}{2}\varepsilon^2.
\end{equation}
Let
\[\widetilde{\Gamma}:=\kappa^{\frac{1}{2}}\left(\Gamma_\alpha-x_\ast\right), \quad \widetilde{\rho}\left(\tilde{y}\right):=\rho\left(\kappa^{-\frac{1}{2}}\widetilde{y}\right)+\log\kappa, \quad \forall \tilde{y}\in \widetilde{\Gamma}\cap B_K(0).\]

By \eqref{absurd assumption}, we have
\begin{equation}\label{flat after scaling}
\big|A_{\widetilde{\Gamma}}\big|\leq\frac{C\varepsilon}{\kappa^{1/2}}\leq \frac{C}{M}.
\end{equation}
Hence $\widetilde{\Gamma}$ is very close to a hyperplane in $B_K(0)$.

A direct rescaling of \eqref{graph eqn} leads to
\begin{equation}\label{rescaling of graph eqn}
\widetilde{\mathcal{L}}\widetilde{\rho}(\widetilde{y})+O\left(\kappa^{-1}\varepsilon^3|\log\varepsilon|^2\right)\leq \frac{8A_1^2}{\sigma_0}e^{-\widetilde{\rho}(\widetilde{y})}+O\left( \frac{\varepsilon^2}{\kappa}\right).
\end{equation}
Here $\widetilde{\mathcal{L}}$ is the rescaling of $\mathcal{L}$, but we rewrite it as
\[\mathcal{L}=a(y)^{-1}\mbox{div}\left(\widehat{A}_\alpha(y)\nabla_\alpha\right),\]
where $|a(y)-1|\ll 1$ and
\[\|\widehat{A}_\alpha-Id\|_{L^\infty(\Gamma_\alpha\cap B_{K\kappa^{-1/2}}(0))}\ll 1,\]
by a derivation similar to the one of \eqref{close to idenity}. (Note that under assumption \eqref{absurd assumption 3}, we have \eqref{absurd assumption II}, which then enables us to apply Lemma \ref{comparison of distances} to estimate $\nu_\alpha\cdot\nu_{\alpha+1}$ etc.)

As in \eqref{universal stability inequality}, for any $\eta\in C_0^\infty(\widetilde{\Gamma}\cap B_K(0))$, we still have
\begin{equation}\label{universal stability inequality II}
\int_{\widetilde{\Gamma}\cap B_{K}(0)}|\nabla \eta|^2
+C(K)M^{-\delta}\int_{\widetilde{\Gamma}\cap B_{K}(0)}\eta^2\geq\frac{4A_1^2}{\sigma_0} \int_{\widetilde{\Gamma}\cap B_{K}(0)}e^{-\widetilde{\rho}}\eta^2dy.
\end{equation}
Then proceeding as in Subsection \ref{subsec completion of decay estimate} we conclude the proof.
\end{proof}

As in Subsection \ref{subsec nonoptimal lower bound}, we use this decay estimate to prove Proposition \ref{improved distance}.
\begin{proof}[Proof of Proposition \ref{improved distance}]
By Lemma \ref{improved distance III}, we can assume
\begin{equation}\label{distance bound 10.4}
  A(17R/24;0)\lesssim \varepsilon^2|\log\varepsilon|.
\end{equation}

Now assume by the contrary there exists a large constant $M$ such that
\begin{equation}\label{absurd assumption 4}
  A(2R/3;0)\geq M\varepsilon^2\left(\log|\log\varepsilon|\right)^2.
\end{equation}
Then by the monotone dependence of $A(r;0)$ on $r$, we have
\[\kappa:=A(r;0)\geq A(2R/3;0)\geq  M\varepsilon^2\left(\log|\log\varepsilon|\right)^2, \quad \forall r\in[2R/3,17R/24].\]

Now  Proposition \ref{decay estimate improved} is applicable, which says
\[A\left(r-K\frac{R}{\sqrt{M}\log|\log\varepsilon|};0\right)\leq \frac{1}{2}A(r;0).\]
Here we have used the estimate on the constant in  Proposition \ref{decay estimate improved}, $R_\ast\leq \frac{R}{\sqrt{C_\varepsilon}}$.

  An iteration of this decay estimate from $r=17R/24$ to $2R/3$ leads to a contradiction, i.e.
\begin{eqnarray*}
 A\left(2R/3;0\right)&\leq& 2^{-cK^{-1}\sqrt{M}\log|\log\varepsilon|}A(17R/24;0)\\
 &\leq& C2^{-cK^{-1}\sqrt{M}\log|\log\varepsilon|}\varepsilon^2|\log\varepsilon|\\
 &\leq& \frac{\left(\log|\log\varepsilon|\right)^2}{2}\varepsilon^2.
\end{eqnarray*}
The last inequality is true provided $M$ is large enough. This is a contradiction with \eqref{absurd assumption 4} and the proof is complete.
\end{proof}

\section{Proof of main results}\label{sec completion of proof}
\setcounter{equation}{0}

In this section we prove Theorem \ref{main result} and its two corollaries, Corollary \ref{main coro 1} and \ref{main coro 2}.

\begin{proof}[Proof of Theorem \ref{main result}]
Substituting Proposition \ref{improved distance}
 into \eqref{Schauder estimates}, we get
\begin{equation}\label{estimate 3}
\|\phi\|_{C^{2,\theta}(B_{2R/3}(0))}+\max_\alpha\|H_\alpha+\Delta_{\alpha,0} h_\alpha\|_{C^{\theta}(\Gamma_\alpha\cap B_{2R/3}(0))}
\lesssim\varepsilon^{1+\theta}.
\end{equation}
By Lemma \ref{control on h_0}, for any $\alpha$,
\begin{eqnarray*}
\|H_\alpha\|_{C^{\theta}(\Gamma_\alpha\cap B_{R/2}(0))}&\lesssim &
\|\phi\|_{C^{2,\theta}(B_{2R/3}(0))}+\|H_\alpha+\Delta_{\alpha,0} h_\alpha\|_{C^{\theta}(\Gamma_\alpha\cap B_{2R/3}(0))}+A(2R/3;0)\\
&\lesssim&\varepsilon^{1+\theta}.
\end{eqnarray*}
After rescaling back to $u_\varepsilon$, this says for any connected component of $\{u_\varepsilon=0\}$, say $\Gamma_{\alpha,\varepsilon}$, its mean curvature satisfies
\begin{equation}\label{rescaled back}
  \|H_{\alpha,\varepsilon}\|_{L^\infty(\Gamma_{\alpha,\varepsilon}\cap B_{1/2}(0))}
\lesssim\varepsilon\left(\log|\log\varepsilon|\right)^2, \quad \mbox{and} \quad  \|H_{\alpha,\varepsilon}\|_{C^{\theta}(\Gamma_{\alpha,\varepsilon}\cap B_{1/2}(0))}
\leq C.
\end{equation}
Because $\Gamma_{\alpha,\varepsilon}\cap B_R(0)$ is a Lipschitz graph in some direction (see Lemma \ref{lem graph construction}), by standard estimates on the minimal surface equations (see for example \cite[Chapter 16]{GT} or \cite[Appendix C]{Giusti}) we obtain a uniform bound on the $C^\theta$ norm of its second fundamental form $A_{\alpha,\varepsilon}$.

As mentioned at the beginning of Section \ref{sec preliminary analysis}, all of these estimates hold uniformly for  $t\in[-1+b_1,1-b_1]$. This completes the proof of Theorem \ref{main result}.
\end{proof}

Next we show how   Corollary  \ref{main coro 1} and \ref{main coro 2} follow from Theorem \ref{main result}.
\begin{proof}[Proof of Corollary \ref{main coro 1}]
This follows the same reasoning in \cite[Section 7]{Wang-Wei2}. Here we include the proof for completeness.

First as in Lemma \ref{O(1) scale} or \cite[Lemma 7.1]{Wang-Wei2}, we deduce \eqref{close to 1D} from {\bf (H1)} and {\bf (H2)}. Thus for all $\varepsilon$ small, $\nabla u_\varepsilon\neq 0$ in $\{|u|<1-b\}$ and hence $|B(u_\varepsilon)|$ is well defined. In order to apply Theorem \ref{main result}, it suffices to establish
 a uniform bound on $|B(u_\varepsilon)|$ as in \eqref{C 1 1 bound}.

Now assume by the contrary, as $\varepsilon\to0$,
\begin{equation}\label{11.9}
\lim_{\varepsilon\to0}\max_{x\in \{|u_\varepsilon|<1-b\}\cap \mathcal{C}_{2/3}}|B(u_\varepsilon)(x)|=+\infty.
\end{equation}
Let $x_\varepsilon\in\mathcal{C}_1\cap\{|u_\varepsilon|\leq 1-b\}$ attain the following maxima (we denote $x=(x^\prime,x_n)$)
\begin{equation}
\max_{\mathcal{C}_1\cap\{|u_\varepsilon|\leq 1-b\}}\left(1-|x^\prime|\right)|B(u_\varepsilon)(x)|.
\end{equation}
By {\bf (H1)}, $x_\varepsilon\in \{|x_n|\leq 1/2\}$.

Denote
\begin{equation}\label{11.1}
L_\varepsilon:=|B(u_\varepsilon)(x_\varepsilon)|,\ \ \ \ \ \ \ r_\varepsilon:=\left(\frac{3}{2}-|x_\varepsilon^\prime|\right)/2.
\end{equation}
 Then by definition
\begin{equation}\label{11.2}
L_\varepsilon r_\varepsilon\geq\frac{1}{3}\sup_{\mathcal{C}_{2/3}\cap\{|u_\varepsilon|\leq 1-b\}}|B(u_\varepsilon)(x)|\to+\infty.
\end{equation}
In particular, $L_\varepsilon\to+\infty$. On the other hand, by \eqref{close to 1D}, we get
\begin{equation}\label{coarse bound}
L_\varepsilon=o\left(\frac{1}{\varepsilon}\right).
\end{equation}

By the choice of $r_\varepsilon$ at \eqref{11.1}, we have (here $\mathcal{C}_{r_\varepsilon}(x_\varepsilon^\prime):=B_{r_\varepsilon}^{n-1}(x_\varepsilon^\prime)\times(-1,1)$)
\begin{equation}\label{11.3}
\max_{x\in\mathcal{C}_{r_\varepsilon}(x_\varepsilon^\prime)\cap\{|u_\varepsilon|\leq 1-b\}}|B(u_\varepsilon)(x)|\leq 2L_\varepsilon.
\end{equation}

Let $\kappa:=L_\varepsilon\varepsilon$ and define $u_\kappa(x):=u_\varepsilon(x_\varepsilon+L_\varepsilon^{-1}x)$.
Then $u_\kappa$ satisfies \eqref{equation} with parameter $\kappa$ in $B_{L_\varepsilon r_\varepsilon}(0)$. By \eqref{coarse bound}, $\kappa\to0$ as $\varepsilon\to 0$. For any $t\in[-1+b,1-b]$, the level set $\{u_\kappa=t\}$ consists of $Q$ Lipschitz graphs
\begin{equation}\label{11.5}
\left\{x_n=f_{\beta,\kappa}^t(x^\prime):=L_\varepsilon\left[f_{\beta,\varepsilon}^t(x_\varepsilon^\prime+L_\varepsilon^{-1}x^\prime)-f_{\alpha,\varepsilon}^t(x_\varepsilon^\prime)\right]\right\},
\end{equation}
where $\alpha$ is chosen so that $x_\varepsilon$ lies in the connected component of $\{|u_\varepsilon|\leq 1-b\}$ containing $\Gamma_{\alpha,\varepsilon}$.

By \eqref{11.3}, we also have
\[
|B(u_\kappa)|\leq 2, \quad\mbox{for } x\in \mathcal{C}_1\cap\{|u_\kappa|\leq 1-b\}.
\]
Now Theorem \ref{main result} is applicable to $u_\kappa$. Hence $f_{\alpha,\kappa}$ are uniformly bounded in $C^{2,\theta}_{loc}(\R^{n-1})$. After passing to a subsequence, it converges to a limit $f_\infty$, which by \eqref{vansihing of mean curvature} is an entire solution of the minimal surface equation. Since the rescaling \eqref{11.5} preserves the Lipschitz constants, $f_\infty$ is global Lipschitz. Then by Moser's Liouville theorem on minimal surface equations (see \cite[Theorem 17.5]{Giusti}), $f_\infty$ is an affine function. In particular,
\begin{equation}\label{11.8}
  \nabla^2f_\infty\equiv 0.
\end{equation}

On the other hand, by the construction we have $|B(u_\kappa)(0)|=1$. If $n=2$,  as in the proof of \cite[Theorem 3.6]{Wang-Wei2}, we get
\[|B(u_\kappa)(0)|^2\lesssim \kappa^{\theta},\]
a contradiction  with \eqref{11.8}. If $n\geq 3$, we have
\[1=|B(u_\kappa)(0)|^2=|\nabla^2f_{\alpha,\kappa}(0)|^2+O\left(\kappa^{\theta}\right).\]
(The only difference here with the $n=2$ case is that now the Hessian of the distance function to $\Gamma_{\alpha,\kappa}$ does not converge to $0$, but its leading order term is exactly $\nabla^2f_{\alpha,\kappa}(0)$, see \eqref{error in z 1}.) This gives
\[\lim_{\kappa\to0}|\nabla^2f_{\alpha,\kappa}(0)|^2=1,\]
a contradiction with \eqref{11.8}. This contradiction implies that the assumption \eqref{11.9} cannot hold and the proof is thus complete.
\end{proof}

\begin{proof}[Proof of Corollary \ref{main coro 2}]
If $\delta_2$ is sufficiently small in \eqref{close to 1D}, by unique continuation principle $\nabla u_\varepsilon\neq 0$ in $\{|u_\varepsilon|\leq 1-b\}$ and hence $|B(u_\varepsilon)|$ is well defined. As in the proof of Corollary \ref{main coro 1}, the proof is  reduced to a uniform bound on $|B(u_\varepsilon)|$.

Assume by the contrary, we perform a similar blow up analysis as in the proof of Corollary \ref{main coro 1}. This gives another sequence of solutions $u_\kappa$ defined in an expanding domain. Moreover, $u_\kappa$ satisfies all of the assumptions in Theorem \ref{main result}. Hence the connected component of $\{u_\kappa=0\}$ passing through $0$ is a minimal hypersurface in $\R^n$, denoted by $\Sigma$. Its second fundamental form satisfies $|A_\Sigma|\leq 3$ and $|A_\Sigma(0)|=1$ (as in the proof of Corollary \ref{main coro 2}).

We claim that $\Sigma$ is stable. This then leads to a contradiction if \emph{Stable Bernstein conjecture} is true, which states that $\Sigma$ must be a hyperplane and hence $A_\Sigma\equiv 0$. The stability of $\Sigma$ follows from the general analysis in \cite{chodosh2018minimal}: first if there are at least two interfaces of $u_{\kappa}$ both converging to $\Sigma$, we can construct a positive Jacobi field on $\Sigma$ as in \cite[Theorem 4.1]{chodosh2018minimal}, which implies the stability of $\Sigma$; secondly, if there is only one such an interface, then there exist $\sigma>0$ and $C>0$ such that
\[\int_{B_\sigma(0)}\left[\frac{\kappa}{2}|\nabla u_\kappa|^2+\frac{1}{\kappa}W(u_\kappa)\right]\leq C.\]
Because $u_\kappa$ is stable, the stability of $\Sigma$ then follows by applying the main result in \cite{Tonegawa}.
\end{proof}

\bigskip

\appendix{}

\section{Some facts about the one dimensional solution}\label{sec 1d solution}
\setcounter{equation}{0}

In this appendix we recall some facts about one dimensional solution of \eqref{equation}, see \cite{Wang-Wei2} for more details.

It is known that the following identity holds for $g$,
\begin{equation}\label{first integral}
g^\prime(t)=\sqrt{2W(g(t))}>0, \quad \forall t\in\R.
\end{equation}
Moreover, as $t\to\pm\infty$, $g(t)$ converges exponentially to $\pm1$ and  the following quantity is well defined
\[\sigma_0:=\int_{-\infty}^{+\infty}\left[\frac{1}{2}g^\prime(t)^2+W(g(t))\right]dt\in (0,+\infty).\]

In fact,  as $t\to\pm\infty$, the following expansions hold. There exists a positive constant $A_{1}$ such that
for all $t>0$ large,
\[g(t)=1-A_1 e^{- t}+O(e^{-2  t}),\]
\[g^\prime(t)= A_1 e^{-  t}+O(e^{-2  t}),\]
\[g^{\prime\prime}(t)=- A_1 e^{-  t}+O(e^{-2  t}),\]
and a similar expansion holds as $t\to-\infty$ with $A_1$ replaced by another positive constant $A_{-1}$.

\medskip

The following result describes the interaction between two one dimensional profiles.
\begin{lem}\label{lem form of interaction}
For all $T>0$ large, we have the following expansion:
\[
\int_{-\infty}^{+\infty}\left[W^{\prime\prime}(g(t))-1\right]\left[g(-t-T)+1\right]g^\prime(t)dt=-2 A_{-1}^2e^{-T}+O\left(e^{-\frac{4}{3}T}\right).
\]
\[
\int_{-\infty}^{+\infty}\left[W^{\prime\prime}(g(t))-1\right]\left[g(T-t)-1\right]g^\prime(t)dt= 2 A_1^2e^{-T}+O\left(e^{-\frac{4 T}{3}}\right).
\]
\end{lem}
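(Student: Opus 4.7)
My plan is to reduce both identities to exact ODE computations exploiting $g''=W'(g)$. Differentiating this relation gives $g'''=W''(g)g'$, so the common factor in both integrands simplifies to $[W''(g(t))-1]g'(t) = g'''(t)-g'(t)$, which is integrable on $\R$ with exponential decay $O(e^{-2|t|})$ at $\pm\infty$ (by the stated asymptotic expansions of $g$, $g'$ and $g''$).

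For the first identity, I will split the integral at $t=-2T/3$. On the right piece the shifted profile satisfies $g(-t-T)+1 = A_{-1}e^{-(t+T)} + O(e^{-2(t+T)})$, so its leading-order contribution is $A_{-1}e^{-T}\int_{-\infty}^{+\infty}(g'''-g')e^{-t}\,dt$, up to an $O(e^{-5T/3})$ error from extending the integration range (using the $e^{-2|t|}$ decay of $g'''-g'$). The crucial step will be to evaluate
\[
\int_{-\infty}^{+\infty}\bigl(g'''(t)-g'(t)\bigr)e^{-t}\,dt = -2A_{-1}
\]
by integrating by parts twice against the exponential weight: in each IBP the boundary at $+\infty$ vanishes (both $g'(t)e^{-t}$ and $g''(t)e^{-t}$ tend to $0$), but the boundary at $-\infty$ picks up $A_{-1}$ (since $g'(t),g''(t)\sim A_{-1}e^{t}$ as $t\to-\infty$ forces $g'(t)e^{-t},g''(t)e^{-t}\to A_{-1}$), contributing $-2A_{-1}$ in total after cancellation with $\int g'(t)e^{-t}\,dt$. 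This will produce the leading contribution $-2A_{-1}^2 e^{-T}$.

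Two remaining error estimates then complete the argument: (a) the $O(e^{-2(t+T)})$ remainder in the expansion of $g(-t-T)+1$ on the right piece integrates against $O(e^{-2|t|})$ to give $O(Te^{-2T})$; (b) on the left piece $t<-2T/3$, both $[W''(g(t))-1]$ and $g'(t)$ are $O(e^t)$ while $g(-t-T)+1$ stays bounded, yielding $\int_{-\infty}^{-2T/3}O(e^{2t})\,dt = O(e^{-4T/3})$. Both will be absorbed into the stated $O(e^{-4T/3})$. The second identity will follow by an identical argument with the roles of $+\infty$ and $-\infty$ exchanged: now $g(T-t)-1 \approx -A_1 e^{t-T}$ on $t<2T/3$, and the symmetric IBP (using $g'(t)e^{t}\to A_1$ and $g''(t)e^{t}\to -A_1$ as $t\to+\infty$) gives $\int(g'''-g')e^{t}\,dt=-2A_1$, producing the leading contribution $-A_1 e^{-T}\cdot(-2A_1) = 2A_1^2 e^{-T}$. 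The only delicate point throughout will be the careful bookkeeping of the non-vanishing IBP boundary terms, whose values are governed precisely by the asymptotic constants $A_{\pm 1}$ stated in Appendix \ref{sec 1d solution}.
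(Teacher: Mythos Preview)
Your argument is correct. The identity $[W''(g)-1]g' = g'''-g'$ together with the $O(e^{-2|t|})$ decay (coming from $W''(g(t))-1 = O(e^{-|t|})$ and $g'(t)=O(e^{-|t|})$) is the right organizing principle, and your integration-by-parts computation
\[
\int_{-\infty}^{+\infty}(g'''-g')e^{-t}\,dt
=\bigl[g''e^{-t}\bigr]_{-\infty}^{+\infty}+\bigl[g'e^{-t}\bigr]_{-\infty}^{+\infty}
=-A_{-1}-A_{-1}=-2A_{-1}
\]
(and symmetrically $\int(g'''-g')e^{t}\,dt=-2A_1$) is exactly what extracts the leading coefficients; the boundary contributions you identify are the only subtle point and you have them right. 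Your error bookkeeping for the three remainder pieces (the tail $t<-2T/3$, the extension of the integration range, and the $O(e^{-2(t+T)})$ correction) is also sound and lands inside $O(e^{-4T/3})$.

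As for comparison: the paper does not actually give a proof of this lemma here---it is quoted as a known fact from \cite{Wang-Wei2} (see the opening sentence of Appendix~\ref{sec 1d solution}). So there is nothing to compare against in this paper; your self-contained derivation via the IBP identity $\int(g'''-g')e^{\mp t}\,dt=-2A_{\mp 1}$ is a clean way to supply the missing details.
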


Next we discuss the spectrum  of the linearized operator at $g$,
\[\mathcal{L}=-\frac{d^2}{dt^2}+W^{\prime\prime}(g(t)).\]
 By a direct differentiation we see $g^\prime(t)$ is an eigenfunction of $\mathcal{L}$ corresponding to eigenvalue $0$.
By \eqref{first integral}, $0$ is the lowest eigenvalue. In other words, $g$ is stable.

Concerning the second eigenvalue, we have
\begin{thm}\label{second eigenvalue for 1d}
There exists a constant $\mu>0$ such that for any $\varphi\in H^1(\R)$ satisfying
\begin{equation}\label{orthogonal condition 1d}
\int_{-\infty}^{+\infty}\varphi(t)g^\prime(t)dt=0,
\end{equation}
we have
\[\int_{-\infty}^{+\infty}\left[\varphi^\prime(t)^2+W^{\prime\prime}(g(t))\varphi(t)^2\right]dt\geq\mu\int_{-\infty}^{+\infty}\varphi(t)^2dt.\]
\end{thm}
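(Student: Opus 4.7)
The plan is to establish a spectral gap for $\mathcal{L} = -\frac{d^2}{dt^2} + W''(g(t))$ on $L^2(\R)$ and then read off the claim via the min--max principle. I would first analyze the essential spectrum. Since $W''(\pm 1) = 1$ and $g(t) \to \pm 1$ exponentially fast, the potential $W''(g(t))$ tends to $1$ at both ends of the line, so $W''(g(t)) - 1$ is a relatively compact perturbation of $-\frac{d^2}{dt^2} + 1$ (e.g.\ by a standard cutoff argument using that the perturbation decays exponentially). By Weyl's theorem, the essential spectrum of $\mathcal{L}$ is $[1,\infty)$, so any spectrum in $[0,1)$ must be discrete and of finite multiplicity.

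Next I would identify the ground state. Differentiating the ODE \eqref{1d problem} gives $\mathcal{L} g' = 0$, so $0$ is an eigenvalue with eigenfunction $g'$. By \eqref{first integral}, $g'(t) = \sqrt{2W(g(t))} > 0$ for all $t \in \R$. A standard Sturm--Liouville / Perron--Frobenius type argument then forces $g'$ to be the ground state and $0$ to be a simple eigenvalue: if there were an eigenvalue strictly less than $0$, the corresponding eigenfunction could be taken positive (standard ground state positivity), which would contradict the orthogonality to the positive function $g'$; simplicity likewise follows because any second eigenfunction at $0$ would have to change sign.

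It follows that the spectrum of $\mathcal{L}$ consists of $\{0\}$, possibly finitely many positive eigenvalues in $(0,1)$, and the essential spectrum $[1,\infty)$. Let
\[
\mu := \inf \bigl( \sigma(\mathcal{L}) \setminus \{0\} \bigr) > 0.
\]
By the variational characterization of the second eigenvalue,
\[
\mu = \inf_{\substack{\varphi \in H^1(\R) \setminus \{0\} \\ \int \varphi\, g' = 0}} \frac{\int \bigl[(\varphi')^2 + W''(g)\varphi^2 \bigr] dt}{\int \varphi^2\, dt},
\]
which is exactly the stated inequality.

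The main obstacle is a soft one: justifying the Sturm--Liouville / positivity claim that $g' > 0$ forces $0$ to be the lowest eigenvalue and that it is simple. This is classical for one-dimensional Schr\"odinger operators with bounded potentials and can either be cited (e.g.\ Reed--Simon) or proved directly by observing that any $L^2$ solution of $\mathcal{L} u = \lambda u$ with $\lambda < 0$ decays exponentially at $\pm\infty$ and hence, if nontrivial and sign-definite, would satisfy $\int u\, g' \neq 0$, yielding a contradiction with orthogonality of distinct eigenfunctions. Everything else reduces to Weyl's essential spectrum theorem and the min--max characterization, both standard.
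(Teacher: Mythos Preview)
Your argument is correct. The paper itself does not spell out a proof; it only records the one sentence ``This can be proved via a contradiction argument.'' What the authors presumably have in mind is the direct compactness route: assume a normalized sequence $\varphi_n \perp g'$ with $\int[(\varphi_n')^2 + W''(g)\varphi_n^2] \to 0$, use the coercivity at infinity (since $W''(g)\to 1$) to gain tightness, pass to a weak limit $\varphi_\infty$ with $\mathcal{L}\varphi_\infty = 0$, and conclude $\varphi_\infty \in \mathrm{span}(g')$, contradicting the orthogonality and the normalization. Your approach is more structural: you identify the essential spectrum via Weyl's theorem, pin down $0$ as the simple ground state by positivity of $g'$, and read off the gap from min--max. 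Both are standard for one-dimensional Schr\"odinger operators; yours gives the extra information that $\mu \leq 1$ and that the discrete spectrum below $1$ is finite, while the contradiction argument is slightly more self-contained in that it avoids citing Weyl's theorem.
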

This can be proved via a contradiction argument.

\section{Proof of Lemma \ref{lem 5.1}}\label{sec proof of Lemma 5.1}
\setcounter{equation}{0}

The proof of Lemma \ref{lem 5.1} is similar to the one given in \cite[Appendix B]{Wang-Wei2}. However, since the setting is a little different (see Step 1 in Subsection \ref{subsec outline of proof}), for reader's convenience, we will include a complete proof.

Before proving Lemma \ref{lem 5.1}, we first derive the exponential nonlinearity in Toda system \eqref{Toda system}.
\begin{lem}\label{lem derivation of exponential nonlinearity}
  For any $y\in\Gamma_\alpha$,
  \[
    \int_{-\infty}^{+\infty}\mathcal{I}(y,z)g_\alpha^\prime(y,z)dz =  (-1)^\alpha\left[ 2A_{(-1)^{\alpha-1}}^2e^{-d_{\alpha-1}(y,0)}-2A_{(-1)^{\alpha}}^2e^{d_{\alpha+1}(y,0)}\right]+\mathcal{A}_\alpha(y),\]
  where
 \begin{eqnarray*}
 \|\mathcal{A}_\alpha \|_{C^{\theta}(B_1^\alpha(y))}
&\lesssim&\varepsilon^2+\varepsilon^{1/3}\max_{B_1^\alpha(y)}e^{-  D_\alpha}+
\max_{B_1^\alpha(y)}e^{-\frac{3}{2}D_\alpha}\\
&+&\max_{\beta: |d_\beta(y,0)|\leq 8|\log\varepsilon|}\max_{B_1^\beta(\Pi_\beta(y,0))}e^{-2D_\beta}+\max_{|z|<8|\log\varepsilon|} \|\phi\|_{C^{2,\theta}(B_1(y,z))}^2.
\end{eqnarray*}
\end{lem}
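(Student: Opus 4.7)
The plan is to insert the expansion for $\mathcal{I}$ from Lemma~\ref{interaction term} and reduce the resulting integrals to the one-dimensional interaction formulas of Lemma~\ref{lem form of interaction}. The two leading contributions
\[
J_{\alpha\pm 1}(y):=\int_{-\infty}^{+\infty}\left[W^{\prime\prime}(g_\alpha)-1\right]\left[g_{\alpha\pm 1}\mp(-1)^{\alpha\pm 1}\right]g_\alpha^\prime\, dz
\]
should produce the claimed exponentials, and everything else should be absorbed into $\mathcal{A}_\alpha$.

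To evaluate $J_{\alpha\pm 1}(y)$, I would first change variable $t=(-1)^\alpha(z-h_\alpha(y))$, so that $g_\alpha(y,z)=\bar g(t)$ and $g_\alpha^\prime(y,z)=\bar g^\prime(t)$. The nontrivial step is rewriting $g_{\alpha\pm 1}(y,z)$ as a translate of $\bar g$ in $t$. For this I would use Lemma~\ref{comparison of distances}: whenever $g_{\alpha\pm 1}^\prime\neq 0$ (equivalently $|d_{\alpha\pm 1}(y,z)|\lesssim|\log\varepsilon|$), we have $d_{\alpha\pm 1}(y,z)=d_{\alpha\pm 1}(y,0)\mp z+O(\varepsilon^{1/3})$, and similarly $\nabla d_{\alpha\pm 1}\cdot\nabla d_\alpha=1+O(\varepsilon^{1/3})$. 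Combined with $h_{\alpha\pm 1}(\Pi_{\alpha\pm 1}(y,z))=h_{\alpha\pm 1}(\Pi_{\alpha\pm 1}(y,0))+O(\|\nabla h_{\alpha\pm 1}\|_{L^\infty}|z|)$, this converts $J_{\alpha\pm 1}(y)$ into a 1D integral with shift $T=d_{\alpha\pm 1}(y,0)\mp h_{\alpha\pm 1}(\Pi_{\alpha\pm 1}(y,0))+(-1)^\alpha h_\alpha(y)$. Since $|h_\beta|\ll 1$ by Proposition~\ref{prop optimal approximation} and Corollary~\ref{coro imporved estimates on h}, a Taylor expansion in $h$ together with Lemma~\ref{lem form of interaction} yields the leading terms $\pm 2(-1)^\alpha A_{(-1)^{\alpha\mp 1}}^2 e^{-|d_{\alpha\pm 1}(y,0)|}$, with the $e^{-\frac{4}{3}T}$ error of Lemma~\ref{lem form of interaction} bounded by $e^{-\frac{3}{2}D_\alpha}$ (slightly worse than $e^{-\frac{4}{3}D_\alpha}$, to absorb the $h$-induced perturbation). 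The $\varepsilon^{1/3}$ factor in the shift propagates to the relative error $\varepsilon^{1/3}e^{-D_\alpha}$ after Taylor expanding $e^{-T}$; this is where the decay of $g_\alpha^\prime$ at infinity is needed to keep the relevant $z$ bounded by $O(|\log\varepsilon|)$.

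Next I would control the remainder terms from Lemma~\ref{interaction term}: the $O(e^{-2d_{\alpha\pm 1}})$ and $O(e^{-d_{\alpha\mp 2}-|d_\alpha|})$ pieces integrate against $g_\alpha^\prime$ to give quantities bounded by $e^{-2D_\alpha}+(\max_{\beta\neq\alpha,\alpha\pm 1}e^{-D_\beta(\Pi_\beta(y,0))})e^{-D_\alpha}$, both dominated by $\max_{B_1^\beta(\Pi_\beta(y,0))}e^{-2D_\beta}$ via Lemma~\ref{lem distance ladder}. The error arising from $\bar g$ versus $g$ in the definition of $g_\beta$ contributes $O(\varepsilon^3)$ from $\bar\xi$. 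The $\phi$-dependence enters through $W^\prime(g_\ast+\phi)-W^\prime(g_\ast)=W^{\prime\prime}(g_\ast)\phi+O(\phi^2)$; the orthogonality condition \eqref{orthogonal condition} together with the replacement of $W^{\prime\prime}(g_\ast)$ by $W^{\prime\prime}(g_\alpha)$ up to $O(e^{-D_\alpha})$ yields a contribution of size $\|\phi\|_{C^{0}}^2+\|\phi\|_{C^0}e^{-D_\alpha}$, which fits into $\|\phi\|_{C^{2,\theta}}^2$ and the stated decay terms by Cauchy--Schwarz.

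The $C^\theta$ bound on $\mathcal{A}_\alpha$ then follows by applying the same scheme to the differences $\mathcal{A}_\alpha(y)-\mathcal{A}_\alpha(y^\prime)$ for $y,y^\prime\in B_1^\alpha(y)$, using Lemma~\ref{Holder bound on interaction} for the H\"older regularity of $\mathcal{I}$, the $C^{1,\theta}$ control on $\nabla h_\beta$ from Corollary~\ref{coro imporved estimates on h}, and the smooth dependence of $d_\beta$ and $\Pi_\beta$ on $y$. The main obstacle I anticipate is the sharp bookkeeping of the $\varepsilon^{1/3}$ error in the second term of the conclusion: one has to track the linearization $d_{\alpha\pm 1}(y,z)=d_{\alpha\pm 1}(y,0)\mp z+O(\varepsilon^{1/3})$ through both the exponential $e^{-T}$ and the H\"older difference, rather than being tempted to use the cruder $\varepsilon^{1/2}|\log\varepsilon|^{3/2}$ bound of Lemma~\ref{comparison of distances} directly, which would be too weak.
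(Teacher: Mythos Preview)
Your overall strategy---expand $\mathcal{I}$ via Lemma~\ref{interaction term}, reduce the two leading integrals $J_{\alpha\pm1}$ to the one-dimensional formulas of Lemma~\ref{lem form of interaction}, and absorb everything else into $\mathcal{A}_\alpha$---is the same as the paper's. But two points in your write-up are off.

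First, the expansion of Lemma~\ref{interaction term} is only stated on $\mathcal{M}^4_\alpha$, not on all of $\R$. So you cannot simply write $\int_{-\infty}^{+\infty}\mathcal{I}\,g_\alpha^\prime$ as $J_{\alpha-1}+J_{\alpha+1}+(\text{remainders from that lemma})$. The paper splits the $z$-integral into the pieces lying in each $\mathcal{M}_\beta^0$: on the piece with $\beta=\alpha$ it runs your argument (and then extends the integral from $(\rho_\alpha^-,\rho_\alpha^+)$ to $(-\infty,+\infty)$ at the cost of $O(e^{-\frac{3}{2}D_\alpha})$), while on the pieces with $\beta\neq\alpha$ it instead bounds $|\mathcal{I}|\lesssim e^{-(|d_\beta|+|d_{\beta-1}|)}+e^{-(|d_\beta|+|d_{\beta+1}|)}+\varepsilon^2$ from Lemma~\ref{upper bound on interaction} and integrates directly against $g_\alpha^\prime\lesssim e^{-|z|}$. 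You need this splitting; it is not bypassed by the exponential decay of $g_\alpha^\prime$ alone.

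Second, your account of where $\phi$ enters is wrong. The quantity $\mathcal{I}=W^\prime(g_\ast)-\sum_\beta W^\prime(g_\beta)$ contains no $\phi$ at all, so neither $W^\prime(g_\ast+\phi)-W^\prime(g_\ast)$ nor the orthogonality condition~\eqref{orthogonal condition} plays any role here. The $\|\phi\|_{C^{2,\theta}}^2$ term in the bound on $\mathcal{A}_\alpha$ arises solely because, after extracting the main term $2A_{(-1)^{\alpha\mp1}}^2e^{-|d_{\alpha\pm1}(y,0)|}$, the residual error carries factors of $|h_\alpha(y)|$ and $|h_{\alpha\pm1}(\Pi_{\alpha\pm1}(y,0))|$; these are then controlled by Lemma~\ref{control on h_0}, which is what converts them into $\|\phi\|_{C^{2,\theta}}+e^{-D_\alpha}$. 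For the same reason you should cite Lemma~\ref{control on h_0} rather than Corollary~\ref{coro imporved estimates on h}: the latter is proved downstream of this lemma (via Lemma~\ref{lem 5.1} and Proposition~\ref{prop Schauder estimate}), so invoking it here would be circular.
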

\begin{proof}

To determine the integral
$\int_{-\infty}^{+\infty}\mathcal{I}g_\alpha^\prime$,
 consider for each $\beta$, the integral on $(-\infty,+\infty)\cap\mathcal{M}_\beta^0$, which we assume to be an interval $(\rho_\beta^-(y),\rho_\beta^+(y))$.

{\bf Step 1.}
If $\beta\neq\alpha$, by Lemma \ref{upper bound on interaction}, in $(\rho_\beta^-(y),\rho_\beta^+(y))$,
\[
\big|\mathcal{I}\big|
\lesssim e^{-(|d_\beta|+|d_{\beta-1}|)}+e^{-(|d_\beta|+|d_{\beta+1}|)}+\varepsilon^2.\]
We only consider the case $\beta>\alpha$ and estimate
\[\int_{\rho_\beta^-(y)}^{\rho_\beta^+(y)}e^{-(|d_\beta|+|d_{\beta-1}|)}g_\alpha^\prime.\]

If $|z|$, $|d_\beta|$ and $|d_{\beta-1}|$ are all smaller than $8|\log\varepsilon|$ at the same time, by Lemma \ref{comparison of distances},
\begin{equation}\label{B1}
d_\beta(y,z)=z+d_\beta(y,0)+O\left(\varepsilon^{1/3}\right),
\end{equation}
\begin{equation}\label{B2}
d_{\beta-1}(y,z)=z+d_{\beta-1}(y,0)+O\left(\varepsilon^{1/3}\right).
\end{equation}
Note that since $\beta>\alpha$, by our convention on the sign of $d_\beta$, we have $z>0$ and $d_\beta(y,0)<d_{\beta-1}(y,0)\leq 0$.

By \eqref{B1} and \eqref{B2} we get
\begin{eqnarray*}
\int_{\rho_\beta^-(y)}^{\rho_\beta^+(y)}e^{-(|d_\beta|+|d_{\beta-1}|)}g_\alpha^\prime&\lesssim&\int_{\rho_\beta^-(y)}^{\rho_\beta^+(y)}e^{-\left(|z|+|z+d_{\beta-1}(y,0)|+|z+d_\beta(y,0)|\right)}\\
&\lesssim&\int_{\rho_\beta^-(y)}^{-d_\beta(y,0)}e^{-\left(z+d_{\beta-1}(y,0)-d_\beta(y,0)\right)}+\int_{-d_\beta(y,0)}^{\rho_\beta^+(y)}e^{-\left(3z+d_{\beta-1}(y,0)+d_\beta(y,0)\right)}\\
&\lesssim&e^{-\left(d_{\beta-1}(y,0)-d_\beta(y,0)\right)-\rho_\beta^-(y)}+e^{-\left(d_{\beta-1}(y,0)-2d_\beta(y,0)\right)}.
\end{eqnarray*}
By definition,
\[-d_\beta(y,\rho_\beta^-(y))=d_{\beta-1}(y,\rho_\beta^-(y)).\]
Thus by \eqref{B1} and \eqref{B2},
\[\rho_\beta^-(y)=-\frac{d_{\beta-1}(y,0)+d_\beta(y,0)}{2}+O\left(\varepsilon^{1/3}\right).\]
Substituting this into the above estimate gives
\[
\int_{\rho_\beta^-(y)}^{\rho_\beta^+(y)}e^{-(|d_\beta|+|d_{\beta-1}|)}g_\alpha^\prime \lesssim e^{-\frac{1}{2}\left(d_{\beta-1}(y,0)-3d_\beta(y,0)\right)}+e^{-\left(d_{\beta-1}(y,0)-2d_\beta(y,0)\right)}.
\]

If $\beta=\alpha+1$, because $d_{\beta-1}(y,0)=0$, the right hand side is bounded by $O\left(e^{\frac{3}{2}d_{\alpha+1}(y,0)}\right)$.

If $\beta\geq\alpha+2$, the right hand side is bounded by $O\left(e^{d_{\alpha+2}(y,0)}\right)$.

{\bf Step 2.} It remains to consider the integration in $(\rho_\alpha^-(y),\rho_\alpha^+(y))$. In this case we use Lemma \ref{interaction term}, which gives
\begin{eqnarray}\label{7.1}
&&\int_{\rho_\alpha^-(y)}^{\rho_\alpha^+(y)}\mathcal{I}g_\alpha^\prime \\
&=&\int_{\rho_\alpha^-(y)}^{\rho_\alpha^+(y)}\left[W^{\prime\prime}(g_\alpha)-1\right]\left[g_{\alpha-1}-(-1)^{\alpha-1}\right]g_\alpha^\prime+
\int_{\rho_\alpha^-(y)}^{\rho_\alpha^+(y)}\left[W^{\prime\prime}(g_\alpha)-1\right]\left[g_{\alpha+1}+(-1)^\alpha\right]g_\alpha^\prime\nonumber\\
&+&\int_{\rho_\alpha^-(y)}^{\rho_\alpha^+(y)}\left[O\left(e^{-2d_{\alpha-1}}+e^{2d_{\alpha+1}}\right)+O\left(e^{-d_{\alpha-2}-|z|}+e^{d_{\alpha+2}-|z|}\right)\right]g_\alpha^\prime.\nonumber
\end{eqnarray}

Because $g_\alpha^\prime\lesssim e^{-|z|}$ and
\[e^{-2d_{\alpha-1}}\lesssim e^{-2d_{\alpha-1}(y,0)-2z}+\varepsilon^2,\]
we get
\begin{eqnarray*}
\int_{\rho_\alpha^-(y)}^{\rho_\alpha^+(y)}e^{-2d_{\alpha-1}}g_\alpha^\prime
&\lesssim&\varepsilon^2+e^{-2d_{\alpha-1}(y,0)}\left[\int_{\rho_\alpha^-(y)}^0e^{-z}dz+ \int_0^{\rho_\alpha^+(y)}e^{-3z}dz\right]\\
&\lesssim&\varepsilon^2+e^{-2d_{\alpha-1}(y,0)-\rho_\alpha^-(y)}\\
&\lesssim&\varepsilon^2+e^{-\frac{3}{2}d_{\alpha-1}(y,0)}.
\end{eqnarray*}

Similarly, we have
\[\int_{\rho_\alpha^-(y)}^{\rho_\alpha^+(y)}e^{2d_{\alpha+1}}g_\alpha^\prime\lesssim\varepsilon^2+e^{\frac{3}{2}d_{\alpha+1}(y,0)},\]
\[\int_{\rho_\alpha^-(y)}^{\rho_\alpha^+(y)}O\left(e^{-d_{\alpha-2}-|z|}+e^{d_{\alpha+2}-|z|}\right)g_\alpha^\prime\lesssim e^{-d_{\alpha-2}}+e^{d_{\alpha+2}}.\]

To determine the first integral in the right hand side of \eqref{7.1}, arguing as in Step 1, if both $g_\alpha^\prime$ and $g_{\alpha-1}-(-1)^{\alpha-1}$ are nonzero, then
\[g_{\alpha-1}(y,z)=\bar{g}\left((-1)^{\alpha-1}\left(z+d_{\alpha-1}(y,0)+h_{\alpha-1}(\Pi_{\alpha-1}(y,z))+O\left(\varepsilon^{1/3}\right)\right)\right).\]
Therefore
\begin{eqnarray*}
&&\int_{\rho_\alpha^-(y)}^{\rho_\alpha^+(y)}\left[W^{\prime\prime}(g_\alpha)-1\right]\left(g_{\alpha-1}-(-1)^{\alpha-1}\right)g_\alpha^\prime\\
&=&\int_{\rho_\alpha^-(y)}^{\rho_\alpha^+(y)}\left[W^{\prime\prime}\left(\bar{g}\left((-1)^{\alpha}(z-h_\alpha(y))\right)\right)-1\right]
\bar{g}^\prime\left((-1)^{\alpha}(z-h_\alpha(y))\right)\\
&&\times\left[\bar{g}\left((-1)^{\alpha-1}\left(z+d_{\alpha-1}(y,0)+h_{\alpha-1}(\Pi_{\alpha-1}(y,z))+O\left(\varepsilon^{1/3}\right)\right)\right)-(-1)^{\alpha-1}\right]
dz\\
&=&\int_{-\infty}^{+\infty}\left[W^{\prime\prime}\left(\bar{g}\left((-1)^{\alpha}(z-h_\alpha(y))\right)\right)-1\right]\bar{g}^\prime\left((-1)^{\alpha }(z-h_\alpha(y))\right)\\
&&\times\left[\bar{g}\left((-1)^{\alpha-1}\left(z+d_{\alpha-1}(y,0)+h_{\alpha-1}(\Pi_{\alpha-1}(y,z))+O\left(\varepsilon^{1/3}\right)\right)\right)-(-1)^{\alpha-1}\right]
dz\\
&+&O\left(e^{-\frac{3}{2}d_{\alpha-1}(y,0)}\right) \\
&=&(-1)^\alpha 2A_{(-1)^{\alpha-1}}^2e^{-d_{\alpha-1}(y,0)}+O\left(|h_\alpha(y)|+|h_{\alpha-1}(\Pi_{\alpha-1}(y,z))|+\varepsilon^{1/3}\right)e^{-d_{\alpha-1}(y,0)}\\
&+&O\left(e^{-\frac{3}{2}d_{\alpha-1}(y,0)}\right) .
\end{eqnarray*}

{\bf Step 3.} What we have proven says
\begin{eqnarray*}
\Big|\mathcal{A}_\alpha(y)\Big|
&\lesssim&\varepsilon^2+\left(|h_\alpha(y)|+|h_{\alpha-1}(\Pi_{\alpha-1}(y,z))|+\varepsilon^{1/3}\right)e^{-d_{\alpha-1}(y,0)}\\
&+&\left(|h_\alpha(y)|+|h_{\alpha+1}(\Pi_{\alpha+1}(y,z))|+\varepsilon^{1/3}\right)e^{d_{\alpha+1}(y,0)}\\
&+&e^{-\frac{3}{2}d_{\alpha-1}(y,0)}+e^{\frac{3}{2}d_{\alpha+1}(y,0)}+e^{-d_{\alpha-2}(y,0)}
+e^{d_{\alpha+2}(y,0)}.
\end{eqnarray*}

By taking derivatives of $\int_{-\infty}^{+\infty}\mathcal{I}g_\alpha^\prime$ in $y$, and then using Lemma \ref{upper bound on interaction} and Lemma \ref{control on h_0},
we see the  $C^{\theta}(B_1^\alpha(y))$ norm (in fact, the Lipschitz norm) of $\mathcal{A}_\alpha$ is bounded by
\begin{eqnarray*}
&&\left(\varepsilon^2+\max_{B_1^\alpha(y)}e^{-  D_\alpha}\right)\left(\sum_{\beta : |d_\beta|<8|\log\varepsilon|}\|h_\beta\|_{C^{2,\theta}(B_1^\beta(\Pi_\beta(y,0)))}\right)\\
&\lesssim&\varepsilon^2+\max_{\beta: |d_\beta(y,0)|\leq 8|\log\varepsilon|}\max_{B_1^\beta(\Pi_\beta(y,0))}e^{-2D_\beta}+\max_{|z|<8|\log\varepsilon|} \|\phi\|_{C^{2,\theta}(B_1(y,z))}^2.
\end{eqnarray*}
Similarly, by Lemma \ref{comparison of distances}, the
 $C^{\theta}(B_1^\alpha(y))$ norm (in fact, the Lipschitz norm) of $2A_{(-1)^\alpha}^2e^{-d_{\alpha-1}(y,0)}-2A_{(-1)^{\alpha-1}}^2e^{d_{\alpha+1}(y,0)}$ is controlled by
$\varepsilon^{1/3}\max_{B_1^\alpha(y)}e^{-  D_\alpha}$. This gives the estimate on $\mathcal{A}_\alpha$.
\end{proof}

Now let us prove Lemma \ref{lem 5.1}.
Differentiating \eqref{orthogonal condition 2} twice leads to
\begin{equation}\label{orthogonal condition 2.1}
\int_{-\infty}^{+\infty}\left[\frac{\partial\phi}{\partial y^i}g_\alpha^\prime+(-1)^{\alpha-1}\phi
g_\alpha^{\prime\prime}\frac{\partial h_\alpha}{\partial y^i}\right]=0
\end{equation}
and
\begin{eqnarray}\label{orthogonal condition 2.2}
&&\int_{-\infty}^{+\infty}\left[\frac{\partial^2\phi}{\partial y^i\partial
y^j}g_\alpha^\prime+(-1)^{\alpha-1}\frac{\partial\phi}{\partial y^i}
g^{\prime\prime}_\alpha\frac{\partial h_\alpha}{\partial
y^j}+(-1)^{\alpha-1}\frac{\partial\phi}{\partial y^j}
g^{\prime\prime}_\alpha\frac{\partial h_\alpha}{\partial y^i}\right]\\
&+&\int_{-\infty}^{+\infty}\left[(-1)^{\alpha-1}\phi
g^{\prime\prime}_\alpha\frac{\partial^2 h_\alpha}{\partial y^i\partial
y^j}+\phi g^{\prime\prime\prime}_\alpha\frac{\partial h_\alpha}{\partial
y^i}\frac{\partial h_\alpha}{\partial y^j}\right]=0.\nonumber
\end{eqnarray}
Therefore
\begin{eqnarray}\label{orthogonal condition 2.3}
\int_{-\infty}^{+\infty}\Delta_{\alpha,0}\phi(y,z)g_\alpha^\prime&=&(-1)^{\alpha}\Delta_{\alpha,0}h_\alpha\int_{-\infty}^{+\infty}\phi
g_\alpha^{\prime\prime}-|\nabla_{\alpha,0}h_\alpha|^2\int_{-\infty}^{+\infty}\phi
g_\alpha^{\prime\prime\prime}\\
&+&2(-1)^{\alpha-1}\int_{-\infty}^{+\infty}
g_{\alpha}^{ij}(y,0)\frac{\partial\phi}{\partial y^i}\frac{\partial
h_\alpha}{\partial y^j}g_\alpha^{\prime\prime}. \nonumber
\end{eqnarray}

Substituting \eqref{orthogonal condition 2.3} into \eqref{H eqn}, we obtain
\begin{eqnarray*}
&&\int_{-\infty}^{+\infty}\left(\Delta_{\alpha,z}\phi-\Delta_{\alpha,0}\phi\right)g_\alpha^\prime +(-1)^{\alpha}\left(\int_{-\infty}^{+\infty}\phi g_\alpha^{\prime\prime}\right)\Delta_{\alpha,0}h_\alpha-\left(\int_{-\infty}^{+\infty}\phi g_\alpha^{\prime\prime\prime}\right)|\nabla_{\alpha,0} h_\alpha(y)|^2\\
&+&2(-1)^{\alpha-1}\int_{-\infty}^{+\infty}g_\alpha^{\prime\prime}g_{\alpha}^{ij}(y,0)\frac{\partial\phi}{\partial y_i}\frac{\partial h_\alpha}{\partial y_j}-\int_{-\infty}^{+\infty}H_\alpha(y,z)g_\alpha^\prime\phi_z+\int_{-\infty}^{+\infty}\xi_\alpha^\prime\phi\\
&=&\int_{-\infty}^{+\infty}\left[W^{\prime\prime}(g_\ast)-W^{\prime\prime}(g_\alpha)\right]g_\alpha^\prime\phi+\int_{-\infty}^{+\infty}\mathcal{R}(\phi)g_\alpha^\prime\\
&+&\int_{-\infty}^{+\infty}\mathcal{I}g_\alpha^\prime+(-1)^{\alpha}\left(\int_{-\infty}^{+\infty}|g_\alpha^\prime|^2\right)\left[H_\alpha(y,0)+\Delta_{\alpha,0}h_\alpha(y)\right]\\
&+&(-1)^{\alpha }\int_{-\infty}^{+\infty}|g_\alpha^\prime|^2\left[H_\alpha(y,z)-H_\alpha(y,0)\right]+(-1)^{\alpha }\int_{-\infty}^{+\infty}|g_\alpha^\prime|^2\left[\Delta_{\alpha,z}h_\alpha(y)-\Delta_{\alpha,0}h_\alpha(y)\right]\\
&+&\frac{1}{2}\left(\int_{-\infty}^{+\infty}|g_\alpha^\prime|^2\frac{\partial}{\partial z}g_{\alpha}^{ij}(y,z)\right)\frac{\partial h_\alpha}{\partial y_i}\frac{\partial h_\alpha}{\partial y_j}\\
&+&\sum_{\beta\neq\alpha}(-1)^{\beta }\int_{-\infty}^{+\infty}g_\alpha^\prime g_\beta^\prime\mathcal{R}_{\beta,1}-\sum_{\beta\neq\alpha}\int_{-\infty}^{+\infty}g_\alpha^\prime g_\beta^{\prime\prime}\mathcal{R}_{\beta,2}-\sum_{\beta }\int_{-\infty}^{+\infty}g_\alpha^\prime \xi_\beta.
\end{eqnarray*}

We estimate the H\"{o}lder norm of these terms one by one.

\begin{enumerate}
\item By \eqref{error in z 5}, we have
 \begin{eqnarray*}
\Big\|\int_{-\infty}^{+\infty}\left(\Delta_{\alpha,z}\phi-\Delta_{\alpha,0}\phi\right)g_\alpha^\prime\Big\|_{C^{\theta}(B_1^\alpha(y))}
&\lesssim&\varepsilon\max_{|z|<8|\log\varepsilon|} \|\phi\|_{C^{2,\theta}(B_1(y,z))}\\
&\lesssim&\varepsilon^2+\max_{|z|<8|\log\varepsilon|} \|\phi\|_{C^{2,\theta}(B_1(y,z))}^2.
\end{eqnarray*}

\item
By the exponential decay of $\bar{g}^\prime$ and Lemma \ref{control on h_0}, we have
\begin{eqnarray*}
\Big\|\left(\int_{-\infty}^{+\infty}\phi g_\alpha^{\prime\prime}\right)\Delta_{\alpha,0}h_\alpha\Big\|_{C^{\theta}(B_1^\alpha(y))}&\lesssim&\|h_\alpha\|_{C^{2,\theta}(B_1^\alpha(y))}
\max_{|z|<8|\log\varepsilon|} \|\phi\|_{C^{\theta}(B_1(y,z))}\\
&\lesssim&\|h_\alpha\|_{C^{2,\theta}(B_1^\alpha(y))}^2+\max_{|z|<8|\log\varepsilon|} \|\phi\|_{C^{\theta}(B_1(y,z))}^2\\
&\lesssim&\max_{B_1^\alpha(y)}e^{-2D_\alpha}+\max_{|z|<8|\log\varepsilon|} \|\phi\|_{C^{2,\theta}(B_1(y,z))}^2.
\end{eqnarray*}

\item
By the exponential decay of $\bar{g}^\prime$ and Lemma \ref{control on h_0}, we have
 \begin{eqnarray*}
\Big\|\int_{-\infty}^{+\infty}g_\alpha^{\prime\prime}g_{\alpha}^{ij}(y,0)\frac{\partial\phi}{\partial y_i}\frac{\partial h_\alpha}{\partial y_j}\Big\|_{C^{\theta}(B_1^\alpha(y))}
&\lesssim&\|h_\alpha\|_{C^{1,\theta}(B_1^\alpha(y))}\max_{|z|<8|\log\varepsilon|} \|\phi\|_{C^{1,\theta}(B_1(y,z))}\\
&\lesssim&\|h_\alpha\|_{C^{1,\theta}(B_1^\alpha(y))}^2+\max_{|z|<8|\log\varepsilon|} \|\phi\|_{C^{1,\theta}(B_1(y,z))}^2\\
&\lesssim&\max_{B_1^\alpha(y)}e^{-2D_\alpha}+\max_{|z|<8|\log\varepsilon|} \|\phi\|_{C^{2,\theta}(B_1(y,z))}^2.
\end{eqnarray*}

\item By the exponential decay of $\bar{g}^\prime$ and Lemma \ref{control on h_0}, we have
\begin{eqnarray*}
\Big\|\left(\int_{-\infty}^{+\infty}\phi g_\alpha^{\prime\prime\prime}\right)|\nabla_{\alpha,0} h_\alpha(y)|^2\Big\|_{C^{\theta}(B_1^\alpha(y))}&\lesssim&\|h_\alpha\|_{C^{1,\theta}(B_1^\alpha(y))}^2\max_{|z|<8|\log\varepsilon|} \|\phi\|_{C^{\theta}(B_1(y,z))}\\
&\lesssim&\|h_\alpha\|_{C^{1,\theta}(B_1^\alpha(y))}^2\\
&\lesssim&\max_{B_1^\alpha(y)}e^{-2D_\alpha}+\|\phi\|_{C^{2,\theta}(B_1(y,0))}^2.
\end{eqnarray*}

\item By \eqref{bound on 3rd derivatives} and the exponential decay of $\bar{g}^\prime$, we have
\begin{eqnarray*}
\Big\|\int_{-\infty}^{+\infty}H_\alpha(y,z)g_\alpha^\prime\phi_z\Big\|_{C^{\theta}(B_1^\alpha(y))}&\lesssim&\varepsilon\max_{|z|<8|\log\varepsilon|} \|\phi\|_{C^{2,\theta}(B_1(y,z))}\\
&\lesssim&\varepsilon^2+\max_{|z|<8|\log\varepsilon|} \|\phi\|_{C^{2,\theta}(B_1(y,z))}^2.
\end{eqnarray*}

\item
The $C^{\theta}(B_1^\alpha(y))$ norm of $\int_{-\infty}^{+\infty}\left[W^{\prime\prime}(g_\ast)-W^{\prime\prime}(g_\alpha)\right]g_\alpha^\prime\phi$ is bounded by
\begin{eqnarray*}
&&\left[\max_{|z|<8|\log\varepsilon|} \|\phi\|_{C^{\theta}(B_1(y,z))}\right]\left[\max_{B_1^\alpha(y)}\left(\int_{-\infty}^{+\infty}\left(|g_{\alpha-1}^2-1|+|g_{\alpha+1}^2-1|\right)g_\alpha^\prime\right)\right]\\
&\lesssim&\left(\max_{|z|<8|\log\varepsilon|} \|\phi\|_{C^{\theta}(B_1(y,z))}\right)\left(\max_{B_1(y)}D_\alpha e^{-D_\alpha}\right)\\
&\lesssim&\max_{|z|<8|\log\varepsilon|} \|\phi\|_{C^{2,\theta}(B_1(y,z))}^2+\max_{B_1^\alpha(y)}e^{-\frac{3}{2}D_\alpha}.
\end{eqnarray*}

\item The $C^{\theta}(B_1^\alpha(y))$ norm of $\int_{-\infty}^{+\infty}\mathcal{R}(\phi)g_\alpha^\prime$ is bounded by
$\max_{|z|<8|\log\varepsilon|}\|\phi\|_{C^{\theta}(B_1(y,z))}^2$.

\item By the definition of $\bar{g}$ (see Subsection \ref{sec optimal approximation}), the $C^{\theta}(B_1^\alpha(y))$ norm of $\int_{-\infty}^{+\infty}|g_\alpha^\prime|^2-\sigma_0$ is bounded by $O\left(\varepsilon^2\right)$.

\item By \eqref{bound on 3rd derivatives} and \eqref{error in z 4}, the $C^{\theta}(B_1^\alpha(y))$ norm of $\int_{-\infty}^{+\infty}|g_\alpha^\prime|^2\left[H_\alpha(y,z)-H_\alpha(y,0)\right]$ is bounded by $O\left(\varepsilon^2\right)$.

\item
By \eqref{error in z 5} and Lemma \ref{control on h_0}, the $C^{\theta}(B_1^\alpha(y))$ norm of $\int_{-\infty}^{+\infty}|g_\alpha^\prime|^2\left[\Delta_{\alpha,0}h_\alpha(y)-\Delta_{\alpha,z}h_\alpha(y)\right]$ is bounded by
\[
\varepsilon\|h_\alpha\|_{C^{2,\theta}(B_1^\alpha(y))}
\lesssim \varepsilon^2+\|h_\alpha\|_{C^{2,\theta}(B_1^\alpha(y))}^2\\
\lesssim \varepsilon^2+\max_{B_1^\alpha(y)}e^{-2D_\alpha}+\|\phi\|_{C^{2,\theta}(B_1(y,0))}^2.\]

\item By \eqref{metirc tensor}, the $C^{\theta}(B_1^\alpha(y))$ norm of $\left(\int_{-\infty}^{+\infty}|g_\alpha^\prime|^2\frac{\partial}{\partial z}g_{\alpha}^{ij}(y,z)\right)\frac{\partial h_\alpha}{\partial y_i}\frac{\partial h_\alpha}{\partial y_j}$ is bounded by
\[
\varepsilon\|h_\alpha\|_{C^{1,\theta}(B_1^\alpha(y))}^2\lesssim\max_{B_1^\alpha(y)}e^{-2D_\alpha}+\|\phi\|_{C^{2,\theta}(B_1(y,0))}^2.\]

\item
For $\beta\neq\alpha$, if $|d_\beta(y,0)|>8|\log\varepsilon|$, the $C^{\theta}(B_1^\alpha(y))$ norm of $\int_{-\infty}^{+\infty}g_\alpha^\prime g_\beta^\prime\mathcal{R}_{\beta,1}$ is bounded by $O\left(\varepsilon^2\right)$.

If $|d_\beta(y,0)|\leq 8|\log\varepsilon|$,   first note that in Fermi coordinates with respect to $\Gamma_\beta$, we have the decomposition
\begin{eqnarray*}
 g_\alpha^\prime g_\beta^\prime\mathcal{R}_{\beta,1}&=& \underbrace{g_\alpha^\prime g_\beta^\prime\left[H_\beta(y,0)+\Delta_{\beta,0}h_\beta(y)\right]}_I+\underbrace{ g_\alpha^\prime g_\beta^\prime \left[H_\beta(y,z)-H_\beta(y,0)\right]}_{II}\\
 &+&\underbrace{ g_\alpha^\prime g_\beta^\prime \left[\Delta_{\beta,z}h_\beta(y)-\Delta_{\beta,0}h_\beta(y)\right]}_{III}.
\end{eqnarray*}
These three terms are estimated in the following way. First we have
\[\|I\|_{C^\theta(B_1(y,z))}\lesssim e^{-|d_\alpha(y,z)|-|z|}\|H_\beta+\Delta_{\beta,0}h_\beta\|_{C^\theta(B_1^\beta(y))}.\]
By  \eqref{error in z 4}, we get
\[\|II\|_{C^\theta(B_1 (y,z))}\lesssim \varepsilon^2 |z| e^{-|d_\alpha(y,z)|-|z|},\]
and by \eqref{error in z 5}, we get
\[\|III\|_{C^\theta(B_1^\beta(y))}\lesssim \varepsilon |z| e^{-|d_\alpha(y,z)|-|z|}\|h_\beta\|_{C^{2,\theta}(B_1^\beta(\Pi_\beta(y,0)))}.\]

Putting these estimates together and coming back to Fermi coordinates with respect to $\Gamma_\alpha$, applying Lemma \ref{comparison of distances} to change distances to be measured with respect to $\Gamma_\alpha$, we see the $C^{\theta}(B_1^\alpha(y))$ norm of $\int_{-\infty}^{+\infty}g_\alpha^\prime g_\beta^\prime\mathcal{R}_{\beta,1}$ is controlled by
\begin{eqnarray}\label{B.1}
&&\varepsilon^2+|d_\beta(y,0)|e^{-|d_\beta(y,0)|}\|H_\beta+\Delta_{\beta,0}h_\beta\|_{C^\theta(B_1^\beta(\Pi_\beta(y,0)))}+\varepsilon |d_\beta(y,0)|^2e^{-|d_\beta(y,0)|}\\
&\lesssim&\varepsilon^2+e^{-\frac{3}{2}|d_\beta(y,0)|}+e^{-\frac{3}{4}|d_\beta(y,0)|}\|H_\beta+\Delta_{\beta,0}h_\beta\|_{C^\theta(B_1^\beta(\Pi_\beta(y,0)))}.
\nonumber
\end{eqnarray}
Summing in $\beta\neq\alpha$ and applying Lemma \ref{lem distance ladder} we obtain
\begin{eqnarray}\label{B.2}
&&\Big\|\sum_{\beta\neq\alpha}\int_{-\infty}^{+\infty}g_\alpha^\prime g_\beta^\prime\mathcal{R}_{\beta,1}\Big\|_{C^{\theta}(B_1^\alpha(y))}\\
&\lesssim&\varepsilon^2+\max_{B_1^\alpha(y)}e^{-\frac{3}{2}D_\alpha}+\max_{\beta\neq\alpha: |d_\beta(y,0)|\leq 8|\log\varepsilon|}\|H_\beta+\Delta_{\beta,0}h_\beta\|_{C^\theta(B_1^\beta(\Pi_\beta(y,0)))}^2. \nonumber
\end{eqnarray}

\item By the same reasoning as in the previous case, for $\beta\neq\alpha$, the $C^{\theta}(B_1^\alpha(y))$ norm of $\sum_{\beta\neq\alpha}\int_{-\infty}^{+\infty}g_\alpha^\prime g_\beta^{\prime\prime}\mathcal{R}_{\beta,2}$ is controlled by
\begin{eqnarray*}
&&\varepsilon^2+\max_{|z|<8|\log\varepsilon|} \|\phi\|_{C^{2,\theta}(B_1(y,z))}^2+\sum_{\beta\neq\alpha: |d_\beta(y,0)|\leq 8|\log\varepsilon|}|d_\beta(y,0)|e^{-\left(|d_\beta(y,0)|+2D_\beta(\Pi_\beta(y,0))\right)}\\
&\lesssim&\varepsilon^2+\max_{\beta: |d_\beta(y,0)|\leq 8|\log\varepsilon|}\max_{B_1^\beta(\Pi_\beta(y,0))}e^{-\frac
{3}{2}D_\beta}+\max_{|z|<8|\log\varepsilon|}\|\phi\|_{C^{2,\theta}(B_1(y,z))}^2.
\end{eqnarray*}

\item By the definition of $\xi$, the $C^{\theta}(B_1^\alpha(y))$ norm of $\sum_{\beta }(-1)^{\beta-1}\int_{-\infty}^{+\infty}g_\alpha^\prime \xi_\beta$ is bounded by
 $O\left(\varepsilon^2\right)$.

\end{enumerate}

\medskip
Combining all of these estimates we get \eqref{5.1}.

\section{Proof of Lemma \ref{Holder for RHS}}\label{sec proof of Lem 9.1}
\setcounter{equation}{0}

We estimate the H\"{o}lder norm of the right hand side of \eqref{error equation} term by term. Since they reproduce similar patterns on each $\mathcal{M}_\alpha^0$, it is sufficient to consider  one of such domains.

\begin{enumerate}

\item Because
\[\mathcal{R}(\phi)=W^\prime(g_\ast+\phi)-W^\prime(g_\ast)-W^{\prime\prime}(g_\ast)\phi,\]
we get
\[\|\mathcal{R}(\phi)\|_{C^{\theta}(B_r(x))}\lesssim\|\phi\|_{C^{\theta}(B_r(x))}^2.\]

\item By Lemma \ref{Holder bound on interaction}, we have
\[\Big\|W^\prime(g_\ast)-\sum_\beta W^\prime(g_\beta)\Big\|_{C^{\theta}(\mathcal{M}_\alpha^0\cap B_r(x))}\lesssim \varepsilon^2+A\left(r;x\right) .\]

\item Take the decomposition
\begin{eqnarray*}
    g_\alpha^\prime\left[H_\alpha(y,z)+\Delta_zh_\alpha(y)\right]&=&g_\alpha^\prime\left[H_\alpha(y,0)+\Delta_0h_\alpha(y)\right]\\
    &+&g_\alpha^\prime\left[H_\alpha(y,z)-H_\alpha(y,0)\right]+g_\alpha^\prime\left[\Delta_{\alpha,z}h_\alpha(y)-\Delta_{\alpha,0}h_\alpha(y)\right].
    \end{eqnarray*}
First we have
\begin{eqnarray*}
&&\|g_\alpha^\prime(y,z)\left[H_\alpha(y,z)-H_\alpha(y,0)\right]\|_{C^{\theta}(\mathcal{M}_\alpha^0\cap B_r(x))}\\
&\lesssim&\|g_\alpha^\prime(y,z)\left[H_\alpha(y,z)-H_\alpha(y,0)\right]\|_{Lip(\mathcal{M}_\alpha^0\cap B_r(x))}\\
&\lesssim& \max_{(y,z)\in \mathcal{M}_\alpha^0\cap B_r(x)} e^{-|z|}\big|H_\alpha(y,z)-H_\alpha(y,0)\big|\\
&&+ \max_{(y,z)\in \mathcal{M}_\alpha^0\cap B_r(x)} |z||e^{-|z|}|\big|A_\alpha(y,0)||\nabla_{\alpha,0} A_\alpha(y,0)\big|\\
&\lesssim&\varepsilon^2.
\end{eqnarray*}

Next because
\[\Delta_{\alpha,z}h_\alpha(y)-\Delta_{\alpha,0}h_\alpha(y)=\sum_{i,j=1}^{n-1}\left[g_{\alpha}^{ij}(y,z)-g_{\alpha}^{ij}(y,0)\right]\frac{\partial^2h_\alpha}{\partial y_i\partial y_j}+\sum_{i=1}^{n-1}\left[b_\alpha^i(y,z)-b_\alpha^i(y,0)\right]\frac{\partial h_\alpha}{\partial y_i},\]
we get
\begin{eqnarray*}
&&\big\|g_\alpha^\prime\left[\Delta_{\alpha,0}h_\alpha(y)-\Delta_{\alpha,z}h_\alpha(y)\right]\big\|_{C^{\theta}(\mathcal{M}_\alpha^0\cap B_r(x))}\\
&\lesssim& \max_{(y,z)\in\mathcal{M}_\alpha^0\cap B_r(x)} e^{-|z|}\left[|g_{\alpha}^{ij}(y,z)-g_{\alpha}^{ij}(y,0)||\nabla_{\alpha,0}^2h_\alpha(y)|+|b^i(y,z)-b^i(y,0)||\nabla_{\alpha,0} h_\alpha(y)|\right]\\
&+& \max_{(y,z)\in\mathcal{M}_\alpha^0\cap B_r(x)}  e^{-|z|}\left(|\nabla_{\alpha,0}^2h_\alpha(y)|+|\nabla_{\alpha,0} h_\alpha(y)|\right)\\
&&\times\left(\big\|g_{\alpha}^{ij}(\tilde{y},\tilde{z})-g_{\alpha}^{ij}(\tilde{y},0)\big\|_{C^{\theta}(B_1(y,z))}+\big\|b_\alpha^i(\tilde{y},\tilde{z})
-b_\alpha^i(\tilde{y},0)\big\|_{C^{\theta}(B_1(y,z))}\right)\\
&+&\|h_\alpha\|_{C^{2,\theta}(\Gamma_\alpha\cap B_r(x))}\left(\max_{(y,z)\in \mathcal{M}_\alpha^0\cap B_r(x)} e^{-|z|}\left(|g_{\alpha}^{ij}(y,z)-g_{\alpha}^{ij}(y,0)|+|b_\alpha^i(y,z)-b_\alpha^i(y,0)|\right)\right)\\
&\lesssim&\varepsilon\|h_\alpha\|_{C^{2,\theta}(\Gamma_\alpha\cap B_{r }(x))}\\
&\lesssim&\varepsilon^2+\|h_\alpha\|_{C^{2,\theta}(\Gamma_\alpha\cap B_{r }(x))}^2  \quad \quad \mbox{(by Cauchy inequality)}\\
&\lesssim&\varepsilon^2+\|\phi\|_{C^{2,\theta}(B_{r+8|\log\varepsilon|}(x))}^2+\max_\alpha\max_{\Gamma_\alpha\cap B_{r+8|\log\varepsilon|}(x)}e^{-2D_\alpha}. \quad \quad \mbox{(by Lemma \ref{control on h_0})}
\end{eqnarray*}

\item By Lemma \ref{control on h_0}, we have
\begin{eqnarray*}
&&\|g_\alpha^{\prime\prime}|\nabla_{\alpha,z}h_\alpha|^2\|_{C^{\theta}(\mathcal{M}_\alpha^0\cap B_r(x))}\\
&\lesssim &\|\nabla_{\alpha,0} h_\alpha\|_{L^\infty(\Gamma_\alpha\cap B_r(x))}^2+\|\nabla_{\alpha,0} h_\alpha\|_{L^\infty(\Gamma_\alpha\cap B_r(x))}\|\nabla_{\alpha,0}^2 h_\alpha\|_{L^\infty(\Gamma_\alpha\cap B_r(x))}\\
&\lesssim& \|\phi\|_{C^{2,\theta}(B_{r+8|\log\varepsilon|}(x))}^2+\max_\alpha\max_{\Gamma_\alpha\cap B_{r+8|\log\varepsilon|}(x)}e^{-2D_\alpha}.
\end{eqnarray*}

\item As in the previous case, we first estimate the H\"{o}lder norm of $\mathcal{R}_{\beta,1}$ in Fermi coordinates with respect to $\Gamma_\beta$ for each $\beta\neq\alpha$. Coming back to Fermi coordinates with respect to $\Gamma_\alpha$ and noting that if $g_\beta^\prime\neq 0$, then $|d_{\beta}(y,z)|<8|\log\varepsilon|$, we obtain
\begin{eqnarray*}
&&\|g_\beta^\prime\mathcal{R}_{\beta,1}\|_{C^{\theta}(\mathcal{M}_\alpha^0\cap B_r(x))}\\
&\lesssim& \left(\max_{(y,z)\in \mathcal{M}_\alpha^0\cap B_r(x)}e^{-|d_\beta(y,z)|}\right)\\
&&\times\left(\varepsilon^2+\|\phi\|_{C^{2,\theta}(B_{r+9|\log\varepsilon|}(x))}^2+\max_\alpha\max_{\Gamma_\alpha\cap B_{r+9|\log\varepsilon|}(x)}e^{-2D_\alpha}\right)\\
&+&\max_{(y,z)\in \mathcal{M}_\alpha^0\cap B_r(x), |d_\beta(y,z)|\leq 8|\log\varepsilon|}e^{-|d_\beta(y,z)|}\|H_\beta+\Delta_{\beta,0}h_\beta\|_{C^\theta(B_2^\beta(\Pi_\beta(y,0)))}.
\end{eqnarray*}
Then using Lemma \ref{lem distance ladder} and summing in $\beta$, we get
\begin{eqnarray*}
&&\big\|\sum_{\beta\neq\alpha}g_\beta^\prime\mathcal{R}_{\beta,1}\big\|_{C^{\theta}(\mathcal{M}_\alpha^0\cap B_r(x))}\\
&\lesssim& \varepsilon^2+\|\phi\|_{C^{2,\theta}(B_{r+9|\log\varepsilon|}(x))}^2+\max_\alpha\max_{\Gamma_\alpha\cap B_{r+9|\log\varepsilon|}(x)}e^{-2D_\alpha}\\
&+&\left(\max_\alpha\max_{\Gamma_\alpha\cap B_{r+9|\log\varepsilon|}(x)}e^{-\frac{D_\alpha }{2}}\right)
\left(\max_\alpha\|H_\alpha+\Delta_{\alpha,0}h_\alpha\|_{C^{\theta}(B_{r+9|\log\varepsilon|}(x))}\right).
\end{eqnarray*}

\item Similar to the previous case, we have
\begin{eqnarray*}
&&\sum_{\beta\neq\alpha}\|g_\beta^\prime\mathcal{R}_{\beta,2}\|_{C^{\theta}(\mathcal{M}_\alpha^0\cap B_r(x))}\\
&\lesssim &\sum_{\beta\neq\alpha}\max_{(y,z)\in \mathcal{M}_\alpha^0\cap B_r(x)}e^{-|d_\beta(y,z)|}\\
&&\times\left(\|\nabla_{\beta,0} h_\beta\|_{L^\infty(\Gamma_\beta\cap B_{r+8|\log\varepsilon|}(x)}^2+\|\nabla_{\beta,0} h_\beta\|_{L^\infty(\Gamma_\beta\cap B_{r+8|\log\varepsilon|}(x))}\|\nabla_{\beta,0}^2 h_\alpha\|_{L^\infty(\Gamma_\beta\cap B_{r+8|\log\varepsilon|}(x))}\right)\\
&\lesssim&\sum_{\beta\neq\alpha} \max_{(y,z)\in \mathcal{M}_\alpha^0\cap B_r(x)}e^{-|d_\beta(y,z)|}\left(\varepsilon^2+\|\phi\|_{C^{2,\theta}(B_{r+9|\log\varepsilon|}(x))}^2+\max_\alpha\max_{\Gamma_\alpha\cap B_{r+9|\log\varepsilon|}(x)}e^{-2D_\alpha}\right)\\
&\lesssim&  \varepsilon^2+\|\phi\|_{C^{2,\theta}(B_{r+9|\log\varepsilon|}(x))}^2+\max_\alpha\max_{\Gamma_\alpha\cap B_{r+9|\log\varepsilon|}(x)}e^{-2D_\alpha}.
\end{eqnarray*}

\item For any $\beta$, $\xi_\beta(y,z)\neq0$ only if $|d_\beta(y,z)|\leq 8|\log\varepsilon|$. Hence by Lemma \ref{O(1) scale},
\[\Big\|\sum_\beta\xi_\beta\Big\|_{C^{\theta}(\mathcal{M}_\alpha^0\cap B_r(x))}\lesssim \varepsilon^3|\log\varepsilon|\lesssim \varepsilon^2.\]

\end{enumerate}

\medskip

Putting these estimates together we finish the proof of Lemma \ref{Holder for RHS}.

\end{document}